\newlength{\halfbls}\setlength{\halfbls}{.5\baselineskip}
\newtheorem{theorem}{Theorem}[section]
\newtheorem{prop}[theorem]{Proposition} 
\newtheorem{cor}[theorem]{Corollary}
\newtheorem{rem}[theorem]{Remark}
\newtheorem{exam}[theorem]{Example}
\newtheorem{lemma}[theorem]{Lemma}
\newtheorem{defn}[theorem]{Definition}
\theoremstyle{remark}
\newtheorem*{remark}{Remark}
\newcommand{\ord}{\mathrm{ord}}
\newcommand{\BB}{\mathbb{B}}
\newcommand{\PP}{\mathbb{P}}
\newcommand{\HH}{\mathbb{H}}
\newcommand{\LL}{\mathbb{L}}
\newcommand{\UU}{\mathbb{U}}
\newcommand{\WW}{\mathbb{W}}
\newcommand{\QQ}{\mathbb{Q}}
\newcommand{\VV}{\mathbb{V}}
\newcommand{\RR}{\mathbb{R}}
\newcommand{\CC}{\mathbb{C}}
\newcommand{\ZZ}{\mathbb{Z}}
\newcommand{\NN}{\mathbb{N}}
\newcommand{\Gal}{{\rm Gal}}
\newcommand{\SL}{\mathrm{SL}}
\newcommand{\CH}{\mathrm{CH}}
\newcommand{\GL}{\mathrm{GL}}
\newcommand{\PGL}{\mathrm{PGL}}
\newcommand{\SU}{\mathrm{SU}}
\newcommand{\PU}{\mathrm{PU}}
\newcommand{\id}{\mathrm{id}}
\newcommand{\cB}{\mathcal{B}}
\newcommand{\cE}{\mathcal{E}}
\newcommand{\cF}{\mathcal{F}}
\newcommand{\cL}{\mathcal{L}}
\newcommand{\cM}{\mathcal{M}}
\newcommand{\cO}{\mathcal{O}}
\newcommand{\cR}{\mathcal{R}}
\newcommand{\cT}{\mathcal{T}}
\newcommand{\cV}{\mathcal{V}}
\newcommand{\cW}{\mathcal{W}}
\newcommand{\cX}{\mathcal{X}}
\newcommand{\XFam}{\mathcal{X}}
\newcommand{\SheafHom}{\mathcal{H}om}
\newcommand{\moduli}[1][g]{{\mathcal M}_{#1}}
\newcommand{\omoduli}[1][g]{{\Omega \mathcal M}_{#1}}
\newcommand{\cI}{\mathcal{I}}
\newcommand{\sing}{\mathrm{sing}}
\newcommand{\Sing}{\mathrm{Sing}} 
\newcommand{\supp}{\mathrm{supp}}
\newcommand{\hyp}{{\rm hyp}}
\newcommand{\tr}{{\rm tr}}
\newcommand{\Gr}{{\rm Gr}}
\newcommand{\semistable}{\mathit{sst}}
\newcommand{\Per}{\mathrm{Per}}
\newcommand{\rk}{\mathrm{rk}}
\newcommand{\ol}[1]{{\overline{#1}}}
\newcommand{\ul}[1]{{\underline{#1}}}
\renewcommand{\tilde}[1]{\widetilde{#1}}
\newcommand{\fracpart}[1]{\left\langle #1 \right\rangle}
\renewcommand{\Re}{\mathrm{Re\,}}        
\renewcommand{\Im}{\mathrm{Im\,}}       
\renewcommand{\top}{{\rm top}}
\newcommand{\dual}{\vee} 
\newcommand{\isom}{\cong}
\newcommand{\sms}{\setminus}
\newcommand{\orb}{\mathrm{orb}} 
\newcommand{\nc}{\mathrm{nc}} 
\newcommand{\parab}{{\rm par}}
\newcommand{\dR}{\mathrm{dR}}
\newcommand{\backmod}{\backslash}
\DeclareMathOperator{\Aut}{Aut}
\DeclareMathOperator{\vol}{vol}
\DeclareMathOperator{\dvol}{dvol}
\DeclareMathOperator{\diag}{diag}
\DeclareMathOperator{\Sym}{Sym}
\DeclareMathOperator{\Hom}{Hom}
\DeclareMathOperator{\Pic}{Pic}
\DeclareMathOperator{\koker}{\mathrm{coker}}
\DeclareMathOperator{\congruent}{\equiv}
\newcommand{\Teichmuller}{{Teich\-m\"uller} }
\DeclareMathOperator{\FDom}{\mathcal{F}}
\DeclareMathOperator{\Ric}{\mathrm{Ric}}
\newcommand{\Laplace}{\Delta}
\newcommand{\dd}{\,\mathrm{d}}
\DeclareMathOperator{\tensor}{\otimes}   
\DeclareMathOperator{\Chern}{\mathrm{c}} 
\DeclareMathOperator{\orbEuler}{\mathrm{e}^{\orb}} 
\DeclareMathOperator{\topEuler}{\mathrm{e}^{\top}} 
\newcommand{\lind}{\langle}
\newcommand{\rind}{\rangle_{1,n}} 
\newcommand{\ldef}{(}
\newcommand{\rdef}{)} 
\newcommand{\pder}[2][]{\frac{\partial #1}{\partial #2}}  
\newcommand{\set}[2]{\bigl\{#1\mid #2\bigr\}} 		
\newcommand{\ie}{i.\,e.\ }
\renewcommand{\subsubsection}{\@startsection{subsubsection}{2}%
        {\z@}{-3.25ex plus -1ex minus-.2ex}{-1em}{\bf}}
\title[Lyapunov spectrum of ball quotients]{Lyapunov spectrum of
ball quotients with applications
to commensurability questions}
\author{Andr\'e Kappes and Martin M\"{o}ller}
\date{\today}
\thanks{The authors are partially supported
by the ERC-StG 257137. }
\address{Institut f\"{u}r Mathematik, Goethe-Universit\"{a}t Frankfurt, 
Robert-Mayer-Str. 6-8, 60325 Frankfurt am Main, Germany}
\email{kappes@math.uni-frankfurt.de}
\email{moeller@math.uni-frankfurt.de}
\begin{document} 
\begin{abstract} We determine the Lyapunov spectrum of ball quotients 
arising from cyclic coverings. The computations are performed by rewriting
the sum of Lyapunov exponents as ratios of intersection numbers 
and by the analysis of the period map near boundary divisors.
\par
As a corollary, we complete the classification of commensurability
classes of all presently known non-arithmetic ball quotients.
\end{abstract}

\maketitle
\setcounter{tocdepth}{1}
\tableofcontents

\noindent
%

\section{Introduction}
\par
This paper is intended to contribute to the problem of classifying commensurability classes of non-arithmetic ball quotients
with a technique that has been useful to understand  the \Teichmuller geodesic flow, the 
calculation of the Lyapunov spectrum.
\par
By Margulis' arithmeticity theorem (\cite{margulisbook}) non-arithmetic lattices
only exist in Lie groups of rank one. While there are irreducible
non-arithmetic lattices in the isometry group of real hyperbolic space 
of dimension $n$ for any $n$, the construction of non-arithmetic lattices
in the isometry group of complex hyperbolic $n$-space $\PU(1,n)$
 is a hard open problem if $n > 1$.
\par
Up to commensurability, all\footnote{After the completion of this work, 
finitely many new non-arithmetic ball quotients not commensurable to the Deligne-Mostow 
examples have been found by Deraux, Parker and
Paupert \cite{derauxppnew}, see also \cite{derauxppcensus}}
the presently known non-arithmetic ball quotients
arise from cyclic coverings of the projective line, an investigation started by
\cite{DeligneMostow86} and completed
by \cite{mostowhalfint} and \cite{thurstonshapes}. 
Other constructions, earlier and shortly after the work of Deligne, Mostow
and Thurston, turned out to be commensurable to these ball quotients.
In fact, the book \cite{delcommen} gathers a lot 
of techniques to detect commensurabilities between lattices. 
\par
To detect non-commensurability, 
the only technique appearing in the literature seems to be the trace field
and (non-)compactness. As a consequence, \cite[Remark~5.1]{paupertIII} asks whether among
the
$15$ cyclic covering examples, there are $7$, $8$ or $9$ commensurability
classes. As one result of our methods we show that there are in fact $9$
classes. The first new commensurability invariant that we propose is the
set of Lyapunov exponents of a variation of Hodge structures associated 
with ball quotients arising from cyclic coverings. We first give some 
background on these notions and then explain that they fit into a larger 
class of commensurability invariants.
\par
\paragraph{\textbf{Cyclic coverings}} 
The ball quotients of Deligne and Mostow stem from
families of algebraic
 curves that are cyclic coverings of $\PP^1$ branched at $N$ points.
For a fixed degree $d$ and a ramification datum $(a_1,\dots,a_N)$, one
considers the algebraic curve
\[y^d = \prod_{i=1}^N (x-x_i)^{a_i},\qquad (x_1,\dots,x_N)\in \moduli[0,N]\]
Moving the branch points in $\moduli[0,N]$ yields a family of curves.
The first cohomology groups of the fibers of this family form a local system
(or flat vector bundle) on $\moduli[0,N]$ and the ${(1,0)}$-subspaces of the
Hodge decomposition yield a holomorphically varying subbundle. 
This is the prototypical example of a variation of Hodge structures of weight
1 (VHS) to have in mind for the theorem below (see Section~\ref{sec:weigh1VHS}
for a definition of VHS). 
\par
Using the action of the Galois group $\ZZ/(d)$ of the
covering, one can decompose the cohomology into eigenspaces; these form
themselves variations of Hodge structures defined over some cyclotomic field,
and the natural polarization on the fibers sometimes is a hermitian form of
signature $(1,n)$, where $n= N-3$. In this case, the period map, which records
the position of the $(1,0)$-subspace in the cohomology, is a holomorphic map
from the universal cover of $\moduli[0,N]$ to a ball $\BB^n$, which is the
parameter space of Hodge structures of this type. The fundamental group of
$\moduli[0,N]$ in turn acts on a fiber of the local system by parallel transport
of flat sections; since it preserves the hermitian form, this yields a
representation into $\PU(1,n)$, and the period map is equivariant for the two
actions of the fundamental group. The main achievement of \cite{DeligneMostow86} is
to define a suitable compactification of $\moduli[0,N]$ and to determine for
which parameters $(d;a_i)$ and which eigenspace one can extend the period map to
yield an isomorphism with the complex ball. In this case, the representation of
the fundamental group gives a lattice in $\PU(1,n)$, and the VHS is called
\textit{uniformizing}.
\par
\paragraph{\textbf{Lyapunov exponents}}
The {\em Lyapunov exponents} of a VHS of rank $k$ are $2k$ real
numbers $\lambda_i$ that group symmetrically around $0$ and measure, roughly
speaking, the logarithmic growth rate of cohomology classes under parallel
transport along the geodesic flow of the ball quotient. By Oseledets' theorem
such Lyapunov exponents can be associated to any ergodic cocycle under a weak
integrability hypothesis which holds for variations of Hodge structures, see
Lemma~\ref{le:Oseledec-integrability}.
\par
Lyapunov exponents for VHS have first been investigated for the space
$\omoduli$
of flat surfaces, i.e.\ pairs $(X,\omega)$ of a compact Riemann surface
together with a non-zero holomorphic one-form.
This space admits a natural action of $\SL_2(\RR)$, and the action of the
diagonal subgroup is the \Teichmuller geodesic flow, see \cite{zorich06} for a
survey. In this situation Lyapunov exponents have been determined e.g.\ for
families of cyclic coverings branched over $4$ points in \cite{bouwmoel}
and \cite{cyclicekz}. The simplicity of the spectrum (but no precise values) are
known for a generic flat surface $(X,\omega)$ of genus at least two. A
fundamental observation of Kontsevich and Zorich (see \cite{kontsevich} and also
\cite{ekz}) relates the sum of Lyapunov exponents to the degree of the Hodge
bundle (or a summand, if the Hodge structure splits) and this degree can
actually be calculated.
\par 
For a comparison with our Theorem~\ref{thm:main}, we
state Kontsevich's formula for the sum of Lyapunov exponents in the case of a
family of curves $f:\XFam \to C$ over a compact hyperbolic curve $C
=\HH/\Gamma$. The VHS in question is the relative cohomology $R^1f_*\CC$
together with the subbundle $f_*\omega_{\XFam/C}$ of relative $1$-forms pushed
forward to
$C$. In this case,
\begin{align}
\sum_{i=1}^g \lambda_i =
\frac{2\Chern_1(f_*\omega_{\XFam/C})}{c_1(\omega_{C})}
\end{align}
If $C$ is not compact as in the case of \Teichmuller curves, one has to take
the Deligne extension of $f_*\omega_{\XFam/C}$ and the canonical bundle
$\omega_{\ol{C}}$ of the completion $\ol{C}$ instead.
\par
Kontsevich's observation can be extended to ball quotients, since they are 
K\"ahler-Einstein manifolds as in the one-dimensional case.
More precisely, we have the following result, which we state for abstract
variations of Hodge structures.
\par
\begin{theorem} \label{thm:main}
Suppose that $\VV$ is a real polarized 
variation of Hodge structures of weight $1$ over
a ball quotient $B = \BB^n/\Gamma$ of
constant curvature $-4$, where $\Gamma$ is a torsionfree lattice in
$\PU(1,n)$. Let $\ol{B}$ be a smooth compactification of $B$ with normal
crossing boundary divisor $\Delta$, and assume that the local monodromy
of $\VV$ about $\Delta$ is unipotent.
Then the Lyapunov spectrum of 
$\VV$ has the following properties.
\begin{itemize}
\item[i)] \textbf{Normalization.} If $\VV_\CC = \VV\tensor_\RR{\CC}$ has an irreducible summand
which is uniformizing, then the top Lyapunov exponent is one.
\item[ii)] \textbf{Duplication.} If an $\RR$-irreducible direct summand
$\WW$ of $\VV$ is reducible over $\CC$, then each Lyapunov exponent of
$\WW$ has even multiplicity.
\item[iii)] \textbf{Zero exponents for non-real factors.}
If an irreducible summand $\WW_\CC$ of $\VV_\CC$ has signature 
$(p,q)$, then at least $2|p-q|$ of the Lyapunov exponents 
corresponding to $\WW_\CC$ are zero.
\item[iv)] \textbf{Partial sums are intersection numbers.}
Let $\WW$ be a direct summand of rank $2k$ in the decomposition of
$\VV$. Then the positive
Lyapunov exponents $\lambda_1,\dots,\lambda_k$ of $\WW$ satisfy
\begin{equation} \label{eq:sumlyapformula}
\lambda_1+\dots +\lambda_k =
\frac{(n+1)\Chern_1(\cE^{1,0}).\Chern_1(\omega_{\ol
B})^{n-1}}{\Chern_1(\omega_{\ol B})^n},
\end{equation}
where $\omega_{\ol B} = \bigwedge^n \Omega^1_{\ol
B}(\log\Delta)$, and $\cE^{1,0}$ is the Deligne extension of $\cW^{1,0} \subset \WW\tensor\cO_B$.
\end{itemize}
\end{theorem}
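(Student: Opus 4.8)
The plan is to establish part~(iv) first, as the analytic engine, then to deduce~(i) from it, and to treat~(ii) and~(iii) by a direct study of the Oseledets filtration. For~(iv), write $\cW^{1,0}\subset\WW\otimes\cO_B$ for the Hodge subbundle, of rank $k$, with its Hodge metric $h$. Since $\Gamma\subset\PU(1,n)$ is a torsionfree lattice, the geodesic flow on the unit tangent bundle $T^1B$ is ergodic for the (finite) Liouville measure, so Oseledets' theorem applies, with integrability supplied by Lemma~\ref{le:Oseledec-integrability}. The first move is a Kontsevich--Forni type identity: the maximal partial sum $\lambda_1+\dots+\lambda_k$ equals the average over $T^1B$ of the infinitesimal logarithmic expansion, along the flow, of the metric induced by $h$ on $\bigwedge^k\cW^{1,0}$. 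As $\WW_\CC$ is flat, the Chern form of $(\cW^{1,0},h)$ is, up to sign, the norm-squared of the second fundamental form of $\cW^{1,0}\subset\WW_\CC$, and its value in the flow direction is what controls that expansion; averaging over the sphere fibre of $T^1B\to B$, on which the isotropy group $\U(n)$ acts transitively, turns this directional quantity into a trace, namely the contraction of $\Chern_1(\cW^{1,0},h)$ with a power of the K\"ahler--Einstein form $\omega_{\mathrm{KE}}$. One obtains
\begin{equation}\label{eq:fornikz}
\lambda_1+\dots+\lambda_k \;=\; c_n\cdot\frac{\int_B \Chern_1(\cW^{1,0},h)\wedge\omega_{\mathrm{KE}}^{\,n-1}}{\int_B\omega_{\mathrm{KE}}^{\,n}}
\end{equation}
for a constant $c_n$ depending only on $n$ and the curvature normalization.

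To pass from~\eqref{eq:fornikz} to~\eqref{eq:sumlyapformula} requires (a) replacing the Chern form of the degenerate metric $h$ by the intersection number $\Chern_1(\cE^{1,0}).\Chern_1(\omega_{\ol B})^{n-1}$ on $\ol B$, and (b) identifying the class of $\omega_{\mathrm{KE}}$ with a multiple of $\Chern_1(\omega_{\ol B})$. For~(a), unipotent local monodromy and Schmid's nilpotent orbit theorem give that $h$ has only logarithmic singularities along $\Delta$, so $\Chern_1(\cW^{1,0},h)$ is an $L^1$ form representing $\Chern_1(\cE^{1,0})$; the delicate point is that the integral in~\eqref{eq:fornikz} acquires no additional mass supported on $\Delta$, which one deduces from Schmid's norm estimates for flat sections combined with the logarithmic growth of $\omega_{\mathrm{KE}}$ near the boundary. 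For~(b), on a toroidal compactification of a ball quotient $\omega_{\mathrm{KE}}$ has logarithmic growth along $\Delta$ and its Ricci form is a fixed negative multiple of itself, so $[\omega_{\mathrm{KE}}]$ is proportional to $\Chern_1(\omega_{\ol B})=\Chern_1(\bigwedge^n\Omega^1_{\ol B}(\log\Delta))$; that proportionality constant, absorbed into $c_n$, produces the factor $n+1$ in~\eqref{eq:sumlyapformula}, which at bottom records $K_{\BB^n}\cong L^{\otimes(n+1)}$ for $L$ the tautological subbundle of $\PP^n$. I expect step~(a) --- the verification that no boundary term is lost, which is exactly the ``analysis of the period map near boundary divisors'' advertised in the abstract --- to be the main obstacle; it is also what pins down the value of the constant.

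Granting~(iv), part~(i) follows because a uniformizing irreducible summand of $\VV_\CC$ has signature $(1,n)$ and its Hodge line bundle is, $\Gamma$-equivariantly on $\BB^n$, the tautological subbundle $L$; hence after Deligne extension $(\cE^{1,0})^{\otimes(n+1)}\cong\omega_{\ol B}$, so $\Chern_1(\cE^{1,0})=\tfrac1{n+1}\Chern_1(\omega_{\ol B})$ and~\eqref{eq:sumlyapformula} gives the unique positive exponent of this summand as $1$; since the spectrum of $\VV$ is the union of those of its summands, the top exponent is $1$. For~(ii), if $\WW$ is $\RR$-irreducible while $\WW_\CC$ is reducible, then $\WW_\CC$ splits as a sum of two summands that are either complex-conjugate to one another or two isomorphic copies, and complex conjugation (resp.\ the identity) is a flat isometry for the Hodge norms between them, so they share the same Lyapunov spectrum and every exponent of $\WW$ has even multiplicity. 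For~(iii), let $\WW_\CC$ be irreducible of signature $(p,q)$ with $p\le q$, so that $\ol{\WW_\CC}\not\cong\WW_\CC$ occurs as a separate summand of $\VV_\CC$ with conjugate signature; its monodromy lies in $\U(p,q)$, whence its Lyapunov spectrum is symmetric, $\{\pm\lambda_1,\dots,\pm\lambda_r,0,\dots,0\}$ with $2r+z=p+q$. The number $r$ of positive exponents satisfies $r\le p$, because the nonzero part of the Oseledets cocycle is confined, by the VHS structure, to a subbundle of rank at most $2\min(p,q)=2p$ governed by the second fundamental form $\cW^{1,0}\to\cW^{0,1}\otimes\Omega^1_B$; hence $z\ge q-p=|p-q|$, and applying~(ii) to the real summand $\WW$ with $\WW\otimes\CC=\WW_\CC\oplus\ol{\WW_\CC}$ doubles this, giving at least $2|p-q|$ zero exponents.
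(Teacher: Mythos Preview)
Your overall plan for~(iv) matches the paper's: the Kontsevich--Forni identity for the sum, the K\"ahler--Einstein proportionality, and the replacement of the Hodge-metric Chern form by the Chern class of the Deligne extension. Two clarifications. First, the ``analysis of the period map near boundary divisors'' in the abstract is not your step~(a); step~(a) is handled in the paper by a one-line citation (Schmid's orbit theorem implies the Hodge metric is good in Mumford's sense, hence computes $\Chern_1(\cE^{1,0})$). The boundary analysis in the abstract refers instead to the explicit computation of $\Chern_1(\cE^{1,0})$ for specific cyclic-cover families, which is a separate section. Second, the paper's derivation of the Kontsevich--Forni identity is more explicit than your sphere-averaging sketch: it uses a Green's-formula lemma in geodesic polar coordinates (Lemma~\ref{lem:Forni}) together with the identity $\Delta_\hyp(\log\|L\|)=-\tfrac12\Delta_\hyp\log\det(\psi(\omega_i,\omega_j))$ for a flat Lagrangian decomposable $k$-vector $L$.

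There are genuine gaps in~(i) and~(iii). For~(i): deducing from~(iv) that the uniformizing summand contributes an exponent equal to~$1$ does not show that $1$ is the \emph{top} exponent of~$\VV$; another summand could in principle contribute something larger. The paper proves the upper bound $\lambda_1(\VV)\le 1$ separately, via the Schwarz--Pick lemma: the period map $\BB^n\to\HH_{\rk\VV}$ has derivative of operator norm at most $1$ (both domain and target have holomorphic sectional curvature $-4$), which bounds $|\tfrac{\dd}{\dd t}\log\|G_t(v)\||$ by~$1$ along every geodesic. The paper in fact proves~(i) by a direct computation of the Hodge norm along a geodesic for the uniformizing VHS, not via~(iv).

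For~(iii): your assertion that ``the nonzero part of the Oseledets cocycle is confined to a subbundle of rank at most $2\min(p,q)$ governed by the second fundamental form'' is not a proof --- the Oseledets subspaces are only measurable and bear no a priori relation to the holomorphic second fundamental form. The correct argument, used in the paper, is that the Oseledets subspaces $V_{\lambda_i}$ with $\lambda_i\neq 0$ are \emph{isotropic} for the flat indefinite hermitian form~$\psi$: for $v_i\in V_{\lambda_i}$, $v_j\in V_{\lambda_j}$ with $\lambda_i+\lambda_j\neq 0$, flat invariance gives $|\psi(v_i,v_j)|=|\psi(G_tv_i,G_tv_j)|\le c(K)\|G_tv_i\|\,\|G_tv_j\|$ on a compact set $K$ of positive measure, and the right side tends to $0$ along a recurrent sequence of times, forcing $\psi(v_i,v_j)=0$. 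Since an isotropic subspace for a hermitian form of signature $(p,q)$ has complex dimension at most $\min(p,q)$, both $\bigoplus_{\lambda_i>0}V_{\lambda_i}$ and $\bigoplus_{\lambda_i<0}V_{\lambda_i}$ have $\dim_\CC\le\min(p,q)$, so $\dim_\CC V_0\ge|p-q|$, i.e.\ real dimension $\ge 2|p-q|$. Your invocation of~(ii) at the end is misplaced: the factor~$2$ is the passage from complex to real dimension of $V_0$, not the pairing of $\WW_\CC$ with its conjugate.
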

\par
\par
\paragraph{\textbf{Commensurability invariants}}
Due to their construction via families of curves, all the ball quotients 
arising from cyclic coverings come with a $\QQ$-VHS that contains a uniformizing
sub-VHS. The Galois conjugates of this sub-VHS constitute the {\em primitive
part} that we associate to such a ball quotient (see Definition~\ref{def:pp} for
the precise statement). 
The local system of the primitive part only depends on
the lattice as it corresponds to the sum of the representations
$\Gamma\to\Gamma^\sigma$ given by Galois conjugation. On the other hand, the
Hodge decomposition is a priori not an intrinsic datum of the lattice. However,
it is unique if it exists. This is shown in Theorem~\ref{thm:modemb_unique}
using the notion of \textit{modular embeddings}. Informally, a modular embedding
is the collection of the equivariant period maps associated with the different
representations $\Gamma\to \Gamma^\sigma$. Using this terminology, we show
in Section~\ref{sec:commensurability}:
\par
\begin{theorem}[{see Theorem~\ref{thm:lyap_spec_comminvariant}}] 
\label{thm:introcomm}
The Lyapunov spectrum of the primitive part is a commensurability invariant
among lattices in $\PU(1,n)$ that admit a modular embedding.
\end{theorem}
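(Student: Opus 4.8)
The plan is to split the claim into an algebraic assertion — that the primitive part is genuinely an invariant of the commensurability class, compatible with restriction to finite-index subgroups — and a dynamical one — that Lyapunov exponents of a flat bundle do not change under passage to a finite \'etale cover of the ball quotient. To begin, recall that two lattices $\Gamma_1,\Gamma_2\subset\PU(1,n)$ are commensurable precisely when there is $g\in\PU(1,n)$ such that $\Gamma_0:=g\Gamma_1g^{-1}\cap\Gamma_2$ has finite index in $g\Gamma_1g^{-1}$ and in $\Gamma_2$; passing if necessary to a torsion-free finite-index subgroup of $\Gamma_0$ affects none of the Lyapunov spectra below (by the finite-cover invariance established in the dynamical step), so I may assume $\Gamma_0$ is torsion-free. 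Conjugation by $g$ is an isometry of $\BB^n$ identifying $\BB^n/\Gamma_1$ with $\BB^n/g\Gamma_1g^{-1}$ compatibly with the geodesic flows and their canonical ergodic invariant probability measures; since conjugation preserves traces it preserves the trace field with its distinguished set of embeddings, and hence — the primitive part being built functorially from the lattice via the unique modular embedding of Theorem~\ref{thm:modemb_unique} — it identifies the primitive part of $\Gamma_1$ with that of $g\Gamma_1g^{-1}$. Thus the Lyapunov spectrum of the primitive part is unchanged, and it suffices to treat two lattices $\Gamma_1,\Gamma_2$ sharing a common torsion-free finite-index subgroup $\Gamma_0$.

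\emph{Restriction-compatibility of the primitive part.} Let $p_i\colon B_0=\BB^n/\Gamma_0\to B_i=\BB^n/\Gamma_i$ be the natural finite \'etale covers. I would show
\[\VV^{\mathrm{pp}}_{\Gamma_0}\;\cong\;p_i^{*}\,\VV^{\mathrm{pp}}_{\Gamma_i}\qquad(i=1,2)\]
as polarized $\QQ$-variations of Hodge structures. Three points go into this. First, a finite \'etale cover of a ball quotient is again a ball quotient, so the uniformizing sub-VHS of $B_i$ pulls back to a uniformizing sub-VHS of $B_0$, and, because being uniformizing is a rigidity condition on the period map, no additional uniformizing factor appears. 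Second, the field of definition of the primitive part — the trace field of $\Gamma_i$ together with the set of those Galois embeddings for which the conjugate hermitian form has the relevant signature — is a commensurability invariant, so the indexing set of the Galois-conjugate summands of the primitive part coincides for $\Gamma_0$ and for $\Gamma_i$. Third, on each such summand the Hodge filtration is the unique one compatible with a modular embedding (Theorem~\ref{thm:modemb_unique}), and $p_i^{*}\VV^{\mathrm{pp}}_{\Gamma_i}$ supplies one, so the filtrations agree. Combining the three gives the displayed isomorphism. For the final conclusion only the isomorphism of underlying flat bundles is needed, since Lyapunov exponents do not depend on the Hodge filtration, but the VHS statement is what Definition~\ref{def:pp} and Theorem~\ref{thm:modemb_unique} actually provide, and it is this that makes ``the primitive part'' intrinsic to $\Gamma_0$.

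\emph{Behaviour under finite covers.} The cover $p_i$ induces a finite covering $T^{1}B_0\to T^{1}B_i$ of unit tangent bundles intertwining the geodesic flows and matching their canonical ergodic invariant probability measures. Geodesic flow on a finite-volume ball quotient is ergodic and $T^{1}B_0$ is connected, so almost every point of $T^{1}B_0$ has the same Oseledets decomposition and the same exponents as its image in $T^{1}B_i$. By Lemma~\ref{le:Oseledec-integrability} the Hodge-norm cocycle of a polarized VHS meets the integrability hypothesis of Oseledets' theorem, so its Lyapunov spectrum is well defined, and pulling this cocycle back along $p_i$ leaves the spectrum unchanged. Together with the previous step this shows that the primitive parts of $\Gamma_1$, of $\Gamma_0$ and of $\Gamma_2$ all have the same Lyapunov spectrum, which is the assertion. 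The same reasoning shows that admitting a modular embedding is itself stable under commensurability, so the hypothesis holds throughout any commensurability class containing one such lattice.

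The substantive part is the restriction-compatibility of the primitive part: one has to know that ``the primitive part'' depends only on the commensurability class, not on a chosen representative together with a chosen modular embedding. Concretely this means verifying that the Galois-orbit data — the trace field together with the signature condition selecting the conjugate hermitian forms — is stable both under passing to finite-index subgroups and under conjugation in $\PU(1,n)$, and then invoking the uniqueness of the modular embedding to pin down the Hodge structure on each Galois conjugate. The dynamical input of the other two steps is by comparison routine.
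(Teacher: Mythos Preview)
Your argument is correct and tracks the paper's overall framework: reduce commensurability to conjugation plus a common finite-index subgroup, check that the primitive part restricts compatibly, and then invoke ergodicity of the geodesic flow for finite-cover invariance (this last step is exactly the paper's Proposition~\ref{prop:lyapfinitindex}). The one point of difference is in how well-definedness of the spectrum is established. You lean on Theorem~\ref{thm:modemb_unique} to pin down the Hodge filtration on each Galois conjugate, so that the primitive part is canonical as a VHS; the paper's proof of Theorem~\ref{thm:lyap_spec_comminvariant} instead uses Lemma~\ref{lemma:oseledec-comparable_norms} to observe that the Lyapunov spectrum depends only on the underlying local system and not on the chosen norm, hence not on the chosen modular embedding at all. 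You in fact remark on this yourself (``only the isomorphism of underlying flat bundles is needed''), so the difference is one of emphasis. The paper's route is slightly more economical for Lyapunov exponents specifically, while yours via the uniqueness of the modular embedding is what is needed for the full family of Chern-class invariants in Corollary~\ref{cor:comminvariants}; the paper presents both and explicitly flags the distinction.
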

\par
The right hand side of \eqref{eq:sumlyapformula} can also be generalized to give
a new class of commensurability invariants.
In general, for lattices as in Theorem~\ref{thm:introcomm} the ratios of the 
form 
\begin{equation} \label{eq:cab}
\frac{\Chern_1(\cE^{1,0})^a.\Chern_1(\omega_{\ol
B})^{b}}{\Chern_1(\omega_{\ol B})^n}
\end{equation}
for all $(a,b)$ with $a+b=n$ are natural invariants of the 
commensurability class of $\Gamma$. Here the $\cE^{1,0}$ correspond to irreducible
summands in the primitive part of the VHS associated with $\Gamma$.
We refer to Corollary~\ref{cor:comminvariants} for the precise statement.
\par
\medskip
The second part of this paper shows that for all presently known
commensurability classes of ball quotients all {\em individual}
Lyapunov exponents in the primitive part can be calculated using
Theorem~\ref{thm:main} and local computations.
\par
First of all, arithmetic lattices are
of little interest in this context. As a consequence of Theorem~\ref{thm:main}
we show in Proposition~\ref{prop:arithmetic} that for an arithmetic lattice
the primitive Lyapunov spectrum is {\em  maximally degenerate}, 
i.e.\ it consists of $\{+1, 0, -1\}$ only and the number of $+1$
is determined by the signature of the Hodge inner product.
\par
The non-arithmetic examples are much more interesting. We calculate
the individual Lyapunov exponents of the primitive part for all known
non-arithmetic examples in Theorem~\ref{thm:allprimvalues}. We state
a particularly interesting case here.
\par
\begin{theorem} \label{thm:lyapSU21}
Let $f:\XFam \to B_0 = (\PP^1)^5 \setminus (\text{diagonals})$ 
be the family of cyclic coverings of $\PP^1$ given by
\begin{equation} \label{eq:intfam1} 
y^{12} = (x-x_1)^3(x-x_2)^3(x-x_3)^5(x-x_4)^6(x-x_5)^7, 
\end{equation}
respectively by
\begin{equation} \label{eq:intfam2} 
y^{12} = (x-x_1)^4(x-x_2)^4(x-x_3)^4(x-x_4)^5(x-x_5)^7. 
\end{equation}
Let $\LL$ be the uniformizing direct summand of $R^1f_*\CC$
that when extended to a suitable compactification $B$ of $B_0$ exhibits
$B$ as a non-compact orbifold ball quotient.
Then the Lyapunov exponents of the primitive part $\PP$ are given by
\[1,\tfrac{5}{17},0,-\tfrac{5}{17},-1,\qquad
\text{respectively}\qquad
1,\tfrac{7}{22},0,-\tfrac{7}{22},-1,\]
where $\PP$ in this case is the $\CC$-subvariation associated 
with the direct sum of the Galois
conjugates of the $\QQ(\zeta_{12})$-form
of $\LL$.
\end{theorem}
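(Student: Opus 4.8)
The plan is to reduce the two specific cases to an application of Theorem~\ref{thm:main}(iv), after identifying the eigenspace decomposition of $R^1f_*\CC$ under the Galois action and keeping track of which Galois-conjugate pieces make up the primitive part $\PP$. First I would record, for each of the two ramification data, the decomposition of $H^1$ of the fiber into eigenspaces $\LL_\ell$ for the characters $\chi^\ell$ of $\ZZ/(12)$, compute the Hodge types $(p_\ell,q_\ell)$ of each $\LL_\ell$ by the standard formula $p_\ell = -1 + \sum_{i=1}^5 \fracpart{\ell a_i/12}$ (so that $p_\ell + q_\ell = n = 2$), and single out the unique $\ell$ with signature $(1,2)$ or $(2,1)$ that gives the uniformizing summand $\LL$. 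The Galois orbit of the $\QQ(\zeta_{12})$-form of $\LL$ under $\Gal(\QQ(\zeta_{12})/\QQ)$ consists of $\LL$ together with its conjugates $\LL^\sigma$; exactly those conjugates of signature $(2,1)$ or $(1,2)$ contribute nonzero exponents, and the conjugates of signature $(0,3)$ or $(3,0)$ contribute, by part~(iii), three zero exponents each (and in fact a rank-$6$ definite piece will only show up as zeros). So the nonzero part of the primitive spectrum comes from $\LL$ and one Galois conjugate $\LL'$ of indefinite signature.

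Next I would compute the partial sum $\lambda_1 = \lambda_1(\LL') $ for the non-uniformizing indefinite conjugate using \eqref{eq:sumlyapformula}: since $\LL$ is uniformizing, part~(i) normalizes its top exponent to $1$, so the exponents attached to $\LL$ are $\{1,0,-1\}$ (it has rank $2k$ with $k=1$ after accounting for the signature $(1,2)$ versus rank $3$ — here one must be careful: an indefinite eigenspace of signature $(1,2)$ has $p-q = \pm 1$, and by part~(iii) $2|p-q| = 2$ of its three $\CC$-exponents vanish, leaving a single $\pm$ pair whose positive member is $1$). For $\LL'$ of signature $(2,1)$ the same count leaves one nontrivial pair $\{\lambda_1,-\lambda_1\}$ and one zero, and $\lambda_1$ is given by the intersection-number ratio $(n+1)\Chern_1(\cE^{1,0}).\Chern_1(\omega_{\ol B})^{n-1}/\Chern_1(\omega_{\ol B})^n$ with $n=2$, i.e.\ $\lambda_1 = 3\,\Chern_1(\cE^{1,0}).\Chern_1(\omega_{\ol B})/\Chern_1(\omega_{\ol B})^2$ on the surface $\ol B$. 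To evaluate this I would use that $B$ is a ball quotient, so $\Chern_1(\omega_{\ol B})$ restricted to the open part is proportional to the Bergman/K\"ahler form, together with the description of $\cE^{1,0}$ as a fractional-weight piece of the Deligne extension of the Hodge bundle: the ratio $\Chern_1(\cE^{1,0}).\Chern_1(\omega_{\ol B})/\Chern_1(\omega_{\ol B})^2$ is then a ratio of "parabolic degrees" or slopes, computable from the weights $\fracpart{\ell a_i/12}$ appearing in the local monodromy and the combinatorics of the Deligne–Mostow compactification $B$ of $B_0=(\PP^1)^5\setminus(\text{diagonals})$.

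The main obstacle is this last intersection-theoretic computation: one must pin down the compactification $B$ of $(\PP^1)^5\setminus(\text{diagonals})$ that realizes $B$ as an orbifold ball quotient (the Deligne–Mostow/Hirzebruch-type space obtained by blowing up the diagonals and then contracting), describe the boundary divisor $\Delta$ and the log-canonical class $\omega_{\ol B}=\bigwedge^2\Omega^1_{\ol B}(\log\Delta)$ on it, and compute $\Chern_1(\cE^{1,0})$ as a $\QQ$-divisor by reading off the eigenvalues of the local monodromy (hence the weights of the Deligne extension) along each boundary component. This is where the "analysis of the period map near boundary divisors" promised in the abstract is used, and it is the place where a genuine calculation — rather than formal manipulation — is unavoidable. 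Once the two ratios are evaluated, one should obtain $\lambda_1(\LL') = \tfrac{5}{17}$ for the datum \eqref{eq:intfam1} and $\lambda_1(\LL')=\tfrac{7}{22}$ for \eqref{eq:intfam2}; assembling the exponents of $\LL$, of $\LL'$, and the zeros coming from the definite conjugates via parts~(i)--(iv) then yields the spectra $1,\tfrac{5}{17},0,-\tfrac{5}{17},-1$ and $1,\tfrac{7}{22},0,-\tfrac{7}{22},-1$ as claimed. I would also double-check the bookkeeping that no further nonzero exponents are hidden: the duplication statement~(ii) should not apply here because the relevant conjugates are already $\CC$-irreducible, and a Galois-orbit count must confirm that $\dim_\QQ\PP$ accounts exactly for the five listed exponents plus the zeros from the definite pieces.
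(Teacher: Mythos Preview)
Your overall architecture is correct and matches the paper: decompose into eigenspaces $\LL_k$, determine signatures via $\sigma(k)=\sum_i\fracpart{ka_i/12}$, apply Theorem~\ref{thm:main}(i),(iii) to see that each indefinite rank-$3$ piece contributes a single pair $\pm\lambda$ and a zero, with $\lambda=1$ for the uniformizing piece, and then evaluate the one remaining $\lambda$ via the intersection formula~\eqref{eq:sumlyapformula}.

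Two points of divergence, one minor and one substantive.

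\emph{Minor.} For both families \eqref{eq:intfam1} and \eqref{eq:intfam2} there are \emph{no} definite Galois conjugates: the four eigenspaces indexed by $k\in\{1,5,7,11\}$ all have signature $(1,2)$ or $(2,1)$ (you should check this directly; e.g.\ for \eqref{eq:intfam1} one finds $\sigma(5)=2$). So the primitive part here has $\dim_\RR\UU=0$, and the five listed values are the \emph{distinct} exponents, each occurring with multiplicity. Your remark about rank-$6$ definite pieces contributing zeros does not apply to these two examples.

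\emph{Substantive.} Your plan to compute $\Chern_1(\cE^{1,0})$ ``by reading off the eigenvalues of the local monodromy along each boundary component'' is not the route the paper takes, and it is less direct than you suggest. The paper instead rewrites the ratio via the short exact sequence
\[
0\to \cE^{1,0}\tensor\cT_Y(-\log\Delta)\xrightarrow{\tau^\dual}\cE^{0,1}\to\koker\tau^\dual\to 0
\]
together with $\Chern_1(\cE^{1,0}\oplus\cE^{0,1})=0$, obtaining
\[
\lambda_1 \;=\; 1 - \frac{\Chern_1(\koker\tau^\dual).\Chern_1(\omega_Y)}{\Chern_1(\omega_Y)^2}.
\]
The point is that $\koker\tau^\dual$ is a torsion sheaf supported on the \emph{elliptic} divisors $L_{ij}$ (the parabolic boundary contributes nothing), and its multiplicity along each $L_{ij}$ is computed as the vanishing order of $\det J(p_k)$ there. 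This is done by an explicit local analysis of the hypergeometric period integrals near $\{x_i=x_j\}$, distinguishing the cases $\mu_i(k)+\mu_j(k)<1$ and $\mu_i(k)+\mu_j(k)>1$; the resulting coefficients $n^k_{ij}/\kappa_{ij}$ are then pushed down to a $\QQ$-divisor $D_k$ on $B^{\nc}$ and the intersection is read off from the explicit Chow ring of $B_9$ or $B_{10}$ (Section~\ref{sec:realization}). Your proposed ``parabolic degree'' calculation would in principle give the same number, but it hides exactly the step where the work lies; the cokernel-of-Higgs reformulation is what makes the local contributions transparent and computable.
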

\par
As a corollary to this theorem we obtain that the two associated lattices are
not commensurable to each other. The full commensurability statement mentioned
at the beginning is given in Corollary~\ref{cor:commensurability-result}. 
\par
In order to prove Theorem~\ref{thm:allprimvalues}, we reduce the calculation of
$\Chern_1(\cE^{1,0})$ to the
computation of local invariants near the 'boundary divisors', where two
of the branch points collapse. It turns out that it suffices to compute the
cokernel of Kodaira-Spencer maps near these boundary divisors and
this will be done by a local analysis of hypergeometric integrals.
As final piece of information we need the intersection rings and
Chern classes of the tangent bundle of the moduli spaces of weighted
stable curves (or their quotients by finite groups in the case
\eqref{eq:SigmaINT}).
\par
Naturally, one would also like to calculate the Lyapunov exponents of the
non-primitive part of the VHS for a cyclic covering. However, one runs into the
following problem.
\par
\begin{rem}
{\rm The problem of calculating the
Lyapunov exponents of the whole
variation of Hodge structures $R^1f_*\CC$ of a family of cyclic coverings
branched over $N\geq 5$ points is
not well-defined in any of the non-arithmetic cases, since there always exists a
direct summand that does not extend to the ball, i.e. whose period map is not
defined on $\BB^n$ (see Example~\ref{ex:non-extendability}). For a discussion
of the case $N=4$, we refer to \cite{bouwmoel}, \cite{Wright11}, and
\cite{cyclicekz}.} 
\end{rem}
\par
\paragraph{\textbf{Orbifold Euler numbers}}
After a first version of this paper was circulated, McMullen 
pointed out to us that one can define another invariant for non-arithmetic 
ball quotients coming from cyclic coverings, the relative orbifold Euler numbers. 
For each Galois conjugate $\Gamma^\sigma$ of $\Gamma$, there is a hyperbolic cone 
manifold $B^\sigma$
(with $B^{\id} = \BB^n/\Gamma$ being the usual orbifold ball quotient). 
The relative Euler numbers are then defined as the collection
$\orbEuler(B^\sigma)/\orbEuler(B^{\id})$. The precise definition, entirely combinatorial, 
is given in Section~\ref{sec:orbeuler_logbq} and an explicit formula is stated 
in Proposition~\ref{prop:formula for orbEuler for mu}. 
McMullen's  observation was that the two sets of invariants, 
positive Lyapunov exponents and relative orbifold Euler numbers, agree for one-dimensional
ball quotients, but differ in dimension two 
\footnote{The values that McMullen calculated (meanwhile also available in
the preprint 'The Gauss-Bonnet theorem for cone manifolds and volumes of moduli spaces' on his web page)
match with our Corollary~\ref{cor:compute-otherinvariant}. They are included in the table
in Theorem~\ref{thm:allprimvalues}.}. Our next result, a simplified restatement
of Corollary~\ref{cor:compute-otherinvariant} gives the explanation.
\par
\begin{theorem} \label{thm:introRelEuler} Let $\LL^\sigma$ be a Galois conjugate
of the uniformizing VHS $\LL$ of a family of cyclic coverings. Suppose $n=2$ and 
that the polarization of $\LL^\sigma$ has signature $(1,2)$.  Then
\begin{equation} \label{eq:introRelEuler}
\frac{\orbEuler(B^\sigma)}{\orbEuler(B^{\id})} = 
\frac{\bigl(3\cdot \Chern_1(\cE^{1,0})\bigr)^2}{\Chern_1(\omega_{\ol B})^2}.
\end{equation}
For the families of curves given by \eqref{eq:intfam1} and \eqref{eq:intfam2} the set 
of relative orbifold Euler numbers is 
\[1,\tfrac{1}{17},\qquad
\text{respectively}\qquad
1,\tfrac{1}{22}\]
\end{theorem}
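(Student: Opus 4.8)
The plan is to rewrite both sides of \eqref{eq:introRelEuler} as ratios of Chern numbers and to reduce the identity to two instances of the proportionality principle. Since $B^{\id}=\BB^2/\Gamma$ carries the Bergman metric of constant holomorphic sectional curvature $-4$ and $\ol B$ is a smooth compactification with normal crossing boundary $\Delta$, the orbifold Gauss--Bonnet theorem gives the proportionality $\Chern_1(\omega_{\ol B})^2 = 3\,\orbEuler(B^{\id})$ between the two Chern numbers of the log tangent bundle of $\ol B$ (this is the $n=2$ case of the normalization recalled in Section~\ref{sec:orbeuler_logbq}; for $\Delta=\emptyset$ it is the equality case of the Bogomolov--Miyaoka--Yau inequality). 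Substituting this for $\Chern_1(\omega_{\ol B})^2$ in the denominator of the right-hand side of \eqref{eq:introRelEuler}, the claimed identity becomes equivalent to
\begin{equation}\label{eq:Bsigma-prop}
\orbEuler(B^\sigma) \;=\; 3\,\Chern_1(\cE^{1,0})^2,
\end{equation}
i.e.\ the same proportionality for the hyperbolic cone manifold $B^\sigma$, with the log-canonical class of $\ol B$ replaced by $3\,\Chern_1(\cE^{1,0})$.

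To prove \eqref{eq:Bsigma-prop} I would argue as follows. The signature hypothesis forces $\cW^{1,0}\subset\LL^\sigma\tensor\cO_B$ to be a line bundle, and by Schmid's curvature estimates for variations of Hodge structures the Hodge metric on it has Chern form equal to a fixed negative multiple of the pull-back $\phi_\sigma^*\omega_{\mathrm{Berg}}$ of the Bergman form under the developing (period) map $\phi_\sigma$ of $\LL^\sigma$; the multiple depends only on $n$ and on the curvature normalization, hence is the same for every $\sigma$. One then checks that the $L^2$-metric extends to a good metric in the sense of Mumford on the Deligne extension $\cE^{1,0}$ over $\ol B$, so that $\Chern_1(\cE^{1,0})$ is represented by this (singular) Chern current; consequently $\Chern_1(\cE^{1,0})^2$ equals a universal constant (depending only on $n$) times the hyperbolic volume $\vol(B^\sigma)$ of the cone manifold. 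On the other hand, the Gauss--Bonnet theorem for hyperbolic cone $2$-manifolds used in Section~\ref{sec:orbeuler_logbq}, which is the analytic input behind Proposition~\ref{prop:formula for orbEuler for mu}, expresses $\orbEuler(B^\sigma)$ as another universal constant times $\vol(B^\sigma)$. It remains to identify the ratio of the two constants, and this I would do by specializing to $\sigma=\id$: there one has $3\,\Chern_1(\cL^{1,0})=\Chern_1(\omega_{\ol B})$, which is the normalization i) of Theorem~\ref{thm:main} (equivalently, that the top Lyapunov exponent computed by \eqref{eq:sumlyapformula} equals $1$), and $\Chern_1(\omega_{\ol B})^2=3\,\orbEuler(B^{\id})$; together these force precisely the coefficient $3$ in \eqref{eq:Bsigma-prop}.

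For the two explicit families \eqref{eq:intfam1} and \eqref{eq:intfam2}, the Galois conjugates $\LL^\sigma$ whose polarization has signature $(1,2)$, and the class $\Chern_1(\cE^{1,0})$ written out in the intersection ring of $\barmoduli[0,5]$ (respectively of its finite quotient in the case \eqref{eq:SigmaINT}), have already been determined in the course of proving Theorem~\ref{thm:allprimvalues} via the local Kodaira--Spencer and hypergeometric computations near the boundary divisors. Hence it only remains to substitute the self-intersection number $\Chern_1(\cE^{1,0})^2$ together with $\Chern_1(\omega_{\ol B})^2$ into the right-hand side of \eqref{eq:introRelEuler}, which yields the values $1,\tfrac1{17}$ and $1,\tfrac1{22}$. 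Note that this genuinely needs the self-intersection of $\cE^{1,0}$ and not merely the mixed number $\Chern_1(\cE^{1,0}).\Chern_1(\omega_{\ol B})$ that enters the Lyapunov exponent \eqref{eq:sumlyapformula}; since $\Chern_1(\cE^{1,0})$ is not numerically proportional to $\Chern_1(\omega_{\ol B})$ in these cases, the relative orbifold Euler numbers come out different from the Lyapunov exponents $\tfrac5{17},\tfrac7{22}$, which is precisely the phenomenon this theorem explains.

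The main obstacle is the proof of \eqref{eq:Bsigma-prop}, namely matching the intrinsically and combinatorially defined quantity $\orbEuler(B^\sigma)$ with the Chern number $3\,\Chern_1(\cE^{1,0})^2$. Every delicate point concerns the boundary $\Delta$: the period map $\phi_\sigma$ degenerates along $\Delta$, so the pulled-back Bergman form and the Chern form of $\cW^{1,0}$ acquire singularities there, and one must invoke Mumford's theory of good metrics to know that $\cE^{1,0}$ is indeed the bundle whose first Chern class is represented by the resulting current. Correspondingly, the Gauss--Bonnet integral over the open part $B^\sigma$ has to be supplemented by the correct cone-angle contributions along the images of the boundary strata, and one has to check that these match term by term with the combinatorial boundary contributions in the definition of $\orbEuler(B^\sigma)$ given in Section~\ref{sec:orbeuler_logbq}.
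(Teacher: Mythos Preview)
Your approach is genuinely different from the paper's and, as you yourself flag, the crux of your argument is the step \eqref{eq:Bsigma-prop}, i.e.\ matching the combinatorially defined $\orbEuler(B^\sigma)$ with $3\,\Chern_1(\cE^{1,0})^2$ via a Gauss--Bonnet theorem for the hyperbolic cone manifold $B^\sigma$. This is where the gap lies. The orbifold Euler number used in the paper is Langer's purely combinatorial quantity \eqref{eqn:orbEuler number}; Proposition~\ref{prop:formula for orbEuler for mu} is proved by a direct count of local contributions, not by any analytic Gauss--Bonnet input. A Gauss--Bonnet identity $\orbEuler(B^\sigma)=\text{const}\cdot\vol(B^\sigma)$ for these complex two-dimensional cone manifolds is exactly the content of McMullen's preprint cited in the introduction, and the paper does not supply a proof. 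In fact, the paragraph following Theorem~\ref{thm:log-ballquotients} explicitly says that a conceptual proof along the lines you sketch (via the period map being \'etale on $\tilde B_0$ and pulling back the Bergman metric) would be desirable, and names the obstacle: for a general $\mu$ with $\sum\mu_i=2$ the Fox completion $\tilde B^\mu$ carries no obvious complex structure, so one cannot simply invoke Mumford's good-metric argument at the cone locus.

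What the paper does instead is entirely algebraic. It proves Theorem~\ref{thm:log-ballquotients}, the log-BMY equality $3\,\orbEuler(B^{\mu,\nc},R^\mu)=(K_{B^{\mu,\nc}}+R^\mu)^2$, for \emph{every} $\mu$ with $\sum\mu_i=2$ by an explicit computation in $\CH_*(B_{10})$: both sides are expanded as polynomials in the $\mu_i$ and shown to agree. Then Lemma~\ref{le:B1Bk-blowdown} identifies $b^*(K_{B^{\mu(k),\nc}}+R^{\mu(k)})$ with $K^\orb-D_k$, and the short exact sequence \eqref{eqn:ses for tautilde} gives $\pi^*(K^\orb-D_k)=3\,\Chern_1(\cE^{1,0}_k)$. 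Dividing the two instances of Theorem~\ref{thm:log-ballquotients} (for $\mu$ and for $\mu(k)$) yields Corollary~\ref{cor:compute-otherinvariant}, which is the precise form of \eqref{eq:introRelEuler}. So the paper trades your analytic proportionality argument for a finite Chow-ring calculation that bypasses the cone-manifold Gauss--Bonnet altogether; your route would be more conceptual if the Gauss--Bonnet step could be made rigorous, but as written that step is asserted rather than proved.
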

\par 
Said differently, the sum of Lyapunov exponents corresponds to the case 
$(1,n-1)$ of the invariants in \eqref{eq:cab} while the orbifold
Euler characteristics correspond to the case $(2,0)$.
\par
To prove Theorem~\ref{thm:introRelEuler} we show 
in Theorem~\ref{thm:log-ballquotients} 
that $B^\sigma$ is a log ball quotient, i.e.\ for an appropriate choice of
weights on boundary divisors this log manifolds attains the upper bound in
Langer's logarithmic version of the Bogomolov-Miyaoka-Yau inequality
(\cite{langerorbifold}). 
\par
\medskip
\paragraph{\textbf{Structure of the paper}} The plan of the paper is as follows.
In Section~\ref{sec:backmoduli} we provide background
information about ball quotients, Lyapunov exponents and variations of Hodge structures.
Section~\ref{sec:cylicgeneral} contains the proof of Theorem~\ref{thm:main}. 
 Section~\ref{sec:commensurability} contains the definition
of modular embeddings for ball quotients and the proof of commensurability invariance of
the primitive Lyapunov spectrum. The arithmetic case
is quickly discussed in Section~\ref{sec:arithmeticity}.
The basic facts about cyclic covers and those
cases that give rise to (non-arithmetic) ball quotients are recalled in Section~\ref{sec:cyliccover}.
This section also contains the description of intersection rings of these ball quotients.
\par
\medskip
\paragraph{\textbf{Acknowledgments}} The authors thank J\"urgen Wolfart for
useful comments on modular embeddings, Simion Filip for useful comments
on the proof of Lemma~2.7, Curt McMullen for asking about the relation 
between Lyapunov exponents and orbifold Euler numbers and the 
referees for their valuable suggestions to 
improve the exposition of the paper.

\section{Background} \label{sec:backmoduli}

\subsection{The complex ball and its K\"ahler structure}
We first collect some well-known facts about the K\"ahler structure
of ball quotients. We carefully carry along the dependence on the
curvature in order to have a consistent normalization in Theorem~\ref{thm:main}~i).
Let $\CC^{1,n}$ be $\CC^{n+1}$ equipped with the following 
hermitian pairing. For $\ul{W} = (W_0, {W})$ and  $\ul{Z} = (Z_0, {Z})
\in \CC^{n+1}$
we let 
\[ \lind \ul{W},\ul{Z} \rind = W_{0} \ol{Z_{0}} - \ldef {W},{Z} \rdef,\]
where $\ldef {W}, {Z}\rdef = \sum_{i=1}^{n} {W}_i\ol{Z_i}$.
Then $\HH_{\CC}^n$ is the space of lines where the hermitian pairing is positive
definite. We may identify $\HH_{\CC}^n$ with
\[ \BB^n = \{{z} = (z_1,\ldots, z_n) : \sum |z_i|^2 < 1 \} \]
via the map $f:  \BB^n \to \PP(\CC^{1,n}), {z} \mapsto (1:{z})$.
\par
$\BB^n$ is naturally endowed with a K\"ahler structure, whose K\"ahler form in
these coordinates is given by
\begin{equation*}
\begin{aligned}
 & \omega_\hyp = 2\kappa i \partial\ol{\partial} \log(\tfrac{1}{2} \lind
f(z),f(z) \rind ) \ \\
& = \frac{2\kappa i}{(1-\ldef z,z\rdef)^2} \left\{
\left(\sum_{j=1}^n \bar{z_j}dz_j \right) \wedge \left(\sum_{k=1}^n
{z_k}d\bar{z}_k \right)
+ (1-\ldef z,z\rdef) \sum_{j=1}^n dz_j \wedge d\bar{z}_j
\right\} \\
\end{aligned}
\end{equation*}
with $\kappa > 0$ \cite{Goldman99}. Its holomorphic sectional curvature is $\tfrac{-2}{\kappa}$.
Setting $\kappa = \tfrac{1}{2}$, we obtain the metric of Theorem~\ref{thm:main}.

The corresponding Riemannian metric is given by
\[ g_\hyp
 = \frac{2\kappa }{(1-\ldef z,z\rdef)^2} \left\{
\left(\sum_{j=1}^n \bar{z_j}dz_j \right)  \left(\sum_{k=1}^n {z_k}d\bar{z}_k
\right)
+ (1-\ldef z,z\rdef) \sum_{j=1}^n dz_j d\bar{z}_j \right\}. 
\]
The volume form is
\begin{align}\label{eqn:volume form on Bn}
\dvol_{\BB^n} = \tfrac{1}{n!}\omega_\hyp^n = \frac{(2\kappa i)^n}{(1-\ldef z,z
\rdef)^{n+1}}dz_1\wedge d\bar{z}_1 \wedge \dots \wedge dz_n\wedge d\bar{z}_n.
\end{align}

\subsubsection{The Laplacian on the complex ball}

Let $\Laplace_\hyp$ be the Laplacian for the Riemannian metric on $\BB^n$. It is
given by 
\begin{align}\label{eqn:Laplacian form on Bn}
\Laplace_\hyp = 
\tfrac1{\kappa}(1-\ldef z,z\rdef) \left(\sum_{i=1}^n
\frac{\partial^2}{\partial z_i\partial\bar{z}_i} - \sum_{i,j = 1}^n
z_i\bar{z}_j\frac{\partial^2}{\partial z_i\partial\bar{z}_j}\right).
\end{align}

\par
\begin{lemma} \label{le:omegahochnminus1}
If $F$ is a smooth function on the ball, we have
\begin{align} \label{eqn: Laplace_and_ddbar}
\Laplace_{\hyp}(F)\cdot \dvol_{\BB^n} = \frac{2 i}{(n-1)!} \cdot \partial\bar{\partial} F \wedge \omega_\hyp^{n-1}
\end{align}
\end{lemma}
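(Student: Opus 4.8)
The plan is to prove \eqref{eqn: Laplace_and_ddbar} as a pointwise identity of $(n,n)$-forms and to reduce it to the origin by homogeneity. Both sides of \eqref{eqn: Laplace_and_ddbar} are linear in $F$, involve only second derivatives of $F$, and are natural under the action of $\PU(1,n)$ on $\BB^n$ by biholomorphic isometries: for $\gamma\in\PU(1,n)$ one has $\Laplace_\hyp(F\circ\gamma)=(\Laplace_\hyp F)\circ\gamma$ because $\gamma$ is an isometry, $\partial\bar\partial(F\circ\gamma)=\gamma^{*}(\partial\bar\partial F)$ because $\gamma$ is holomorphic, and $\gamma^{*}\omega_\hyp=\omega_\hyp$, $\gamma^{*}\dvol_{\BB^n}=\dvol_{\BB^n}$ because $\gamma$ is a biholomorphic isometry (hence orientation preserving). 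Since $\PU(1,n)$ acts transitively, checking \eqref{eqn: Laplace_and_ddbar} for all smooth $F$ at the single point $z=0$ will suffice: the identity for an arbitrary $G$ at a point $z_0=\gamma(0)$ becomes, after pulling everything back by $\gamma$, the identity for $G\circ\gamma$ at $0$.

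Next I would evaluate the three explicit formulas at $z=0$, where the metric becomes its flat model. From \eqref{eqn:Laplacian form on Bn} the double sum drops out, so $\Laplace_\hyp(F)(0)=\tfrac1\kappa\sum_{i=1}^{n}\partial^{2}F/(\partial z_i\,\partial\bar z_i)(0)$; in the displayed formula for $\omega_\hyp$ the first summand, being a multiple of $\bar z_j z_k$, vanishes, so $\omega_\hyp|_{0}=2\kappa i\sum_{j=1}^{n}dz_j\wedge d\bar z_j$; and by \eqref{eqn:volume form on Bn}, $\dvol_{\BB^n}|_{0}=(2\kappa i)^{n}\,dz_1\wedge d\bar z_1\wedge\cdots\wedge dz_n\wedge d\bar z_n$. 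Writing $F_{i\bar j}=\partial^{2}F/(\partial z_i\,\partial\bar z_j)$ and $dV=dz_1\wedge d\bar z_1\wedge\cdots\wedge dz_n\wedge d\bar z_n$, the left-hand side of \eqref{eqn: Laplace_and_ddbar} at $0$ is $\tfrac1\kappa\bigl(\sum_i F_{i\bar i}\bigr)(2\kappa i)^{n}\,dV$.

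Then it remains a routine wedge computation. Expanding, $\bigl(\sum_j dz_j\wedge d\bar z_j\bigr)^{n-1}=(n-1)!\sum_{k=1}^{n}\bigwedge_{j\neq k}(dz_j\wedge d\bar z_j)$ since the $2$-forms $dz_j\wedge d\bar z_j$ commute and square to zero. Hence $\omega_\hyp^{\,n-1}|_{0}=(2\kappa i)^{n-1}(n-1)!\sum_{k}\bigwedge_{j\neq k}(dz_j\wedge d\bar z_j)$, and since $\partial\bar\partial F|_{0}=\sum_{i,j}F_{i\bar j}\,dz_i\wedge d\bar z_j$ and $dz_i\wedge d\bar z_j\wedge\bigwedge_{l\neq k}(dz_l\wedge d\bar z_l)$ is nonzero only for $i=j=k$, where it equals $dV$, we get $\partial\bar\partial F|_{0}\wedge\omega_\hyp^{\,n-1}|_{0}=(2\kappa i)^{n-1}(n-1)!\bigl(\sum_k F_{k\bar k}\bigr)dV$. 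Multiplying by $2i/(n-1)!$ yields $2i(2\kappa i)^{n-1}\bigl(\sum_k F_{k\bar k}\bigr)dV$, which equals the left-hand side computed above because $\tfrac1\kappa(2\kappa i)^{n}=2i(2\kappa i)^{n-1}$.

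I do not expect a genuine obstacle: the content is the Kähler identity expressing $i\partial\bar\partial F\wedge\omega^{n-1}/(n-1)!$ as the $\omega$-trace of $i\partial\bar\partial F$ times the volume form, here made concrete with the curvature normalization fixed in \eqref{eqn:Laplacian form on Bn}. The only things to watch are the bookkeeping of the constant $\kappa$ and of the factorial coming from $\omega_\hyp^{\,n-1}$, and the verification that the reduction to $z=0$ is legitimate, which rests on transitivity of $\PU(1,n)$ and the naturality properties listed above.
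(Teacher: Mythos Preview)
Your proof is correct and follows essentially the same strategy as the paper: reduce to a point where the metric is flat and carry out the elementary wedge computation. The only cosmetic difference is that the paper invokes the existence of K\"ahler normal coordinates at an arbitrary point $x$ (which would work on any K\"ahler manifold), whereas you use the transitive $\PU(1,n)$-action to move $x$ to the origin; both lead to the same flat-model calculation, which you carry out correctly.
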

\begin{proof}
Since $\BB^n$ is K\"ahler, there is a coordinate system $w_1,\dots,w_n$ 
centered at $x\in \BB^n$ such that in the fiber over $x$
\[\omega_x = 2\kappa i \sum_{j=1}^n dw_{j,x}\wedge d\bar{w}_{j,x},\quad \text{and}\quad
 \Laplace_{\hyp}(F)_x = \tfrac1{\kappa} \sum_{i=1}^n\frac{\partial^2F}{\partial w_i\partial\bar{w}_i}\Big|_x.\]
 Therefore, it suffices to prove~\eqref{eqn: Laplace_and_ddbar} 
for this particular coordinate system, 
which is a straightforward calculation.
\end{proof}

\par

\subsubsection{Geodesic polar coordinates} 
We use the parametrization of $\BB^n$ by geodesic polar
coordinates centered at $x\in \BB^n$. Let $\theta =
(\theta_1,\dots,\theta_{2n-1})$ be a
local parametrization of $T^1_x\BB^n\subset T_x\BB^n$. Then the map
\[\exp_x: T_x\BB^n \to \BB^n, \quad (t,\theta) \mapsto
\exp_x(t\theta_1,\dots,t\theta_{2n-1})\]
parametrizes $\BB^n$ by geodesic polar coordinates.

 The volume form on $\BB^n$ written in geodesic polar coordinates is
 \[\dvol_{\BB^n} = \frac{(4\kappa)^n}{2\sqrt{2\kappa}}
\sinh^{2n-2}\biggl(\frac{t}{\sqrt{2\kappa}}\biggr)\sinh\biggl(\frac{2t}{\sqrt{
2\kappa } } \biggr)\dd t\dd
\sigma,\]
where $\dd\sigma$ denotes the volume form on $T^1_x\BB^n$.
This follows from \eqref{eqn:volume form on Bn} and the $\PU(1,n)$-invariance,
by which we can reduce the computation to $x= 0$. Further, we use the equality
\[\left(\tfrac{i}{2}\right)^n\dd z_1\wedge \dd\bar z_1 \wedge \dots \wedge
\dd z_n\wedge \dd \bar{z}_n = \dd x_1\wedge \dd y_1 \wedge \dots \wedge \dd x_n
\wedge
\dd y_n\]
and
\[\dd x_1\wedge \dd y_1 \wedge \dots \wedge \dd x_n \wedge \dd y_n = r^{2n-1}\dd
r\dd\sigma.\]
Moreover, the Euclidean radius $r$ is related 
to the hyperbolic radius $t$ by $r = \tanh(t/(\sqrt{2\kappa}))$.
Hence the formula follows using
$\cosh(x)\sinh(x) = \tfrac{1}{2}\sinh(2x)$.

Moreover, we need the volume of $\BB^n_t = \BB^n_t(x) = \set{z\in
\BB^n}{d_{\hyp}(x,z) < t}$. It is
given by
\[\vol(\BB^n_t) = \int_{0}^t \int_{T^1_x\BB^n} \dvol_{\BB^n} =
\frac{(4\kappa)^n\sigma_{2n-1}}{2n}\sinh^{2n}\Bigl(\tfrac{t}{\sqrt{2\kappa}}
\Bigr).\]
Here $\sigma_{2n-1}$ is the Euclidean volume of $T^1_x\BB^n$.

Finally, by \cite{HelgGeomAna}
the Laplacian on $\BB^n$ written in geodesic polar coordinates centered at $0$
is
\begin{align}\label{eqn:Laplacian on Bn}
\Laplace_{\hyp} = \frac{\partial^2}{\partial t^2} + \frac{1}{A(t)}\frac{\dd
A}{\dd t}\frac{\partial}{\partial t} + \Laplace_{S_t(0)}
\end{align}
where $A(t)$ is the area of the sphere $S_t(0)\subset \BB^n$ of radius $t$
centered at $0$.
Here $\Laplace_{S_t(0)}$ is the Laplacian on $S_t(0)$ for the Riemannian metric
induced from the one on $\BB^n$.

We have
\[A(t) = \frac{\dd}{\dd t}\vol(\BB^n_t) =
\frac{(4\kappa)^n\sigma_{2n-1}}{2\sqrt{2\kappa}}\sinh^{2n-2}(\tfrac{t}{\sqrt{
2\kappa}})\sinh(\tfrac{2t}{\sqrt{2\kappa}}).\]
Therefore the factor in front of $\partial/\partial t$ in \eqref{eqn:Laplacian
on Bn} is
\begin{align*}
A(t)^{-1}A'(t) &=
A(t)^{-1}\cdot(4\kappa)^{n-1}\sigma_{2n-1}2\sinh^{2n-2}(\tfrac{t}{\sqrt{2\kappa}
})\bigl(2n\cosh^2(\tfrac{t}{\sqrt{2\kappa}}) -1\bigr)\\
	       &=
\frac{\sqrt{2\kappa}}{\kappa}\bigl(n\coth(2\tfrac{t}{\sqrt{2\kappa}}) +
\frac{n-1}{\sinh(2\tfrac{t}{\sqrt{2\kappa}})}\bigr)
\end{align*}


The following version of Green's formula was used in \cite{forni02} for
the hyperbolic plane. It carries over to higher dimension. 
\par

\begin{lemma}\label{lem:Forni}
 Let $x\in\BB^n$, let $\Lambda: \BB^n\to \RR$ be a smooth function, and
let $L$ be a
smooth solution to the Poisson equation $\Laplace L = \Lambda$. Then
\[\frac{1}{\sigma_{2n-1}}\frac{\partial}{\partial t} \int_{T^1_x\BB^n}
L(\exp_x(t,\theta)) d\sigma =
\frac{\sqrt{2\kappa}}{2n}\tanh\biggl(\frac{t}{\sqrt{2\kappa}}\biggr)\cdot
\frac{1}{\vol(\BB^n_t)} \int_{\BB^n_t(x)} \Lambda \dvol_{\BB^n}.\]
\end{lemma}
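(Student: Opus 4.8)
The strategy is to integrate the radial form of the Laplacian \eqref{eqn:Laplacian on Bn} over geodesic spheres and exploit that the spherical Laplacian $\Laplace_{S_t(0)}$ integrates to zero by the divergence theorem on the compact manifold $S_t(x)$. Concretely, I would first fix $x$ and use the exponential chart to transport everything to geodesic polar coordinates centered at $x$, so that $\Laplace_{\hyp}L = \Lambda$ becomes, via \eqref{eqn:Laplacian on Bn},
\[
\frac{\partial^2 L}{\partial t^2} + \frac{A'(t)}{A(t)}\frac{\partial L}{\partial t} + \Laplace_{S_t(x)}L = \Lambda.
\]
Integrate this identity over the unit sphere $T^1_x\BB^n$ against $\dd\sigma$. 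The term $\int_{T^1_x\BB^n}\Laplace_{S_t(x)}L\,\dd\sigma$ vanishes: up to the conformal factor relating $\dd\sigma$ to the induced volume on $S_t(x)$, it is the integral of a Laplacian (i.e.\ a divergence) over a closed manifold. Write $\Phi(t) = \int_{T^1_x\BB^n} L(\exp_x(t,\theta))\,\dd\sigma$; then the first two terms give $\Phi''(t) + \tfrac{A'(t)}{A(t)}\Phi'(t) = \tfrac{1}{A(t)}\bigl(A(t)\Phi'(t)\bigr)'$, so we obtain
\[
\frac{1}{A(t)}\frac{\dd}{\dd t}\bigl(A(t)\Phi'(t)\bigr) = \int_{T^1_x\BB^n}\Lambda(\exp_x(t,\theta))\,\dd\sigma.
\]

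Next I would multiply by $A(t)$ and integrate in $t$ from $0$ to $T$. On the right, $\int_0^T\!\!\int_{T^1_x\BB^n}\Lambda(\exp_x(t,\theta))A(t)\,\dd t\,\dd\sigma$ is, after absorbing the conformal factor, exactly $\int_{\BB^n_T(x)}\Lambda\,\dvol_{\BB^n}$ — this is just the polar-coordinate expression for the volume form derived just before the lemma, $\dvol_{\BB^n} = \tfrac{(4\kappa)^n}{2\sqrt{2\kappa}}\sinh^{2n-2}(\tfrac{t}{\sqrt{2\kappa}})\sinh(\tfrac{2t}{\sqrt{2\kappa}})\,\dd t\,\dd\sigma$, together with the formula for $A(t)$. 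The boundary term at $t=0$ vanishes since $A(0)=0$ and $\Phi'$ is bounded near the center (as $L$ is smooth). Hence $A(T)\Phi'(T) = \int_{\BB^n_T(x)}\Lambda\,\dvol_{\BB^n}$, i.e.
\[
\Phi'(T) = \frac{1}{A(T)}\int_{\BB^n_T(x)}\Lambda\,\dvol_{\BB^n}.
\]
Finally I would divide by $\sigma_{2n-1}$ and rewrite the prefactor $\sigma_{2n-1}/A(T)$ using the explicit formulas $A(T) = \frac{(4\kappa)^n\sigma_{2n-1}}{2\sqrt{2\kappa}}\sinh^{2n-2}(\tfrac{T}{\sqrt{2\kappa}})\sinh(\tfrac{2T}{\sqrt{2\kappa}})$ and $\vol(\BB^n_T) = \frac{(4\kappa)^n\sigma_{2n-1}}{2n}\sinh^{2n}(\tfrac{T}{\sqrt{2\kappa}})$; the quotient collapses, using $\sinh(2u)=2\sinh u\cosh u$ and $\tanh u = \sinh u/\cosh u$, to $\frac{\sqrt{2\kappa}}{2n}\tanh(\tfrac{T}{\sqrt{2\kappa}})\cdot\frac{1}{\vol(\BB^n_T)}$, which is precisely the claimed identity after renaming $T$ to $t$.

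The one genuinely delicate point is the vanishing of the spherical-Laplacian term: one must be careful that $\Laplace_{S_t(x)}$ in \eqref{eqn:Laplacian on Bn} is the Laplace–Beltrami operator for the \emph{induced} metric on $S_t(x)$, whereas $\dd\sigma$ is the fixed volume on the unit sphere $T^1_x\BB^n$; since $S_t(x)$ has constant radius and the ball is homogeneous, the induced metric on $S_t(x)$ is a constant conformal multiple of (the pushforward of) the round metric, so the two measures differ by a $t$-dependent constant and $\int_{S_t(x)}\Laplace_{S_t(x)}L\,\dd(\text{vol}_{S_t}) = 0$ by Stokes transfers to $\int_{T^1_x\BB^n}\Laplace_{S_t(x)}L\,\dd\sigma = 0$. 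The rest is bookkeeping with the hyperbolic trigonometric identities already assembled in the preceding paragraphs.
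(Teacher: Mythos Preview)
Your approach is essentially the same as the paper's: reduce to a first-order ODE for the radial derivative of the spherical average by integrating the polar-coordinate Laplacian over the unit sphere, then solve by multiplying by $A(t)$ and integrating. There is a harmless bookkeeping slip---since $\dvol_{\BB^n}=\tfrac{A(t)}{\sigma_{2n-1}}\,\dd t\,\dd\sigma$, the integral $\int_0^T\!\!\int_{T^1_x\BB^n}\Lambda\,A(t)\,\dd t\,\dd\sigma$ equals $\sigma_{2n-1}\int_{\BB^n_T}\Lambda\,\dvol_{\BB^n}$ rather than $\int_{\BB^n_T}\Lambda\,\dvol_{\BB^n}$---but this extra factor cancels precisely when you divide by $\sigma_{2n-1}$ in the last step, and your final identification of the prefactor with $\tfrac{\sqrt{2\kappa}}{2n}\tanh(\tfrac{T}{\sqrt{2\kappa}})\cdot\vol(\BB^n_T)^{-1}$ is correct.
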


\begin{proof}[Proof of Lemma~\ref{lem:Forni}]
By homogeneity, we can assume $x = 0$.
Proceeding in exactly the same way as in \cite{forni02}, we set
\[L_r(t) = \frac{1}{\sigma_{2n-1}} \int_{T^1_0\BB^n} L(\exp_0(t,\theta))
\dd\sigma\quad \text{and} \quad \Lambda_r(t) = \frac{1}{\sigma_{2n-1}}
\int_{T^1_0\BB^n} \Lambda(\exp_0(t,\theta)) \dd\sigma\]
Then since $\Laplace_{S_t(0)}L_r = 0$ as it involves only partial derivatives in
$\theta$, we have
\begin{align}
\Laplace_{\hyp}L_r(t) = \frac{\partial^2}{\partial t^2}L_r(t) +
f(t)\frac{\partial}{\partial t}L_r(t) = \Lambda_r(t). 
\end{align}
Thus $u(t) = \frac{\partial}{\partial t}L_r(t)$ satisfies a first-order ODE,
whose solution is
\begin{align}
u(t) =
\frac{1}{\sinh^{2n-2}(\tfrac{t}{\sqrt{2\kappa}})\sinh(\tfrac{2t}{\sqrt{2\kappa}}
)} \int_0^t
\Lambda_r(\tau)\sinh^{2n-2}(\tfrac{\tau}{\sqrt{2\kappa}})\sinh(\tfrac{2\tau}{
\sqrt{2\kappa}}) \dd \tau 
\end{align}
Therefore,
\begin{align*}
\frac{1}{\sigma_{2n-1}}u(t) &=
\frac{(4\kappa)^n}{2n}\sinh^{2n}(\tfrac{t}{\sqrt{2\kappa}})\cdot\frac{1}{
\vol(\BB^n_t)} u(t)\\
			    &=
\frac{\sqrt{2\kappa}}{n}\cdot\frac{\sinh^{2}(\tfrac{t}{\sqrt{2\kappa}})}{
\sinh(\tfrac{2t}{\sqrt{2\kappa}})}\cdot\frac{1}{\vol(\BB^n_t)} \cdot \\
&\phantom{xxx}\cdot \int_0^t
\int_{T^1_0\BB^n} \Lambda(\exp_0(\tau,\theta))
\frac{(4\kappa)^n}{2\sqrt{2\kappa}}\sinh^{2n-2}(\tfrac{\tau}{\sqrt{2\kappa}}
)\sinh(\tfrac{2\tau}{\sqrt{2\kappa}}) \dd \tau \dd\sigma\\
			    &=  \frac{\sqrt{2\kappa}}{2n}
\tanh(\tfrac{t}{\sqrt{2\kappa}})\frac{1}{\vol(\BB^n_t)}\int_{\BB^n_t}\Lambda
\dvol_{\BB^n}
\end{align*}
\end{proof}

\subsection{Weight one variations of Hodge structures (VHS)} \label{sec:weigh1VHS}
The first cohomology of a complex algebraic curve $X$ 
naturally carries a Hodge decomposition 
$H^1(X,\CC) = H^{1,0}\oplus H^{0,1}$ into the direct sum of classes of 
holomorphic and antiholomorphic forms. The generalization of the Hodge
decomposition
to families of curves or more generally to families of compact K\"ahler manifolds
is abstracted by the notion of a variation of Hodge structres. 
Below, we collect the basic definitions needed. For simplicity, we state everything for
weight one only. A general reference for this section is \cite{CMSP}.
\par
Let $K\subset \RR$ be a field, and let $W$ be a $K$-vector space. By a \emph{$K$-Hodge structure 
(of weight one)} on $W$, we understand a decomposition $W\tensor_K\CC = W^{1,0}\oplus W^{0,1}$, such 
that $\ol{W^{1,0}}= W^{0,1}$. We extend this notion to $K=\CC$ by dropping the condition 
that $\ol{W^{1,0}}= W^{0,1}$.
\par
Let $B$ be a base manifold which is the complement in the projective
variety $Y$ of a normal crossing divisor $\Delta$. 
A \emph{(weight one) $K$-variation of Hodge structures} over 
$B$ 
consists of a $K$-local system $\WW$ on $B$ and a $C^\infty$-decomposition
\[\WW\tensor_K\cO_B = \cW^{1,0}\oplus \cW^{0,1}\]
into a holomorphic (resp. antiholomorphic)
subbundle $\cW^{1,0}$ (resp. $\cW^{0,1}$) that induces fiber by fiber a 
Hodge structure of weight $1$. Note that for higher weight, there is an additional 
transversality condition, which is vacuous for weight one.
\par
A $\CC$-VHS $\WW$ is called {\em  polarized}, if there is a
locally constant hermitian form $\psi$ on $\WW$, for which the decomposition
$\cW^{1,0}\oplus \cW^{0,1}$ is orthogonal, and which is positive definite
on $\cW^{1,0}$ and negative definite on $\cW^{0,1}$. Consequently, its 
signature is $(\rk \cW^{1,0}, \rk \cW^{0,1})$. This indefinite hermitian form
will be referred to as Hodge inner product. If $K\subset \RR$,
we require instead the existence of a non-degenerate, locally constant,
antisymmetric form $Q( \cdot,\cdot )$ on
$\WW$, which is zero
on $\cW^{1,0} \tensor \cW^{1,0}$ and on $\cW^{0,1} \tensor
\cW^{0,1}$ and such that $i^{p-q} Q( v,\ol{v}) >0$ for
every non-zero $v \in \cW^{p,q}$. The indefinite hermitian form on
$\WW_\CC = \WW\tensor_K\CC$ is then given by\footnote{Be aware that our
definition of $\psi$ parallels the one used for the area
of a flat surface (see e.g. \cite{ekz}). In Hodge theory (e.g.
\cite{schmid73}), the factor $\tfrac12$ is usually omitted.}
\[\psi(v,w) := \tfrac{i}{2} Q( v,\ol{w} ) \]
\par
If $K \subset \RR$, then we may write every element $w \in  \WW$
as $w = \Re(\omega)$ for some $\omega\in \cW^{1,0}$ and define the 
{\em Hodge norm} as
$\|w\| = \psi(\omega,\omega)^{1/2}$. We extend this norm to
$\WW_\CC = \WW_\RR + i\WW_\RR$ as orthogonal direct sum and
also to $\CC$-sub-VHS. This Hodge norm will be used throughout
when talking about Lyapunov exponents. In particular, for $v\in \WW_{\CC}$ we
have
\begin{align*}
\|v\|^2 &= \|\Re(v)\|^2 + \|i\Im(v)\|^2 =
\psi((v+\ol{v})^{1,0},(v+\ol{v})^{1,0}) +
\psi((v-\ol{v})^{1,0},(v-\ol{v})^{1,0})\\
 &= 2(\psi(v^{1,0},{v^{1,0}}) + \psi(\ol{v^{0,1}},\ol{v^{0,1}})) 
\end{align*}
\par
Suppose that we are given a polarized $\RR$-VHS $\WW$ that splits
over $\CC$ as $\WW_{\CC} = \VV \oplus \ol{\VV}$ into $\CC$-VHS. Since
$\psi(\ol{v},\ol{v}) = -\psi(v,v)$, the above computation shows that we may
also flip the sign of $\psi$ on the negative definite part $\cV^{0,1}$ in order
to obtain a positive hermitian form that computes the Hodge norm of vectors in
$\VV$:
\[\|v\|^2 = 2(\psi(v^{1,0},v^{1,0}) - \psi(v^{0,1},v^{0,1})).\]
\par
If $f: \cX \to B$ is a family of curves, then $\WW = R^1f_*K$ is a
$K$-VHS of weight one, polarized by the natural intersection pairing on
cohomology.
\par
Let $W$ be a $\CC$-vector space and let $\psi$ be a polarization. 
The period domain $\Per(W)$ is the classifying space of 
all $\CC$-Hodge structures polarized by $\psi$ that can be put on $W$. 
Analogously, we define the period domain for $K$-Hodge structures 
on the $K$-vector space $W$, polarized by the $K$-bilinear form $Q$.
\par
Note that for weight one, $\Per(W)$ is always a hermitian symmetric domain. 
In fact, weight one $\CC$-Hodge structures of signature $(1,n)$ are parametrized by
the complex ball $\BB^n$, and weight one $\RR$-Hodge structures 
are parametrized by the Siegel upper halfspace $\HH_{\dim W}$.
\par
Let $b\in B$ be a base point. A polarized VHS $\WW$ defines a \emph{period map} 
$p: \hat{B} \to \Per(\WW_b)$ 
from the universal cover of $B$ to the period domain $\Per(\WW_b)$ that 
records the position of the $(1,0)$-subspace in the fibers of $\WW$.
The period map is only well-defined on the universal cover, since the identification
of the fibers of $\WW$ by parallel transport depends on the chosen path, and in fact produces 
an action of the fundamental group $\pi_1(B,b)$ on $\WW_b$, preserving
the polarization. The group homomorphism $\rho: \pi_1(B,b) \to \Aut(\WW_b, \psi_b)$ is 
called \emph{monodromy representation}. There is an induced action of $\pi_1(B,b)$
on $\Per(\WW_b)$ and the period map is equivariant 
with respect to the actions of $\pi_1(B,b)$ on the universal
cover by deck transformations and on $\Per(\WW_b)$. 
In particular, if $K$ denotes the Kernel of $\rho$, then $p$
drops to a map $\hat{B}/K \to \Per(\WW_b)$.
In the sequel, we will often omit the base point from the notation, 
if its choice does not matter.
\par
\begin{defn}
A VHS is called {\em uniformizing}, if the period map is an isomorphism. 
\end{defn}
A uniformizing VHS is unique up to isomorphism. Indeed, if $\WW_i$, $i=1,2$
are uniformizing VHS on $B$, we obtain a group isomorphism $\Gamma_1\to\Gamma_2$
of the monodromy groups and a biholomorphic map $\Per(\WW_1)\to \Per(\WW_2)$
equivariant with respect to this group isomorphism, which yields
$\WW_1\isom\WW_2$ as VHS.
\par
Suppose now that $\VV$ is a $\QQ$-VHS and that $\VV\tensor_\QQ\CC$ contains a uniformizing
sub-VHS $\WW$ of signature $(1,n)$. The assumption on the signature implies that
$\WW$ is irreducible. Recall also that by Deligne's semisimplicity 
theorem \cite[Prop.~1.13]{delfinitude}, $\VV = \bigoplus_i \WW_i \tensor E_i$, where $\WW_i$
are irreducible, polarized $\CC$-VHS and $E_i$ are $\CC$-vector spaces.
\par
\begin{defn} \label{def:pp}
We define the \emph{primitive part} of a $\QQ$-VHS $\VV$ to be the 
direct sum of those
$\WW_i\tensor E_i$, where $\WW_i$ is isomorphic as a local system to $\WW^\sigma$
with $\sigma\in \Aut(\CC/\QQ)$.
\end{defn}
\par
It is clear from the definition that the primitive part does not depend on
the choice of a uniformizing sub-VHS, since any two such are isomorphic.
Thus dropping the dependence on $\WW$ will cause no confusion. The primitive
part is fixed by any element of $\Aut(\CC/\QQ)$, and thus 
a $\QQ$-sub-VHS of $\VV$.
\par
For the application below we explicit the (unique up to isomorphism)
uniformizing VHS $\UU$ over $\BB^n$ together with the polarization in
terms of the form $\langle \cdot,\cdot \rangle_{1,n}$ used above. Over $\BB^n$
we represent a constant $\CC$-local system of rank $n+1$ by row vectors
and define $\UU^{1,0}_z$ as the line generated by $\ul{z} = (1,z)$ in
$\{z\}\times \CC^{n+1}$. Let $\pi_z$ denote the orthogonal projection on the
$1,0$-part. Then for $\ul{v}\in \{z\}\times \CC^{n+1}$
\begin{align}\label{eqn:unif-norm}
\begin{split}
\tfrac{1}{2}\|\ul{v}\|_z^2 &= \langle
\pi_{z}(\ul{v}),\pi_{z}(\ul{v})\rangle_{1,n} -
\langle (1-\pi_{z})(\ul{v}), (1-\pi_{z})(\ul{v}) \rangle_{1,n}\\
	   &= -\langle \ul{v},\ul{v} \rangle_{1,n} + 2\frac{|\langle \ul{v},
\ul{z}\rangle_{1,n}|^2}{\langle \ul{z},\ul{z} \rangle_{1,n}}
\end{split}
\end{align}


\subsubsection{\bf The Higgs field.} \label{sec:Higgscurvature}
By a theorem of Borel (see e.g.\cite[Lemma 4.5]{schmid73}), the local monodromy 
about a boundary divisor in $\Delta$ is always quasi-unipotent.
In the sequel we suppose throughout that the monodromy of the local 
system $\WW$ about $\Delta$ is already unipotent, or pass to a finite 
cover where this holds. 
Then the associated holomorphic vector bundle $\WW\tensor_{\CC} \cO_{B}$ has a 
unique extension due to Deligne \cite[Prop. 5.2]{delequadiff} 
to a holomorphic bundle $\cE$ on $Y$. The
extension $\cE^{1,0}$ of $\cW^{1,0}$ inside $\cE$ is a holomorphic subbundle
and we set
$\cE^{0,1} := \cE / \cE^{1,0}$. Further let $\Omega^1_Y(\log \Delta)$ be the
bundle of differential forms on $Y$ with logarithmic poles along $\Delta$; then
$\omega_Y = \bigwedge^{\dim B}\Omega^1_Y(\log \Delta)$ is called the
log-canonical bundle. The graded piece of the Gauss-Manin connection
is an $\cO_Y$-linear map
\begin{align*}
\tau: \cE^{1,0} \to \cE^{0,1}\tensor
\Omega_Y^1(\log \Delta),
\end{align*}
called the Higgs field. Its dual is the $\cO_Y$-linear map 
\begin{align*}
\tau^\dual: \cE^{1,0} \tensor \cT_Y(-\log \Delta) \to \cE^{0,1}\tensor
\Omega_Y^1(\log \Delta) \tensor \cT_Y(-\log \Delta) \to \cE^{0,1}
\end{align*}
obtained by composing with the natural contraction. Another viewpoint on the
same map is the map 
\begin{align*}
\tilde\tau : \cT_Y(-\log \Delta) \to
\SheafHom(\cE^{1,0},\cE^{0,1}) 
\end{align*}
obtained from $\tau^\dual$ by tensoring with $(\cE^{1,0})^*$. If $p:\hat B \to
\Per(\WW)$ denotes the period map of $\WW$, then its differential descends to a
sheaf map on $B$, which is equal to $\tilde\tau$ restricted to $B$.

\subsection{Lyapunov spectrum for polarized VHS}
We quickly recall Oseledec's theorem together with some properties
\par
\begin{theorem}[see e.g. {\cite{Ruelle79}}] \label{thm:oseledec}
Let $g_t: (M,\mu) \to (M,\mu)$ be an ergodic
flow on a space $M$ with finite measure $\mu$.
Suppose that the action of $t \in \RR$ lifts
equivariantly to a linear flow $G_t$ on some measurable 
real bundle $V$ on $M$. Suppose there exists
a (not equivariant) norm
$\|\cdot\|$ on $V$ such that the functions
\begin{align}\label{eq:Oseledec-integrability}
x\mapsto \sup_{t\in [0,1]} \log^+ \|G_t\|_x\quad \text{and}\quad x\mapsto
\sup_{t\in [0,1]} \log^+ \|G_{1-t}\|_{g_t(x)}
\end{align}
are in $L^1(M,\mu)$.
Then there exist real constants $\lambda_1 \geq \cdots \geq \lambda_k$
and a 
decomposition
\[V = \bigoplus_{i=1}^k V_{\lambda_i}\]
by measurable vector subbundles such that, for almost all $m \in M$ 
and all $v \in (V_{\lambda_i})_m \sms \{0\}$, one has
\begin{equation} \label{eq:Oseledeclimit}
\lambda_i = \lim_{t \to \pm\infty} \frac{1}{t} \log \|G_t(v)\|.
\end{equation}
\end{theorem}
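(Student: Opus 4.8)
Since Theorem~\ref{thm:oseledec} is classical, I only outline the route one would take. \emph{The plan is to} measurably trivialize the bundle, discretize the flow to its time-one map, apply Kingman's subadditive ergodic theorem to all exterior powers of the resulting cocycle in order to produce the exponents, build a forward and a backward measurable flag from the a.e.\ limits, intersect them to obtain the Oseledec splitting, and finally transfer everything from the time-one map back to the flow.

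\textbf{Discretization and Kingman.} After a measurable trivialization $V\cong M\times\RR^d$, put $T=g_1$ and let $A(x)=G_1|_{V_x}$ be the time-one cocycle, with iterates $A^{(n)}(x)=A(T^{n-1}x)\cdots A(Tx)A(x)$. Taking $t=1$ in the first integrand of \eqref{eq:Oseledec-integrability} gives $\log^+\|A\|\in L^1(M,\mu)$, and the two hypotheses together (the second applied also to the time-reversed flow, using that $G_t$ is a genuine linear flow so $A$ is $\GL$-valued) give $\log^+\|A^{-1}\|\in L^1$ as well as integrability of the fluctuation of $\log\|G_t\|$ over a unit orbit segment. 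For each $p$ with $1\le p\le d$, the sequence $x\mapsto \log\|\Lambda^p A^{(n)}(x)\|$ is subadditive along $T$, since $\|\Lambda^p(BA)\|\le\|\Lambda^p B\|\,\|\Lambda^p A\|$, and is dominated by a Birkhoff sum of $p\log^+\|A\|$; Kingman's theorem yields an a.e.\ limit $\Lambda_p=\lim_n \tfrac1n\log\|\Lambda^p A^{(n)}(x)\|$, constant by ergodicity. Standard singular-value (Horn) inequalities show $p\mapsto\Lambda_p$ is concave, so the differences $\lambda_p:=\Lambda_p-\Lambda_{p-1}$ (with $\Lambda_0=0$) form a non-increasing list, which after collecting equal values is the list $\lambda_1\ge\cdots\ge\lambda_k$ with multiplicities $d_i$.

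\textbf{The two flags and the splitting.} The heart of the matter is to show that for \emph{every} nonzero $v\in V_x$ the limit $\chi(x,v):=\lim_n\tfrac1n\log\|A^{(n)}(x)v\|$ exists, equals one of the $\lambda_i$, and that $\{v:\chi(x,v)\le\lambda_i\}$ is a subspace of dimension $d_k+\cdots+d_i$; this produces a decreasing measurable flag $F^1_x\supsetneq\cdots\supsetneq F^k_x\supsetneq 0$. One proves this either by Ruelle's argument — showing $(A^{(n)}(x)^{\ast}A^{(n)}(x))^{1/(2n)}$ converges a.e.\ to a positive-definite operator with eigenvalues $e^{\lambda_i}$ and reading the flag off its eigenspaces — or by Raghunathan's induction on $d$; both combine the Kingman limits above with the tempering estimate $\tfrac1n\log^+\|A(T^n x)\|\to 0$ a.e.\ (Borel--Cantelli from $\log^+\|A\|\in L^1$). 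Running the same construction for $A^{(-n)}$ gives an increasing backward flag $0\subsetneq H^k_x\subsetneq\cdots\subsetneq H^1_x$; invariance and ergodicity of $\mu$, through a Poincar\'e recurrence argument (a drift would contradict $T$-invariance), force the forward and backward exponents to coincide and the two flags to be transverse. Setting $V_{\lambda_i}:=F^i\cap H^i$ gives a $T$-equivariant measurable decomposition $V=\bigoplus_i V_{\lambda_i}$ with $\lim_{n\to\pm\infty}\tfrac1n\log\|A^{(n)}v\|=\lambda_i$ for $v\in(V_{\lambda_i})_x\setminus\{0\}$.

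\textbf{Return to the flow.} For $t\in\RR$ write $t=\lfloor t\rfloor+s$ with $s\in[0,1)$, so $G_t|_x=G_s|_{T^{\lfloor t\rfloor}x}\circ A^{(\lfloor t\rfloor)}(x)$; the second hypothesis in \eqref{eq:Oseledec-integrability} together with Birkhoff's theorem shows $\tfrac1t\log^+\|G_s\|_{T^{\lfloor t\rfloor}x}\to 0$ (and likewise for the inverse), so the correction is subexponential and \eqref{eq:Oseledeclimit} holds for the flow with the same exponents. Since for each fixed $t$ the Oseledec splitting of the time-$t$ map is unique, the bundles $V_{\lambda_i}$ are automatically $g_t$-equivariant for all $t$, which finishes the proof. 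The step I expect to be the genuine obstacle is the previous one: upgrading the Kingman convergence of operator (and exterior-power) norms to pointwise convergence of $\tfrac1n\log\|A^{(n)}(x)v\|$ for individual vectors, identifying the dimensions of the flag pieces, and proving transversality of the forward and backward flags — this is precisely the content of Oseledec's lemma and requires the delicate interplay of the tempering estimate with the subadditive limits.
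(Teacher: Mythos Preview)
Your outline is a reasonable sketch of the standard proof of Oseledec's theorem via discretization, Kingman's subadditive ergodic theorem on exterior powers, and the Ruelle/Raghunathan machinery for constructing and intersecting the forward and backward flags. There is nothing to compare against, however: the paper does not prove Theorem~\ref{thm:oseledec} at all but simply quotes it as a classical result with a reference to Ruelle, so any correct proof is acceptable here.
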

\par
We call the values $\lambda_i$, repeated with multiplicity $\dim V_{\lambda_i}$
the set of Lyapunov exponents or {\em Lyapunov spectrum} of $(M,\mu, V, g_t)$.
\par
Suppose that $V$ carries a symplectic structure preserved by the flow. 
Then the Lyapunov exponents are symmetric with respect to zero, i.e. 
$\lambda_{k+1-i} = - \lambda_i$.
The \emph{positive Lyapunov spectrum}
is by definition the first half of the symmetric Lyapunov spectrum.
\par
The $V_{\lambda_i}$ do not change if $\|\cdot\|$
is replaced by another norm of `comparable' size. More precisely,
\begin{lemma} \label{lemma:oseledec-comparable_norms}
 In the above theorem, suppose $M$ is a locally compact topological space and
$\mu$ a regular Borel measure. Let $\|\cdot\|_1$ and $\|\cdot\|_2$ be norms on $V$
(varying measurably) such
that for both \eqref{eq:Oseledec-integrability} is satisfied. Then the 
filtrations and the two sets of Lyapunov exponents coincide.
\end{lemma}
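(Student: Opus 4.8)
The plan is to prove that the two norms are \emph{tempered equivalent} along $\mu$-almost every orbit, which forces the Oseledets data to agree. For $m\in M$ let $\theta(m)$ be the sum of the logarithm of the operator norm of $\id\colon(V_m,\|\cdot\|_1)\to(V_m,\|\cdot\|_2)$ and the logarithm of the operator norm of its inverse; then $\theta\ge 0$ is measurable (the norms varying measurably), is $\mu$-a.e.\ finite since each fibre is finite dimensional, and satisfies $\bigl|\log\|v\|_1-\log\|v\|_2\bigr|\le\theta(m)$ for every $m$ and every $v\in V_m\setminus\{0\}$. Suppose I can show
\[
\lim_{t\to\pm\infty}\tfrac1t\,\theta(g_t m)=0\qquad\text{for $\mu$-a.e.\ $m$.}
\]
Then for a.e.\ $m$ and every $v\neq 0$ the quantities $\tfrac1t\log\|G_t v\|_1$ and $\tfrac1t\log\|G_t v\|_2$ differ by at most $\tfrac1t\,\theta(g_t m)\to 0$, so the limits in \eqref{eq:Oseledeclimit} do not depend on which of the two norms is used. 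Since the exponents $\lambda_i$ are the values of these limits and the spaces $V_{\lambda_i}$ are precisely the sets of vectors for which the two one-sided limits in \eqref{eq:Oseledeclimit} exist and equal $\lambda_i$, the exponents (with multiplicities) and the splitting $V=\bigoplus_i V_{\lambda_i}$ coincide for $\|\cdot\|_1$ and $\|\cdot\|_2$. So everything reduces to the temperedness of $\theta$.

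To prove this I pass to the time-one map $T=g_1$. Applying the integrability hypothesis \eqref{eq:Oseledec-integrability} to $\|\cdot\|_j$ controls the one-step distortion of $\|\cdot\|_j$ along the flow: there is a function $H_j\in L^1(M,\mu)$ with $\bigl|\log(\|G_1 v\|_{j,Tm}/\|v\|_{j,m})\bigr|\le H_j(m)$ for all $v\neq 0$ and a.e.\ $m$ (here $\|\cdot\|_{j,p}$ denotes $\|\cdot\|_j$ on $V_p$; this is where both clauses of \eqref{eq:Oseledec-integrability} enter, the first bounding the operator norm of $G_1|_m$ and the second that of its inverse, combined via $\|G_1\|_{j,m}\cdot\|(G_1|_m)^{-1}\|_j\ge 1$). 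For $v\neq 0$ I then write
\[
\log\frac{\|v\|_{1,m}}{\|v\|_{2,m}}
=\log\frac{\|v\|_{1,m}}{\|G_1 v\|_{1,Tm}}
+\log\frac{\|G_1 v\|_{1,Tm}}{\|G_1 v\|_{2,Tm}}
+\log\frac{\|G_1 v\|_{2,Tm}}{\|v\|_{2,m}}
\]
and observe that the middle term (which does not depend on which norm measures $G_1v$) is bounded in absolute value by $\theta(Tm)$ while the outer two are bounded by $H_1(m)$ and $H_2(m)$; taking the supremum over $v$ yields $\theta(m)\le\theta(Tm)+H_1(m)+H_2(m)$, and the analogous decomposition of a vector $w\in V_{Tm}$ (inserting $\|G_{-1}w\|_{j,m}$) gives the reverse inequality with the same bound. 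Hence $\varphi:=\theta\circ T-\theta$ satisfies $|\varphi|\le H_1+H_2$, so $\varphi\in L^1(M,\mu)$.

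The main obstacle is that $\theta$ itself need not be $\mu$-integrable, so Birkhoff's theorem cannot be applied to $\theta$ directly; instead one uses that $\varphi$ is an integrable coboundary. By Birkhoff's theorem, $\tfrac1n\sum_{k=0}^{n-1}\varphi(T^k m)=\tfrac1n\bigl(\theta(T^n m)-\theta(m)\bigr)$ converges a.e.\ to $\overline\varphi:=\mathbb{E}[\varphi\mid\mathcal{I}_T]$, the conditional expectation over the $T$-invariant $\sigma$-algebra, so $\tfrac1n\,\theta(T^n m)\to\overline\varphi(m)$ a.e., and $\theta\ge 0$ forces $\overline\varphi\ge 0$ a.e. On the other hand $\int\overline\varphi\,d\mu=\int\varphi\,d\mu$, and $\int\varphi\,d\mu=0$: a positive value would force $\theta(T^n m)\to+\infty$ on a set of positive measure, contradicting Poincaré recurrence into the positive-measure sets $\{\theta<N\}$ (for $N$ large, since $\theta$ is a.e.\ finite), while a negative value is incompatible with $\overline\varphi\ge 0$. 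Thus $\overline\varphi$ is a non-negative function with vanishing integral, hence $\overline\varphi=0$ a.e., i.e.\ $\tfrac1n\,\theta(T^n m)\to 0$ a.e. Finally, \eqref{eq:Oseledec-integrability} also dominates $\sup_{t\in[0,1]}\bigl|\theta(g_t m)-\theta(m)\bigr|$ by an $L^1$ function, which lets me interpolate from the integer times $g_n=T^n$ to the full flow, and running the same argument for $T^{-1}$ handles $t\to-\infty$; this gives the limit $\tfrac1t\,\theta(g_t m)\to 0$ as $t\to\pm\infty$ for a.e.\ $m$, and by the first paragraph the proof is complete.
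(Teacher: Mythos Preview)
Your approach is sound in outline but takes a substantially different and more elaborate route than the paper. The paper's argument runs in a few lines: by Lusin's theorem (this is where the local compactness of $M$ and regularity of $\mu$ enter), the measurable function $m\mapsto\sup_{\|v\|_{2,m}=1}\|v\|_{1,m}$ is bounded by some constant $C$ on a compact set $K$ of positive measure; by Poincar\'e recurrence, for a.e.\ $m$ there is a sequence $t_j\to\infty$ with $g_{t_j}(m)\in K$; and since the Oseledets limit $\lim_{t\to\infty}\tfrac1t\log\|G_t v\|_i$ already exists for each norm separately, it may be evaluated along $(t_j)$, where $\|G_{t_j}v\|_1\le C\|G_{t_j}v\|_2$ gives one inequality and symmetry the other. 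You instead prove the stronger intermediate statement that $\theta$ is tempered along a.e.\ orbit, via a coboundary argument and Birkhoff's theorem. This has the virtue of working in a purely measurable setting without the topological hypotheses, but at the price of considerably more machinery and an extra integrability input.

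That extra input is the one step I would flag. You assert that the second clause of \eqref{eq:Oseledec-integrability} bounds $\|(G_1|_m)^{-1}\|$, but $\sup_{t\in[0,1]}\log^+\|G_{1-t}\|_{g_t(x)}$ controls forward maps $V_{g_t(x)}\to V_{g_1(x)}$ for $t\in[0,1]$ and does not obviously dominate $\log^+\|G_1^{-1}\|$; the inequality $\|G_1\|\cdot\|G_1^{-1}\|\ge 1$ you invoke points in the wrong direction to help. Consequently your bound $|\varphi|\le H_1+H_2$, and likewise the interpolation $\sup_{t\in[0,1]}|\theta(g_t m)-\theta(m)|\in L^1$, rest on an integrability of $\log^+\|G_1^{-1}\|$ that you have not actually extracted from the hypothesis as written. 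The paper's argument sidesteps this issue entirely: it never compares $\theta$ at different points of an orbit, only uses recurrence to a fixed set where $\theta$ is bounded, and lets the already-established existence of the Oseledets limits do the rest.
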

\begin{proof}
 The function $m\mapsto \phi(m) = \sup_{\|v\|_{2,m} = 1} \|v\|_{1,m}$
is measurable. Thus, by Lusin's theorem, there is a compact subset $K$ of
positive measure and a $C = C(K)>0$ such $\phi(m)\leq C$ for all $m\in K$. By
Poincar\'e recurrence, we find for almost all $m$ an increasing sequence of
times $t_1,t_2,\ldots$ with $\lim t_j\to \infty$ such that $g_{t_j}(m) \in K$
for all $j$. 
Now for
almost all $m$ and $v\in V_m\setminus\{0\}$, 
\begin{align*}
\lim_{t \to \infty} \frac{1}{t} \log \|G_t(v)\|_1 &= \lim_{j\to\infty}
\frac{1}{t_j} \log \|G_{t_j}(v)\|_1
 \leq \lim_{j\to\infty} \frac{1}{t_j} \log (C \cdot \|G_{t_j}(v)\|_2)\\
	    &= \lim_{j\to\infty} \frac{1}{t_j} \log \|G_{t_j}(v)\|_2 = \lim_{t
\to \infty} \frac{1}{t} \log \|G_t(v)\|_2
\end{align*}
By symmetry, we also obtain the other inequality, hence
\[\lim_{t \to \infty} \frac{1}{t} \log \|G_t(v)\|_1 = \lim_{t \to \infty}
\frac{1}{t} \log \|G_t(v)\|_2,\]
and this implies the claim.
\end{proof}        
\par
In all the applications below, the manifold $M$ will be the
unit tangent bundle $T^1B$ of a ball quotient $B$, $g_t$ will be the geodesic flow,
which is well-known to be ergodic,
and $\mu$ will be the push-forward of Haar measure on $\PU(1,n)$. $V$ will be
the pullback along $T^1B \to B$ of the vector bundle associated with a polarized
$\RR$-VHS $\VV$, and will be endowed with the Hodge norm. The flat
connection then provides a lift $G_t$ of the geodesic flow $g_t$ on the base
$T^1B$. We first check the integrability condition.
\par
\begin{lemma}  \label{le:Oseledec-integrability}
In this setting, the integrability condition \eqref{eq:Oseledec-integrability}
holds. More precisely, we have
\begin{align}\label{eq:bound-log of hodge norm}
\bigl|\tfrac{\dd}{\dd t} \log \|G_t(v)\|\bigr| \leq \frac{1}{\sqrt{2\kappa}}.
\end{align}
\end{lemma}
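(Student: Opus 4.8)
\textbf{Proof proposal for Lemma~\ref{le:Oseledec-integrability}.}

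The plan is to compute the logarithmic derivative of the Hodge norm along a geodesic and bound it pointwise by $1/\sqrt{2\kappa}$; the integrability condition \eqref{eq:Oseledec-integrability} then follows trivially since the supremum over $t\in[0,1]$ of $\log^+\|G_t\|_x$ is dominated by $1/\sqrt{2\kappa}$, a constant, hence in $L^1(M,\mu)$ because $\mu$ is finite. So the whole content is the pointwise estimate \eqref{eq:bound-log of hodge norm}.

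First I would fix a flat section $v$ of $\WW_\CC$ along a unit-speed geodesic $t\mapsto \gamma(t)$ in $\BB^n$ and use the decomposition $\|v\|^2 = 2(\psi(v^{1,0},v^{1,0}) - \psi(v^{0,1},v^{0,1}))$ recalled in Section~\ref{sec:weigh1VHS} (with the sign flipped so $\psi$ is positive on both pieces). Differentiating $\log\|v\|^2$ in $t$, the flatness of $v$ and of $\psi$ means the only contribution comes from the variation of the Hodge filtration, i.e. from the Higgs field $\tau$ (equivalently the derivative of the period map $\tilde\tau$). A standard computation — essentially the second fundamental form computation underlying Schmid's norm estimates — gives
\[
\tfrac{\dd}{\dd t}\log\|v\|^2 = \frac{4\,\Re\,\psi(\tau_{\dot\gamma}(v^{1,0}),v^{0,1})}{\|v\|^2},
\]
where $\tau_{\dot\gamma}$ denotes contraction of the Higgs field with the tangent vector $\dot\gamma$. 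By Cauchy--Schwarz for the positive form $\psi$, the numerator is at most $4\,\|\tau_{\dot\gamma}(v^{1,0})\|\cdot\|v^{0,1}\| \le 4\,\|\tau_{\dot\gamma}\|_{\mathrm{op}}\,\|v^{1,0}\|\,\|v^{0,1}\| \le 2\,\|\tau_{\dot\gamma}\|_{\mathrm{op}}\,\|v\|^2$, the last step being $2\|v^{1,0}\|\|v^{0,1}\|\le \|v^{1,0}\|^2+\|v^{0,1}\|^2 = \tfrac12\|v\|^2$. Hence $\bigl|\tfrac{\dd}{\dd t}\log\|v\|\bigr| \le \|\tau_{\dot\gamma}\|_{\mathrm{op}}$, and it remains to bound the operator norm of the Higgs field contracted against a unit tangent vector.

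The key input for that bound is Griffiths transversality / the curvature property of period domains: the differential of the period map is norm-decreasing up to an explicit constant depending only on the normalization of the metric on $\BB^n$. Concretely, for a weight-one polarized VHS the Higgs field, viewed via $\tilde\tau: \cT_B(-\log\Delta)\to \SheafHom(\cE^{1,0},\cE^{0,1})$, has operator norm controlled by the holomorphic sectional curvature of $\BB^n$, which for our metric of curvature $-4$ (i.e. $\kappa=\tfrac12$) comes out exactly so that $\|\tau_{\dot\gamma}\|_{\mathrm{op}}\le \tfrac{1}{2}\cdot\tfrac{1}{\sqrt{2\kappa}}$ for a unit vector $\dot\gamma$; combined with the factor above this yields $\bigl|\tfrac{\dd}{\dd t}\log\|G_t(v)\|\bigr|\le \tfrac{1}{\sqrt{2\kappa}}$. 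The main obstacle is getting this last constant right: one has to match the normalization of the Hodge metric $\psi$, the normalization of the Bergman/hyperbolic metric $g_\hyp$ on $\BB^n$, and the definition of the Higgs field, so that the universal curvature bound on the period domain (for VHS mapping into $\BB^n$) translates into precisely $1/\sqrt{2\kappa}$ rather than some other multiple. I would pin this down by reducing via $\PU(1,n)$-homogeneity to the point $0\in\BB^n$, where the period map, the metric $g_\hyp$ from the explicit formula, and the Hodge form can all be written out simultaneously, and where the computation in \eqref{eqn:unif-norm} for the uniformizing VHS $\UU$ provides the model case to calibrate against; the general VHS case then follows because any weight-one period map into $\BB^n$ is, up to the monodromy representation, at most as expanding as the identity, i.e. factors metrically through the curvature bound.
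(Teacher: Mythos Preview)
Your reduction to the pointwise bound $\bigl|\tfrac{\dd}{\dd t}\log\|v\|\bigr| \leq \|\tau_{\dot\gamma}\|_{\mathrm{op}}$ via Cauchy--Schwarz is correct and is exactly what the paper does (it cites \cite[Lemma~2.3]{FMZinvariant} for the derivative formula). The gap is in your final step, bounding $\|\tau_{\dot\gamma}\|_{\mathrm{op}}$.

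First, a confusion of domains: the period map of a general weight-one polarized $\RR$-VHS on $B$ does \emph{not} land in $\BB^n$; its target is the Siegel upper half space $\HH_{\rk\VV}$. So the sentence ``any weight-one period map into $\BB^n$ is at most as expanding as the identity'' is not the right statement, and calibrating against the uniformizing VHS $\UU$ is circular (the top Lyapunov exponent of $\UU$ is computed \emph{after} this lemma). Second, your claimed bound $\|\tau_{\dot\gamma}\|_{\mathrm{op}}\le \tfrac12\cdot\tfrac{1}{\sqrt{2\kappa}}$ followed by ``combined with the factor above'' does not track: the reduction already gave $|\tfrac{\dd}{\dd t}\log\|v\|| \le \|\tau_{\dot\gamma}\|_{\mathrm{op}}$ with no extra factor, so either the $\tfrac12$ is spurious or the final constant is off.

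The paper's fix is clean and worth knowing: identify $\tilde\tau$ with the differential of the period map $p:\BB^n \to \HH_{\rk\VV}$, note that both domain and target are hermitian manifolds of constant holomorphic sectional curvature $k_0 = -2/\kappa$ and $K_0 = -4$ respectively, and invoke the generalized Ahlfors--Schwarz--Pick lemma (e.g.\ \cite[Theorem~2]{Roy}) to get $\|\dd p\| \le \sqrt{k_0/K_0} = 1/\sqrt{2\kappa}$. This is the ``curvature property of period domains'' you allude to, but the missing ingredients are the correct target $\HH_{\rk\VV}$, its curvature $-4$, and the explicit Schwarz--Pick statement.
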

\par
\begin{proof}
We first argue that it suffices to show \eqref{eq:bound-log of hodge norm}.
It clearly suffices to bound $\log\|G_t(v)\|$ for every $v$ of norm 1 by a
constant depending continuously on $t\in\RR$. Assuming \eqref{eq:bound-log of
hodge norm}, we have
\[\bigl|\log\|G_t(v)\|\bigr| = \bigl|\int_0^t \tfrac{\dd}{\dd s} \log\|G_s(v)\|
\dd s\bigr| \leq \int_0^t \bigl|\tfrac{\dd}{\dd s} \log \|G_s(v)\|\bigr|\dd s
\leq
t \tfrac{1}{\sqrt{2\kappa}}.\]
%
%
Let $\vartheta\in
T^1B$, and let $\gamma:[0,t] \to B$ be the geodesic with $\gamma'(0) =
\vartheta$. For $v\in V_{\vartheta}$, the lift $G_t(v)$ of the flow is given by
a flat section $w$ of $V$ along $\gamma$ with $w_{\vartheta} = v$. We
will use the following formula, which is proved in \cite[Lemma
2.3]{FMZinvariant} for the case of the Teichm\"uller geodesic flow; the
proof carries over to our situation.
\[\tfrac{\dd}{\dd t}
\psi\bigl(w^{1,0}_{\gamma(t)},w^{1,0}_{\gamma(t)}\bigr)\Bigl|_{t=t_0} = 2\cdot
\Re\Bigl( \psi\bigl( \tilde\tau_{\gamma'(t)}(w^{1,0}_{\gamma(t)}),
w^{0,1}_{\gamma(t)}\bigr)\Bigr)\Bigl|_{t=t_0}.\]
Using $\|w_{\gamma(t)}\|^2 = 4\psi(w_{\gamma(t)}^{1,0},w_{\gamma(t)}^{1,0})$, we
have
\begin{align*}
 \tfrac{\dd}{\dd t} \log \|G_t(v)\|\Bigl|_{t=t_0} &=
      2 \|G_{t_0}(v)\|^{-2} \cdot\ \frac{\dd}{\dd t}
\psi\bigl(w^{1,0}_{\gamma(t)},w^{1,0}_{\gamma(t)}\bigr)\Bigl|_{t=t_0}\\
	&= 4 \|G_{t_0}(v)\|^{-2} \cdot\ \Re\Bigl( \psi\bigl(
\tilde\tau_{\gamma'(t_0)}(w^{1,0}_{\gamma(t_0)}),
w^{0,1}_{\gamma(t_0)}\bigr)\Bigr),
\end{align*}
hence by applying the Cauchy-Schwarz inequality
\[|\psi(x,y)| \leq \tfrac1{\sqrt{2}} \|x\| \cdot \tfrac1{\sqrt{2}} \|y\|\]
on the $(0,1)$ subspace and using the submultiplicativity of operator norm 
$\| \cdot\|_{\rm op}$ on $\Hom(V^{1,0},V^{0,1})$
\begin{align*} 
\bigl|\tfrac{\dd}{\dd t} \log \|G_t(v)\| \Bigl|_{t=t_0}\bigl| 
        &\leq
2 \|G_{t_0}(v)\|^{-2} \cdot \|\tilde\tau_{\gamma'(t_0)}(w^{1,0}_{\gamma(t_0)})\|
\cdot \|w^{0,1}_{\gamma(t_0)}\|\\
	&\leq 2 \|G_{t_0}(v)\|^{-2} \cdot
\|\tilde\tau_{\gamma'(t_0)}\|_{\rm op} \cdot \|w^{1,0}_{\gamma(t_0)}\|\cdot
\|w^{0,1}_{\gamma(t_0)}\|\\
        &\leq \|\tilde\tau_{\gamma'(t_0)}\|_{\rm op}.
\end{align*}
The map $\tilde\tau$ can be identified with the derivative of the period map
$p:\BB^n\to \HH_{\rk\VV}$ associated with $\VV$. We may use the
realization of $\HH_{\rk\VV}$ as symmetric matrices, so 
a tangent vector like $\tilde\tau_{\gamma'(t_0)}$
is also given by a symmetric matrix. Acting by an appropriate
block diagonal element in the symplectic group, we may suppose
that $\tilde\tau_{\gamma'(t_0)}$ is given by a diagonal matrix.
To bound its operator norm it suffices to bound the operator
norm of the individual diagonal entries. Let $r_i: \HH_{\rk\VV} \to \HH$
be the projection on the $i$-th diagonal element and 
$p_i = r_i \circ p: \BB^n\to \HH$. In this notation, 
we want to bound the operator norm of $\dd r_i(\tilde\tau_{\gamma'(t_0)}) = 
(\dd p_i)({\gamma'(t_0)})$.
\par
For this purpose we want to apply a generalization of the 
Schwarz-Pick Lemma (e.g.\ \cite[Theorem~2]{Roy}) to the
composition $p_i$. The ball $\BB^n$ is a 
hermitian manifold of constant negative holomorphic
sectional curvature $k_0$, in fact with our normalization we 
have $k_0= \tfrac{-2}{\kappa}$. Suppose we take the hermitian metric
of constant holomorphic sectional curvature $K_0$ on $\HH$ and let 
$\| \cdot \|_{\HH}$ 
be the associated norm.
In this situation, Royden's theorem states that 
$\|\dd p_i\|_\HH \leq \sqrt{k_0/K_0}$. Now it suffices to check that
for $K_0=-4$ the operator norm on the tangent bundle and the
norm $\| \cdot \|_\HH$  coincide.
\par
To do so, we use the unit disk model $\HH\isom \BB^1$ of hyperbolic space.
The norm $\|\cdot\|_\HH$ is given by the hermitian form
\[h = \frac{2\kappa}{(1-|z|^2)^2} dz\tensor d\bar{z},\qquad \text{where}\ K_0 =
\tfrac{-2}{\kappa}\]
and the  operator norm on $T\BB^1$ is induced
by the isomorphism $T\BB^1 \to \Hom(E^{1,0},E^{0,1})$ coming from the Higgs
field $\tilde\tau = \tilde\tau_{\BB^1}$ of the uniformizing VHS on $\BB^1$.
Here, $E = \BB^1\times\CC^2$ is the trivial bundle with trivial connection
\[\nabla(f_1e_1 + f_2e_2) = df_1\tensor e_1 + df_2 \tensor e_2\]
and indefinite hermitian form $\psi(v,w) = v_1\bar{w_1} - v_2\bar{w_2}$
with respect to the standard basis $\{e_1,e_2\}$. $E^{1,0}$ is the holomorphic
subbundle with fiber $E^{1,0}_{z} = \CC (e_1 + ze_2)$ and $E^{0,1}$ is its
orthogonal complement with fiber $E^{0,1}_z = \CC(\bar{z}e_1 + e_2)$.
The norm on $E^{1,0}$ and $E^{0,1}$ is the one induced by restricting $\psi$
and flipping the sign on $E^{0,1}$, i.e.
\[\|\lambda(e_1+ze_2)\|^2 = |\lambda|^2(1-|z|^2) = \|\lambda(\bar{z}e_1 +
e_2)\|^2.\]

For the identification $T\BB^1 \to \Hom(E^{1,0},E^{0,1})$, we first consider
the map
\[\partial_z \mapsto \bigl(\lambda(e_1+ze_2)
\mapsto \nabla_{\partial_z}(\lambda(e_1+ze_2)) = \lambda_ze_1 +
(\lambda + z\lambda_z)e_2\bigr),\]
where $\lambda_z = \pder[\lambda]{z}$. The orthogonal projection to $E^{0,1}$ is
given by
\begin{align*}
f_1e_1 + f_2e_2 \mapsto &\psi(f_1e_1 + f_2e_2, \bar{z}e_1 +
e_2) \cdot \psi(\bar{z}e_1 + e_2, \bar{z}e_1 + e_2)^{-1} \cdot (\bar{z}e_1 +
e_2)\\
&= \tfrac{1}{|z|^2 -1} (f_1z - f_2) (\bar{z}e_1 + e_2) ,
\end{align*}
and the Higgs field is the composition of these two maps. Thus,
\begin{align*}
 \tilde\tau(\partial_z)(\lambda(e_1+ze_2)) &= \tfrac{1}{|z|^2 - 1} (\lambda_z z
-
(\lambda + z\lambda_z)) (\bar{z}e_1 + e_2) \\
    &= \tfrac{\lambda}{1-|z|^2} (\bar{z}e_1 + e_2)
\end{align*}
Hence, the operator norm of $\tilde\tau(\partial_z)$ is given by
\begin{align*}
\|\tilde\tau(\partial_z)\| &= \frac{\|\sigma(\partial_z)(\lambda(e_1 +
ze_2))\|}{\|(\lambda(e_1 + ze_2)\|} =
\frac{|\lambda|\sqrt{(1-|z|^2)}}{(1-|z|^2)}
 \cdot |\lambda|^{-1}(\sqrt{1-|z|^2})^{-1}\\
    &= \frac{1}{1-|z|^2}
\end{align*}
Therefore, $\tilde\tau$ is an isometry iff $\kappa = \tfrac12$, which proves
the claim.
\end{proof}
\par
The hypothesis of Oseledec's theorem holds verbatim for $\VV_\CC$
and for the summands of a decomposition of the VHS into 
$ \VV_\CC = \oplus \WW_i$, replacing Siegel upper-half space by the respective 
period domain for the polarized VHS.
In particular, it applies to the primitive part $\PP_\CC \subset \VV_\CC$.
We call the corresponding part of the Lyapunov spectrum the
{\em primitive Lyapunov spectrum}.
\medskip

\section{Lyapunov exponents of ball quotients: Generalities}
\label{sec:cylicgeneral}

From now on $B$ will be a {\em ball quotient}, i.e.\ the quotient of $\BB^n$ by some cofinite
discrete subgroup $\Gamma \subset \PU(1,n)$.
In this section we prove the general results announced as Theorem~\ref{thm:main}.
\par
\subsection{Normalization}  \label{sec:normalization}
Recall the construction and the metric of the uniformizing local system $\UU$
from Section~\ref{sec:weigh1VHS}. The following proposition comprises the
statement of Theorem \ref{thm:main} i).
\par
\begin{prop}
 The Lyapunov exponents of a uniformizing polarized VHS $\UU$ are
\[\lambda_1,\lambda_1,\underbrace{0,\dots,0}_{2n-2},-\lambda_1,-\lambda_1\]
 where $\lambda_1 = \tfrac{1}{\sqrt{2\kappa}}$, and the Lyapunov exponents of
any polarized VHS $\VV$ on $B$ are all bounded by $\lambda_1$.
\end{prop}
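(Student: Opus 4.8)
The plan is to compute the Lyapunov exponents of the uniformizing VHS $\UU$ directly and explicitly, exploiting the fact that $\UU$ is essentially the tautological bundle over $\BB^n$, and then to deduce the upper bound for an arbitrary polarized VHS $\VV$ from the Hodge-norm derivative estimate already established in Lemma~\ref{le:Oseledec-integrability}.

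First I would handle $\UU$. Fix a geodesic $\gamma$ in $\BB^n$; by $\PU(1,n)$-homogeneity we may assume $\gamma$ passes through $0$ in the direction $e_1$, so that $\gamma(t) = (\tanh(t/\sqrt{2\kappa})\,,0,\dots,0)$ in the ball model, with $\ul{z}(t) = (1,\tanh(t/\sqrt{2\kappa}),0,\dots,0)$. A flat section of $\UU$ along $\gamma$ is a constant row vector $\ul v \in \CC^{n+1}$; its Hodge norm is given by formula~\eqref{eqn:unif-norm}. I would plug $\ul z(t)$ into that formula and read off the growth rates: writing $\ul v = (v_0,v_1,\dots,v_n)$, the quantity $\langle \ul v,\ul v\rangle_{1,n}$ is constant, while $|\langle \ul v,\ul z(t)\rangle_{1,n}|^2/\langle\ul z(t),\ul z(t)\rangle_{1,n}$ behaves like $e^{2t/\sqrt{2\kappa}}$ when $v_0 - v_1 \neq 0$ (generic case), is constant for vectors with $v_0=v_1=0$ (the $n-1$-dimensional "neutral" space spanned by $e_2,\dots,e_n$), and decays like $e^{-2t/\sqrt{2\kappa}}$ on the remaining line. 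This gives exponents $+\tfrac{1}{\sqrt{2\kappa}}$, $0$ (with multiplicity $n-1$) and $-\tfrac{1}{\sqrt{2\kappa}}$ for the growth along a single geodesic direction. Since $\UU$ as a real bundle underlies $\UU \oplus \ol\UU$ of rank $2(n+1)$ — or, more precisely, since the relevant real VHS has rank $2n$ after accounting for the fixed part — each of these is doubled, yielding the claimed spectrum $\lambda_1,\lambda_1,0,\dots,0,-\lambda_1,-\lambda_1$ with $\lambda_1 = \tfrac{1}{\sqrt{2\kappa}}$. The symmetry about zero is automatic from the symplectic structure (as recalled before Lemma~\ref{lemma:oseledec-comparable_norms}), and the zero exponents can alternatively be forced by item~iii) of Theorem~\ref{thm:main} since the neutral part corresponds to a factor of signature type with $p\neq q$; but the direct computation is cleanest here.

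For the upper bound on a general polarized $\RR$-VHS $\VV$ on $B$, I would simply invoke the pointwise estimate $\bigl|\tfrac{\dd}{\dd t}\log\|G_t(v)\|\bigr| \le \tfrac{1}{\sqrt{2\kappa}}$ from Lemma~\ref{le:Oseledec-integrability}: integrating and dividing by $t$ shows every Oseledets limit in \eqref{eq:Oseledeclimit} lies in $[-\tfrac{1}{\sqrt{2\kappa}},\tfrac{1}{\sqrt{2\kappa}}]$, so all Lyapunov exponents of $\VV$ are bounded in absolute value by $\lambda_1$. The main obstacle, such as it is, is bookkeeping: making sure the passage from the complex bundle $\UU$ to the underlying real VHS correctly accounts for the rank and for the "duplication" so that the stated multiplicities ($\lambda_1$ twice, $0$ with multiplicity $2n-2$) come out right, and checking that the single-geodesic computation indeed gives the Oseledets exponents (which it does, because the Hodge norm derivative estimate guarantees the limits exist and the bound is attained on the generic vector for every geodesic direction, by homogeneity).
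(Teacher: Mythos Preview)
Your approach is essentially the same as the paper's: exploit $\PU(1,n)$-homogeneity to reduce to a single geodesic through the origin, plug into the explicit Hodge-norm formula~\eqref{eqn:unif-norm}, read off the growth rates, and then invoke the derivative bound from Lemma~\ref{le:Oseledec-integrability} for the general upper bound.

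One piece of bookkeeping is muddled, though. Your parenthetical ``Since $\UU$ as a real bundle underlies $\UU \oplus \ol\UU$ of rank $2(n+1)$ --- or, more precisely, since the relevant real VHS has rank $2n$ after accounting for the fixed part'' is incorrect and should be deleted. There is no fixed part here, and you do not need to pass to $\UU\oplus\ol\UU$. The uniformizing VHS $\UU$ has $\CC$-rank $n+1$, hence real rank $2n+2$, and the doubling of each complex multiplicity is simply the observation that $v$ and $iv$ are $\RR$-independent with identical norm growth (this is the ``first duplication phenomenon'' mentioned before Proposition~\ref{prop: duplication}). Your complex multiplicities $1,\ n{-}1,\ 1$ double to real multiplicities $2,\ 2n{-}2,\ 2$, matching the statement directly.

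Also, your justification for why a single-geodesic computation yields the Oseledets exponents is slightly off: the limits exist a.e.\ by Oseledets' theorem (whose hypotheses were verified in Lemma~\ref{le:Oseledec-integrability}), and homogeneity of the $\PU(1,n)$-action together with invariance of the measure is what lets you compute at one specific tangent vector. The derivative estimate is not what guarantees existence of the limits.
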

\begin{proof}
We can lift the whole situation to the universal cover and look at the geodesic
flow on $T^1\BB^n$ acting on $\BB^n\times \CC^{n+1}$ endowed with the metric
from Section~\ref{sec:weigh1VHS}. Moreover, it suffices to compute the Lyapunov
exponents for the point $(0,\tfrac{1}{\sqrt{2\kappa}}e_1)\in T^1\BB^n$ since
$\PU(1,n)$ acts transitively on $T^1\BB^n$ and the measure $\mu_{T^1\BB^n}$ is
invariant under this action. The unit speed geodesic flow starting at
$(0,\tfrac{1}{\sqrt{2\kappa}}e_1)$ is given by
\[g_t\cdot (0,\tfrac{1}{\sqrt{2\kappa}}e_1) =
(\tanh\Bigl(\tfrac{t}{\sqrt{2\kappa}}\Bigr)e_1,
(1-\tanh^2\Bigl(\tfrac{t}{\sqrt{2\kappa}}\Bigr))\tfrac{1}{\sqrt{2\kappa}}e_1)\]
%
%
Since the local system is trivial, the geodesic flow acts trivially on the
fibers of $T^1\BB^n\times \CC^{n+1}$. However, the norm in this bundle varies.
Now
let $\ul{x} = (x_1,\dots,x_{n+1})$ in the fiber over
$(0,\tfrac{1}{\sqrt{2\kappa}}e_1)$. Set $\ul{z}_t =
(1,z_t)$, where $z_t$ is the projection of $g_t\cdot
(0,\tfrac{1}{\sqrt{2\kappa}}e_1)$ to $\BB^n$. By~\eqref{eqn:unif-norm},
\begin{align*}
\|G_t\cdot \ul{x}\|^2 = \|\ul{x}\|_{z_t}^2 &= -2\langle
\ul{x},\ul{x}
\rangle_{1,n} 
+ 4|\langle \ul{x}, \ul{z}_t \rangle_{1,n}|^2 \cdot (\langle
\ul{z}_t,\ul{z}_t 
\rangle_{1,n})^{-1}\\
				        &= -2\langle \ul{x},\ul{x} \rangle_{1,n}
+ 4|x_1\cosh\Bigl(\tfrac{t}{\sqrt{2\kappa}}\Bigr) -
x_{2}\sinh\Bigl(\tfrac{t}{\sqrt{2\kappa}}\Bigr)|^2
\end{align*}
since
\[\langle \ul{z}_t,\ul{z}_t\rangle_{1,n} = 1 -
\tanh^2\Bigl(\tfrac{t}{\sqrt{2\kappa}}\Bigr) =
\cosh^{-2}\Bigl(\tfrac{t}{\sqrt{2\kappa}}\Bigr)\]
and
\[\langle \ul{x}, \ul{z}_t \rangle_{1,n} =
x_1- x_2\tanh\Bigl(\tfrac{t}{\sqrt{2\kappa}}\Bigr).\]
As $\cosh(\alpha) \pm \sinh(\alpha) = e^{\pm\alpha}$,
we see that
\[V^{\lambda_1} = [(1,-1,0,\dots,0),(i,-i,0,\dots,0)]_{\RR}\]
\[V^0 = \{\ul{x}\in \CC^{n,1}| x_1 = x_{2} = 0\}\]
\[V^{-\lambda_1} = [(1,1,0,\dots,0),(i,i,0,\dots,0)]_{\RR}\]
is an orthogonal decomposition of the fiber of $\BB^n\times \CC^{n+1}$ over
$(0,\tfrac{1}{\sqrt{2\kappa}}e_1)$ which realizes the Lyapunov exponents.
\par
The second claim follows from Lemma \ref{le:Oseledec-integrability}, and its
proof. First, we can assume $\VV$ to be an $\RR$-VHS, since we otherwise
consider the real form of $\VV\oplus\ol{\VV}$, which has the same Lyapunov
exponents as $\VV$ (but possibly with different multiplicity). With 
curvature $k_0 = \tfrac{-2}{\kappa}$ on the ball, we have the bound
\[\tfrac{1}{t}\log\|G_t(v)\| =
\tfrac{1}{t} \int_0^t \tfrac{\dd}{\dd s} \log\|G_s(v)\| \dd s \leq
\tfrac{1}{\sqrt{2\kappa}}.\]
\end{proof}

%
\subsection{Duplication}
When working with $\CC$-variations, there are two duplication phenomena of the
Lyapunov spectrum. The first one is obvious from the fact that in a normed
$\CC$-vector space $V$, a vector $v\neq 0$ and $iv$ are $\RR$-linearly
independent and have the same norm. The second one occurs when an $\RR$-VHS
becomes reducible over $\CC$.
\par
\begin{prop} \label{prop: duplication}
Let $\WW_\RR$ be an irreducible direct summand as in Theorem \ref{thm:main} that
splits after tensoring with $\CC$. Then each Lyapunov exponent of
$\WW_{\RR}$ occurs with even multiplicity.
\end{prop}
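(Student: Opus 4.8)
The plan is to exploit the interaction between the real structure and the complex splitting. Since $\WW_\RR$ is $\RR$-irreducible but reducible over $\CC$, write $\WW_\CC = \WW_\RR\tensor_\RR\CC = \VV\oplus\VV'$ as a sum of $\CC$-sub-VHS. Because $\WW_\RR$ is defined over $\RR$, complex conjugation acts on $\WW_\CC$ and must permute the simple factors; as $\WW_\RR$ is $\RR$-irreducible (so $\WW_\CC$ cannot already be a single $\CC$-sub-VHS defined over $\RR$), conjugation interchanges the two factors, i.e.\ $\VV' = \ol{\VV}$. Thus $\WW_\CC = \VV\oplus\ol{\VV}$.

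The next step is to compare the Lyapunov spectra of $\WW_\RR$ and of $\VV$ as a $\CC$-VHS. Here I would invoke the remark after Lemma~\ref{le:Oseledec-integrability}: Oseledec's theorem applies to $\WW_\RR$ with the Hodge norm and also, verbatim, to the summands $\VV$ and $\ol{\VV}$ of $\WW_\CC$ with their Hodge norms, and as observed in Section~\ref{sec:weigh1VHS} one may flip the sign on the negative-definite part of $\psi$ to get a genuine positive-definite norm on $\VV$ computing the Hodge norm of vectors in $\VV$. The key point is that the inclusion $\WW_\RR\hookrightarrow\WW_\CC = \VV\oplus\ol{\VV}$ followed by projection to $\VV$ is a flat bundle map that is fibrewise an $\RR$-linear \emph{isomorphism} onto the underlying real vector space of $\VV$ (its kernel is a real sub-local-system of the $\RR$-irreducible $\WW_\RR$, hence zero or everything, and it cannot be everything). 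Moreover this map is norm-preserving up to a bounded factor: on each fibre the Hodge norm of $w\in\WW_\RR$ and the Hodge norm of its image in $\VV$ differ by a factor in a fixed compact range, because $\|w\|^2 = 2(\psi(w^{1,0},w^{1,0}) - \psi(w^{0,1},w^{0,1}))$ and the two components $\pi_\VV(w), \pi_{\ol\VV}(w)=\ol{\pi_\VV(w)}$ have equal contributions. Hence by Lemma~\ref{lemma:oseledec-comparable_norms} the $G_t$-cocycle on $\WW_\RR$ and the $G_t$-cocycle on $\VV$ have the \emph{same} Lyapunov spectrum (and filtration), and in particular $\VV$ has the same spectrum as $\ol\VV$ by applying conjugation.

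Now the duplication follows: the Lyapunov spectrum of $\WW_\RR$, counted as a \emph{real} bundle of rank $2\rk_\CC\VV$, equals the spectrum of $\VV\oplus\ol\VV$ as a real bundle, which is the spectrum of $\VV$ (as a rank-$\rk_\CC\VV$ complex bundle, hence rank-$2\rk_\CC\VV$ real bundle — already each complex exponent appears with the trivial ``$v$ vs.\ $iv$'' multiplicity two) union the spectrum of $\ol\VV$; but these two spectra coincide, so every value appears an even number of times. Equivalently, and more cleanly: the spectrum of $\WW_\RR$ equals that of $\VV$, and that of $\ol\VV$, and of $\VV\oplus\ol\VV$; since the last is the disjoint union of two copies of the first, each exponent of $\WW_\RR$ has even multiplicity. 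The main obstacle I anticipate is making precise and honest the claim that the projection $\WW_\RR\to\VV$ is a comparable-norm isomorphism of flat bundles — one must check that $\VV\cap\ol\VV = 0$ in $\WW_\CC$ (automatic once $\VV\neq\ol\VV$, since $\VV$ is simple) and that the real-linear bijection it induces fibrewise is bounded with bounded inverse uniformly over $B$, which is where the explicit Hodge-norm formula and compactness of the relevant Grassmannian data are used, so that Lemma~\ref{lemma:oseledec-comparable_norms} genuinely applies; everything else is formal.
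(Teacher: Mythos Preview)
Your proposal is correct and uses the same ingredients as the paper: the splitting $\WW_\CC = \VV \oplus \ol\VV$, the equality of the spectra of $\VV$ and $\ol\VV$ (since $\|\ol v\| = \|v\|$ and $G_t$ commutes with conjugation), and the $v$/$iv$ doubling on a complex bundle. The paper's route to the identification of the spectrum of $\WW_\RR$ with that of $\VV$ is slightly more direct than yours: instead of the projection $\WW_\RR\to\VV$ and a norm comparison via Lemma~\ref{lemma:oseledec-comparable_norms}, it simply observes that the spectrum of $\WW_\CC$ is both twice the spectrum of $\WW_\RR$ (first duplication) and twice the spectrum of $\VV$ (since $\VV$ and $\ol\VV$ contribute equally), and cancels.

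Two remarks. Your ``main obstacle'' is none: since $\VV\perp_\psi\ol\VV$, a direct computation gives $\|w\|_{\WW_\RR}^2 = 2\,\|\pi_\VV(w)\|_\VV^2$ exactly, so the projection is an isometry up to a constant. On the other hand, your sentence ``the Lyapunov spectrum of $\WW_\RR$ \ldots\ equals the spectrum of $\VV\oplus\ol\VV$ as a real bundle'' is wrong as a multiset identity (the right-hand side has twice as many elements), and the ``equivalently, and more cleanly'' restatement that follows is muddled for the same rank-counting reason; drop it, since your projection argument is already complete once you note that the Oseledets subspaces of $\VV$ are $\CC$-subspaces (parallel transport being $\CC$-linear) and hence of even real dimension.
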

\begin{proof}
We have $\WW_{\RR}\tensor \CC = \VV \oplus \ol{\VV}$ with a $\CC$-variation
$\VV$. Therefore, the Lyapunov spectrum of $\WW\tensor\CC$ is the union of the
two spectra of $\VV$ and $\ol{\VV}$, and is the spectrum of $\WW_\RR$ with each
exponent occurring with twice the multiplicity. Since $\|\ol{v}\| = \|v\|$ for
each section $v$ of $\VV$, and $G_t\cdot \ol{v} = \ol{G_t\cdot v}$, it follows
that $\VV$ and $\ol{\VV}$ have the same Lyapunov spectrum, and each individual
exponent occurs with even multiplicity because of the first duplication
phenomenon.
\end{proof}

\subsection{Zero exponents} 
This result is probably well-known, it was observed for the
Teichm\"uller geodesic flow in \cite{FMZzero} and we sketch their argument.
\begin{proof}[Proof of Theorem~\ref{thm:main} iii)] First, with respect to the
indefinite hermitian form $\psi$, the Oseledec subspaces
 $V_{\lambda_i}$ are isotropic unless $\lambda_i=0$ and pairwise orthogonal unless
$\lambda_i = - \lambda_j$. Indeed,
\[|Q( v_i, v_j )| \leq c(K) \|v_i\| \|v_j\|\]
on any compact set $K$ of positive measure with a uniform constant depending
only on $K$, and
\[\|G_t\cdot v_i\|\|G_t\cdot v_j\| \sim \exp((\lambda_i
+\lambda_j)t),\] 
which tends to zero for $t \to \infty$ or $t \to -\infty$. Since $g_t$ returns
to $K$ for a sequence of times $t_k \to \infty$ or $t_k\to -\infty$, 
\[Q(v_i,v_j) = Q(G_{t_k}\cdot v_i,G_{t_k}\cdot v_j) \to 0.\]
\par
The key observation is now that an isotropic subspace in a vector space with
an indefinite hermitian form of signature $(p,q)$ has at most dimension
$\min(p,q)$. 
 Since 
both $\bigoplus_{\lambda_i >0} V_{\lambda_i}$ and $\bigoplus_{\lambda_i <0}
V_{\lambda_i}$ are isotropic,
hence of dimension at most $\min(p,q)$, the complement $V_0$ has to be of
dimension at least $|p-q|$.
\end{proof}

\begin{remark}
The key observation of the proof also implies that the  that the Lyapunov
spectrum is symmetric with respect to $0$ \cite[Lemma A.3]{FMZzero}.
Non-degeneracy of the Hodge inner product implies that the Lyapunov spectrum
containing $\lambda_i$ also contains $-\lambda_i$. For $\lambda_i\neq 0$,
$V_{\lambda_i}\oplus V_{-\lambda_i}$ is an orthogonal factor of the Oseledec
decomposition, both of whose summands are isotropic, and hence have the same
dimension. Therefore $\lambda_i = - \lambda_{p+q+1-i}$.
\end{remark}

\subsection{Partial sums are intersection numbers}
We restate Part iv) of Theorem~\ref{thm:main} for holomorphic
sectional curvature $\tfrac{-2}{\kappa}$. Consider a polarized $\RR$-VHS $\WW_\RR$ on
a ball quotient $B = \BB^n/\Gamma$ as in Theorem~\ref{thm:main}~iv).
\begin{theorem}\label{thm:main-with-kappa}
 The positive Lyapunov exponents $\lambda_1,\dots,\lambda_k$ of $\WW_\RR$
satisfy
\begin{equation*}
\lambda_1+\dots +\lambda_k =
\frac{1}{\sqrt{2\kappa}}\cdot
\frac{(n+1)\Chern_1(\cE^{1,0}).\Chern_1(\omega_{\ol
B})^{n-1}}{\Chern_1(\omega_{\ol B})^n}.
\end{equation*}
\end{theorem}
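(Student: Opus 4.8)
The plan is to compute the sum of positive Lyapunov exponents by integrating the logarithmic derivative of the Hodge norm along the geodesic flow and applying the Oseledets multiplicative ergodic theorem, then to identify the resulting integral as an intersection number. First I would fix a point $\vartheta \in T^1B$ and recall from the proof of Lemma~\ref{le:Oseledec-integrability} the formula $\tfrac{\dd}{\dd t}\log\|G_t(v)\|$ in terms of the Higgs field $\tilde\tau$. Summing this over an orthonormal basis of the fastest-growing Oseledets subspace and using that $\lambda_1 + \dots + \lambda_k$ equals the top Lyapunov exponent of $\bigwedge^k \cW^{1,0}$, one sees that $\lambda_1 + \dots + \lambda_k$ equals the average over $T^1B$ (with respect to the Haar-induced measure $\mu$) of the second fundamental form term, i.e.\ essentially the average of $\|\tilde\tau_{\vartheta}\|^2_{HS}$ restricted to $\cW^{1,0}$, or more precisely the integral of the curvature form of the determinant of the Hodge bundle contracted against the geodesic direction. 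The key point is that the geodesic flow average of a function on $T^1B$ that is obtained by evaluating a $(1,1)$-form at the tangent direction reduces, by unfolding over the sphere fibers and using $\PU(1,n)$-invariance, to an integral over $B$ of that $(1,1)$-form wedged with $\omega_\hyp^{n-1}$; this is exactly the content made available by Lemma~\ref{le:omegahochnminus1} (relating $\partial\bar\partial$ of a function wedged with $\omega_\hyp^{n-1}$ to the Laplacian) together with Lemma~\ref{lem:Forni} (the higher-dimensional Green's formula, which converts the $t$-derivative of a spherical average into an integral of the Laplacian over a ball).

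Concretely I would proceed as follows. (1) Choose a smooth hermitian metric on $\cW^{1,0}$ and let $h$ be its ratio to the Hodge metric; the curvature of the Hodge metric on $\det\cW^{1,0}$ differs from a fixed closed form by $\partial\bar\partial$ of a function $\Lambda$ built from $h$. (2) Write $\tfrac1k(\lambda_1+\dots+\lambda_k) $— or rather the sum directly — as $\lim_{t\to\infty}\tfrac1t\log\|G_t(e_1\wedge\dots\wedge e_k)\|$ for a generic frame, and then, via Birkhoff's ergodic theorem applied to $\tfrac{\dd}{\dd t}\log\|G_t\cdot(\text{frame})\|$, as $\int_{T^1B} \Phi \,\dd\mu$, where $\Phi(\vartheta)$ is the infinitesimal growth of the Hodge volume of $\cW^{1,0}$ in the direction $\vartheta$. (3) Recognize $\Phi$ as $c\cdot(\text{curvature of }\det\cE^{1,0}\text{ evaluated on }(\vartheta,\bar\vartheta))$ up to an exact term whose geodesic average vanishes — this vanishing is where Lemma~\ref{lem:Forni} and the ergodicity/recurrence argument enter, exactly as in Forni's treatment of the hyperbolic plane and its adaptation in \cite{FMZinvariant}. (4) Use Lemma~\ref{le:omegahochnminus1} to turn the fiber-integral of the curvature $(1,1)$-form against the geodesic direction into $\tfrac{2i}{(n-1)!}\int_B \Theta(\det\cE^{1,0})\wedge\omega_\hyp^{n-1}$ divided by $\vol(B)$, and finally identify $\int_B\omega_\hyp^n/n!=\vol(B)$ and $\int_B \Theta(\det\cE^{1,0})\wedge\omega_\hyp^{n-1}$, $\int_B\omega_\hyp^n$ with the intersection numbers $\Chern_1(\cE^{1,0}).\Chern_1(\omega_{\ol B})^{n-1}$ and $\Chern_1(\omega_{\ol B})^n$ up to the curvature-dependent constant. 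The Kähler–Einstein property $\Ric(\omega_\hyp) = -(n+1)\,(\text{const})\,\omega_\hyp$ of the ball quotient produces the factor $(n+1)$, and tracking the normalization constant $\kappa$ through the volume forms of Section~\ref{sec:backmoduli} yields the prefactor $\tfrac{1}{\sqrt{2\kappa}}$.

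The main subtlety — and the step I expect to be the real obstacle — is the passage in (3) from a genuinely measurable, a priori merely $L^1$ object (the Oseledets subspace and the associated cocycle growth) to the smooth geometric quantity given by the curvature of the Hodge bundle: one must argue that the non-smooth, time-dependent corrections average to zero along the flow. This is handled by the Green's formula of Lemma~\ref{lem:Forni} exactly as in \cite{forni02} and \cite{FMZinvariant}: the spherical averages of the relevant potential function are controlled because $L$ solving $\Delta L = \Lambda$ grows sublinearly, so its contribution to $\tfrac1t\log\|\cdot\|$ vanishes in the limit; the remaining term is a pure ball-integral of $\Lambda = \Delta L$, which by Lemma~\ref{le:omegahochnminus1} is the curvature integral. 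A secondary technical point is that $B$ is non-compact: one must pass to the Deligne extension $\cE^{1,0}$ over $\ol B$ and check that the Hodge metric has at worst logarithmic singularities along $\Delta$, so that the curvature integral over $B$ computes the first Chern class of the extended bundle on $\ol B$ (and likewise $\omega_\hyp$ represents $\Chern_1(\omega_{\ol B})$, up to scale, by the Mumford–Hirzebruch proportionality for the log-canonical bundle of a ball quotient). Both facts are standard consequences of the unipotence hypothesis and Schmid's nilpotent orbit theorem, so they enter as cited input rather than as new work.
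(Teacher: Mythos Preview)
Your proposal follows essentially the same route as the paper: Oseledets/Birkhoff to express the sum as a time average, Forni's higher-dimensional Green formula (Lemma~\ref{lem:Forni}) to convert spherical $t$-derivatives into ball-integrals of the Laplacian, Lemma~\ref{le:omegahochnminus1} to rewrite the Laplacian integral as a curvature integral, the K\"ahler--Einstein relation to produce the factor $(n+1)$, and Mumford's good-metric theorem to pass to Chern classes on $\ol B$.

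There is one point where your description is slightly off and worth sharpening. In your step~(3) you say that $\Phi$ equals the curvature term ``up to an exact term whose geodesic average vanishes'', and you locate this vanishing in Forni's lemma and ergodicity. That is not quite how the argument runs. The paper first integrates over the Lagrangian Grassmannian bundle (rather than picking the Oseledets subspace), then applies Forni's lemma to produce $\Delta_\hyp\log\|L\|$ for a \emph{flat} Lagrangian decomposable section $L$, and only then invokes the key pluriharmonicity observation (Lemma~\ref{le: Laplace kills flat contributions}): using the explicit formula~\eqref{eq:HodgeviaOmega} for $\|L\|^2$, the $L$-dependent factors are locally $|f|^2$ with $f$ holomorphic in the base variable, so $\partial\bar\partial\log|f|^2=0$ and hence $\Delta_\hyp\log\|L\| = -\tfrac12\Delta_\hyp\log\det(\psi(\omega_i,\omega_j))$ \emph{pointwise}, independently of $L$. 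So the passage from the measurable Oseledets data to the smooth curvature integrand is not an averaging statement at all; it is a pointwise identity for the Laplacian, and Forni's lemma is used before, not after, this step. Once you insert this lemma your outline is complete and matches the paper.
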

Before we engage in the proof, a few remarks are in order. Almost all 
ingredients are just parallel to 
\cite{kontsevichzorich}, see also \cite{forni02}, explained in more detail in \cite{bouwmoel} for
the curve case and \cite{ekz} in general. There are a few modifications however. 
\par
First, we remark that the formula \eqref{eq:HodgeviaOmega} is valid not only for (flat) surfaces, 
but for general weight one VHS. Second, the Laplacian on the $n$-ball
replaces the Laplacian on the one-dimensional ball without major
difficulties. Finally, at a crucial step we use that ball quotients
are K\"ahler-Einstein manifolds to trade the first Chern class of the
cotangent bundle for the class of the K\"ahler metric.
\par 
To start with the details, let
$\Omega = \tfrac{1}{k!}\wedge^kQ \in (\bigwedge^{2k}\WW_{\CC})^*$. Consider a
local section $L$ of $\bigwedge^k\WW$ over an open set $U$ that is given by a
decomposable vector $L = v_1 \wedge \cdots \wedge v_k$.
Such a vector is called Lagrangian, if the symplectic form $Q( v_i, v_j
) =0$
for all pairs $(i,j)$. For a Lagrangian decomposable vector we claim that
the Hodge norm can be calculated (see \cite{ekz} and in more detail in
\cite{GrHub}) by the formula
\begin{equation} \label{eq:HodgeviaOmega}
 \| L\|^2 = \frac{|\Omega(\omega_1\wedge\cdots\wedge\omega_k \wedge
v_1 \wedge\cdots\wedge v_k)| \cdot |\Omega(v_1\wedge\cdots\wedge v_k \wedge
\bar\omega_1\wedge\cdots\wedge\bar\omega_k)|}{|\Omega(\omega_1\wedge\cdots
\wedge \omega_k \wedge \bar\omega_1\wedge\cdots\wedge\bar\omega_k)|}, 
\end{equation}
where $\omega_1,\ldots,\omega_k$ is a local basis of $\cW^{1,0}$. Note 
that the right hand side does not depend on the choice of this basis. 
\par 
\begin{lemma} \label{le: Laplace kills flat contributions} Given a decomposable
Lagrangian section $L$ of $\bigwedge^k\WW$
as above over some open subset $U$ of $B$, we have on $U$ the equality of
functions
\[ \Phi := \Delta_\hyp(\log\|L\|) = -\tfrac{1}{2}\Delta_\hyp \log \det
(\psi(\omega_i, \omega_j))_{i,j}.\]
\end{lemma}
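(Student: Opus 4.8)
The plan is to substitute the closed formula \eqref{eq:HodgeviaOmega} for the Hodge norm of a decomposable Lagrangian vector into $\Delta_\hyp\log(\cdot)$ and to observe that every factor built purely out of the flat data — the flat section $L$ and the flat form $Q$ — is pluriharmonic, hence killed by $\Delta_\hyp$ by Lemma~\ref{le:omegahochnminus1}, while the only surviving term is the Gram determinant $\det(\psi(\omega_i,\omega_j))_{i,j}$ of a local holomorphic frame of $\cW^{1,0}$.

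First I would fix such a frame $\omega_1,\dots,\omega_k$ of $\cW^{1,0}$ over $U$ and set $f:=\Omega(\omega_1\wedge\cdots\wedge\omega_k\wedge v_1\wedge\cdots\wedge v_k)$. Because the $v_i$ are flat sections of the real local system $\WW$ — in particular holomorphic for the natural holomorphic structure on $\WW\tensor\cO_B$, for which $\cW^{1,0}$ is a holomorphic subbundle — and $\Omega$ is a flat, hence holomorphic, section of $(\bigwedge^{2k}\WW_\CC)^\dual$, the function $f$ is holomorphic on $U$. Since $Q$ is defined over $\RR$ and $\ol{v_i}=v_i$, one gets $\Omega(v_1\wedge\cdots\wedge v_k\wedge\ol\omega_1\wedge\cdots\wedge\ol\omega_k)=\pm\,\ol f$ (the sign is irrelevant below), so the numerator of \eqref{eq:HodgeviaOmega} equals $|f|^2$. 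For the denominator I would use the multilinear‑algebra identity for $\tfrac1{k!}Q^{\wedge k}$ on a pair of $Q$‑isotropic frames: since $Q$ vanishes on $\cW^{1,0}\tensor\cW^{1,0}$ and on $\cW^{0,1}\tensor\cW^{0,1}$,
\[
\Omega(\omega_1\wedge\cdots\wedge\omega_k\wedge\ol\omega_1\wedge\cdots\wedge\ol\omega_k)=\pm\det\bigl(Q(\omega_i,\ol\omega_j)\bigr)_{i,j}=c_k\,\det\bigl(\psi(\omega_i,\omega_j)\bigr)_{i,j}
\]
for a nonzero constant $c_k$, the last determinant being a positive real number because $\psi$ is positive definite on $\cW^{1,0}$. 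Thus \eqref{eq:HodgeviaOmega} rewrites as $\|L\|^2=|f|^2/\bigl(|c_k|\,\det(\psi(\omega_i,\omega_j))_{i,j}\bigr)$, i.e.
\[
2\log\|L\| = \log|f|^2 - \log|c_k| - \log\det\bigl(\psi(\omega_i,\omega_j)\bigr)_{i,j}.
\]

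Next I would argue that $f$ is nowhere zero on $U$: the $v_i$ remain linearly independent (being flat), so $L$ is a nowhere‑vanishing flat section of $\bigwedge^k\WW$ and $\|L\|>0$ throughout $U$, which by the last displayed identity forces $f\neq0$ on $U$. Hence $\log|f|^2=2\Re\log f$ is locally the real part of a holomorphic function, so $\partial\ol\partial\log|f|^2=0$ on $U$; the same holds trivially for the constant $\log|c_k|$. By Lemma~\ref{le:omegahochnminus1}, $\Delta_\hyp$ annihilates every function whose $\partial\ol\partial$ vanishes, so applying $\Delta_\hyp$ to the displayed equation yields $2\,\Delta_\hyp\log\|L\| = -\,\Delta_\hyp\log\det(\psi(\omega_i,\omega_j))_{i,j}$, which is the assertion. (The right‑hand side is independent of the chosen frame: a holomorphic change of frame multiplies $\det(\psi(\omega_i,\omega_j))_{i,j}$ by $|\det A|^2$ with $A$ holomorphic invertible, and $\Delta_\hyp\log|\det A|^2=0$ by the same pluriharmonicity argument — consistent with the stated basis‑independence of the right‑hand side of \eqref{eq:HodgeviaOmega}.)

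The one point requiring care is the nowhere‑vanishing of $f$, which is what lets $\log|f|^2$ be treated as genuinely pluriharmonic rather than plurisubharmonic with a polar set; it is forced here precisely because $\|L\|$ is the Hodge norm of a globally nonzero flat section and \eqref{eq:HodgeviaOmega} is an honest equality of smooth functions on $U$. Everything else is the Pfaffian‑type identity for $\tfrac1{k!}Q^{\wedge k}$ on isotropic frames and the elementary fact, recorded in Lemma~\ref{le:omegahochnminus1}, that $\Delta_\hyp$ kills $\partial\ol\partial$‑closed functions.
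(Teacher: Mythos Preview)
Your proof is correct and follows essentially the same approach as the paper's: both rely on the formula \eqref{eq:HodgeviaOmega} together with the fact that the hyperbolic Laplacian kills pluriharmonic functions. The paper only sketches the argument by referring to \cite{ekz} and noting that $\Delta_\hyp$ is built out of $\partial\bar\partial$-type derivatives (equivalently, by the proportionality in Lemma~\ref{le:omegahochnminus1}), while you spell out explicitly the identification of the numerator of \eqref{eq:HodgeviaOmega} with $|f|^2$ for a holomorphic $f$ and of the denominator with $|c_k|\det(\psi(\omega_i,\omega_j))$, and you justify carefully why $f$ is nowhere zero.
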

\par
\begin{proof}
The proof is the same as in \cite{ekz} and the other sources above.
The only thing we need to use is that for higher-dimensional balls, too,
the hyperbolic Laplacian is a sum of $\frac{\partial^2}{\partial_i
\ol{\partial}_j}$-derivatives  and of $\frac{\partial^2}{\ol{\partial}_i
\partial_j}$-derivatives by \eqref{eqn:Laplacian form on Bn}.
Alternatively, one can use the proportionality \eqref{eqn: Laplace_and_ddbar} of
$\Laplace_{\hyp}(\log\|L\|)\dvol_{\BB^n}$ and 
$\partial\bar\partial\log\|L\| \wedge \omega_\hyp^{n-1}$ 
and the fact that $\partial\bar\partial f = 0$ for any holomorphic or 
antiholomorphic function $f$.
\end{proof}
\par
Since both sides of the equation do not depend of the choice of the
basis we may consider $\Phi_k$ as a function on the whole of $B$.

\begin{proof}[Proof of Theorem~\ref{thm:main-with-kappa}] We first set up
some notations. Let $T^1B$ be the unit tangent bundle, and let $\Gr_k(B)$ be the
Lagrangian Grassmannian bundle over $B$, whose fiber over $x\in B$ consists
of the $k$-dimensional Lagrangian $\RR$-vector subspaces of
$\WW$. Set \[\Gr_k(T^1B) = T^1B \times_B \Gr_k(B).\]
The Lagrangian Grassmannian is a homogeneous space, thus it carries a natural
measure $\gamma$. The bundle measure $\mu_{\Gr_k(T^1B)}$ is then the
product measure of $\gamma$ and the measure $\mu_{T^1B}$ on
the base. We assume the measure $\gamma$, and the measure on the fiber of $T^1B$
to be normalized to have area 1. Denote again $G_t:\Gr_k(T^1B)\to \Gr_k(T^1B)$
the lift of the geodesic flow $g_t:T^1B\to T^1B$, and let $u:\BB^n\to B$ be the
universal covering map. If $\vartheta\in T^1B$, then we denote the coordinate on
the base by $x(\vartheta)\in B$, and a lift to the universal cover by $\tilde
x(\vartheta)$.
\par


The core of the proof is the following chain of equalities. After averaging
over the whole space, we introduce another average over the unit tangent
space at each point. Then we interchange the integral and the limit;
this is possible, since the logarithmic derivative of the Hodge norm is bounded
above by Lemma \ref{le:Oseledec-integrability}. Next, in order to apply Lemma
\ref{lem:Forni}, we pass
to the universal cover. Then we use Lemma~\ref{le: Laplace kills flat
contributions} to get rid of the dependence on $L$. Then we interchange once
more limit and integral. Next, we go back to $B$ and decompose the fiber bundle
$T^{\|\cdot\| \leq t} B$ along the fibers of
\[\exp:T^{\|\cdot\| \leq t} B \to B. \]
The preimage of $z\in B$ under
this map 
has volume equal to the one of $\BB^n_t(\tilde z)$, where $\tilde z$ is a lift
of $z$ to $\BB^n$, since
\begin{align*}
\exp^{-1}(z) &= \{\vartheta \in T^{\|\cdot\| \leq t} B\mid \exp(\vartheta) = z\}
\\
	     &= \{\tilde\vartheta \in T^{\|\cdot\| \leq t} \BB^n\mid
\exp(\tilde\vartheta) = \tilde z\}\\
	     &= \bigcup_{0\leq \tau \leq t} \bigcup_{d_{\hyp}(\tilde z, y) =
\tau} \{y\}
\end{align*}
Finally, the factor
\[c_1(t) =
\frac{\sqrt{2\kappa}}{2n}\tanh\biggl(\frac{t}{\sqrt{2\kappa}}\biggr)\cdot
\frac{1}{\vol(\BB^n_t)}\]
will drop out once we apply Lemma \ref{lem:Forni}. In the last line we use that
\[\frac{1}{T}\int_0^T \vol(\BB^n_t)\cdot c_1(t) \dd t =
\frac{\kappa}{n}\cdot\frac{\log(\cosh(\tfrac{T}{\sqrt{2\kappa}}))}{T} \to
\frac{\kappa}{n\cdot \sqrt{2\kappa}}\]
as $T\to \infty$.


{\allowdisplaybreaks 
\begin{align*}
\vol(B)\sum_{i=1}^k \lambda_i  
     &= \int_{\Gr_k(T^1B)} \lim_{T \to \infty} \frac{1}{T}\int_{0}^T 
	\frac{\dd}{\dd t} \log \|G_t(\vartheta, L)\| 
	\dd t \dd\mu_{\Gr_k(T^1B)}(\vartheta,L)\\ 
%
%
     &= \int_{\Gr_k(T^1B)} \frac{1}{\sigma_{2n-1}} \int_{T^1_{x(\vartheta)}B}
\lim_{T \to \infty}  \frac{1}{T} \int_{0}^T \frac{\dd}{\dd t} \log
\|G_t(\theta, L )\|  &&&\\ 
     & \qquad\qquad\qquad\qquad\qquad\qquad\qquad\qquad\qquad 	\dd t
\dd\sigma(\theta) \dd\mu_{\Gr_k(T^1B)}(\vartheta,L)\\
%
    & = \int_{\Gr_k(T^1B)} \lim_{T \to \infty}  \frac{1}{T} \int_{0}^T 
	\frac{1}{\sigma_{2n-1}} \frac{\dd}{\dd t} \int_{T^1_{x(\vartheta)}B} 
\log \|G_t(\theta, L )\|  \\
& \qquad\qquad\qquad\qquad\qquad\qquad\qquad\qquad\qquad 
\dd\sigma(\theta) \dd t \dd\mu_{\Gr_k(T^1B)}((x,\vartheta),L) \\
%
%
    & = \int_{\Gr_k(T^1B)} \lim_{T \to \infty}  \frac{1}{T} \int_{0}^T 
	\frac{1}{\sigma_{2n-1}} \frac{\dd}{\dd t}
\int_{T^1_{\tilde x(\vartheta)}\BB^n} 
\log \|G_t(u(\tilde\theta), L )\|  \\
& \qquad\qquad\qquad\qquad\qquad\qquad\qquad\qquad\qquad 
\dd\sigma(\tilde\theta) \dd t \dd\mu_{\Gr_k(T^1B)}(\vartheta,L) \\
%
%
    & = \int_{\Gr_k(T^1B)} \lim_{T \to \infty}  \frac{1}{T}\int_{0}^T c_1(t)
\int_0^t \int_{T^1_{\tilde x(\vartheta)}\BB^n} 
	\Delta_{\hyp} \log \|G_{\tau}(u(\tilde\theta),L)\|\\
    & \qquad\qquad\qquad\qquad\qquad\qquad\qquad\qquad\qquad 
\dd\sigma(\tilde\theta) \dd\tau  \dd t \dd\mu_{\Gr_k(T^1B)}(\vartheta,L) \\ 
%
%
    & = \int_{B} \lim_{T \to \infty}  \frac{1}{T}\int_{0}^T c_1(t)
\int_0^t \int_{T^1_{\tilde x}\BB^n} 
	-\tfrac{1}{2} \Delta_{\hyp} \log
\det(\psi(\omega_i,\omega_j))_{i,j}(u(\exp_{\tilde x}(\tau\tilde
\theta)))\\
    & \qquad\qquad\qquad\qquad\qquad\qquad\qquad\qquad\qquad 
\dd\sigma(\tilde\theta) \dd\tau \dd t
\dd\mu_{B}(x) \\ 
%
%
    & = \lim_{T \to \infty}  \frac{1}{T}\int_{0}^T c_1(t)
\int_{B} \int_0^t \int_{T^1_{\tilde x}\BB^n} 
	-\tfrac{1}{2} \Delta_{\hyp} \log
\det(\psi(\omega_i,\omega_j))_{i,j}(u(\exp_{\tilde x}(\tau\tilde
\theta)))\\
    & \qquad\qquad\qquad\qquad\qquad\qquad\qquad\qquad\qquad 
\dd\sigma(\tilde\theta) \dd\tau 
\dd\mu_{B}(x) \dd t\\ 
%
%
    & = \lim_{T \to \infty}  \frac{1}{T}\int_{0}^T c_1(t)
\int_{T^{\|\cdot\| \leq t} B}\Phi(\exp_{x(v)}(v)) \dd\mu_{T^{\|\cdot\| \leq t}
B}(v) \dd t\\ 
%
%
    & = \lim_{T \to \infty}  \frac{1}{T}\int_{0}^T c_1(t)
\int_{B}\int_{\exp^{-1}(z)} \Phi(z) \dd\mu_{\exp^{-1}(z)}\dd\mu_B(z) \dd t\\ 
%
%
    & = \int_{B} \Phi(z) \dd\mu_B(z) \cdot \lim_{T \to \infty} 
\frac{1}{T}\int_{0}^T c_1(t)
\int_{\exp^{-1}(z)} \dd\mu_{\exp^{-1}(z)} \dd t\\ 
%
    & = \int_{B} \Phi(z) \dd\mu_B(z) \cdot \lim_{T \to \infty} 
\frac{1}{T}\int_{0}^T c_1(t)
\vol(\BB^n_t) \dd t\\ 
%
%
%
%
%
    &= \int_B \frac{\kappa}{n\sqrt{2\kappa}}\Phi(x) \dd \mu_B(x).
%
%
\end{align*}
}
\par
The measure $\dd\mu_B$ is given by integrating against the volume form
$\dvol_B$, which is the image of $\dvol_{\BB^n}$ via the universal cover $u:
\BB^n \to B$. Choose a fundamental domain $\FDom\subset \BB^n$ for the action of
$\Gamma$. Let $F:B\to \RR$ be the function
\[F = \log \det(\psi(\omega_i,\omega_j))_{i,j}.\]
Then Lemma \ref{le:omegahochnminus1} yields
\begin{align*}
 \frac{\kappa}{n\sqrt{2\kappa}} \int_B \Phi(x) \dd\mu_B(x) &=
\frac{\kappa}{n\sqrt{2\kappa}} \int_{\FDom} -\tfrac{1}{2}\Laplace_{\hyp}(F\circ
u) \dvol_{\BB^n}\\
					     &= -\frac{i\kappa}{n!\sqrt{2\kappa}}
 \int_{\FDom} \partial\bar\partial (F\circ u) \wedge \omega_{\hyp}^{n-1}.\\
\end{align*}

A ball quotient $B$ is a K\"ahler-Einstein manifold, therefore the first Chern
class and the K\"ahler class are proportional. Moreover, as was remarked by
B. Hunt \cite[Lemma 1.8]{Hunt00}, the K\"ahler-Einstein metric also computes the
logarithmic Chern class in case $B$ is not compact. The proportionality
constant is determined from the holomorphic section curvature.
By \cite[p. 223]{Huybrechts05},
\[\Chern_1(\omega_{\ol B}) = [-1/(2\pi) \cdot
\Ric(B,g_{\hyp})]\]
and by \cite[p. 168]{KobayashiNomizu2}
\[\Ric(B,g_{\hyp}) = \tfrac{1}{2}(n+1)k_0 \omega_{\hyp},\]
where $k_0 = \tfrac{-2}{\kappa}$ is the holomorphic sectional curvature.
Thus,
\[\Chern_1(\omega_{\ol B}) = \frac{(n+1)}{2\pi\kappa} [\omega_{\hyp}].\]
As the metric on $\cW^{1,0}$ is good in
the sense of Mumford (by Schmid's $\SL_2(\RR)$-orbit theorem, e.g.\ \cite[Theorem~5.21]{catkapsch}), 
it computes the first Chern class
of Mumford's extension of $\cW^{1,0}$ to $\ol B$, which is the same
as the Deligne extension, see e.g.\ \cite[Lemma~3.4]{MVZ}. Consequently, 
\[\Chern_1(\cE^{1,0}) = \tfrac{i}{2\pi} [\Theta(\wedge^k\cW^{1,0})],\]
where the curvature $\Theta(\wedge^k\cW^{1,0})$ is given by
$-\partial\bar\partial F$. Altogether we obtain
\begin{align*}
\vol(B)\sum_{i=1}^k \lambda_i &= \frac{1}{\sqrt{2\kappa}} \cdot
\frac{i\kappa}{n!}  \cdot \frac{2\pi}{i} \cdot
\biggl(\frac{2\pi\kappa}{(n+1)}\biggr)^{n-1}\cdot
\Chern_1(\cE^{1,0}). \Chern_1(\omega_{\ol B})^{n-1}[B]\\
			      &=
\frac{1}{\sqrt{2\kappa}}\cdot \frac{(2\pi\kappa)^{n}}{n! (n+1)^{n-1}}
\Chern_1(\cE^{1,0}). \Chern_1(\omega_{\ol B})^{n-1}[B]
\end{align*}
On the other hand,
\[[\dvol_{B}] = \frac{1}{n!}[\omega_{\hyp}]^n = \frac{(2\pi\kappa)^n}{n!(n+1)^n}
\Chern_1(\omega_{\ol B})^n.\]
This finishes the proof.
%
\end{proof}
\medskip

\section{Invariants of modular embeddings and commensurability invariants} \label{sec:commensurability}

The {\em trace field} of a lattice $\Gamma \subset \PU(1,n)$ is defined
to be $E = \QQ(\tr({\rm Ad}(\gamma)), \gamma \in \Gamma)$, where $\rm Ad$ is 
the adjoint representation. The trace field is
an invariant of the commensurability class of $\Gamma$ (e.g.\ 
\cite[Proposition~12.2.1]{DeligneMostow86}).
\par
The aim of this section is to show that, among lattices $\Gamma \subset
\PU(1,n)$ (or rather $\SU(1,n)$ for that matter) that admit a modular embedding, 
the Lyapunov spectrum 
and the relative orbifold Euler characteristics are also commensurability invariants. 
In fact, given a modular embedding, we can define several invariants using 
intersection theory and we have shown in Theorem~\ref{thm:main} resp.\ we will show 
in Corollary~\ref{cor:compute-otherinvariant} later that the two 
invariants mentioned above, the Lyapunov spectrum 
and the relative orbifold Euler characteristics, indeed arise in that way.
At the end of this section we give another proof of the commensurability 
invariance of the Lyapunov spectrum without referring to modular embeddings. 
%
\par
\begin{remark} {\rm
From the point of view of Lyapunov exponents, arithmetic
lattices in $\PU(1,n)$ are less interesting, as we will show 
in the next section.
\par
The construction of {\em non-arithmetic} 
lattices in $\PU(1,n)$ is a long-standing
challenge. Besides the cyclic coverings studied below there are several
techniques to construct (non-arithmetic) lattices in $\PU(1,n)$, notably the original
construction using complex reflection groups by Mostow (\cite{mostowremarkable}). By the
work of Sauter (\cite{sauter}) and Deligne-Mostow (\cite{delcommen})
all the presently known non-arithmetic lattices in $\PU(1,n)$ 
turn out to be commensurable to lattices arising from cyclic coverings.  
See \cite{parkersurvey} for a recent survey on commensurability results
and \cite{derauxppcensus} for other constructions that by numerical
evidence are very likely to give finitely many other non-arithmetic
lattices.
}\end{remark}
\par
\subsection{Modular embeddings}
In the following, we suppose that $\Gamma\subset \SU(1,n) \cap \GL_{n+1}(F)$
for some number field $F$, which we take to be Galois over $\QQ$.
In particular, the Zariski closure of $\Gamma$, which by 
Borel density is all of $\SU(1,n)$,  is defined over $F$, as is the
adjoint representation. This entails $E\subseteq F$.
\par
We say that $\Gamma$ admits a modular embedding, 
if for any $\sigma\in \Gal(F/\QQ)$
\begin{enumerate}[a)]
 \item the Galois conjugate group ${\Gamma}^\sigma$ fixes a hermitian form of signature
$(p(\sigma),q(\sigma))$, where we assume that $p(\sigma) \geq q(\sigma)$, and
 \item there is a holomorphic map
$\varphi^\sigma: \BB^n \to \BB_{p(\sigma),q(\sigma)}$ to the symmetric space
of $\PU(p(\sigma),q(\sigma))$ that is equivariant with respect to
the action of $\Gamma$ on the domain and $\Gamma^\sigma$ on the
range of $\varphi^\sigma$.
\end{enumerate}
We will show below that a lattice as above admits at most one
modular embedding. Namely, the signature $(p(\sigma),q(\sigma))$ 
is uniquely determined by ($\Gamma$ and)
$\sigma$ by Lemma~\ref{le:sign} below, and
the map $\varphi^\sigma$ is unique by Theorem~\ref{thm:modemb_unique}.
\par
In passing, we note that 
$p(\sigma)=0$ or $q(\sigma)=0$ implies that
$\BB_{p(\sigma),q(\sigma)}$ is a point, 
thus $\varphi^\sigma = \mathit{const}$ 
trivially does the job.
\par
Our notion of modular embedding is essentially
the same as in \cite{CoWoMod}. Modular embeddings are also discussed in
\cite[Sec. 10]{mcmullenhodge}; however one should be aware that the target space
of the period map used there is not equal to the period domain for signatures
$(p,q)$ with $p+q = n+1$, $|p-q|<n-1$. 
\par
\begin{lemma} \label{le:sign} Let $\Gamma$ and $\sigma$ be as above. Then
$\Gamma^\sigma$ preserves a hermitian form for at most one 
signature $(p(\sigma),q(\sigma))$ with $p(\sigma) \geq q(\sigma)$. 
\end{lemma}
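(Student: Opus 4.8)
The plan is to reduce Lemma~\ref{le:sign} to Schur's Lemma applied to the representation of $\Gamma^\sigma$ on $\CC^{n+1}$. The point is that this representation is irreducible: then a nonzero invariant hermitian form is automatically nondegenerate (its radical would be a $\Gamma^\sigma$-invariant subspace), and any two invariant hermitian forms differ by a nonzero \emph{real} scalar. Scaling by a negative real at worst interchanges the positive and negative indices, so once we impose the normalization $p(\sigma)\ge q(\sigma)$ the signature is uniquely determined, which is exactly the assertion.

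First I would record that $\Gamma$ acts irreducibly on $\CC^{n+1}$: by Borel density $\Gamma$ is Zariski dense in $\SU(1,n)$, hence Zariski dense in $\SL_{n+1}(\CC)$, so by Burnside's theorem the $\CC$-linear span of $\Gamma$ in $M_{n+1}(\CC)$ is all of $M_{n+1}(\CC)$. Since $\Gamma\subset\GL_{n+1}(F)$, the $F$-linear span of $\Gamma$ in $M_{n+1}(F)$ therefore already has dimension $(n+1)^2$, i.e.\ equals $M_{n+1}(F)$. Applying the field automorphism $\sigma$, the $\sigma(F)$-span of $\Gamma^\sigma$ equals $M_{n+1}(\sigma(F))$, hence the $\CC$-span of $\Gamma^\sigma$ is all of $M_{n+1}(\CC)$, so $\Gamma^\sigma$ acts irreducibly on $\CC^{n+1}$ and $\mathrm{End}_{\CC[\Gamma^\sigma]}(\CC^{n+1})=\CC\cdot\id$ by Schur. (Conceptually this is just the statement that the Zariski closure of $\Gamma^\sigma$ is the Galois conjugate group $\mathbf{G}^\sigma$, still an $F$-form of $\SL_{n+1}$ whose standard representation is absolutely irreducible, but the matrix-algebra formulation is the cleanest way to justify it.)

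For the final step, suppose $h_1$ and $h_2$ are $\Gamma^\sigma$-invariant hermitian forms on $\CC^{n+1}$ with $h_1\neq 0$, hence nondegenerate. Define $A\in\mathrm{End}(\CC^{n+1})$ by $h_2(v,w)=h_1(Av,w)$; sesquilinearity shows $A$ is $\CC$-linear, and invariance of $h_1$ and $h_2$ forces $A$ to commute with $\Gamma^\sigma$, so $A=\lambda\cdot\id$ with $\lambda\in\CC$. Comparing $\overline{h_2(v,w)}=h_2(w,v)$ with $\overline{h_1(v,w)}=h_1(w,v)$ yields $\bar\lambda=\lambda$, so $\lambda\in\RR^\times$, and then $h_2=\lambda h_1$ has the signature of $h_1$ when $\lambda>0$ and the reversed signature when $\lambda<0$. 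In both cases $\{p(\sigma),q(\sigma)\}$ is the same unordered pair, so with $p(\sigma)\ge q(\sigma)$ fixed, $h_1$ and $h_2$ determine the same signature. I expect the only genuinely delicate point to be the transport of irreducibility through $\sigma$: one must phrase it as the algebraic condition ``the group elements span the full matrix algebra'', which is defined over $F$ and therefore preserved by the abstract field automorphism $\sigma$, rather than as the (not obviously $\sigma$-stable) absence of invariant subspaces.
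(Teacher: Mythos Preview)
Your proof is correct and takes a genuinely different route from the paper's. The paper argues by dimension counting: if $\Gamma^\sigma$ preserved hermitian forms of two distinct normalized signatures, its Zariski closure would lie in the intersection of two distinct groups $\SU(p_1,q_1)\cap\SU(p_2,q_2)$, hence have real dimension strictly less than $(n+1)^2-1$; but the dimension of the Zariski closure is invariant under Galois conjugation and equals $\dim\SU(1,n)=(n+1)^2-1$, a contradiction. You instead transport \emph{absolute irreducibility} through $\sigma$ by phrasing it as the $F$-rational condition ``the matrices in $\Gamma$ span $M_{n+1}(F)$'', and then finish with Schur's lemma.

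Your approach is arguably more elementary and self-contained: it avoids the algebraic-group machinery (real Zariski closures, dimension invariance under field automorphisms) in favor of pure linear algebra (Burnside and Schur), and it makes completely explicit which property is being carried through $\sigma$. It also yields a bit more than the paper's argument, namely that the invariant hermitian form is unique up to a nonzero real scalar, not merely that the signature is determined. The paper's argument, on the other hand, is shorter to state once one accepts the background fact that Galois conjugation preserves the dimension of the Zariski closure, and it generalizes more readily to situations where one wants to compare Zariski closures rather than just invariant forms. One minor point worth making explicit in your write-up: the step ``Zariski dense in $\SU(1,n)$, hence Zariski dense in $\SL_{n+1}(\CC)$'' uses that $\SU(1,n)$ is a real form of $\SL_{n+1}$; equivalently, you can bypass this by noting directly that $\SU(1,n)$ already acts irreducibly on $\CC^{n+1}$, so by Burnside it spans $M_{n+1}(\CC)$, and then any linear functional vanishing on the (real-)Zariski-dense subset $\Gamma$ vanishes on $\SU(1,n)$.
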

\par
\begin{proof}
Suppose the contrary holds. Then the Zariski closure of $\Gamma^\sigma$
is contained in the intersection of two special unitary groups $\SU(p_1,q_1)$
and 
$\SU(p_2,q_2)$ with $p_i + q_i = n+1$, $p_i \leq q_i$ but $p_1 \neq q_1$. 
These two groups have both the (real) dimension $(n+1)^2 -1$ and since they are not
equal, their intersection has strictly smaller dimension. The dimension of 
the Zariski closure is invariant under Galois
conjugation and the Zariski closure of $\Gamma$ over $\RR$ 
is all of $\SU(1, n)$, hence of dimension
$(n + 1)^2 - 1$. This is a contradiction.
\end{proof}
\par
Now we prove the main result about modular embeddings. 
First, we show that we can pass to a finite index 
subgroup to realize the trace field as field of traces 
with respect to the standard embedding.
\par
\begin{lemma} \label{le:righttracefield}
There exists a finite index subgroup $\Gamma_1 \subset \Gamma \subset \SU(1,n)$,
such
that the trace field $E = \QQ(\tr({\rm Ad}(\gamma)), \gamma \in \Gamma)$
for the adjoint representation of $\Gamma$ coincides with the
trace field $\QQ(\tr(\gamma), \gamma \in \Gamma_1)$ for the standard
representation of $\Gamma_1$. 
\end{lemma}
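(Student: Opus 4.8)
The plan is to prove the two inclusions between $E$ and $\QQ(\tr\gamma : \gamma\in\Gamma_1)$ separately; only the second one dictates the choice of $\Gamma_1$.

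The inclusion $E\subseteq \QQ(\tr\gamma : \gamma\in\Gamma_1)$ holds for \emph{any} finite-index subgroup $\Gamma_1\subseteq\Gamma$. Indeed, $\mathfrak{sl}_{n+1}(\CC)$ is a direct summand of $\mathrm{End}(\CC^{n+1}) = V\otimes V^\vee$, where $V$ is the standard representation, so the adjoint trace is expressed through standard traces by $\tr(\mathrm{Ad}(\gamma)) = \tr(\gamma)\tr(\gamma^{-1}) - 1$. Since $\gamma$ and $\gamma^{-1}$ lie in $\Gamma_1$ whenever $\gamma$ does, this gives $\tr(\mathrm{Ad}(\gamma)) \in \QQ(\tr\delta : \delta\in\Gamma_1)$ for $\gamma\in\Gamma_1$, and together with the commensurability invariance of the adjoint trace field recalled at the start of this section (which yields $\QQ(\tr(\mathrm{Ad}(\gamma)):\gamma\in\Gamma_1) = E$) we conclude $E\subseteq\QQ(\tr\delta : \delta\in\Gamma_1)$. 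It therefore suffices to produce one finite-index $\Gamma_1$ with $\QQ(\tr\delta : \delta\in\Gamma_1)\subseteq E$.

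The remaining inclusion I would get by Galois descent applied to the standard embedding $\rho\colon\Gamma\hookrightarrow\SU(1,n)\subseteq\GL_{n+1}(F)$. Fix $\sigma\in\Gal(F/E)$ and let $\rho^\sigma = \sigma\circ\rho$ be the representation obtained by applying $\sigma$ to the matrix entries. Because $\sigma$ fixes $E$, and $E$ contains all adjoint traces, $\mathrm{Ad}\circ\rho^\sigma$ and $\mathrm{Ad}\circ\rho$ have the same character; since $\Gamma$ is Zariski-dense in the semisimple group $\SU(1,n)$, the representation $\mathrm{Ad}\circ\rho$ is semisimple, so equality of characters forces $\mathrm{Ad}\circ\rho^\sigma\cong\mathrm{Ad}\circ\rho$. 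Passing to $\PGL_{n+1}$, the two projectivized representations then differ by an automorphism of $\SL_{n+1}$, so $\rho^\sigma\cong\eta_\sigma\otimes\rho$ (up to possibly replacing $\rho$ by its contragredient), where $\eta_\sigma\colon\Gamma\to\CC^\times$ is a character; comparing determinants (each equal to $\sigma(1)=1$) shows $\eta_\sigma^{\,n+1}=1$. As $\rho^\sigma$ depends only on $\sigma|_F$ and $\Gal(F/E)$ is finite, only finitely many characters $\eta_\sigma$ occur, and each has a finite-index kernel because the lattice $\Gamma$ is finitely generated. Taking $\Gamma_1$ to be the intersection of these kernels (adjusted so as to also absorb the contragredient discrepancy), one gets $\sigma(\tr\rho(\delta)) = \tr\rho(\delta)$ for every $\delta\in\Gamma_1$ and every $\sigma\in\Gal(F/E)$, using that $\delta\mapsto\delta^{-1}$ preserves $\Gamma_1$ so that $\tr\rho^\vee(\delta) = \tr\rho(\delta^{-1})$ can be matched against traces of $\Gamma_1$. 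Hence $\tr\rho(\delta)\in E$ for all $\delta\in\Gamma_1$.

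I expect the main obstacle to be exactly the bookkeeping around the contragredient: one must organize the finite-index reduction so that, on $\Gamma_1$, each Galois conjugate $\rho^\sigma$ is literally a character twist of $\rho$ rather than of $\rho^\vee$ — equivalently, one must exclude that the smallest achievable standard trace field is a proper (quadratic) extension of $E$ — while at the same time keeping the intersection of all the twisting-character kernels of finite index, which works because those characters have order dividing $n+1$ on a finitely generated group. The inputs that $\mathrm{Ad}\circ\rho$ is semisimple and that characters are determined by their values are routine, and so is the passage from $\mathrm{PGL}_{n+1}$ back to $\SL_{n+1}$.
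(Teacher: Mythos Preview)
Your approach coincides with the paper's: the proof there simply cites McReynolds and takes $\Gamma_1$ to be the kernel of the map from $\Gamma$ to its maximal abelian quotient of exponent $n+1$, which is precisely the common kernel of all characters of order dividing $n+1$ --- the same subgroup your argument produces once the $\eta_\sigma$ are killed.

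The obstacle you single out, however, is not bookkeeping but a genuine gap, and for $n\geq 2$ it appears fatal to the lemma as literally stated. Since $\tr(\mathrm{Ad}\,\gamma)=\tr(\gamma)\tr(\gamma^{-1})-1=|\tr\gamma|^2-1$ is real, the field $E$ lies in $\RR$ under the given embedding, and complex conjugation $c$ belongs to $\Gal(F/E)$. But $\rho^c$ has character $\gamma\mapsto\overline{\tr\gamma}=\tr(\gamma^{-1})$, so $\rho^c\cong\rho^\vee$ with \emph{no} character twist at all; passing to a finite-index subgroup cannot undo this. For $n\geq 2$ the standard representation restricted to any Zariski-dense subgroup is irreducible and not self-dual (the real-analytic function $\Im\tr$ is nonconstant on $\SU(1,n)$, hence cannot vanish on a Zariski-dense $\Gamma_1$), so $c$ never fixes all standard traces and the standard trace field of any $\Gamma_1$ is at best the quadratic CM extension of $E$ fixed by $\{\sigma:\rho^\sigma|_{\Gamma_1}\cong\rho|_{\Gamma_1}\}$. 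What you have actually established --- that on $\Gamma_1$ every $\rho^\sigma$ with $\sigma\in\Gal(F/E)$ is isomorphic to $\rho$ or to $\rho^\vee$ --- is exactly what the application in Theorem~\ref{thm:modemb_unique} needs, since either alternative yields conjugate images in $\PU(p,q)$ and hence the desired automorphism $M$ of the period domain.
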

\par
\begin{proof} By the argument given in \cite[Appendix~2]{mcreynolds} it suffices
to take $\Gamma_1$ as the kernel of the map from $\Gamma$ to its maximal abelian
quotient of exponent $n+1$. 
\end{proof}
\par
\begin{theorem} \label{thm:modemb_unique}
Suppose $\Gamma \subset \SU(1,n)$ is a lattice with Galois conjugate
$\Gamma^\sigma \subset \SU(p(\sigma),q(\sigma))$ and we are given two maps $\varphi_i: \BB^n \to
\BB_{p(\sigma),q(\sigma)}$, $i=1,2$ equivariant with respect to the action of
$\Gamma$ and $\Gamma^\sigma$ on domain and range. Then $\varphi_1 = \varphi_2$.
\par
If $\tau$ fixes $E$ and $\varphi_1$ is equivariant with respect
to $\Gamma$ and $\Gamma^\sigma$ and $\varphi_2$ is equivariant with respect
to $\Gamma$ and $\Gamma^{\sigma\tau}$, then the signatures of the hermitian forms
preserved by $\Gamma^\sigma$ and $\Gamma^{\sigma\tau}$ coincide and, moreover, 
there exists an automorphism $M$ of $\BB_{p(\sigma),q(\sigma)}$ such that $M \circ \varphi_1 = \varphi_2$.
\end{theorem}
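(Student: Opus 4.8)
The plan is to exploit rigidity of equivariant holomorphic maps between bounded symmetric domains, in the spirit of the uniqueness argument for uniformizing VHS given just after Definition~\ref{def:pp}. First I would dispose of the degenerate situation: if $q(\sigma)=0$ then $\BB_{p(\sigma),q(\sigma)}$ is a point and there is nothing to prove, so I may assume $p(\sigma),q(\sigma)\ge 1$. The key observation is that $\Gamma$ is a lattice in $\SU(1,n)$, hence Zariski-dense by Borel density, so any two $\Gamma$-equivariant maps agree as soon as they agree on a Zariski-dense set, and it suffices to show that the map $f = \varphi_1 \times \varphi_2 \colon \BB^n \to \BB_{p(\sigma),q(\sigma)} \times \BB_{p(\sigma),q(\sigma)}$ has image contained in the diagonal. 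I would argue this by considering the ``difference'' of $\varphi_1$ and $\varphi_2$: since $\Gamma^\sigma$ acts on $\BB_{p(\sigma),q(\sigma)}$ with a Zariski-dense image (again Borel density applied to $\Gamma^\sigma$, whose Zariski closure is $\SU(p(\sigma),q(\sigma))$ by the dimension argument in Lemma~\ref{le:sign}), the diagonal in the product is the unique closed $\Gamma^\sigma$-invariant subvariety through a generic graph point, and any equivariant holomorphic map lands in an orbit closure of this type. Concretely, consider the holomorphic map $\delta \colon \BB^n \to \RR_{\ge 0}$, $\delta(z) = d_{\mathrm{Berg}}(\varphi_1(z),\varphi_2(z))$ for the Bergman (or any invariant) metric; it is $\Gamma$-invariant, hence descends to the finite-volume quotient $\BB^n/\Gamma$, and a plurisubharmonicity/maximum-principle argument together with the fact that both $\varphi_i$ are distance-nonincreasing (Schwarz--Pick, as in the proof of Lemma~\ref{le:Oseledec-integrability}) forces $\delta$ to be constant; examining the equality case in Schwarz--Pick then shows $\delta \equiv 0$.

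For the second part, the hypothesis that $\tau$ fixes $E=\QQ(\tr(\mathrm{Ad}(\gamma)))$ means $\Gamma^\sigma$ and $\Gamma^{\sigma\tau}$ have the same traces in the adjoint representation, so the representations $\gamma \mapsto \mathrm{Ad}(\gamma^\sigma)$ and $\gamma \mapsto \mathrm{Ad}(\gamma^{\sigma\tau})$ of $\Gamma$ into $\mathrm{PGL}$ of the Lie algebra are conjugate (both are irreducible as $\Gamma$ is Zariski-dense, so equal traces imply equivalence). In particular the real forms $\SU(p(\sigma),q(\sigma))$ and $\SU(p(\sigma\tau),q(\sigma\tau))$ are isomorphic as real algebraic groups, which by Lemma~\ref{le:sign} (uniqueness of the signature with $p\ge q$) forces $(p(\sigma\tau),q(\sigma\tau)) = (p(\sigma),q(\sigma))$. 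Having identified the two target domains, the conjugating element intertwining the two adjoint representations yields an element $M$ of the isometry group $\PU(p(\sigma),q(\sigma))$ (after checking it preserves the hermitian form, using that it conjugates one $\SU(p,q)$ to the other inside $\GL_{n+1}$) such that $\gamma^{\sigma\tau} = M \gamma^\sigma M^{-1}$ for all $\gamma\in\Gamma$. Then $M \circ \varphi_1$ is equivariant for $\Gamma$ and $\Gamma^{\sigma\tau}$, exactly as $\varphi_2$ is, and the first part of the theorem applied to the pair $(M\circ\varphi_1, \varphi_2)$ gives $M\circ\varphi_1 = \varphi_2$.

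The main obstacle I anticipate is making the rigidity step in the first part airtight on a \emph{noncompact} finite-volume quotient: the maximum principle for the (psh) function $\delta$ on $\BB^n/\Gamma$ requires controlling the behaviour near the cusps/boundary divisors $\Delta$. I would handle this either by invoking the unipotence of the local monodromy (as assumed throughout the paper) to bound the growth of $\varphi_i$ near $\Delta$ via Schmid's nilpotent orbit theorem, so that $\delta$ extends continuously (or at least with a removable singularity) across $\Delta$, or alternatively by replacing the maximum principle with an $L^1$-argument: $\delta$ is bounded (difference of two contractions of the same $\BB^n$-metric is uniformly bounded once we fix a base point, by the triangle inequality and finiteness of the injectivity-radius-type data), hence integrable, and a standard ergodicity/harmonicity argument on $T^1(\BB^n/\Gamma)$ — the same machinery used around Lemma~\ref{lem:Forni} and in the proof of Theorem~\ref{thm:main-with-kappa} — shows a bounded psh $\Gamma$-invariant function on a finite-volume ball quotient is constant. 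A secondary technical point is verifying that the algebraic conjugator $M$ in the second part actually lies in $\PU(p,q)$ rather than merely in $\mathrm{PGL}_{n+1}(\CC)$; this follows because conjugation by $M$ sends the $\Gamma^\sigma$-invariant hermitian form to a $\Gamma^{\sigma\tau}$-invariant one, and by Lemma~\ref{le:sign} (plus density) the latter is unique up to scalar, so $M$ preserves the form up to scalar and hence defines an element of the projective unitary group.
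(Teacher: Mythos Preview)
Your approach to both parts differs substantially from the paper's, and each part has a genuine gap.

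\textbf{First part.} The paper does not use a distance function or harmonic-map rigidity. It argues via boundary values: each $\varphi_i$, being a bounded holomorphic map on $\BB^n$, has non-tangential boundary values $\varphi_i^*$ defined a.e.\ on $\partial\BB^n$. Since $\Gamma$ is a lattice it is of divergence type, so a.e.\ boundary point is a point of approximation $x=\lim\gamma_n z_0$; one shows $\varphi_i^*(x)=\lim\gamma_n^\sigma\cdot w$ for \emph{any} $w$ in the target, hence $\varphi_1^*=\varphi_2^*$, and the Cauchy integral formula gives $\varphi_1=\varphi_2$.

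Your argument has two concrete problems. First, the step ``$\delta$ constant $\Rightarrow\delta\equiv 0$ by the equality case in Schwarz--Pick'' is not valid: Schwarz--Pick equality concerns a single holomorphic map being a local isometry, not the distance between two maps. What you would actually need is the equality analysis in the Bochner formula for $\Delta\delta^2$, and since $\BB_{p,q}$ has only nonpositive (not strictly negative) curvature when $\min(p,q)\ge 2$, this does not by itself force $\delta=0$; one must additionally invoke Zariski density of $\Gamma^\sigma$ to rule out the degenerate parallel configurations, and you have not done so. This is exactly the extra strength the paper's remark after the theorem refers to. Second, your boundedness claim for $\delta$ is unjustified: the triangle inequality gives only $\delta(z)\le 2d_{\BB^n}(z,z_0)+\delta(z_0)$, which grows linearly and yields neither an $L^\infty$ nor an $L^1$ bound on the noncompact quotient without separate control of the $\varphi_i$ near the cusps. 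The paper's boundary-value method sidesteps both issues entirely.

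\textbf{Second part.} Here the gap is in producing the conjugator $M$. From equality of \emph{adjoint} traces you get an intertwiner of $\mathrm{Ad}\circ\rho^\sigma$ and $\mathrm{Ad}\circ\rho^{\sigma\tau}$, which lives in $\GL(\mathfrak{su})$ and a priori only lifts to an automorphism of the group, possibly outer; it does not directly give an $M\in\GL_{n+1}(\CC)$ with $M\gamma^\sigma M^{-1}=\gamma^{\sigma\tau}$. The paper handles this by first passing, via Lemma~\ref{le:righttracefield}, to a finite-index subgroup $\Gamma_1\subset\Gamma$ on which the adjoint trace field $E$ coincides with the trace field $\QQ(\tr\gamma:\gamma\in\Gamma_1)$ for the \emph{standard} representation. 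Then $\tau$ fixing $E$ gives $\tr\gamma^\sigma=\tr\gamma^{\sigma\tau}$ for all $\gamma\in\Gamma_1$, the standard representations are isomorphic, and the conjugating $M\in\GL_{n+1}(\CC)$ exists directly. After that your final sentence---apply the first part to $M\circ\varphi_1$ and $\varphi_2$---is exactly what the paper does.
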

\par
Note that this
statement is somewhat stronger than usual rigidity results for VHS (see 
e.g.~\cite[Corollary 12]{peterssteenmonodromy}) that require a priori that
the two period maps coincide at one point.
\par
\begin{proof} The case  $p(\sigma)=0$ or $q(\sigma)=0$ is trivial, since
the range of $\varphi_i$ is just a point in this case.
\par
Assume first that $\varphi_i$ is constant.
Then $\Gamma^\sigma$ is entirely contained in the
stabilizer of a point in $\SU(p(\sigma),q(\sigma))$. Thus, the 
Zariski-closure of $\Gamma^\sigma$ has strictly smaller dimension than 
$\SU(p(\sigma),q(\sigma))$. But on the other hand, 
this dimension is the same as the dimension of the Zariski-closure of
$\Gamma$, which is $\SU(1,n)$, and we arrive at a contradiction.
Thus, both $\varphi_i$ are not constant.
\par
We consider the modular embeddings $\varphi_i$ as maps to the 
Harish-Chandra realization of  $\BB_{p(\sigma),q(\sigma)}$
as a bounded symmetric domain. Being composed of bounded holomorphic functions, 
each $\varphi_i$ has a boundary map $\varphi_i^*: \partial \BB^n \to 
\ol{\BB}_{p(\sigma),q(\sigma)}$ defined almost everywhere on $\partial \BB^n$ 
and given by $\varphi_i^*(x) = \lim_{n\to \infty} \varphi_i(\xi_n)$
for any sequence of points $\xi_n \to x$ that stay in an angular sector $D_\alpha(x)$.
\par
Since $\Gamma$ is a lattice, it is in particular a discrete subgroup 
of divergence type, and hence 
the set points of approximation in $\partial \BB^n$ (those that
can be approximated by a sequence in $\Gamma \cdot z$ that stays
in a angular sector $D_\alpha$) is of full Lebesgue measure. Consequently, 
we obtain $\varphi_i^*(x) = \lim \varphi(\gamma_n (x_0))$ 
for some $x_0 \in \BB^n$ and a sequence of $\gamma_n \in \Gamma$.
We may replace $x_0$ by any other point $y_0 \in \BB^n$ and still
$x = \lim \gamma_n (y_0)$, since the hyperbolic distances 
$d(\gamma_n (y_0), \gamma_n(x_0)) = d(x_0, y_0)$ are unchanged.
\par
We claim that the boundary maps agree, i.e.\ $\varphi_1^* = \varphi_2^*$ on $\partial \BB^n$.
For this purpose we first show that almost everywhere $\varphi_i^*$ maps $\partial \BB^n$ to
$\partial \BB_{p(\sigma),q(\sigma)}$ (compare to \cite[Lemma~2.2]{Shiga}). 
Since $\varphi_i$ is non-constant, there exist
$x_0, y_0 \in \BB^n$ with $\varphi_i(x_0) \neq \varphi_i(y_0)$. This implies
\[0<d(\varphi_i(x_0),\varphi_i(y_0)) = d(\gamma_n^\sigma(\varphi_i(x_0)),\gamma_n^\sigma(\varphi_i(y_0)))
= d(\varphi_i(\gamma_n(x_0)),\varphi_i(\gamma_n(y_0))).\]
If $x = \lim \gamma_n(x_0) \in \partial \BB^n$ is a point of approximation, taking the limit $n \to \infty$
the inequality gives a contradiction if $\varphi^*_i(x)$ lies in the interior of 
$\BB_{p(\sigma),q(\sigma)}$. 
This argument also shows that for any such approximation $x = \lim \gamma_n(x_0)$
the sequence $\gamma_n^\sigma(\varphi_i(x_0))$ goes to the boundary of 
$\BB_{p(\sigma),q(\sigma)}$. The limit point of such a sequence is known to exist since
$\varphi_i^*$ is well-defined and, 
by the same argument as above in $\BB^n$, it only depends on the group elements $\gamma_n^\sigma$, 
not on the starting point. Consequently, 
\[\varphi^*_1(x) = \lim \varphi_1(\gamma_n (x_0)) = 
 \lim \gamma_n^\sigma(\varphi_1(x_0)) = \lim \gamma_n^\sigma(\varphi_2(x_0)) = \varphi^*_2(x), \]
as we claimed.
\par
Now, the Cauchy integral formula is still valid for these boundary maps
(\cite[Theorem~11.32 and Theorem 17.18]{RudinComplex}), since we may also use 
radial limits to define $\varphi_i^*$. These theorems imply that 
the maps $\varphi_1$ and $\varphi_2$ agree.
\par
For the proof of the second statement we may pass to a subgroup of
finite index and thus we suppose that $\Gamma$ itself satisfies the conclusion
of Lemma~\ref{le:righttracefield}. By this assumption and definition $\tau$ fixes
all traces of elements in $\Gamma$ and consequently the two representations
$\Gamma^{\sigma\tau}$ and $\Gamma^{\sigma}$ are isomorphic (e.g.\ \cite[Corollary~XVII.3.8]{LangAlgebra}),
in particular their signatures agree. 
If $M$ is the matrix that provides the isomorphism, i.e.\ $M \gamma^{\sigma} M^{-1} = \gamma^{\sigma\tau}$, 
then both $\varphi_2$ and $M \circ \varphi_1$ are equivariant with respect
to $\Gamma$ and $\Gamma^{\sigma\tau}$. By the first part these two maps agree.
\end{proof}
\par
In the situation considered everywhere in the sequel, the existence
of a modular embedding is no obstruction thanks to the following 
proposition,
but in general the existence of modular
embeddings is a hard problem, even for one-dimensional ball quotients,
i.e.\ for Fuchsian groups.
\par
\begin{prop}\label{prop:modemb_iff_Q-VHS} 
$\Gamma$ arises as the monodromy group of a uniformizing 
sub-local system $\WW$ in a polarized $\QQ$-VHS $\VV$ of weight 1
if and only if $\Gamma$ admits a modular embedding.
\end{prop}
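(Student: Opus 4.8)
The plan is to establish the two implications separately, with restriction of scalars from a number field $F$ to $\QQ$ serving as the bridge: a polarized $\QQ$-VHS containing a uniformizing summand decomposes, after complexification, into a Galois-indexed family of polarized $\CC$-VHS, and conversely such a family equipped with equivariant period maps can be glued back together into a $\QQ$-VHS.

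For the direction ``uniformizing sub-VHS $\Rightarrow$ modular embedding'', suppose $\WW\subset\VV$ is a uniformizing $\CC$-sub-VHS (irreducible by the signature assumption) with monodromy $\Gamma$; then the universal cover of $B$ is $\BB^n$, $\pi_1(B)\twoheadrightarrow\Gamma$ and $B=\BB^n/\Gamma$. First I would invoke Deligne's semisimplicity theorem \cite[Prop.~1.13]{delfinitude} to write $\VV\tensor_\QQ\CC=\bigoplus_i\WW_i\tensor E_i$ with the $\WW_i$ irreducible polarized $\CC$-VHS, $\WW$ among them, all defined over a number field that I enlarge to $F$, Galois over $\QQ$, with $\Gamma\subset\SU(g)\cap\GL_{n+1}(F)$ for a Hermitian form $g$ of signature $(1,n)$. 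Since $\VV$ is defined over $\QQ$, the Galois conjugate $\WW^\sigma$ of $\WW$ (for $\sigma\in\Gal(F/\QQ)$) is again a sub-local-system of $\VV\tensor_\QQ\CC$, irreducible, hence one of the $\WW_i$ and in particular a polarized $\CC$-VHS on $B$; its monodromy is $\Gamma^\sigma$ and its signature $(p(\sigma),q(\sigma))$. The period map of $\WW^\sigma$ is then exactly the required holomorphic equivariant map $\varphi^\sigma\colon\BB^n\to\BB_{p(\sigma),q(\sigma)}$ (using $\BB_{p,q}=\BB_{q,p}$ to arrange the normalization $p(\sigma)\geq q(\sigma)$), and condition (a) of the definition holds by Deligne's polarizability. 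For $\sigma=\id$ this reproduces the identity, consistent with $\WW$ being uniformizing.

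For the converse, given a modular embedding $(\varphi^\sigma)_{\sigma\in\Gal(F/\QQ)}$ for $\Gamma\subset\SU(g)\cap\GL_{n+1}(F)$, I would let $\WW_F$ be the $F$-local system on $B=\BB^n/\Gamma$ of the tautological representation $\Gamma\hookrightarrow\GL_{n+1}(F)$ and set $\VV:=\Res_{F/\QQ}\WW_F$, so that
\[\VV\tensor_\QQ\CC=\bigoplus_{\sigma\colon F\hookrightarrow\CC}\WW^\sigma,\qquad\WW^\sigma:=\WW_F\tensor_{F,\sigma}\CC,\]
with $\WW^\sigma$ of monodromy $\Gamma^\sigma\subset\SU(g^\sigma)$ and $g^\sigma$ of signature $(p(\sigma),q(\sigma))$. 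Next I would polarize $\VV$ by the classical device $Q(u,v):=\mathrm{Tr}_{F/\QQ}(\lambda\,g(u,v))$, with $\lambda\in F$, $\iota(\lambda)=-\lambda$ ($\iota$ the complex conjugation of $F$) chosen so that $Q$ is a non-degenerate antisymmetric $\QQ$-form pairing $\WW^\sigma$ with $\WW^{\iota\sigma}$ and satisfying $i\,Q(v,\bar v)>0$ on the $g^\sigma$-positive subspaces --- the passage from a Hermitian form to a $\QQ$-symplectic form familiar from the construction of the primitive part of the VHS of a cyclic covering and, generally, of unitary Shimura varieties. Then, choosing representatives $\sigma$ of the conjugate pairs $\{\sigma,\iota\sigma\}$, I would define $(\WW^\sigma)^{1,0}_z$ to be the $g^\sigma$-positive (resp.\ $g^\sigma$-negative) subspace recorded by $\varphi^\sigma(\tilde z)$, the choice dictated by the sign of $i\,Q(\cdot,\bar\cdot)$ on $\WW^\sigma$, and set $(\WW^{\iota\sigma})^{1,0}_z:=\overline{(\WW^\sigma)^{0,1}_z}$. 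This splitting of $\VV\tensor_\QQ\CC$ is by construction stable under complex conjugation, hence descends to a $C^\infty$-splitting of $\VV\tensor_\QQ\cO_B$; holomorphy of $\varphi^\sigma$ makes $(\WW^\sigma)^{1,0}$ a holomorphic subbundle (its conjugate then being anti-holomorphic), and the Hodge--Riemann relations hold by the choice of $\lambda$. So $\VV$ is a polarized $\QQ$-VHS of weight one, and by Theorem~\ref{thm:modemb_unique} (applied with $\sigma=\id$, comparing $\varphi^{\id}$ with the canonical identification) the summand $\WW^{\id}$ is uniformizing with monodromy $\Gamma$; indeed $\WW^{\id}\isom\UU$ and $\VV$ is, up to isomorphism, the primitive part of Definition~\ref{def:pp}.

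The step I expect to require the most care is the complex-conjugation compatibility of the Hodge filtrations on a conjugate pair $\{\sigma,\iota\sigma\}$: one must check that $(\WW^{\iota\sigma})^{1,0}:=\overline{(\WW^\sigma)^{0,1}}$ has dimension $p(\iota\sigma)$ and is the $g^{\iota\sigma}$-positive or $g^{\iota\sigma}$-negative subspace matching the sign of $i\,Q(\cdot,\bar\cdot)$ on $\WW^{\iota\sigma}$. This works precisely because that polarization sign flips between $\WW^\sigma$ and $\WW^{\iota\sigma}$, which is forced by the antisymmetry of $Q$; alternatively, if one insists on describing $(\WW^{\iota\sigma})^{1,0}$ through the given $\varphi^{\iota\sigma}$ rather than by conjugation, the required identity is exactly a rigidity statement of the type of Theorem~\ref{thm:modemb_unique}. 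The remaining points --- rationality and non-degeneracy of $Q$, and positivity of the Hodge norm on the $(1,0)$-parts --- are the standard arithmetic of Hermitian forms over CM fields and should cause no trouble.
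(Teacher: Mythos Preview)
Your proof is correct and the forward direction coincides with the paper's argument essentially verbatim: Deligne semisimplicity plus the observation that the Galois conjugates $\WW^\sigma$ of $\WW$ appear in $\VV\tensor_\QQ\CC$ as polarized sub-$\CC$-VHS, whose period maps furnish the required equivariant $\varphi^\sigma$.

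For the converse the paper is much terser than you: it simply asserts that the direct sum $\bigoplus_{\sigma}\WW^\sigma$, with the Hodge structure on each summand pulled back along $\varphi^\sigma$, is a polarized VHS defined over~$\QQ$, and stops there. Your explicit construction via $\Res_{F/\QQ}\WW_F$ and the trace form $Q(u,v)=\mathrm{Tr}_{F/\QQ}(\lambda\,g(u,v))$, together with the check that the Hodge filtrations on a conjugate pair $\{\sigma,\iota\sigma\}$ match up under complex conjugation, makes this honest; the paper leaves these points implicit. Your appeal to Theorem~\ref{thm:modemb_unique} to conclude that $\WW^{\id}$ is genuinely uniformizing --- i.e.\ that the given $\varphi^{\id}$ must coincide with the identity rather than being an arbitrary $\Gamma$-equivariant self-map of $\BB^n$ --- is likewise a detail the paper passes over in silence. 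Same skeleton, but you have supplied more of the connective tissue.
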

\par
\begin{proof}
Suppose that $\Gamma$ is the monodromy group of a uniformizing sub-local system of $\VV$.
By Deligne's semisimplicity theorem, $\VV_\CC$ decomposes into a direct sum of irreducible
$\CC$-VHS, one of which is $\WW$. Since $\VV$ is defined over $\QQ$, all Galois conjugate 
local systems $\WW^\sigma$ appear in the decomposition of $\VV_\CC$. After tensoring with $\CC$, 
these $\WW^\sigma$ are polarized $\CC$-VHS of weight 1, thus their monodromy representation 
fixes an indefinite hermitian form of some signature $(p(\sigma),q(\sigma))$, and their period map
$\varphi^\sigma$ is an equivariant map $\BB^n \to \BB_{p(\sigma),q(\sigma)}$ as desired.
\par
Conversely, let $\Gamma$ have a modular embedding. Each group homomorphism $\Gamma\to \Gamma^\sigma$
determines a $\CC$-local system $\WW^\sigma$. These carry a polarized VHS of weight 1 induced by the 
map $\varphi^\sigma$. The direct sum $\bigoplus_{\sigma\in \Gal(F/\QQ)} \WW^\sigma$ is a 
polarized VHS that is defined over $\QQ$.
\end{proof}
\par

\subsection{Commensurability invariants} \label{sec:invs}
Fix a lattice $\Gamma\subset \SU(1,n)\cap \GL_{n+1}(F)$ with modular embedding.
This determines a Hodge structure on the corresponding local system $\WW^\sigma$
on the ball quotient $B = \BB^n/\Gamma$. We focus on the case of signature $(1,n)$.
 The subbundle of holomorphic forms $\cE^{1,0}$ is thus of rank one. All the numbers
$\bigl({\Chern_1(\cE^{1,0})^a.\Chern_1(\omega_{\ol
B})^{b}}\bigr)/{\Chern_1(\omega_{\ol B})^n}$
for $(a,b)\in \NN_0^2$ with $a+b=n$
are invariants of the modular embedding. By homogeneity, they are also
unchanged under passage from $\Gamma$ to a subgroup of finite index. 
By the preceding Theorem~\ref{thm:modemb_unique} there is no choice involved in 
fixing the modular embedding, and these numbers only depend on the coset of $\sigma$ in
$\Gal(F/\QQ)/\Gal(F/E)$. Altogether, this yields the following.
\par
\begin{cor} \label{cor:comminvariants}
For any $(a,b)\in \NN_0^2$ with $a+b=n$ and
$\sigma \in \Gal(F/\QQ)/\Gal(F/E)$ the ratios 
\[\frac{\Chern_1(\cE^{1,0})^a.\Chern_1(\omega_{\ol
B})^{b}}{\Chern_1(\omega_{\ol B})^n}, \quad \quad \cE^{1,0} =
\cE^{1,0}(\WW^{\sigma})\]
are commensurability invariants of $\Gamma$.
\end{cor}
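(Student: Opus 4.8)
The plan is to combine the four properties collected just before the statement — that the ratios are invariants of the modular embedding, that they are unchanged under passage to a finite-index subgroup, that the modular embedding is unique (Theorem~\ref{thm:modemb_unique}), and that the dependence on $\sigma$ is only through its coset in $\Gal(F/\QQ)/\Gal(F/E)$ — with the standard description of commensurability. Recall that two lattices $\Gamma,\Gamma'\subset\PU(1,n)$ are commensurable exactly when there are finite-index subgroups $\Gamma_1\le\Gamma$, $\Gamma_1'\le\Gamma'$ and an element $g\in\PU(1,n)$ with $g\Gamma_1g^{-1}=\Gamma_1'$. By passing to deeper finite-index subgroups we may assume that $\Gamma_1$ and $\Gamma_1'$ are torsion-free with unipotent local monodromy about their boundary divisors (cf.\ Section~\ref{sec:Higgscurvature} for the monodromy), and, enlarging the number field, that the standard representations of $\Gamma_1$ and $\Gamma_1'$ both land in $\GL_{n+1}(F)$ for one common field $F$ Galois over $\QQ$. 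Since the ratios in \eqref{eq:cab} are unchanged under passage to a finite-index subgroup (they are quotients of two top intersection numbers on an $n$-fold, each of which scales by the covering degree via the projection formula; independence of the chosen smooth normal-crossing compactification is built in, as $\Chern_1(\omega_{\ol B})$ and $\Chern_1(\cE^{1,0})$ are computed by the K\"ahler-Einstein metric, resp.\ the Mumford-good Hodge metric, exactly as in the proof of Theorem~\ref{thm:main-with-kappa}), and since conjugation by $g$ merely induces a biholomorphism $\BB^n/\Gamma_1\to\BB^n/\Gamma_1'$, it suffices to match, for every admissible $(a,b)$ and every $\sigma$, the ratio for $\Gamma_1$ with the one for $\Gamma_1'$.

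The crux is to identify the variations $\WW^\sigma(\Gamma_1)$ and $\WW^\sigma(\Gamma_1')$ under the isomorphism $c_g\colon\Gamma_1\xrightarrow{\sim}\Gamma_1'$, $\gamma\mapsto g\gamma g^{-1}$. Write $\rho,\rho'$ for the two standard representations. Lifting $g$ from $\PU(1,n)$ to $\SU(1,n)$ introduces a central character $\Gamma_1\to\mu_{n+1}$; absorbing it into one more finite-index passage (as in the proof of Lemma~\ref{le:righttracefield}) we obtain $\tr\rho'(c_g(\gamma))=\tr\rho(\gamma)$ for all $\gamma\in\Gamma_1$, whence $\tr\bigl(((\rho')^\sigma\circ c_g)(\gamma)\bigr)=\sigma\bigl(\tr\rho(\gamma)\bigr)=\tr\rho^\sigma(\gamma)$ for every $\sigma\in\Gal(F/\QQ)$. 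Both $\rho^\sigma$ and $(\rho')^\sigma\circ c_g$ are irreducible, their Zariski closures being the groups $\SU(p(\sigma),q(\sigma))$, which act irreducibly on $\CC^{n+1}$; so equality of characters forces $(\rho')^\sigma\circ c_g\cong\rho^\sigma$, that is, $c_g^*\WW^\sigma(\Gamma_1')\cong\WW^\sigma(\Gamma_1)$ as $\CC$-local systems. By Theorem~\ref{thm:modemb_unique} the polarized weight-one VHS carried by such a local system is unique, hence this isomorphism is one of variations of Hodge structures; in particular $c_g^*\cE^{1,0}\bigl(\WW^\sigma(\Gamma_1')\bigr)=\cE^{1,0}\bigl(\WW^\sigma(\Gamma_1)\bigr)$, and the associated ratios in \eqref{eq:cab} coincide.

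It remains to reconcile the two index sets. For $\Gamma_1$ the ratio depends only on the image of $\sigma$ in $\Gal(F/\QQ)/\Gal(F/E)$, with $E$ the adjoint trace field, by the second assertion of Theorem~\ref{thm:modemb_unique}, and likewise for $\Gamma_1'$; but the adjoint trace field is a commensurability invariant, so $E(\Gamma)=E(\Gamma')$, and $\Gal(F/\QQ)/\Gal(F/E)$ is canonically the set of field embeddings $E\hookrightarrow\CC$ in both cases. Therefore the collection of ratios attached to $\Gamma$ equals the one attached to $\Gamma'$, which is the assertion of Corollary~\ref{cor:comminvariants}.

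The only genuinely non-formal step is the passage from "the standard $\sigma$-conjugates have equal characters" to "the whole families $\{\WW^\sigma\}_\sigma$ are isomorphic as VHS": this is not automatic and rests squarely on the strong rigidity of Theorem~\ref{thm:modemb_unique}, in which equivariant period maps are shown to be unique without any base-point normalization. Everything else is organizational bookkeeping — shrinking to kill torsion, non-unipotent monodromy, and the $\SU(1,n)$-versus-$\PU(1,n)$ central ambiguity, and enlarging $F$ to a common Galois field — and each of these operations leaves the ratios in \eqref{eq:cab} untouched.
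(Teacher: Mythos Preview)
Your proposal is correct and follows essentially the same approach as the paper. The paper's argument is the terse paragraph immediately preceding the corollary: invariance of the ratios under the modular embedding, invariance under passage to finite index by homogeneity, uniqueness of the modular embedding from Theorem~\ref{thm:modemb_unique}, and dependence on $\sigma$ only through its coset modulo $\Gal(F/E)$; you have simply unpacked these four ingredients carefully, in particular making explicit the conjugation step in the commensurability relation and the matching of the local systems $\WW^\sigma$ via equality of characters, which the paper leaves implicit.
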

\par
We end this section with a proof of this corollary for the special case $a=1$, i.e.\
for Lyapunov exponents, not relying on  Theorem~\ref{thm:modemb_unique}, but
rather on the definition of Lyapunov exponents.
\par
\begin{prop} \label{prop:lyapfinitindex}
In the situation of Theorem~\ref{thm:main}, passing from $\Gamma$ to a 
subgroup $\Gamma'$ of finite index and to the pullback VHS does
not change the Lyapunov spectrum.
\end{prop}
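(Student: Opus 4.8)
The plan is to exploit that the entire Oseledets setup is local on the unit tangent bundle and therefore pulls back unchanged under a finite covering. Write $d = [\Gamma:\Gamma']$ and let $\pi\colon B' = \BB^n/\Gamma' \to B = \BB^n/\Gamma$ be the associated degree-$d$ covering; it is unramified, and $B'$ is again a ball quotient by a torsionfree lattice, so all the hypotheses of Theorem~\ref{thm:main} carry over to $B'$. Lifting $\pi$ to unit tangent bundles gives a degree-$d$ covering $\Pi\colon T^1B' \to T^1B$ that intertwines the geodesic flows, $g_t\circ\Pi = \Pi\circ g_t$, and that carries the normalized Liouville measure $\mu_{T^1B'}$ to $\mu_{T^1B}$. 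The relevant VHS on $B'$ is $\VV' = \pi^*\VV$, and its underlying local system, flat connection, Hodge filtration $\pi^*\cW^{1,0}$, and hence Hodge norm are the pullbacks of those of $\VV$ under the canonical identification $(\VV')_{x'}\cong\VV_{\pi(x')}$.

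First I would observe that the flat cocycle $(G_t')$ on $T^1B'\times\VV'$ is, under these identifications, literally the $\Pi$-pullback of the cocycle $(G_t)$ on $T^1B\times\VV$: a geodesic in $B'$ issuing from $\vartheta'$ maps to the geodesic in $B$ issuing from $\Pi(\vartheta')$, and parallel transport of a flat section commutes with $\pi^*$. In particular the integrability hypothesis \eqref{eq:Oseledec-integrability} for $(G_t')$ is inherited from that for $(G_t)$ (alternatively one may invoke Lemma~\ref{le:Oseledec-integrability} directly on the ball quotient $B'$), so Oseledets' theorem applies to $(T^1B',\mu_{T^1B'},\VV',g_t)$. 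Since the Hodge norm on $\VV'$ is exactly the pullback of the one on $\VV$, not merely comparable to it, for $\mu_{T^1B'}$-almost every $\vartheta'$ and every nonzero $v\in(\VV')_{\vartheta'}$ the limit $\lim_{t\to\pm\infty}\tfrac1t\log\|G_t'(v)\|$ agrees term by term with the corresponding limit at $\Pi(\vartheta')$ for $(G_t)$. Hence the Oseledets subbundles on $T^1B'$ are the $\Pi$-pullbacks of those on $T^1B$.

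Finally I would compare multiplicities. The geodesic flow on a finite-volume ball quotient is ergodic (as already used in Section~\ref{sec:backmoduli}), so on both $T^1B$ and $T^1B'$ the exponents and the dimensions $\dim V_{\lambda_i}$ are $\mu$-a.e. constant. Picking a point $\vartheta'\in T^1B'$ lying over a generic $\vartheta\in T^1B$, the pointwise list of exponents with multiplicities at $\vartheta'$ equals that at $\vartheta$ by the previous paragraph, and hence the two Lyapunov spectra coincide.

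I expect no serious obstacle here: every step is an unwinding of the definition of the Lyapunov spectrum together with the obvious functoriality of flat bundles and of Hodge norms under pullback, the only external input being ergodicity of the geodesic flow, which converts the pointwise equality of exponents into an equality of spectra with multiplicities. The single point that deserves a line of care is the bookkeeping of multiplicities, but since $\Pi$ is finite and measure-preserving after normalization, almost every fiber of $\Pi$ consists of exactly $d$ points each carrying the same Oseledets decomposition, which is precisely what makes the multiplicities match.
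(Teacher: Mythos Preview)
Your argument is correct and is essentially the same idea as the paper's: the flat cocycle and the Hodge norm are determined on the universal cover $\BB^n$, hence agree under the finite cover, and ergodicity of the geodesic flow turns the pointwise equality of Oseledets limits into equality of spectra. The paper phrases this via fundamental domains $\cF\subset\cF'\subset\BB^n$ and a cofinal set of return times, whereas you phrase it as functoriality of the pullback $\Pi^*$; your version is a bit more direct, since the pulled-back norm coincides with the Hodge norm for \emph{all} $t$, so no passage to a cofinal subsequence is actually needed.
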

\par
\begin{proof}
Let $\cF \subset \BB$ be a fundamental domain for $\Gamma$ and $\cF' \supset \cF$ be
a fundamental domain for $\Gamma'$. Given a starting point $x \in M = T_1\BB^n$, 
for all times $t$ such that $g_t(x)$ is $\Gamma'$-equivalent to a point in $\cF$, 
the quantity $\frac{1}{t}\log\|g_t(v)\|$ appearing on the right of 
\eqref{eq:Oseledeclimit} is the same for both situations. Since the geodesic flow is
ergodic, this set of times is cofinal and thus the limit in \eqref{eq:Oseledeclimit}
is the same in both situations.
\end{proof}
\par
\begin{defn}
Let $\Gamma$ be a lattice admitting a modular embedding, and let $\WW$
be the uniformizing VHS on $\BB^n/\Gamma$. The \textit{Lyapunov spectrum} of
$\Gamma$ is the set of different Lyapunov exponents in the Lyapunov spectrum of
the $\QQ$-VHS
\[\PP(\Gamma) = \bigoplus_{\sigma\in \Gal(F/\QQ)}\WW^\sigma.\]
\end{defn}
\par
The definition might a priori depend on the choice of the modular embedding. We
show that this is not the case.
\par
\begin{theorem} \label{thm:lyap_spec_comminvariant}
The Lyapunov spectrum of a lattice $\Gamma$ in $\PU(1,n)$
that admits a modular embedding is well-defined. It is a 
commensurability invariant among all lattices in $\PU(1,n)$ 
that admit a modular embedding.
\end{theorem}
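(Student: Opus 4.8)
The plan is to deduce both assertions from three facts already established: the uniqueness (up to isomorphism) of the uniformizing VHS, the uniqueness of the modular embedding (Theorem~\ref{thm:modemb_unique}), and the invariance of Lyapunov spectra under passage to finite-index subgroups and to the pullback VHS (Proposition~\ref{prop:lyapfinitindex}).

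\emph{Well-definedness.} The only data entering the definition of $\PP(\Gamma)$ are the uniformizing VHS $\WW$ on $B=\BB^n/\Gamma$ and the number field $F$. The uniformizing VHS is unique up to isomorphism, so each Galois conjugate $\WW^\sigma$ is determined as a local system; and for each $\sigma$ the Hodge filtration on $\WW^\sigma$ is exactly the datum of the equivariant period map $\varphi^\sigma\colon\BB^n\to\BB_{p(\sigma),q(\sigma)}$ of the modular embedding, whose target is fixed by Lemma~\ref{le:sign} and which is itself unique by Theorem~\ref{thm:modemb_unique}. Enlarging $F$ to a bigger Galois number field only repeats summands that are already isomorphic as polarized $\CC$-VHS, hence does not affect the set of Lyapunov exponents occurring in $\PP(\Gamma)$; and by Lemma~\ref{lemma:oseledec-comparable_norms} this set depends only on the isomorphism class of the VHS. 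Thus the Lyapunov spectrum of $\PP(\Gamma)$ is well-defined. (That $\PP(\Gamma)$ really is a $\QQ$-VHS is Proposition~\ref{prop:modemb_iff_Q-VHS}.)

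\emph{Commensurability invariance.} Let $\Gamma_1,\Gamma_2\subset\PU(1,n)$ be commensurable lattices admitting modular embeddings, and choose $g\in\PU(1,n)$ so that $\Gamma_1$ and $g\Gamma_2g^{-1}$ share a finite-index subgroup. Conjugation by $g$ is an isometry of $\BB^n$ intertwining the geodesic flows and carrying the uniformizing VHS of $\Gamma_2$, together with all its conjugates and their Hodge metrics, to those of $g\Gamma_2g^{-1}$; hence it preserves Lyapunov spectra, and we may assume $g=1$. Put $\Gamma_0=\Gamma_1\cap\Gamma_2$, let $u_j\colon B_0=\BB^n/\Gamma_0\to B_j=\BB^n/\Gamma_j$ be the covering maps, and fix one Galois number field $F$ large enough for all three groups. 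Then $u_j^*\WW_{\Gamma_j}$ is again uniformizing on $B_0$ — its period map is once more the biholomorphism $\BB^n\to\BB^n$ — hence isomorphic to $\WW_{\Gamma_0}$, and since Galois conjugation commutes with pullback we get $u_j^*(\WW_{\Gamma_j}^\sigma)\isom\WW_{\Gamma_0}^\sigma$ as local systems for every $\sigma\in\Gal(F/\QQ)$. Restricting the modular embedding of $\Gamma_j$ to $\Gamma_0$ produces a modular embedding of $\Gamma_0$, so by the uniqueness part of Theorem~\ref{thm:modemb_unique} the Hodge filtrations agree too: $u_1^*(\WW_{\Gamma_1}^\sigma)\isom\WW_{\Gamma_0}^\sigma\isom u_2^*(\WW_{\Gamma_2}^\sigma)$ as polarized $\CC$-VHS for all $\sigma$. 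Now apply Proposition~\ref{prop:lyapfinitindex} to each summand: the Lyapunov spectrum of $\WW_{\Gamma_j}^\sigma$ equals that of its pullback $u_j^*(\WW_{\Gamma_j}^\sigma)$. Since the Lyapunov spectrum of a direct sum of VHS is the union of the spectra of the summands, the set of Lyapunov exponents of $\PP(\Gamma_1)=\bigoplus_\sigma\WW_{\Gamma_1}^\sigma$ equals $\bigcup_\sigma(\text{spectrum of }\WW_{\Gamma_0}^\sigma)$, and by the symmetric argument this is also the set of Lyapunov exponents of $\PP(\Gamma_2)$. This proves the claim.

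\emph{Expected main obstacle.} Once Proposition~\ref{prop:lyapfinitindex} is available, the argument is essentially formal; the one genuinely substantial input is Theorem~\ref{thm:modemb_unique}, which is what forces the Hodge structures on the Galois-conjugate local systems — and these are exactly what the Lyapunov exponents detect — to be intrinsic to the group and compatible with restriction to finite-index subgroups. The residual bookkeeping of \emph{which} conjugates actually occur can be handled either by working over one sufficiently large Galois field $F$ from the start (as above) or, more classically, via commensurability invariance of the trace field \cite[Proposition~12.2.1]{DeligneMostow86} combined with Lemma~\ref{le:righttracefield}.
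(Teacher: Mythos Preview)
Your proof is correct, but it takes a different route from the paper's, and the difference is worth noting. The paper explicitly states that its proof of Theorem~\ref{thm:lyap_spec_comminvariant} does \emph{not} rely on Theorem~\ref{thm:modemb_unique}; instead, for well-definedness it observes that the local systems $\WW^\sigma$ are intrinsic to $\Gamma$ (they are determined by the group homomorphisms $\Gamma\to\Gamma^\sigma$), while the Hodge norm coming from a modular embedding is merely one admissible norm among many---and by Lemma~\ref{lemma:oseledec-comparable_norms} any two such norms yield the same Lyapunov spectrum. So the paper bypasses the question of whether the Hodge filtration is unique and simply shows the spectrum doesn't see it. Your argument instead invokes Theorem~\ref{thm:modemb_unique} to pin down the Hodge structure itself, which is a stronger input than necessary (and makes your appeal to Lemma~\ref{lemma:oseledec-comparable_norms} at the end of the well-definedness paragraph redundant: once the VHS is unique up to isomorphism, so is its Hodge norm).

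For commensurability invariance both approaches go through Proposition~\ref{prop:lyapfinitindex}; you spell out the passage to a common finite-index subgroup and the identification of pullback VHS more carefully than the paper, which leaves this step implicit. Your use of Theorem~\ref{thm:modemb_unique} to match the Hodge filtrations on $B_0$ is again correct but unnecessary from the paper's point of view: the pullback local systems already agree, and norm independence finishes the job. What your approach buys is a cleaner conceptual picture (the primitive VHS itself, not just its Lyapunov spectrum, is a commensurability invariant); what the paper's approach buys is independence from the rigidity result Theorem~\ref{thm:modemb_unique}.
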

\par
\begin{proof} The Lyapunov spectrum is determined by the choice of
the geodesic flow on the base $B$, the Haar measure $\mu$ on $B$, 
the local systems with monodromy group $\Gamma^\sigma$ and a norm
satisfying the growth conditions in Theorem~\ref{thm:oseledec}. Any choice of
a modular embedding defines a Hodge structure and thus a corresponding Hodge
norm. But the Lyapunov spectrum does not depend on this choice by
Lemma~\ref{lemma:oseledec-comparable_norms}.
\end{proof}
\par
Note that for a lattice $\Gamma$ admitting a modular embedding, the $\QQ$-VHS 
$\PP(\Gamma)$ is equal to its primitive part. If on the other hand,
$\VV$ is a $\QQ$-VHS on $\BB^n/\Gamma$ containing a primitive part $\PP$, then
the uniqueness of the uniformizing sub-VHS and the above theorem yield that the
set of Lyapunov exponents of $\PP$ (forgetting multiplicities) is equal to the
Lyapunov spectrum of $\Gamma$. 
\par
\section{Arithmeticity} \label{sec:arithmeticity}

Recall that a lattice $\Gamma \subset \PU(1,n)$ is called {\em arithmetic}, if
$\Gamma$ is commensurable to $\phi(G_\ZZ)$, the image of the integral points in some 
linear algebraic group $G_\RR$, admitting a continuous surjective homomorphism $\phi: G_\RR \to
\PU(1,n)$
with compact kernel. 
\par
Let $\PP$ be a $\QQ$-irreducible $\QQ$-VHS. Note that a
uniformizing $\CC$-sub-VHS $\WW$ of signature $(1,n)$ 
of $\PP_\CC$ is defined over
$\RR$ if and only if $n=1$. We say that the Lyapunov spectrum of $\PP$ 
is {\em maximally degenerate}, if it contains twice the Lyapunov exponents 
$1$ and $-1$ for $n>1$ (resp.\ once for $n=1$) and all the other 
Lyapunov exponents are zero. 
\par
\begin{prop} \label{prop:arithmetic}
If $\Gamma\subset\PU(1,n)$ is an arithmetic lattice, then the 
Lyapunov spectrum is 
maximally degenerate. 
\end{prop}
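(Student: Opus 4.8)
The plan is to deduce the statement from the structure theory of arithmetic lattices in $\PU(1,n)$ together with Theorem~\ref{thm:main}. First I would recall that an arithmetic lattice $\Gamma \subset \PU(1,n)$ is obtained from a semisimple $\QQ$-group $\Galg$ with $\Galg(\RR) \cong \PU(1,n) \times (\text{compact})$, and that the primitive $\QQ$-VHS $\PP(\Gamma)$ decomposes, after tensoring with $\CC$, as $\bigoplus_\sigma \WW^\sigma$ where $\WW = \UU$ is uniformizing of signature $(1,n)$. The key arithmeticity input is that for every Galois conjugate $\sigma \neq \id$ (more precisely, for every $\sigma$ not fixing the trace field $E$ in the relevant sense), the conjugate group $\Gamma^\sigma$ preserves a \emph{definite} hermitian form, i.e.\ the signature $(p(\sigma), q(\sigma))$ has $q(\sigma) = 0$. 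Indeed, this is precisely the condition that the factor of $\Galg(\RR)$ corresponding to $\sigma$ is compact, which is what makes the lattice arithmetic; this is the classical description going back to the construction of arithmetic ball quotients, and it is the content of, e.g., the analysis in \cite{DeligneMostow86} of which $(d;a_i)$ give arithmetic groups via the $\Sigma$INT condition.

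Granting this, the argument is then a short case split. For $\sigma$ with $q(\sigma) = 0$ (equivalently $\WW^\sigma$ is unitary definite), the period map $\varphi^\sigma : \BB^n \to \BB_{p(\sigma),0}$ is a map to a point, so $\WW^\sigma$ is a \emph{unitary} local system: $\cW^{\sigma,1,0}$ is either all of $\WW^\sigma \tensor \cO_B$ or zero, the Higgs field vanishes, and hence $\Chern_1(\cE^{1,0}) = 0$. By Theorem~\ref{thm:main}~iv) all positive Lyapunov exponents of $\WW^\sigma$ sum to zero; since the Lyapunov spectrum is symmetric and, more importantly, since a unitary cocycle has all Lyapunov exponents equal to zero (the Hodge norm is flat), every Lyapunov exponent of $\WW^\sigma$ is zero. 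Alternatively one invokes part~iii): a definite hermitian form has signature $(p,0)$ with $|p - q| = p = \rk \WW^\sigma$, forcing all $2p$ exponents to be zero. For the remaining conjugates — those $\sigma$ with $\WW^\sigma$ isomorphic as a local system to the uniformizing $\WW$ itself, i.e.\ $\sigma \in \Gal(F/E)$ up to the identification — the summand is uniformizing of signature $(1,n)$, and Theorem~\ref{thm:main}~i) gives top exponent $1$; part~iii) with $(p,q) = (1,n)$ forces at least $2(n-1)$ zero exponents, and symmetry around $0$ together with the rank-$(2n+2)$ count leaves exactly the spectrum $1, \underbrace{0,\dots,0}_{2n}, -1$ for each such summand (and the first duplication phenomenon of Proposition~\ref{prop: duplication}, applied when $n>1$ so that $\WW$ is not defined over $\RR$, accounts for the doubling to two copies of $\pm 1$; for $n=1$ the summand is already real and one gets $\pm 1$ once). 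Summing over $\sigma$, the whole primitive spectrum consists of $\pm 1$ (with the stated multiplicity governed by the signature of the Hodge form on $\PP$) and zeros, which is the definition of maximally degenerate.

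The main obstacle — really the only substantive point — is justifying the claim that arithmeticity forces $q(\sigma) = 0$ for all non-central conjugates, i.e.\ that the modular embedding of an arithmetic lattice has all its nontrivial target balls equal to a point. I would handle this by spelling out the standard dictionary: arithmeticity of $\Gamma \subset \PU(1,n)$ means there is an absolutely simple $\QQ$-group $\Galg$ of type $A_n$ (a special unitary group of a hermitian form over a CM field, or a form of $\SL_{n+1}$) with $\Galg(\RR)$ having exactly one noncompact factor isogenous to $\PU(1,n)$; the local system $\PP(\Gamma)$ is, up to finite cover, the one attached to the standard representation of $\Galg$, and its Galois conjugates are the other real factors, which are compact precisely because all but one signature is definite. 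One must be slightly careful that $F$, the field over which $\Gamma$ is defined in the sense of Section~\ref{sec:commensurability}, may be larger than the field cutting out $\Galg$, so that several $\sigma \in \Gal(F/\QQ)$ induce the \emph{same} representation up to isomorphism; but these extra $\sigma$ merely repeat summands already accounted for, and by Lemma~\ref{le:sign} the signature is well-defined on $\sigma$, so no inconsistency arises. Once this structural fact is in hand, the rest is an immediate bookkeeping consequence of Theorem~\ref{thm:main}, and I would present it as such rather than reproving any vanishing statements from scratch.
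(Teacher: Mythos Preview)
Your proposal is correct and follows essentially the same approach as the paper's proof: both use the structural characterization that arithmeticity forces all Galois conjugates of the uniformizing VHS other than $\WW$ and $\overline{\WW}$ to be unitary (the paper cites \cite[12.2.6]{DeligneMostow86} for this), and then both invoke Theorem~\ref{thm:main}~i) and~iii) to read off the spectrum. Your version is considerably more detailed in spelling out the dictionary between arithmeticity and definiteness of the conjugate hermitian forms, and in tracking the multiplicities via the duplication phenomenon, whereas the paper dispatches the whole argument in three sentences; but the logical skeleton is identical.
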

\par
\begin{proof} Suppose that $\Gamma$ is arithmetic. Then 
$\WW$ (and hence $\overline{\WW}$) is uniformizing, and all
other conjugates of $\WW$ are unitary (\cite[12.2.6]{DeligneMostow86}). 
In particular, $\Gamma$ admits a modular embedding with $\varphi^\sigma$ 
being either the identity or constant. For all conjugates of $\WW$ 
the Lyapunov spectrum is determined by 
Theorem~\ref{thm:main}~i) and iii)
and this is precisely the content of the notion maximally degenerate.
\end{proof}
\par
We remark that the converse of the preceding proposition
is an interesting problem. Suppose that some $\QQ$-irreducible VHS 
$\PP$ over some ball quotient $B = \BB^n/\Gamma$ is maximally degenerate. 
If we knew that $\PP_\CC$ contained a sub-VHS $\WW$ of signature $(1,n)$, 
then maximally degenerate implies that this $\WW$ reaches the Arakelov bound
by \eqref{eq:sumlyapformula}. Then $\WW \oplus \overline{\WW}$ uses up
all the non-zero Lyapunov exponents. In the complement, the norm growth 
along the geodesic flow is subexponential. By
the argument in \cite[Proposition~5.4]{moellerST} this complement is in 
fact a unitary VHS.
Consequently, we can conclude as in \cite{VZShimhigher} or \cite{MVZ}
to show that $\Gamma$ is arithmetic. The missing step is thus the 
question whether there exists at all such a $\CC$-sub-VHS of $\PP_\CC$. 

\section{Cyclic covers} \label{sec:cyliccover}

The section contains well-known general material that
is used as preparation for the next section. We start
with generalities on cyclic coverings and then recall
which cyclic coverings give rise to a structure of
a ball quotient on the parameter space. 
%

\subsection{The parameter space for cyclic coverings}


Let $N\geq 3$ and let $B_0  = \moduli[0,N]$ denote the moduli
space (or configuration space) of $N$ ordered points on $\PP^1 = \PP^1(\CC)$.
Explicitly, if we let
\[M_0 = \set{x=(x_1,\dots,x_N)\in (\PP^1)^N}{\forall i,j\in
\{1,\dots,N\}: x_i\neq x_j}\]
then $\PGL_2(\CC)$ acts diagonally on $M_0$ and
\[B_0 = \PGL_2(\CC) \backmod M_0.\]

Let $a,b,c\in \PP^1$ be distinct and consider the space of normalized tuples
\[\set{x \in M_0}{x_1 = a, x_{N-1} = b, x_{N} =
c}.\]
We may identify $B_0$ with this set. A point in $B_0$ will be denoted by
$\underline{x}$; we will often choose a convenient representative by fixing
three coordinates.

\subsubsection{The type of a cyclic covering} 
A tuple $(d; a_1,\dots, a_N)$ of natural numbers is called a \textit{type of a
cyclic covering} if
\begin{align*}
d \geq 2, \quad 0 < a_i < d,\quad \gcd(a_1,\dots,a_N,d)=1,\quad
\sum_{i=1}^N a_i \in d\ZZ.
\end{align*}
A cyclic covering of type $(d; a_1,\dots,a_N)$ branched at $\underline{x}\in
B_0$ is the complete, nonsingular 
curve $X$ with affine equation
\[y^d = \prod_{i=1}^{N} (x - x_i)^{a_i}.\]
Its genus is given by the Riemann-Hurwitz formula
\begin{align*}
 g(X) = \frac{N-2}{2}d +1 -\frac{1}{2}\sum_{i=1}^N \gcd(a_i,d). 
\end{align*}

By varying $\underline{x}\in B_0$ we obtain a family of cyclic covers of
the same type 
\begin{align}\label{eqn:family-cycliccovers}
f:\XFam \to B_0.
\end{align}
Since this family is locally topologically trivial, the sheaf $R^1f_*(\CC)$ is a
local system on $B_0$.

\subsubsection{Stable and semi-stable points, cusps} 
We enlarge $B_0$ by adding divisors in the boundary that parametrize stable points (compare \cite[Section~4]{DeligneMostow86})
and this enlarged space will have the structure of a ball quotient. Sometimes the ball quotient
is not yet compact and we add some boundary points, called cusps, that parametrize classes of semi-stable points
in $M$.
\par
For a fixed $\mu =
(\mu_i)_{1\leq i \leq N} \in (\QQ\cap(0,1))^N$ such that $\sum_i \mu_i = 2$, we call $x\in
(\PP^1)^N$ a $\mu$-stable (resp.
$\mu$-semi-stable) point, if for all $z\in \PP^1$ 
\[\sum_{i: x_i = z} \mu_i    < 1\qquad (\text{resp.}\ 
  \sum_{i: x_i = z} \mu_i \leq 1).\]
Define $M = M^{\mu} \supset M_0$ to be the set of $\mu$-{stable points} and
let
\[B = B^{\mu} = \PGL_2(\CC)\backmod M.\]
Then $B$ is a complex manifold containing $B_0$ as an open subspace. We use throughout the 
convention adopted here to refer to the tuple of weights defining (semi-)stable points
by {\em upper indices} (like $B^{\mu}$ or $B^k$ later), while {\em lower indices} refer
to parameters of the local system (or period maps) on these spaces.
\par
Next let $\ol{M} = \ol{M}^{\mu} \supset M^{\mu}$ be the set of
$\mu$-{semi-stable points}. If $x \in (\PP^1)^N$ is strictly semi-stable, then
there is a unique partition 
of $\{1,\ldots,N\} = S_1 \dot{\cup} S_2$, such that $\sum_{k\in S_j} \mu_k
=1$
for $k=1,2$, $x_i\neq x_j$ for $i\in S_1, j\in S_2$, and $i\mapsto x_i$ is constant on
$S_1$ or on $S_2$. To define $\ol{B}$, we first take the quotient of $\ol{M}$ by
$\PGL_2(\CC)$ and then identify the classes of strictly semi-stable points
having the same partition. The elements of $\ol{B}\setminus B$ are called \emph{cusps}.
$\ol{B}$ with its quotient topology is a compact 
Hausdorff space and an algebraic variety \cite[4.3, 4.4.2, 4.5]{DeligneMostow86}.
\par
Let $\{i,j\} \subset \{1,\dots,N\}$ be a two-element subset such that
$\mu_i+\mu_j<1$. Define the \textit{elliptic divisor} $L_{ij} = L_{ij}^{\mu}$ as
the image of
\[\set{x =(x_1,\dots,x_N) \in M}{x_i = x_j}\]
under the projection modulo $\PGL_2(\CC)$. The interior $L_{ij}^\circ$ of
$L_{ij}$
is defined as the subset of $L_{ij}$ where precisely two coordinates are equal.

\subsection{Cyclic coverings uniformized by the ball}
For a cyclic covering of type $(d;a_1,\dots,a_N)$, define $\mu_i = a_i/d$. The
following condition on the $\mu_i$ was the first
criterion that provided a general construction of ball quotients.
\par
\begin{align} \tag{INT}
\label{eq:INT}
 \sum_{i=1}^N \mu_i = 2,\qquad \text{and} \quad \text{for all}\ i\neq j\
\text{with}\
\mu_i +
\mu_j < 1,\ (1-\mu_i-\mu_j)^{-1}\ \in \ZZ
\end{align}
\par
\begin{theorem}[\cite{DeligneMostow86}] If $\mu$ satisfies \eqref{eq:INT} then
$B$ is a ball quotient.
\end{theorem}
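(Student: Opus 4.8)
The plan is to realize $B$ explicitly as a ball quotient through the period map of a distinguished sub-variation of Hodge structures of the family of cyclic covers, following Deligne--Mostow; the condition \eqref{eq:INT} will be exactly what guarantees that this period map extends across the divisors of $B$ along which two branch points collide. Let $f\colon \XFam \to B_0 = \moduli[0,N]$ be the family of cyclic covers of type $(d;a_1,\dots,a_N)$, $\mu_i = a_i/d$, and decompose the local system $R^1f_*\CC$ into character eigenspaces $\LL_\chi$ for the Galois action of $\ZZ/d$. By the Chevalley--Weil formula (equivalently, Riemann--Hurwitz applied fibrewise), for the character $\chi$ sending a fixed generator to a suitable primitive $d$-th root of unity one gets $h^{1,0}(\LL_\chi) = -1 + \sum_i \mu_i$ and $h^{0,1}(\LL_\chi) = -1 + \sum_i (1-\mu_i)$; the hypothesis $\sum_i \mu_i = 2$ makes these Hodge numbers $1$ and $N-3 = n$. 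The intersection pairing on $H^1$ induces on $\LL_\chi$ a locally constant hermitian form which, by the Hodge--Riemann relations, has signature $(1,n)$, so $\LL_\chi$ is a polarized $\CC$-VHS whose period domain (Section~\ref{sec:weigh1VHS}) is $\BB^n$. Since projectively equivalent branch configurations yield isomorphic covers, the period map is $\PGL_2(\CC)$-invariant and descends to a map $p\colon \widehat{B_0}\to \BB^n$ on the universal cover of $B_0$.

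First I would show that $p$ is a local biholomorphism. By the description of the Higgs field in Section~\ref{sec:Higgscurvature}, $\dd p$ is the Kodaira--Spencer map $\widetilde\tau\colon \cT_{B_0}\to \SheafHom(\LL_\chi^{1,0},\LL_\chi^{0,1})$, and since $\dim B_0 = N-3 = \dim \BB^n$ it suffices to check injectivity of $\widetilde\tau$ at one, hence every, point. As $\LL_\chi^{1,0}$ is a line, this reduces to showing that a single explicit holomorphic one-form on the cyclic cover has linearly independent Kodaira--Spencer images against a basis of the antiholomorphic forms --- the classical non-vanishing of a Wronskian-type determinant of the associated Lauricella--Appell hypergeometric periods, going back to Picard. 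Hence $\widetilde\tau$ is an isomorphism and $p$ is étale.

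Next comes the extension of $p$ across the boundary divisors of $B = \PGL_2(\CC)\backmod M^\mu$, the technical heart of the argument. Near an elliptic divisor $L_{ij}$ (so $\mu_i+\mu_j<1$) two branch points collide; a local model computation identifies the monodromy of $\LL_\chi$ about $L_{ij}$ with a pseudo-reflection whose non-trivial eigenvalue is $e^{-2\pi i(\mu_i+\mu_j)}$, which by \eqref{eq:INT} has finite order $m_{ij}=(1-\mu_i-\mu_j)^{-1}\in\ZZ$. Passing to the local $m_{ij}$-fold cyclic cover branched along $L_{ij}$ trivializes this monodromy, so $p$ extends holomorphically there, the extension remaining étale as a morphism of orbifolds (the Kodaira--Spencer computation passes to the limit) with $\ZZ/m_{ij}$ isotropy along $L_{ij}$. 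One checks similarly that the strictly $\mu$-semistable strata of $\ol{M}^\mu$ collapse to the cusps of $\ol{B}$, around which the monodromy is parabolic and $p$ maps onto a horoball, so these are genuine cusps of a finite-volume ball quotient and need no further extension. Thus $p$ prolongs to an orbifold-étale map $\widehat{B}\to\BB^n$ from the universal orbifold cover of $B$.

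Finally I would pull back the Bergman metric of $\BB^n$ along the extended $p$. Since $\ol{B}$ is a compact Hausdorff algebraic variety \cite[4.3, 4.4.2, 4.5]{DeligneMostow86} and $p$ is orbifold-étale over each $L_{ij}$ and has standard cuspidal behaviour over $\ol{B}\setminus B$, this metric is complete on $\widehat{B}$ (in the orbifold sense). A local isometry from a complete orbifold onto $\BB^n$ is a covering map, and $\BB^n$ being simply connected forces $\widehat{B}\isom\BB^n$; hence $B\isom \BB^n/\Gamma$ with $\Gamma$ the image in $\PU(1,n)$ of the orbifold fundamental group of $B$ under the monodromy of $\LL_\chi$. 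It is discrete, being the deck group, and cofinite because $\vol(B)<\infty$ (the pulled-back metric being that of a compact orbifold minus finitely many cusps). The main obstacle is precisely the boundary analysis of the previous paragraph --- pinning down the local monodromy, proving holomorphic extendability of the period map, and checking that étaleness survives at the deepest strata where several branch points collide at once; this is where \eqref{eq:INT} enters and constitutes the delicate part of \cite{DeligneMostow86}. (Thurston's description of the same space as a moduli space of flat cone metrics on $\PP^1$ gives an alternative route in which \eqref{eq:INT} plays the same pivotal role.)
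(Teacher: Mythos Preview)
The paper does not give a proof of this theorem; it simply quotes it from \cite{DeligneMostow86}. So there is no ``paper's own proof'' to compare against, only the original Deligne--Mostow argument.

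Your sketch is a faithful outline of that argument. The \'etaleness of the period map on $B_0$ is exactly what the paper later quotes as Lemma~\ref{lem:DM-Lemma on Etaleness of period map in the interior}; the finite-order monodromy about $L_{ij}$ under \eqref{eq:INT} and the resulting holomorphic extension is the content of \cite[\S 8--9]{DeligneMostow86}; and the completeness argument is how they conclude. One stylistic difference: Deligne and Mostow do not work with the orbifold universal cover of $B$ but rather with the Fox completion $\tilde B^1$ of $\tilde B_0^1 = \hat B_0/K_1$ (cf.\ diagram~\eqref{eq:coveringdiagram} in the present paper), and show directly that the extended period map $p_1^1\colon \tilde B^1 \to \BB^n$ is a homeomorphism, hence a biholomorphism. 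Your orbifold phrasing is equivalent in spirit but not their language. Also, your handling of the deepest strata (where more than two branch points collide) is left vague; in \cite{DeligneMostow86} this requires separate analysis and is part of why the argument is delicate, as you acknowledge.
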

\par
One can relax the condition \eqref{eq:INT} in order to be more flexible and
to obtain more ball quotients. We say that the tuple of $\mu$ satisfies
\eqref{eq:SigmaINT}, 
if there is $S \subset \{1,\ldots,N\}$ such that the following conditions hold.
\par
\begin{equation} \tag{$\Sigma$INT}
\begin{aligned}\label{eq:SigmaINT}
 \sum_{i=1}^N \mu_i = 2,\qquad \text{and} \qquad \mu_i = \mu_j \quad \text{for
all} \quad i,j \in S \quad 
\text{and moreover} \\ \text{for}
\ i\neq j\ \text{with}\
\mu_i + \mu_j < 1: 
(1-\mu_i-\mu_j)^{-1}\ \in \left\{\begin{array}{ll} \frac12 \ZZ & i,j \in S \\
\ZZ & \text{otherwise} \end{array} \right.
\end{aligned}
\end{equation}
Let $\Sigma = \Sym(S)$ act on $B$ by permuting coordinates.
\par
\begin{theorem}[\cite{mostowhalfint}] If $\mu$ satisfies \eqref{eq:SigmaINT}
then
$B/\Sigma$ is a ball quotient.
\end{theorem}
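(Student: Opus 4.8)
The plan is to run the Deligne--Mostow period-map construction \cite{DeligneMostow86}, using the symmetry group $\Sigma$ to absorb the half-integral local orders. First I would fix a type $(d;a_1,\dots,a_N)$ realizing $\mu$ (so $\mu_i = a_i/d$), form the family $f\colon\XFam\to B_0 = \moduli[0,N]$ of cyclic covers, and isolate the eigenspace $\LL_\chi\subset R^1f_*\CC$ for the character $\chi$ of the Galois group $\ZZ/(d)$ on which the induced polarized $\CC$-VHS has signature $(1,n)$, $n = N-3$; this is the uniformizing summand, with period map $p\colon\widehat{B_0}\to\BB^n$ equivariant for the monodromy $\rho\colon\pi_1(B_0)\to\PU(1,n)$. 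Since the branch points indexed by $S$ all carry the same weight, $\Sigma = \Sym(S)$ acts on $f$ compatibly with the $\ZZ/(d)$-action, hence on $\LL_\chi$ preserving the Hodge filtration and the polarization; so $\Sigma$ acts on $\BB^n$, the monodromy extends to $\rho\colon\pi_1^{\orb}(B_0/\Sigma)\to\PU(1,n)$, and $p$ descends to an equivariant map on the orbifold universal cover of $B_0/\Sigma$. The task then splits into extending $p$ across the stable boundary divisors and identifying the extension with $\BB^n$.

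The core of the argument is the local study near the elliptic divisors $L_{ij}$ with $\mu_i+\mu_j<1$, and this is exactly where the hypothesis enters. The Hodge structure of $\LL_\chi$ degenerates transversally to $L_{ij}$ in such a way that the monodromy $\gamma_{ij}$ of a small loop around $L_{ij}$ acts on $\BB^n$ as a complex reflection of rotation angle $2\pi(1-\mu_i-\mu_j)$, whose mirror is a totally geodesic $\BB^{n-1}\subset\BB^n$. If $i\notin S$ or $j\notin S$, condition \eqref{eq:SigmaINT} forces $(1-\mu_i-\mu_j)^{-1}\in\ZZ$, so $\gamma_{ij}$ has finite order and the Deligne--Mostow extension lemma applies verbatim: $p$ extends over $L_{ij}^\circ$ to a local isomorphism of a neighbourhood in $B$ with a neighbourhood of the mirror modulo $\gamma_{ij}$. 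If $i,j\in S$, I would use that a half-turn around $L_{ij}$ realizes the transposition $(i\,j)\in\Sigma$: on $B_0/\Sigma$ the class $\delta_{ij}$ of this half-loop is a genuine element of $\pi_1^{\orb}$ with $\delta_{ij}^2 = \gamma_{ij}$, so $\rho(\delta_{ij})$ is a complex reflection of rotation angle $\pi(1-\mu_i-\mu_j)$, of order $2(1-\mu_i-\mu_j)^{-1}$, and \eqref{eq:SigmaINT} now guarantees precisely that this is an integer. Hence $p$ descends over the image of $L_{ij}^\circ$ in $B/\Sigma$ to a local orbifold isomorphism with a neighbourhood of the mirror modulo $\rho(\delta_{ij})$. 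At the strictly semistable points one argues as in \cite{DeligneMostow86} that they contribute only finitely many cusps.

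Having assembled these charts, $p$ extends to a local biholomorphism $\bar p\colon\widehat{B/\Sigma}\to\BB^n$ from the orbifold universal cover of $B/\Sigma$. Pulling back the metric $\omega_\hyp$ and invoking the Deligne--Mostow compactification $\ol{B}$ to see that $\widehat{B/\Sigma}$ is metrically complete (with the finitely many cusps of $\ol{B}\setminus B$ as complete ends), a local isometry from a complete manifold to connected $\BB^n$ is a covering, and since $\BB^n$ is simply connected, $\bar p$ is an isomorphism. Thus $B/\Sigma\cong\BB^n/\Gamma$ with $\Gamma = \rho(\pi_1^{\orb}(B/\Sigma))$, a lattice since the quotient has finite volume. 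The hard part will be exactly the boundary extension together with the completeness (equivalently properness) of $\bar p$ --- the technical heart of \cite{DeligneMostow86} and \cite{mostowhalfint} --- whereas the only genuinely new input over the \eqref{eq:INT} case is the observation that transposing two equal-weight branch points is a square root of the corresponding elliptic monodromy, which is what halves the integrality requirement.
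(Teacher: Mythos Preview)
The paper does not prove this theorem at all: it is stated with the citation \cite{mostowhalfint} and no argument is given, as it is a foundational result from the literature on which the paper builds. So there is no ``paper's own proof'' to compare your proposal against.

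That said, your sketch is a reasonable outline of the strategy in \cite{mostowhalfint} and \cite{delcommen}: descend the period map to the $\Sigma$-quotient, observe that for $i,j\in S$ the transposition $(i\,j)$ provides a square root of the local monodromy $\gamma_{ij}$, so that the relevant reflection on the $\Sigma$-quotient has order $2(1-\mu_i-\mu_j)^{-1}$ rather than $(1-\mu_i-\mu_j)^{-1}$, and then run the Deligne--Mostow completion and properness argument. The key insight --- that passing to $B/\Sigma$ halves the integrality requirement along the divisors $L_{ij}$ with $i,j\in S$ --- is correctly identified. The genuinely delicate parts (the extension of the period map across higher-codimension strata, and the completeness/properness argument establishing that the extended period map is a global isomorphism) are acknowledged but not carried out; these are indeed the technical heart of \cite{DeligneMostow86} and \cite{mostowhalfint}, and a full proof would require substantially more detail than a sketch can provide.
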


\subsection{The non-arithmetic examples} \label{sec:nonarithexam}

In the following table we reproduce (and number) the non-arithmetic examples
of two-dimensional ball quotients of \cite{DeligneMostow86} and
\cite{mostowhalfint}, 
according to the corrected table in \cite{mostowdisc}. Note
that example no.~5 in loc. cit. has a misprint concerning the $a_i$.
\par
$$\begin{array}{|c|c|c|c|c|c|c|c|c|c|c|c|}
\hline
  & d  & a_1 & a_2 & a_3 & a_4 & a_5 & & &L_{ij}^\parab & g & \text{comm. to}\\
\hline
1 & 12 & 3 & 3 & 3 & 7 & 8 & INT & B_9 & -- & 12 &  4 \\
\hline
2 & 12 & 3 & 3 & 5 & 6 & 7 & INT & B_9 & L_{35} & 12 & \\
\hline
3 & 12 & 4 & 4 & 4 & 5 & 7 & INT & B_{10} & L_{45} & 12 & \\
\hline
4 & 12 & 4 & 4 & 5 & 5 & 6 & INT & B_{10} &  -- & 11 & 1 \\
\hline
5 & 15 & 4 & 6 & 6 & 6 & 8 & INT & B_{10} &  --  & 18 & 13 \\
\hline
6 & 18 & 2 & 7 & 7 & 7 & 13 & \Sigma INT & B_7/\Sigma_3 & -- & 25 & 7\\
\hline
7 & 18 & 7 & 7 & 7 & 7 & 8 & \Sigma INT & B_{10}/\Sigma_4 & -- & 25 & 6 \\
\hline
8 & 20 & 5 & 5 & 5 & 11 & 14 & INT & B_9 & --  & 22 & 9\\
\hline
9 & 20 & 6 & 6 & 9 & 9 & 10 & \Sigma INT &  B_{10} /\Sigma_2 & -- & 23 & 8\\
\hline
10 & 20 & 6 & 6 & 6 & 9 & 13 & \Sigma INT &  B_{9}/\Sigma_3 & -- & 27 & \\
\hline
11 & 24 & 4 & 4 & 4 & 17 & 19 & \Sigma INT & B_9/\Sigma_3 & -- & 30 & 12\\
\hline
12 & 24 & 7 & 9 & 9 & 9 & 14 & INT & B_{10} & --  & 31 & 11 \\
\hline
13 & 30 & 5 & 5 & 5 & 22 & 23 & \Sigma INT & B_9/\Sigma_3 & -- & 37 &  5\\
\hline
14 & 42 & 7 & 7 & 7 & 29 & 34 & \Sigma INT & B_9/\Sigma_3 & --  & 53 & 15 \\
\hline
15 & 42 & 13 & 15 & 15 & 15 & 26 & \Sigma INT & B_{10}/\Sigma_3 & -- & 58 &
14 \\
\hline
\end{array}
$$
\par
\medskip
The trace fields of these lattices are $\QQ[\cos(2\pi/d)]$.
They are distinct if and only if the $d$ are distinct with the exception
that $d=15$ and $d=30$ produce the same trace field (and indeed 
commensurable lattices). The commensurability results of Sauter
and Deligne-Mostow (see \cite[Section~3]{parkersurvey}) among
these lattices are indicated in the last column.
\par
In dimension three, there is a unique commensurability class of 
a non-arithmetic ball quotient
known, given by the cyclic covering of type $(12;7,5,3,3,3,3)$.

\subsection{Realizations of the compactification and their intersection rings} 
\label{sec:realization}
For technical reasons we will need the boundary of the ball quotients to be a normal crossing
divisor. By \cite[Lemma 4.5.1]{DeligneMostow86}, $\ol{B}$ is smooth if for all cusps, the 
associated partition $\{S_1,S_2\}$ satisfies $|S_1| = 2$ or $|S_2|=2$. Fortunately, this applies
to all cases that we need to consider. 
We will thus blow up the cusps and in the blowup $B^{\mathrm{nc}} \to
\ol{B}$ the preimage of the cusp is now a divisor, called boundary divisor. 
In the two-dimensional case, each of these divisors can be identified with
 the image modulo $\PGL_2(\CC)$
of the set of all semi-stable points in $M$ with $x_i= x_j$ for some $i,j$. We will
write
$L_{ij}^\parab$ to distinguish these divisor from elliptic divisors which 
are also some $L_{ij}$, but the monodromy is elliptic, as opposed to parabolic
in the case of boundary divisors.
\par

\subsubsection{Dimension two, case (INT)} 
\label{sec:2dimINT}
We start with two-dimensional ball quotients satisfying \eqref{eq:INT}. In all
the cases that we will be interested in 
$B^{\rm nc}$ will contain all the $10$ divisors $L_{ij}$
(some of them possibly being $L_{ij}^\parab$) or $9$ out of the $10$.
\par
We call the first case $B_{10}$. In this case $|L_{ij} \cap L_{kl}| = 1$ if
$\{i,j\} \cap \{k,l\} = \emptyset$
and $L_{ij} \cap L_{kl} = \emptyset$ other wise. The variety $B_{10}$ is isomorphic to
$\PP^2$ blown up at $4$ points in general
position \cite[\S 10.5]{YoshidaFuchs87}. Therefore, $\CH_1(B_{10})$ is generated by
\[h, e_1,\dots, e_4\]
where $h$ is the pullback of the hyperplane class in $\CH_1(\PP^2)$, and
$e_1,\dots,e_4$ are the classes of the four exceptional divisors. The ten lines
$L_{ij}$ are given by the strict transforms of the six lines in $\PP^2$
connecting the four points, and by the four exceptional divisors. The classes
of the lines can be expressed in the generators (up to
renumbering) as
\begin{align*}
 [L_{ij}] &= h - e_k - e_m\quad \text{where}\quad \{i,j,k,m\} = \{1,\dots,4\}\\
 [L_{i5}] &= e_i.
\end{align*}
The intersection matrix with respect to the ordering $h,e_1,e_2,e_3,e_4$ is
\[\diag(1,-1,-1,-1,-1),\]
and the canonical class is $K_{B_{10}} = -3h + e_1 + e_2 + e_3 + e_4$.
\par
We call the second case $B_9$ and we may choose indices so that $B_{10} \to B_9$
contracts $L_{45}$. 
Said differently, $B_9$  can be obtained by blowing up $\PP^2$ at three points
in general position. Now, $\CH_1(B_9) = \langle h,e_1,e_2,e_3\rangle$  and
\begin{align*}
[L_{12}] &= h-e_3, \quad [L_{13}] = h-e_2, \quad [L_{23}] = h - e_1  \\
 [L_{i4}] &= h - e_k - e_m\quad  \text{where}\quad \{i,k,m\} = \{1,2,3\}\\
 [L_{i5}] &= e_i. \qquad  \qquad\quad \text{for} \quad i=1,2,3
\end{align*}
The intersection pairing has matrix $\diag(1,-1,-1,-1)$ and the canonical class
is $K_{B_9} = -3h + e_1 + e_2 +e_3$.
\par

\subsubsection{Dimension two, case ($\Sigma$INT)} 
\label{sec:2dimSIGMAINT}
We will also need to work with the quotient of $B$ by a symmetric group
$\Sigma$. Let $S\subseteq\{1,\dots,5\}$, and let $\Sigma = \Sym(S)$ act on 
$B$ by permuting the coordinates. The Chow ring $\CH_*(B/\Sigma)_{\QQ}$ is
isomorphic to $(\CH_*(B)_{\QQ})^\Sigma$, and the ring structure is given by
\[(D_1.D_2) = 1/|\Sigma|\cdot \eta_*(\eta^*D_1.\eta^*D_2)\]
where $\eta:B\to B/\Sigma$ denotes the quotient map. Note that the pullback of a
divisor is the sum of the irreducible components of the preimage, each weighted
with the order of its stabilizer. Moreover, by \cite[17.4.10]{FultonIT}, 
the ring structure is independent of the presentation of $B/\Sigma$ as a quotient
by a finite group.

We only present the cases of $B$ and $\Sigma$ that are actually needed in the
subsequent computations. The case of a permutation group of $4$ elements appearing
in line $7$ of the above table will not be treated, since it is covered
by the case in line $6$ and the commensurability 
Theorem~\ref{thm:lyap_spec_comminvariant}.

\paragraph{\textit{The case} $B_9/\Sigma_3$} Let $S= \{1,2,3\}$, and consider the action of 
$\Sigma_3 = \Sym(S)$ on $B=B_9$. The map $\eta$ is ramified of order 2 along the 
divisors $L_{12}$, $L_{23}$ and $L_{13}$. The group $\CH_1(B_9/\Sigma_3)_{\QQ}$ is generated 
by the images $[\ol{L}_{14}]$,  $[\ol{L}_{15}]$, and $[\ol{L}_{12}]$ of the  divisors and
\begin{align*}
\eta^*[\ol{L}_{14}] &= [L_{14}] + [L_{24}] + [L_{34}],\\
  \eta^*[\ol{L}_{15}] &= [L_{15}] + [L_{25}] + [L_{35}],\\
  \eta^*[\ol{L}_{12}] &= 2([L_{12}] + [L_{23}] + [L_{13}]).
\end{align*}
Note that $[\ol{L}_{12}] = 2([\ol{L}_{14}] + [\ol{L}_{15}])$.
The intersection matrix with respect to this system of generators is
\[\begin{pmatrix}
   -1/2 & 1 & 1\\
   1    &-1/2 & 1\\
   1    & 1 & 4
  \end{pmatrix}.\]
By Riemann-Hurwitz,
\[\eta^*K_{B_9/\Sigma_3} = K_{B_9} - ([L_{12}] + [L_{23}] + [L_{13}]).\]
By intersecting with all generators in $\CH_1(B_9/\Sigma_3)_{\QQ}$, we obtain
\[K_{B_9/\Sigma_3} = - [\ol{L}_{12}].\]

\paragraph{\textit{The case }$B_7/\Sigma_3$} Again, let $S= \{1,2,3\}$, and let $B_7$ be the
space obtained from $B_{10}$ by collapsing all $L_{i5}$, $i=1,\dots,3$. This is
the projective
plane blown up at one point. Let again $h$ denote the hyperplane class and let $e$ denote the
class of the exceptional divisor. Then the classes of the boundary divisors are given by
\begin{align*}
[L_{i4}] &= h, \quad i = 1,2,3\\
[L_{12}] &= [L_{23}] = [L_{13}] = h - e\\
[L_{45}] &= e 
\end{align*}
The Chow group $\CH_1(B_7/\Sigma_3)_{\QQ}$ is generated by
$[\ol{L}_{12}],  [\ol{L}_{14}]$ and $[\ol{L}_{45}]$, and
\[\eta^*[\ol{L}_{12}] = 2([L_{12}] + [L_{23}] + [L_{13}]),\quad
  \eta^*[\ol{L}_{14}] = [L_{14}] + [L_{24}] + [L_{34}],\quad
  \eta^*[\ol{L}_{45}] = [L_{45}].  
\]
The respective matrix of the intersection product is
\[\begin{pmatrix}
   0 & 3 & 1\\
   3 & 3/2 & 0\\
   1 & 0   & -1/6
  \end{pmatrix},
\]
and the canonical class is given by
\[K_{B_7/\Sigma_3} = -\tfrac{2}{3}([\ol{L}_{12}] + [\ol{L}_{14}]).\]
\par

\subsubsection{Dimension three} 
\label{sec:3dimINT}
In order to compute the intersection ring of the moduli space
$B_{14}$ (we keep indexing by the number of boundary divisors of stable
configurations) of the three-dimensional ball quotient 
we use the formalism of weighted stable curves of Hassett (\cite{HasWeighted}).
In his language, we are interested in $\ol{\cM}_{0; (\frac{7}{12},\frac{5}{12},\frac14,\frac14,\frac14,\frac14)}$.
The space $\ol{\cM}_{0; (1,\frac{1}{4},\frac14,\frac14,\frac14,\frac14)}$ is isomorphic to $\PP^3$
and $\ol{\cM}_{0; (1,\frac{2}{4},\frac14,\frac14,\frac14,\frac14)}$ is the blowup of $\PP^3$ 
corresponding to curves where the points with labels $I_3 = \{2,4,5,6\}$, 
$I_4 = \{2,3,5,6\}$, $I_5 = \{2,3,4,6\}$ resp.\ $I_6 = \{2,3,4,5\}$ come
together. Reducing the weights from $(1,\frac{2}{4},\frac14,\frac14,\frac14,\frac14)$ to
$(1,\frac{5}{12},\frac14,\frac14,\frac14,\frac14)$ and further to 
$(\frac{7}{12},\frac{5}{12},\frac14,\frac14,\frac14,\frac14)$ is an
isomorphism, as can be checked using the criterion in \cite[Section~4.2]{HasWeighted}.
Consequently, we are interested in a projective space $\PP^3$, blown up at 
four points $P_i$ corresponding to the set of indices $I_i$, $i=3,4,5,6$.
\par
We deduce from \cite[Proposition~6.7 (e)]{FultonIT}  that $\Pic(B_{14})$ is
freely generated by 
the pullback $h$ of the hyperplane class of $\PP^3$ and the classes of the exceptional 
divisors $e_3,e_4,e_5,e_6$. Moreover, $\CH_1(B_{14})$ is generated by the pullback
of a line $\ell$ from $\PP^3$ and lines $\ell_i$ on the exceptional divisors.
The ring structure of the chow ring is given by
$ h \cdot h = \ell$,  $e_i \cdot e_i = (-1)\cdot \ell_1$
and 
$ h \cdot \ell = 1$, $e_i \cdot l_i = 1$
as well as  zero for all intersections not listed above. 
\par
If we normalize $x_1 = \infty$ and let $z_i = x_2 - x_i$, then
$(z_3:z_4:z_5:z_6)$ is indeed a coordinate system, in which $P_3 = (1:0:0:0)$
etc.\ and the semi-stable point is $P_{12} = (1:1:1:1)$. In these coordinates, 
one checks that
\begin{equation*}
\begin{aligned}
L_{1j} & = e_j, \quad L_{2j} = h-e_{3}-e_4-e_5-e_6 + e_j, \quad j=3,4,5,6. \\
L_{jk} & = h-e_{3}-e_4-e_5-e_6 + e_j + e_j \quad 3 \leq j < k \leq 6 \\
\end{aligned}
\end{equation*}
\par
Finally, the formula for the behaviour of the canonical class yields
\begin{equation*}
K_{B_{14}} = -4h + 2e_{3}+2e_4+2e_5+2e_6.
\end{equation*}

\subsection{Decomposition of the VHS for cyclic coverings}
A general reference for the following discussion 
is \cite{bouwhabil} and \cite{bouwprank}. 
Let $G= \Gal(X/\PP^1) \isom \ZZ/(d)$ be the Galois group of a cyclic cover of
type $(d;a_1,\dots,a_N)$. Fix a primitive $d$-th root of unity $\zeta$, the generator $g$
of $G$ given by 
\[g: x\mapsto x,\quad y \mapsto \zeta y\]
 and let $\chi:G \to \CC^\times$ be the
character of $G$ defined by $\chi(g) = \zeta^{-1}$. As $G$ acts on
$H^1_{\dR}(X)$, we can decompose this space into eigenspaces
\[H^1_{\dR}(X) = \bigoplus_{k=1}^d H^1_{\dR}(X)_{\chi^k}.\]
Since the action of $G$ respects the Hodge decomposition $H^1_{\dR}(X) =
H^{1,0}\oplus H^{0,1}$, we also have a decomposition of $H^{1,0}$ and $H^{0,1}$.
\par
The eigenspace decomposition carries over to the relative situation of \eqref{eqn:family-cycliccovers}. We have
\[R^1f_*\CC = \bigoplus_{k=1}^d \LL_k.\]
Let 
\[\cL_k^{1,0} \subseteq \LL_k\tensor\cO_{B_0}\] be the subbundle of holomorphic
forms, and let
\[\cL_k^{0,1} = (\LL_k\tensor\cO_{B_0})/\cL^{1,0}_k.\]
\par
The Chevalley-Weil formula gives information about the dimensions of the
eigenspaces.
Define for each $k= 1,\dots ,d$
\begin{align*}
 \mu_i(k)  &= \fracpart{\frac{k\cdot a_i}{d}},\qquad \mu(k) = (\mu_i(k))_i\\
 a_i(k)    &= \mu_i(k)\cdot d\\
 \sigma(k) &= \sum_{i=1}^N \mu_i(k)
\end{align*}
Here $\fracpart{x} = x - [x]$ denotes the fractional part of $x\in \RR$.
Further, let $s(k)\in \{0,\dots,N\}$ be the number of $a_i$ that are not
equal to $0$ modulo $d/\gcd(k,d)$.
\par
\begin{lemma} \label{le:basis-coho of cyclic covering}
With the notations above,
 \begin{enumerate}[a)]
  \item The rank of $\cL_k^{1,0}$ is $\sigma(k) - 1$, and the rank of
$\cL_k^{0,1}$ is $s(k) - 1 - \sigma(k)$. 
  \item $\LL_k$ is a $\CC$-local system of dimension $s(k) - 2$,
polarized by a hermitian form of signature $(\sigma(k)-1, s(k) - 1 -
\sigma(k))$.
  \item When $k$ is prime to $d$, an explicit basis of $\cL_k^{1,0}$ is given by
\[\omega_k^l = y^{-k}x^l f_k \dd x\]
with $f_k = \prod_{j} (x-x_j)^{[\tfrac{ka_j}{d}]}$ where $l = 0,\dots,
\sigma(k)-2$.
 \end{enumerate}
\end{lemma}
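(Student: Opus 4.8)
The plan is to compute the eigenspace decomposition of $H^1_{\dR}(X)$ under $G = \ZZ/(d)$ directly from the explicit affine model $y^d = \prod_i (x-x_i)^{a_i}$, using the description of differential forms on a cyclic cover in terms of the coordinates $x$ and $y$. First I would reduce to the case where $X$ is viewed as a ramified cover $\pi\colon X \to \PP^1$ and observe that, since the covering is cyclic, the pushforward $\pi_* \cO_X$ decomposes as $\bigoplus_{k=0}^{d-1} \cL^{(k)}$ into line bundles on $\PP^1$, where $\cL^{(k)}$ is the subsheaf on which $g$ acts by $\zeta^{-k}$ (i.e.\ the $\chi^k$-eigensheaf), and a local generator of $\cL^{(k)}$ is $y^{-k} \prod_j (x-x_j)^{[k a_j/d]}$. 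The degree of $\cL^{(k)}$ is computed from the local ramification data: at $x_j$ the branching contributes $\fracpart{k a_j / d}$ to the order, so one gets $\deg \cL^{(k)} = -\sum_j \fracpart{k a_j/d} = -\sigma(k)$ after accounting for behaviour at infinity (using $\sum a_i \in d\ZZ$). Then $H^0(X, \Omega^1_X)_{\chi^k} = H^0(\PP^1, \omega_{\PP^1} \otimes \cL^{(-k)})$ by relative duality / direct computation, and a basis of global sections of $\omega_{\PP^1}(\sum_j [\text{something}]) $ is given by the forms $x^l \, y^{-k} f_k \, \dd x$ for $l$ in the stated range, with the dimension count $\sigma(k) - 1$ following from Riemann--Roch on $\PP^1$. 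This gives part c) and the first assertion of part a).

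For the rank of $\cL_k^{0,1}$ and part b), I would use that $\LL_k = H^1_{\dR}(X)_{\chi^k}$ has dimension $\rk \cL_k^{1,0} + \rk \cL_k^{0,1}$, and compute the total dimension of the $\chi^k$-isotypic piece of $H^1_{\dR}(X)$ by the Chevalley--Weil formula (or equivalently by an Euler-characteristic computation: $\dim H^1_{\dR}(X)_{\chi^k} = -\chi(\PP^1, \cL^{(k)}) - \chi(\PP^1,\cL^{(-k)}) + (\text{correction for }h^0)$, which collapses to $s(k) - 2$ once one tracks which $x_i$ actually ramify in the quotient cover of degree $d/\gcd(k,d)$). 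The number $s(k)$ enters precisely because an index $i$ with $a_i \equiv 0 \bmod d/\gcd(k,d)$ is unramified in the relevant subcover and contributes nothing. Subtracting $\rk \cL_k^{1,0} = \sigma(k) - 1$ from $s(k) - 2$ gives $\rk \cL_k^{0,1} = s(k) - 1 - \sigma(k)$, and the signature statement in b) is then immediate from the fact that the polarization is positive definite on the $(1,0)$-part and negative definite on the $(0,1)$-part (Section~\ref{sec:weigh1VHS}), so the signature is $(\rk \cL_k^{1,0}, \rk \cL_k^{0,1})$.

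The passage from the absolute statement on a single curve $X$ to the relative statement over $B_0$ is formal: the family \eqref{eqn:family-cycliccovers} is topologically locally trivial, the $G$-action is fibrewise, and the eigensheaf decomposition of $R^1 f_* \CC$ is the one induced fibrewise; the Hodge subbundles $\cL_k^{1,0}$ vary holomorphically and the ranks are locally constant, hence constant on the connected base $B_0$, so the fibrewise computation suffices. The main obstacle I anticipate is bookkeeping the contributions at the branch points and at $x = \infty$ consistently — in particular getting the floor/fractional-part exponents right in $f_k$ and correctly identifying when $k$ is \emph{not} coprime to $d$ (where $s(k) < N$ and the cover factors through a smaller cyclic cover), so that the dimension formulas in a) and b) hold uniformly in $k$ rather than only for $k$ coprime to $d$. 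Part c), by contrast, is only claimed for $k$ coprime to $d$ and there the computation is the cleanest, since then $s(k) = N$, all branch points genuinely ramify, and $f_k = \prod_j (x-x_j)^{[ka_j/d]}$ with the forms $\omega_k^l$ visibly holomorphic and linearly independent.
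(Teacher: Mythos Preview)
Your outline is correct and follows the standard argument for cyclic covers of $\PP^1$: decompose $\pi_*\cO_X$ into eigensheaves, compute their degrees from the ramification data, and read off the Hodge numbers via Riemann--Roch on $\PP^1$; the signature then follows from the polarization. The paper, however, does not actually carry out any of this --- its proof consists entirely of citations to Bouw's work (\cite[Lemma~4.5]{bouwprank} for part~a), \cite[Lemma~1.1.2]{bouwhabil} for part~c)), noting only that the indexing convention there uses $d-k$ in place of $k$. So you have written out the argument that the paper chooses to outsource; your version is more informative for a reader who does not have those references at hand, while the paper's version is shorter and defers the bookkeeping you correctly flag as the main hazard (the floor/fractional-part exponents and the behaviour when $\gcd(k,d)>1$) to a source where it is already done carefully.
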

\begin{proof}
a) is shown in {\cite[Lemma 4.5]{bouwprank}}, and b) follows from a) and the
fact that $\LL_k$ is a polarized $\CC$-VHS. Part c) is proved in 
{\cite[Lemma 1.1.2]{bouwhabil}}. In both source, our index $k$ is replaced by $d-k$.
\end{proof}
\par
The Galois conjugates of the local system $\LL_1$ are parametrized by $k\in
\{1,\dots,d-1\}$ such that $\gcd(k,d)=1$. We will speak of
$(\mu_i(k))_i$ as a Galois conjugate tuple. Furthermore, the local system
$\LL_1$ corresponds to a non-arithmetic lattice in $\PU(1,n)$ if and only if
$(\mu_i)_i$ satisfies \eqref{eq:SigmaINT} and at least one Galois conjugate
$\LL_k$ has signature different from $(0,n)$ and $(n,0)$. 

\subsection{The period map}\label{sec:periodmap}
For each $k=1,\dots, d-1$, where $\cL_k^{1,0}$ is not trivial, we obtain a
period map $\hat B_0\to \Per(\LL_k)$ from the universal covering $\hat
B_0$ of $B_0$ to the period domain of the VHS $\Per(\LL_k)$. It is equivariant
with respect to the action of $\pi_1(B_0)$ on $\hat B_0$ and on $\Per(\LL_k)$
via the monodromy representation $\rho_k:\pi_1(B_0,\underline{x}')\to
\Aut((\LL_k)_{\underline{x}'})$. Let $K_k = \ker(\rho_k)$, and let $\tilde B_0^k =
\hat B_0 /K_k$. Denote $p_k: \tilde B_0^k\to \Per(\LL_k)$ the resulting period
map.
\par
The map $p_k$ can be though of as a multi-valued map on $B_0$. For
$\underline{x}\in B_0$ choose a path connecting $\underline{x}$ to
$\underline{x}'$. Via this path, the fibers of $R^1f_*\CC$ above $\underline{x}$
and $\underline{x}'$ can be canonically identified. Moreover,
\[(R^1f_*\CC)_{\underline{x}'} \isom H^1(\XFam_{\underline{x}'},\CC) \isom
H^1_{\sing}(\XFam_{\underline{x}'},\CC)\]
Hence, the inclusion
\[H^{1,0}(\XFam_{\underline{x}}) \to H^1_{\sing}(\XFam_{\underline{x}'},\CC)\]
is given by integration $\omega\mapsto (\gamma \mapsto \int_\gamma \omega)$.
Since this inclusion is equivariant with respect to the $G$-action, it induces
an inclusion 
\[(\cL^{1,0}_k)_{\underline{x}} \isom H^{1,0}(\XFam_{\underline{x}})_{\chi^k}
\to H^1_{\sing}(\XFam_{\underline{x}'},\CC)_{\chi^k} \isom
(\LL_k)_{\underline{x}'}\]
and ${p}_k$ maps $\underline{x}$ to the point in $\Per(\LL_k)$ defined by
the image of this morphism. Since $p_k(\underline{x})$ depends on the
chosen path, $p_k$ is multi-valued on $B_0$.
\par
Assume for the rest of Section \ref{sec:periodmap} that $\sum_i \mu_i(k) = 2$ 
and $\mu_i(k) \not\in \ZZ$ for
all $i$. Then $\cL^{1,0}_k$ is a line bundle with a global non-zero section
$\omega_k(\underline{x}) = \omega_k^0(\underline{x}) \in H^0(B_0,\cL^{1,0}_k)$ and $\Per(\LL_k)$ is a
ball in a projective space of dimension $n = N-3$. 
\par
\begin{lemma}[{\cite[Lemma 3.9]{DeligneMostow86}}] 
\label{lem:DM-Lemma on Etaleness of period map in the interior} Let $N\geq 4$.
The (multi-valued) period map 
\[p_k: B_0 \to \PP(H^1(\XFam_{\underline{x}'},\CC)_{\chi^k}),\qquad 
\underline{x}\mapsto
\biggl(\gamma\mapsto
\int_{\gamma} \omega_k(\underline{x})\biggr)\]
 has injective differential for every $\underline{x}\in B_{0}$.
 It is given by the map \[(\cT B_{0})_{\underline{x}}
\to \Hom(H^{1,0}(\XFam_{\underline{x}})_{\chi^k},\ \ 
H^{1}(\XFam_{\underline{x}},
\CC)_{\chi^k}/H^{1,0}(\XFam_{\underline{x}})_{\chi^k})\]
mapping $\pder{v}$ to 
\[(\gamma\mapsto \pder{v} \int_{\gamma} \omega_k = \int_{\gamma}
\pder{v}\omega_k)\]
\end{lemma}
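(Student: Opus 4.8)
The plan is to identify the differential $\dd p_k$ with the Gauss--Manin connection read modulo the Hodge filtration, to compute it explicitly on the distinguished section $\omega_k=\omega_k^0=y^{-k}f_k\,\dd x$ of $\cL_k^{1,0}$ provided by Lemma~\ref{le:basis-coho of cyclic covering}, and then to deduce injectivity by a dimension count together with a non-degeneracy statement for a cup-product pairing. For the formula itself I would use the standard description of the differential of a period map: for a locally constant (multivalued) family of cycles $\gamma$, the period $\underline{x}\mapsto\int_\gamma\omega_k(\underline{x})$ is holomorphic, and differentiation under the integral sign commutes with parallel transport of $\gamma$; hence $\dd p_k(\partial/\partial v)$ is represented by the class of $\partial_v\omega_k$ in $H^1_{\dR}(\XFam_{\underline{x}})_{\chi^k}$ read modulo $H^{1,0}(\XFam_{\underline{x}})_{\chi^k}=(\cL_k^{1,0})_{\underline{x}}$. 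This is exactly the asserted map $(\gamma\mapsto\partial_v\int_\gamma\omega_k=\int_\gamma\partial_v\omega_k)$; equivalently it is the eigenspace component of the Higgs field $\tilde\tau$ of Section~\ref{sec:Higgscurvature} restricted to $B_0$.

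Next I would compute $\partial_{x_i}\omega_k$. Writing $y=\prod_j(x-x_j)^{a_j/d}$ and $f_k=\prod_j(x-x_j)^{[ka_j/d]}$ and differentiating logarithmically in one of the $N-3$ free branch points $x_i$ (three of the $x_j$ being normalized), one gets at once
\[
\partial_{x_i}\omega_k=\Bigl(\tfrac{ka_i}{d}-\bigl[\tfrac{ka_i}{d}\bigr]\Bigr)\frac{\omega_k}{x-x_i}=\mu_i(k)\,\frac{\omega_k}{x-x_i},
\]
and the coefficient $\mu_i(k)=\fracpart{ka_i/d}$ is non-zero by the standing assumption $\mu_i(k)\notin\ZZ$. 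The meromorphic form $\omega_k/(x-x_i)$ has its only poles over $x_i$ — multiplying $\omega_k$ by $1/(x-x_i)$ only worsens the order over $x_i$ and improves it at $\infty$ — and the $G$-eigenspace structure forces its residues to vanish, so it is a differential of the second kind and $\dd p_k(\partial/\partial x_i)=\mu_i(k)\,[\omega_k/(x-x_i)]\in H^{0,1}(\XFam_{\underline{x}})_{\chi^k}$.

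For injectivity, note first that under the running hypotheses ($\sigma(k)=2$ and $\mu_j(k)\notin\ZZ$ for all $j$, hence $s(k)=N$) Lemma~\ref{le:basis-coho of cyclic covering} gives $\rk\cL_k^{1,0}=1$ and $\rk\cL_k^{0,1}=N-3=\dim\cT_{\underline{x}}B_0$, so $\dd p_k$ is injective iff the $N-3$ classes $[\omega_k/(x-x_i)]$ are linearly independent in $H^{0,1}(\XFam_{\underline{x}})_{\chi^k}$. I would test them against the cup-product-dual space $H^{1,0}(\XFam_{\underline{x}})_{\chi^{d-k}}$, which (for $\gcd(k,d)=1$, which is the relevant case) has the explicit basis $\eta_\ell=y^{-(d-k)}x^\ell f_{d-k}\,\dd x$, $\ell=0,\dots,N-4$, by Lemma~\ref{le:basis-coho of cyclic covering}~c). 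A residue computation using the ramification numbers of the cyclic cover shows that, over $x_i$, a local primitive of $\eta_\ell$ has leading coefficient proportional to $x_i^\ell$, and hence that $\langle[\omega_k/(x-x_i)],\eta_\ell\rangle=x_i^\ell\,C_i$ for a constant $C_i$ which (again by a short root-of-unity computation stemming from the $G$-action) is non-zero. The pairing matrix is therefore $\diag(C_i)$ times the Vandermonde matrix $(x_i^\ell)$ in the pairwise distinct branch points, hence invertible, so the classes are independent and $\dd p_k$ is injective. The hard part is exactly this last step — the careful bookkeeping of the orders of $\omega_k/(x-x_i)$ and $\eta_\ell$ at all ramification points and at $\infty$, and the non-vanishing of the relevant root-of-unity sums — whereas everything upstream is either the standard Griffiths formula for $\dd p_k$ or a one-line logarithmic derivative; this is the content of \cite[Lemma~3.9]{DeligneMostow86}.
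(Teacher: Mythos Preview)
The paper does not give a proof of this lemma; it only records the statement with a citation to \cite[Lemma~3.9]{DeligneMostow86}. Your outline is nonetheless correct. The identification of $\dd p_k$ with the Gauss--Manin derivative read modulo $H^{1,0}$ is the standard Griffiths description, and the formula $\partial_{x_i}\omega_k=\mu_i(k)\,\omega_k/(x-x_i)$ is right. For the injectivity step, the residue pairing against $\eta_\ell\in H^{1,0}_{\chi^{d-k}}$ really does give exactly $C_i\,x_i^\ell$ with no lower-order corrections: near a preimage $P$ of $x_i$ one has $\ord_P\eta_0=e_i\mu_i(k)-1$ and $\ord_P(\omega_k/(x-x_i))=-e_i\mu_i(k)-1$, so in the expansion $x^\ell=\sum_m\binom{\ell}{m}x_i^{\ell-m}(x-x_i)^m$ only the $m=0$ term survives in the residue; and the contributions from the several preimages of $x_i$ all coincide (the integrand lies in the trivial $G$-eigenspace), so $C_i\neq 0$. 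The pairing matrix is therefore genuinely $\diag(C_i)$ times a Vandermonde in the distinct free branch points, and injectivity follows.

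One remark: your use of Lemma~\ref{le:basis-coho of cyclic covering}~c) to write down the $\eta_\ell$ requires $\gcd(k,d)=1$, as you note. This suffices for every application made in the paper (the primitive part), but the lemma as stated only assumes $\sigma(k)=2$ and $\mu_i(k)\notin\ZZ$. Deligne--Mostow's original argument works directly with the rank-one local system on $\PP^1\setminus\{\underline{x}\}$ (cf.\ Section~\ref{sec:explicit basis of H1}) rather than on the cyclic cover, and so does not need this coprimality hypothesis.
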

This implies that $p_k$ is a local isomorphism on $\tilde B_0^k$, since 
the dimensions of the range and image agree. Recall also from Sec.~\ref{sec:Higgscurvature} that
the derivative of the period map is equal to the map $\tilde\tau$,
 derived from the dual $\tau^\dual$ of the Higgs field.

\subsubsection{Extending the period map} \label{sec:extending-periodmap}

As in \cite{DeligneMostow86}, we consider the Fox completions $\tilde
B^k$ of $\tilde B_0^k$ over $B^k = B^{\mu(k)}$ and $\tilde{B}^k_{\semistable}$
over
$\overline{B}^k = \ol{B}^{\mu(k)}$. The period map $p_k$ has continuous
extensions, denoted $p_k^k$,
to $\tilde B^k$ and to $\tilde{B}_{\semistable}^k$
(\cite[Sect. 8]{DeligneMostow86}), both of which are equivariant for the
monodromy action of $\pi_1(B_0)$. The following diagram gives an overview of
the spaces involved
\begin{equation} \label{eq:coveringdiagram}
 \xymatrix{      & \hat{B_0} \ar[d]^{/ K_k} \\
{\rm Per}(\LL_k) & \tilde B_0^k \ar[l]_{p_k} \ar[d]
\ar@{}[r]|-*[@]{\subset}& \tilde{B}^{k} 
\ar@/^1pc/[ll]^<<<{p_k^k} \ar[d]^u \ar@{}[r]|-*[@]{\subseteq} &
\tilde{B}_{\semistable}^k \ar[d]\\
& B_0 \ar@{}[r]|-*[@]{\subset}
& B^{k} \ar@{}[r]|-*[@]{\subseteq}& {\ol B}^k & B^{k,\rm nc} \ar[l]
\\
}
\end{equation}
If there is additional symmetry in the tuple $(\mu_i(k))_i$, then the above
discussion carries over to a quotient (see \cite{mostowhalfint}). Let
$S\subset
\{1,\dots,N\}$, and assume $\mu_i(k) = \mu_j(k)$ for all $i,j\in S$. The group
$\Sigma = \Sym(S)$ acts on $B_0$, and there is an open, dense submanifold $F$
where the action is free. The VHS on $B_0$ descends to a VHS $\LL_{k,\Sigma}$
on the quotient $F/\Sigma$, and we obtain a period map $\widehat{F/\Sigma} \to
\Per(\LL_{k,\Sigma})$ from the universal covering of $F/\Sigma$ to the period domain of
$\LL_{k,\Sigma}$, which is equivariant for the monodromy action $\rho_{k,\Sigma}$ of
$\pi_1(F/\Sigma)$. Set 
\[K_{k,\Sigma} = \ker(\rho_{k,\Sigma})
\quad\text{and}\quad \tilde F_\Sigma^k
= \widehat{F/\Sigma}/ K_{k,\Sigma},\]
 and let 
\[p_{k,\Sigma}^k : \widetilde{B}_\Sigma^k \to \Per(\LL_{k,\Sigma})\]
denote the induced map
from the Fox completion of $\tilde F_\Sigma^k$ over $B^k/\Sigma$ to the
period domain of $\LL_{k,\Sigma}$.
\par
\begin{remark} Assume that $\mu = \mu(1)$ satisfies \eqref{eq:INT},
then $p^1_1$ is an isomorphism from $\tilde B^1$ onto a complex ball, and turns
$B^1$ into a ball quotient. If $\mu$ satisfies
\eqref{eq:SigmaINT}, the same is true for the induced map $p^1_{1,\Sigma}$ and
$B^1/\Sigma$. 
\end{remark}
\par
We shall also need the extension of the period map of $\LL_k$ to the Fox
completion $\tilde B^1$. The generic case is the following: $\mu(1)$ satisfies
\eqref{eq:SigmaINT} and $k$ parametrizes a Galois conjugate $\LL_{k,\Sigma}$ of $\LL_{1,\Sigma}$.
Since the kernels $K_{1,\Sigma} = K_{k,\Sigma}$, we obtain a period map
\[p_{k,\Sigma}^1: \tilde F_\Sigma^1 \to \Per(\LL_{k,\Sigma})\]
which we wish to extend to $\tilde B^1_\Sigma$. Via $p^1_{1,\Sigma}$, the set
$\tilde F_\Sigma^1$ is the complement in $\BB^n$ of normal crossing divisors,
the preimages of the stable $\ol{L}_{ij}$'s. By Lemma \ref{lemma:branch orders
are integers} below, the monodromy about each of these divisors is of finite
order, whence by \cite[Theorem~9.5]{griffglobal} the map $p^1_{k,\Sigma}$
extends holomorphically to
\[p^1_{k,\Sigma} : \tilde B^1_\Sigma \to \Per(\LL_{k,\Sigma})\]
with image in the interior of the period domain. The following diagram captures this situation.
\begin{equation} 
\label{eq:coveringdiagram-Sigma}
 \xymatrix{      & \widehat{F/\Sigma} \ar[d]^{/ K_{1,\Sigma}} \\
{\rm Per}(\LL_{k,\Sigma}) 
& \tilde F_\Sigma^1 \ar[l]_{p_{k,\Sigma}^1} \ar@/^1pc/[ddr] \ar@{}[r]|-*[@]{\subset}
& \tilde{B}^{1}_\Sigma 
 \ar@/^2pc/[ll]_<<<<<<<<<<{p_{k,\Sigma}^1} \ar[ddr] \ar[r]^{\cong}_{p^1_{1,\Sigma}} 
 & \BB^n\\
& F \ar@{}[r]|-*[@]{\subseteq} \ar[dr]
& B^{1} \ar[dr] \\
&
& F/\Sigma \ar@{}[r]|-*[@]{\subseteq} 
&B^1/\Sigma\\
}
\end{equation}
\par
There is a third case, when we need an extension of $p_k$. As above let $k$
parameterize a Galois conjugate of $\LL_1$, and let $\LL_1$ have signature
$(1,n)$, but do not assume that $\mu(1)$ satisfies \eqref{eq:INT} (e.g. it
only satisfies \eqref{eq:SigmaINT}). Let $\tilde B_1^1$ denote the Fox
completion of $\tilde B^1_0$ over the union of $B_0$ with the codimension
1-strata of the $\mu(1)$-stable points. Then the map $p_k^1: \tilde B^1_0 \to
\Per(\LL_k)$ is again well-defined and extends to $p_k^1: \tilde B^1_1 \to
\Per(\LL_k)$. The point here is that $\tilde B^1_1$ is a complex manifold by
the
discussion in \cite[Sect. 10]{DeligneMostow86}, the monodromy about the stable
divisors being of finite order. We can thus apply the same reasoning as in the
second case.

\subsubsection{Relations between period maps}\label{sec:relations-between-periodmaps}
The period map, and in particular its domain of definition depends on $k$. If
$p_k$ and $p_\ell$ belong to Galois conjugate local systems, their domains of
definition coincide, but in general they need not be related in any way, as the
following example shows.
\par
\begin{exam}\label{ex:non-extendability}
{\rm  Consider the family of cyclic coverings $f:\XFam \to B_0$ of type $(12; 3,3,3,7,8)$. This
family of curves is fiberwise a degree 2-covering of a family of cyclic
coverings $f':\XFam'\to B_0$ of type $(6;3,3,3,1,2)$ by taking the quotient by
$\langle{g^2}\rangle\subset G$, and the resulting map $\XFam \to \XFam'$ induces an inclusion of VHS
with image
\[ \bigoplus_{k\congruent 0 \bmod 2} \LL_k \subset R^1f_*\CC.\]
We neither have $K_2 \subseteq K_1$, nor $K_1\subseteq K_2$. To see this, consider the
monodromy transformation, i.e. the image of a small loop $\gamma_{ij}$ about
one of the divisors $L_{ij}$ under the monodromy representation $\rho_k$.
In case $\sum_{i} \mu_i(k) = 2$ and $\mu_i(k)\not\in \ZZ$, its order is the denominator 
of the reduced fraction $|1-\mu_i(k)-\mu_j(k)|^{-1}$ or
$\infty$ if $\mu_i(k) + \mu_j(k) = 1$. Thus, $L_{12}$ is a 
parabolic divisor for $\LL_2$, which is elliptic for $\LL_1$, whereas e.g. for
$L_{15}$, the monodromy transformation has order $12$ for $k=1$ and order $3$
for $k=2$. 
\par
In particular, the pullback of the local system $\LL_2$ to the universal cover
 $\tilde B^1 \isom \BB^n$ of $B^1$ cannot be
extended as local system over the preimage of $L_{12}$ in $\BB^n$, thus for a
generic geodesic, parallel transport is not defined for all times.
\par
For each of the non-arithmetic ball quotients listed in the table
in Section~\ref{sec:nonarithexam} there exists a sub-VHS for which
an analogous statement holds.
}\end{exam}
\par
For later use, we record a relation between
$(\mu_i(1))_i$ and $(\mu_i(k))_i$ for $k$ prime to $d$. Its proof is a
straightforward computation.
\par
\begin{lemma} \label{lemma:branch orders are integers}
Let $d>1$, $k\in \{0,\dots,d-1\}$ be prime to $d$, and let $i,j\in \{1,\dots,
N\}$. Then
\begin{enumerate}[a)]
 \item $\mu_{i}(k) + \mu_{j}(k) = 1$ iff
$\mu_{i}(1) + \mu_{j}(1) = 1$
 \item If $\kappa_{ij} = (1-\mu_i(1)-\mu_j(1))^{-1}$ is in $\ZZ$ or if
$a_i = a_j$, $\kappa_{ij}\in \tfrac{1}{2}\ZZ$ and $k$ is odd, then
\[\kappa_{ij} \cdot (1-\mu_i(k) - \mu_j(k)) \in \ZZ.\]
\end{enumerate}
\end{lemma}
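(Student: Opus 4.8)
The plan is to translate both assertions into elementary congruences modulo $d$. For $i\in\{1,\dots,N\}$ write $r_i := ka_i \bmod d$ for the representative in $\{1,\dots,d-1\}$; since $\gcd(k,d)=1$ and $0<a_i<d$ this residue is nonzero, and by definition $\mu_i(k)=r_i/d$, while $\mu_i(1)=a_i/d$. For part a), note that $\mu_i(1)+\mu_j(1)=1$ is equivalent to $a_i+a_j=d$, hence to $k(a_i+a_j)\equiv 0\pmod d$, hence to $r_i+r_j\equiv 0\pmod d$: the forward implication uses $r_i+r_j\equiv k(a_i+a_j)$, and the backward one cancels $k$ using $\gcd(k,d)=1$ and then invokes the bound $0<a_i+a_j<2d$ to force $a_i+a_j=d$. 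Finally $r_i+r_j\equiv 0\pmod d$ together with $0<r_i+r_j<2d$ gives $r_i+r_j=d$, i.e.\ $\mu_i(k)+\mu_j(k)=1$. Thus all three conditions are equivalent.

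For part b), since $\kappa_{ij}$ is defined we have $\mu_i(1)+\mu_j(1)\ne 1$, so $m:=d-a_i-a_j$ is a nonzero integer and $\kappa_{ij}=d/m$. A direct computation gives
\[
\kappa_{ij}\bigl(1-\mu_i(k)-\mu_j(k)\bigr)=\frac{d}{m}\cdot\frac{d-r_i-r_j}{d}=\frac{d-r_i-r_j}{m},
\]
so it suffices to show $m\mid d-r_i-r_j$. Assume first $\kappa_{ij}\in\ZZ$, i.e.\ $m\mid d$. Then $a_i+a_j=d-m\equiv 0\pmod m$, so $r_i+r_j\equiv k(a_i+a_j)\equiv 0\pmod d$ reduces to $r_i+r_j\equiv 0\pmod m$; combined with $d\equiv 0\pmod m$ this yields $m\mid d-r_i-r_j$. (Signs of $m$ are irrelevant throughout.)

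It remains to treat the half-integer case: $a_i=a_j=:a$, $k$ odd, $\kappa_{ij}\in\tfrac12\ZZ$; set $m:=d-2a$ (positive in the cases of interest, where $\mu_i+\mu_j<1$), so $\kappa_{ij}=d/m$ and $m\mid 2d$. If $m\mid d$ we conclude as above, so assume $m\nmid d$; then $m$ is even (else $\gcd(m,2)=1$ and $m\mid 2d$ would give $m\mid d$) and $d=m+2a$ is even, so we may write $m=2m'$ with $m'\mid d$ and $q:=d/m'$ odd, and since $d=qm'$ is even with $q$ odd, $m'=2m''$ is even as well. Now $a=(d-m)/2=d/2-m'=(q-2)m''$, hence $a\equiv m''\pmod{m'}$ because $q-2$ is odd; likewise $d/2=qm''\equiv m''\equiv a\pmod{m'}$. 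Writing $r:=r_i=r_j\equiv ka\pmod d$ (so $r\equiv ka\pmod{m'}$), the number $d-r_i-r_j=d-2r$ is even and
\[
\tfrac12(d-2r)=d/2-r\equiv a-ka=(1-k)a\equiv(1-k)m''\equiv 0\pmod{m'},
\]
the last step because $1-k$ is even and $m'=2m''$. Hence $m'\mid d/2-r$, so $m=2m'\mid d-r_i-r_j$, and $\kappa_{ij}(1-\mu_i(k)-\mu_j(k))\in\ZZ$. The only delicate point is this last divisibility bookkeeping in the half-integer case; once everything is phrased in terms of the residues $r_i$, the rest is immediate.
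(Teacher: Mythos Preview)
Your argument is correct and carries out exactly the straightforward computation the paper alludes to (the paper gives no details). One small expository slip: in the first case of part~b) you write ``$r_i+r_j\equiv k(a_i+a_j)\equiv 0\pmod d$,'' but of course $k(a_i+a_j)\not\equiv 0\pmod d$ in general; what you mean is that the congruence $r_i+r_j\equiv k(a_i+a_j)\pmod d$, reduced modulo $m$ (using $m\mid d$), gives $r_i+r_j\equiv k(a_i+a_j)\equiv 0\pmod m$. With that correction the reasoning is clean, and the half-integer case is handled carefully.
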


\subsubsection{Explicit description of the period map}
\label{sec:explicit basis of H1}
We describe a trivialization of $\LL_k$ at $\underline{x}\in B_0$. Instead of
working with $(\LL_k)_{\underline{x}} = H^1(\XFam_{\underline{x}},\CC)_{\chi^k}$, we use the isomorphism
\begin{equation} \label{eqn:iso-coho-and-coho-in-local-system}
\begin{aligned}
H^1(\XFam_{\underline{x}},\CC)_{\chi^k} \isom H^1(\PP^1 \setminus
\{\underline{x}\}, L) 
\end{aligned}
\end{equation}
where $\{\underline{x}\}$ is the set of points defined by $\underline{x}$ and
$L$ is the rank one local system on $\PP^1\setminus\{\underline{x}\}$ having
local monodromy $\exp(2\pi i \mu_i(k))$ about $x_i$ (see
\cite[Sect. 2]{DeligneMostow86} for a discuss of the cohomology of $L$).
Explicitly, $H^*(\PP^1 \setminus \{\underline{x}\}, L)$ can be computed using
the de Rham complex of $\mathrm{C}^\infty$-forms on $\PP^1\setminus
\{\underline{x}\}$. One can also consider the subcomplex of compactly supported
forms, whose cohomology we denote by $H_c^*(\PP^1 \setminus \{\underline{x}\},
L)$. 
\par
If none of the $\mu_i(k)$ is an integer, then $H_c^*(\PP^1 \setminus
\{\underline{x}\},L) \isom H^*(\PP^1 \setminus \{\underline{x}\},
L)$ (see \cite[2.3]{DeligneMostow86}). The Poincar\'e duality pairing
\[H^i(\PP^1 \setminus \{\underline{x}\}, L) \tensor H^{2-i}_c(\PP^1 \setminus
\{\underline{x}\}, L^\dual) \to \CC,\quad \alpha \tensor \beta \mapsto
\int_{\PP^1\setminus\{\underline{x}\}} \alpha \wedge \beta\]
allows us to identify the dual of $H^1(\PP^1 \setminus \{\underline{x}\},
L)$ with the first cohomology of the complex of currents on $\PP^1\setminus
\{\underline{x}\}$ with values in $L^\dual$. We describe a particular basis of
$H^1(\PP^1 \setminus \{\underline{x}\}, L^\dual)$ (compare
\cite[2.5]{DeligneMostow86}). Assume $\{1,\dots,N\} = S_1\cup S_2$ is a
partition such that $\sum_{i\in S_1} \mu_i(k) \not\in \ZZ$. Embed the union of
two trees $T_1\cup T_2$ into $\PP^1$ such that the vertices of $T_i$ are
$\set{x_s}{s\in S_i}$, and let $\gamma_j$ be the oriented edges of $T_1\cup
T_2$. Then a basis of $H^1(\PP^1\setminus \{\ul{x}\}, L^\dual)$ is given by the
currents of integration along the paths $\gamma_j$, $j=1,\dots, N-2$, each
tensored with a global section $e_j$ of $L^\dual$ restricted to (the interior of)
$\gamma_j$. 
\par
If $\cL^{1,0}_k$ is a line bundle, the basis element of 
Lemma~\ref{le:basis-coho of cyclic covering} is identified by the 
isomorphism \eqref{eqn:iso-coho-and-coho-in-local-system} with
the multi-valued form $\omega_k = \prod_j (x-x_j)^{-\mu_j(k)}\dd x$
tensored with an appropriate multi-valued section $e$ of $L$, so as to 
produce an honest single-valued section
of $H^0(\PP^1\setminus\{\underline{x}\}, \Omega^1(L)) \subset 
H^1(\PP^1\setminus\{\underline{x}\}, L)$.
The period map is then the
map 
\[{p}_k = (F_1^k:\dots: F_{N-2}^k)\ \text{with}\ F_j^k = \int_{\gamma_j\tensor
e_j} \omega_k\tensor e.\]

\section{Lyapunov exponents of ball quotients constructed via cyclic coverings} \label{sec:Lyapcyclic}

In this section we calculate explicitly the Lyapunov exponents
for all non-arithmetic ball quotients arising from cyclic covers. 
In the table below we also add the set of relative orbifold Euler numbers
for an easy comparison. Their defintion is given in the next section.
%
\par
\begin{theorem} \label{thm:allprimvalues}
The non-negative distinct Lyapunov exponents of the primitive part 
of the two-dimensional non-arithmetic ball quotients 
arising from cyclic coverings are given in the following table.
Here, $\UU$ denotes
the maximal unitary subsystem of $\PP$.  
\[\begin{array}{|c|c|c|c|c|c|c|c|c|c|c|}
\hline
  & d  & g &  \dim_\RR \PP & \dim_\RR \UU & \text{Prim.\ Lyapunov
spectrum} & \text{Relative}\,\,\, e^\orb  \\
\hline
1 & 12 & 12 & 12 & 0 &  \{1, \quad5/13,\quad 0\} & \{1, \quad 1/13\}\\
\hline
2 & 12 & 12 & 12 & 0 & \{1, \quad 5/17,\quad 0\}  & \{1, \quad 1/17\}\\
\hline
3 & 12 & 12 & 12 & 0 & \{1, \quad 7/22,\quad 0\}  & \{1, \quad 1/22\} \\
\hline
4 & 12 & 11 & 12 & 0 &  \{1, \quad 5/13,\quad 0\} & \{1, \quad 1/13\} \\
\hline
5 & 15 & 18 & 24 & 6 &  \{1, \quad 16/37, \quad  7/37,\quad 0\} & \{1, \quad 4/37, \quad  1/37\}\\
\hline
6 & 18 & 25 & 18 & 6 &  \{1, \quad 5/16,\quad 0\} & \{1, \quad 1/16\} \\
\hline
7 & 18 & 25 & 18 & 6 & \{1, \quad 5/16,\quad 0\} & \{1, \quad 1/16 \} \\
\hline
 8 & 20 & 22 & 24 & 6   & \{1,\quad 4/11,\quad 3/11,\quad 0\} & \{1, \quad 4/33, \quad 1/33\}\\
\hline
 9 & 20 & 23 & 24 & 6   & \{1,\quad 4/11,\quad 3/11,\quad 0\} & \{1, \quad 4/33, \quad 1/33\}\\
\hline
10 & 20 & 27 & 24 & 12  & \{1, \quad 11/46,\quad 0\} & \{1, \quad 1/46\} \\
\hline
11 & 24 & 30 & 24 & 12  & \{1, \quad 7/22,\quad 0\} & \{1, \quad 1/22\} \\
\hline
12 & 24 & 31 & 24 & 12  & \{1, \quad 7/22,\quad 0\} & \{1, \quad 1/22\} \\
\hline
13 & 30 & 37 & 24 &  6  & \{1, \quad 16/37, \quad  7/37,\quad 0\} & \{1, \quad 4/37, \quad  1/37\}\\
\hline
14 & 42 & 52 & 36 & 18  & \{1, \quad 16/61, \quad 13/61,\quad 0 \} & \{1, \quad 4/61, \quad 1/61\}\\
\hline
15 & 42 & 58 & 36 & 18  & \{1, \quad 16/61, \quad 13/61,\quad 0 \} & \{1, \quad 4/61, \quad 1/61\}\\
\hline
\end{array} \]
\par
\smallskip
For the three-dimensional ball quotient, the positive primitive Lyapunov spectrum
is given by
\[1, 1, \tfrac{25}{93}, \tfrac{25}{93}, 0,0,0,0.\]
\end{theorem}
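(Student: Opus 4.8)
The plan is to run through the $\CC$-irreducible decomposition $\PP_{\CC}=\bigoplus_{k\in(\ZZ/d)^{\times}}\LL_{k}$ of the primitive part and to apply Theorem~\ref{thm:main} summand by summand. For every line of the table in Section~\ref{sec:nonarithexam} I would first list the Galois conjugate tuples $\mu(k)$ and read off from Lemma~\ref{le:basis-coho of cyclic covering} the signature $(\sigma(k)-1,\,s(k)-1-\sigma(k))$ of $\LL_{k}$; note that $s(k)=N$ for all $k\in(\ZZ/d)^{\times}$ in these examples, so each $\LL_{k}$ has $\CC$-rank $N-2$ and $\cL_{k}^{1,0}$ has rank $\sigma(k)-1$. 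The conjugates of signature $(0,n+1)$ or $(n+1,0)$ are exactly the unitary ones, which assemble into $\UU$, and Theorem~\ref{thm:main}~iii) forces all their Lyapunov exponents to vanish. The uniformizing conjugate $\LL_{1}$, of signature $(1,n)$, together with $\ol{\LL_{1}}=\LL_{d-1}$, contributes the exponent $1$ by Theorem~\ref{thm:main}~i) and otherwise only zeros by iii) combined with the $\CC$-linear duplication of Proposition~\ref{prop: duplication}. The remaining summands come in conjugate pairs $\{\LL_{k},\LL_{d-k}\}$ with one member of signature $(1,n)$, i.e.\ $\sigma(k)=2$, and $k\neq 1$; for each such pair the positive spectrum collapses, again by i), iii) and duplication, to a single distinct positive exponent $\lambda_{k}$, and Theorem~\ref{thm:main}~iv) together with the normalization i) identifies it as a ratio of intersection numbers
\[
\lambda_{k}=\frac{\Chern_{1}(\cE_{k}^{1,0}).\Chern_{1}(\omega_{\ol B})^{\,n-1}}{\Chern_{1}(\cE_{1}^{1,0}).\Chern_{1}(\omega_{\ol B})^{\,n-1}},
\]
in which the universal constant and the common denominator $\Chern_{1}(\omega_{\ol B})^{n}$ of \eqref{eq:sumlyapformula} cancel. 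Here $\cE_{k}^{1,0}$ is the Deligne extension of $\cL_{k}^{1,0}$ to a fixed smooth compactification $\ol B$ of the ball quotient uniformized by $\LL_{1}$, and one uses the extensions of the period maps from Section~\ref{sec:extending-periodmap} (and their $\Sigma$-equivariant versions) so that $\cE_{k}^{1,0}$ and $\cE_{1}^{1,0}$ are genuine line bundles on one and the same $\ol B$.

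Everything then reduces to computing $\Chern_{1}(\cE_{k}^{1,0})\in\Pic(\ol B)_{\QQ}$ as an explicit combination $\sum c_{ij}(k)\,[L_{ij}]$ of the boundary divisors. I would obtain this by a local study near each $L_{ij}$, where the branch points $x_{i},x_{j}$ collide: expanding the generating section $\omega_{k}=\prod_{\ell}(x-x_{\ell})^{-\mu_{\ell}(k)}\,\dd x$ of Lemma~\ref{le:basis-coho of cyclic covering}~c) in a coordinate $t$ transverse to $L_{ij}$, the relevant period is a hypergeometric integral with leading term of order $t^{\,1-\mu_{i}(k)-\mu_{j}(k)}$. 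This computes the cokernel of the Kodaira--Spencer (Higgs) map along $L_{ij}$, hence the twist needed to pass from the naive to the Deligne extension, in terms of $1-\mu_{i}(k)-\mu_{j}(k)$ measured against the value $1-\mu_{i}(1)-\mu_{j}(1)$ for the uniformizing system, whose ramification data are controlled by Lemma~\ref{lemma:branch orders are integers}. Applied to $\LL_{1}$ this recovers $\Chern_{1}(\cE_{1}^{1,0})$ explicitly (proportional to $\Chern_{1}(\omega_{\ol B})$, as it must be on a ball quotient), which validates the local method.

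With these classes in hand I would finish by the intersection-number arithmetic, case by case, using the presentations of the Chow/Picard rings and the (log-)canonical classes of $B_{9}$, $B_{10}$, $B_{9}/\Sigma_{3}$, $B_{7}/\Sigma_{3}$ and $B_{14}$ tabulated in Section~\ref{sec:realization}, remembering that $\Chern_{1}(\omega_{\ol B})=K_{\ol B}+\sum L_{ij}^{\parab}$ and, under \eqref{eq:SigmaINT}, working inside $(\CH_{*}(B)_{\QQ})^{\Sigma}$; the cases carrying a symmetric group on four letters (line $7$) then come for free from the commensurability invariance of Theorem~\ref{thm:lyap_spec_comminvariant}. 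The genus and dimension columns are elementary (Riemann--Hurwitz, and counting the ranks of the $\LL_{k}$), and the relative orbifold Euler numbers in the last column follow from the very same classes $\Chern_{1}(\cE_{k}^{1,0})$ via the identity of Theorem~\ref{thm:introRelEuler}. The three-dimensional cover $(12;7,5,3,3,3,3)$ is handled identically with $n=3$: its four Galois conjugates split into two conjugate pairs of signature $(1,3)/(3,1)$, the uniformizing one yielding $1$, the other yielding $\tfrac{25}{93}$ via \eqref{eq:sumlyapformula} evaluated on $B_{14}$.

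The step I expect to be the real obstacle is the local determination of $\Chern_{1}(\cE_{k}^{1,0})$: one must control the Deligne extension of a \emph{non}-uniformizing Hodge line bundle simultaneously across the elliptic divisors (finite monodromy) and the parabolic divisors (cusps), and the hypergeometric estimates must be sharp enough to fix every boundary coefficient exactly — a single mis-evaluated ramification contribution would corrupt all the final fractions.
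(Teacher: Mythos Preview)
Your proposal is correct and follows essentially the same route as the paper: decompose $\PP_\CC$ into the $\LL_k$, use Theorem~\ref{thm:main} to reduce each non-unitary signature-$(1,n)$ summand to the intersection ratio in \eqref{eq:sumlyapformula}, compute the Chern class via the cokernel of the Higgs field by a local hypergeometric analysis at the divisors $L_{ij}$, and finish in the Chow rings of Section~\ref{sec:realization} (with line~7 handled by commensurability). The one point the paper makes explicit that you leave inside your ``real obstacle'' is the dichotomy $\mu_i(k)+\mu_j(k)\lessgtr 1$: in the second case the relevant period acquires a pole rather than a zero, and the vanishing order of $\det J(p_k)$ is $n\,\kappa_{ij}|1-\mu_i(k)-\mu_j(k)|-1$ rather than $\kappa_{ij}(1-\mu_i(k)-\mu_j(k))-1$, which is exactly the refinement needed to get the fractions right.
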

\par
\begin{cor}\label{cor:commensurability-result}
 The non-arithmetic lattices in $\PU(1,n)$ arising via cyclic coverings fall into precisely nine commensurability classes.
\end{cor}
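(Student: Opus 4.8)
The plan is to combine the classical trace-field invariant with the primitive Lyapunov spectrum computed in Theorem~\ref{thm:allprimvalues}. All fifteen lattices in the table of Section~\ref{sec:nonarithexam} arise as monodromy groups of uniformizing sub-VHS inside $\QQ$-VHS of weight one, hence admit a modular embedding by Proposition~\ref{prop:modemb_iff_Q-VHS}, so that their primitive Lyapunov spectrum is a commensurability invariant by Theorem~\ref{thm:lyap_spec_comminvariant}. First I would record that the trace fields are $\QQ[\cos(2\pi/d)]$, pairwise distinct for distinct $d$ except that $d=15$ and $d=30$ give the same field; since the trace field is a commensurability invariant, this already partitions the examples into the blocks $\{1,2,3,4\}$ ($d=12$), $\{5,13\}$ ($d=15$ and $d=30$), $\{6,7\}$ ($d=18$), $\{8,9,10\}$ ($d=20$), $\{11,12\}$ ($d=24$), $\{14,15\}$ ($d=42$), with no commensurability possible between different blocks.

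For the upper bound I would invoke the known commensurabilities of Sauter and Deligne--Mostow recorded in the last column of that table (see \cite[Section~3]{parkersurvey}): inside the blocks they identify the pairs $\{1,4\}$, $\{5,13\}$, $\{6,7\}$, $\{8,9\}$, $\{11,12\}$, $\{14,15\}$, so the six blocks contribute at most $3,1,1,2,1,1$ classes respectively, i.e.\ at most nine in total.

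For the matching lower bound I would read the primitive Lyapunov spectra off Theorem~\ref{thm:allprimvalues}. In the block $d=12$, examples $1$, $2$, $3$ have the three pairwise distinct spectra $\{1,\tfrac{5}{13},0\}$, $\{1,\tfrac{5}{17},0\}$, $\{1,\tfrac{7}{22},0\}$, so that block carries at least three commensurability classes; in the block $d=20$, examples $8$ and $10$ have the distinct spectra $\{1,\tfrac{4}{11},\tfrac{3}{11},0\}$ and $\{1,\tfrac{11}{46},0\}$, so that block carries at least two; each of the four remaining blocks carries at least one. Hence there are at least, and therefore exactly, nine classes. The relative orbifold Euler numbers of Theorem~\ref{thm:allprimvalues} separate the same two critical blocks and could be used instead, and the three-dimensional example of type $(12;7,5,3,3,3,3)$ lies in $\PU(1,3)$ and so is not commensurable to any lattice in $\PU(1,2)$.

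The only genuine work sits in Theorem~\ref{thm:allprimvalues}: computing $\Chern_1(\cE^{1,0})$ for the relevant Galois conjugates via the local analysis of hypergeometric integrals near the boundary divisors and the intersection rings of Section~\ref{sec:realization}, and combining this with \eqref{eq:sumlyapformula} and parts ii)--iii) of Theorem~\ref{thm:main} to extract the individual exponents. Once that table is granted, the argument above is a short bookkeeping exercise; the one subtlety worth flagging is that the Lyapunov spectrum by itself does \emph{not} separate example $3$ from examples $11$ and $12$ --- all three have spectrum $\{1,\tfrac{7}{22},0\}$ --- so here it is essential that the trace field ($d=12$ versus $d=24$) distinguishes them.
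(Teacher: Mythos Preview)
Your argument is correct and follows exactly the same route as the paper: partition by trace field, invoke the known Sauter/Deligne--Mostow commensurabilities for the upper bound, and separate the remaining cases in the $d=12$ and $d=20$ blocks using the primitive Lyapunov spectra of Theorem~\ref{thm:allprimvalues} (the paper cites Corollary~\ref{cor:comminvariants} rather than Theorem~\ref{thm:lyap_spec_comminvariant}, but these give the same invariant here). Your write-up is in fact more explicit than the paper's terse version, and your remarks on the three-dimensional example and on the coincidence of spectra between no.~$3$ and nos.~$11,12$ are useful clarifications not spelled out there.
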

\begin{proof}
 Using the trace field, we only need to decide whether no. $2$ or $3$ belong to
the class of $\{1,4\}$ or are commensurable and whether no. $10$ belongs to the
class of $\{8,9\}$. But by Corollary~\ref{cor:comminvariants} the primitive
Lyapunov spectrum is a commensurability invariant among lattices admitting a
modular embedding.
\end{proof}
\par
The proof of the theorem relies on computing the right-hand side of
\eqref{eq:sumlyapformula}. We first
show how to relate this quantity to contributions coming from the boundary
divisors, and then compute these contributions for each case. 
\par

\subsection{Intersection products of Chern classes}
\label{sec:intersection products of Chern classes}
Let $B^u = \BB^n/\Gamma'$ be a ball quotient, where $\Gamma'\subset \PU(1,n)$
acts cofinitely and freely on $\BB^n$. We assume that there is a smooth
projective variety $Y$ such that $B^u$ embeds into $Y$ with $Y\setminus B^u =
\Delta$, a divisor with normal crossings. Suppose further that we are given a
polarized $\CC$-VHS $\LL^u$ on $B^u$ of weight $1$ and signature $(1,n)$, whose local
monodromies are unipotent. Recall from 
Section~\ref{sec:Higgscurvature}
that Higgs field $\tau$ of $\LL^u$ comes with two derived maps 
$\tilde\tau$ and $\tau^\vee$.
\par
\par
\begin{lemma}\label{lemma:compute c1 of 10 eq. compute c1 of kokern}
Assume that $\tilde\tau$ (or equivalently, $\tau^\dual$) is injective.
Then
\begin{align}\label{eqn:10,01 und kokern}
 \frac{(n+1)\Chern_1(\cE^{1,0}).\Chern_1(\omega_Y)^{n-1}}{
\Chern_1(\omega_Y)^n }
= 1 - \frac{\Chern_1(\koker \tau^\dual).\Chern_1(\omega_Y)^{n-1}}
	   {\Chern_1(\omega_Y)^n},
\end{align}
where $\omega_Y = \bigwedge^n\Omega_Y(\log \Delta)$.
\end{lemma}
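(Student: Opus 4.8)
The plan is to deduce \eqref{eqn:10,01 und kokern} from a first Chern class computation on the short exact sequence that defines $\koker\tau^\dual$; the only input beyond formal Chern class bookkeeping is the vanishing of the first Chern class of the Deligne extension of a unipotent local system.

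First I would record the ranks involved. As $\LL^u$ has signature $(1,n)$, the sheaf $\cE^{1,0}$ is a line bundle and $\cE^{0,1} = \cE/\cE^{1,0}$ is locally free of rank $n$; and since $\dim Y = n$, the sheaf $\cE^{1,0}\tensor\cT_Y(-\log\Delta)$ also has rank $n$. Hence the hypothesis that $\tilde\tau$ (equivalently $\tau^\dual$) be injective means precisely that
\[
0 \longrightarrow \cE^{1,0}\tensor\cT_Y(-\log\Delta) \xrightarrow{\tau^\dual} \cE^{0,1} \longrightarrow \koker\tau^\dual \longrightarrow 0
\]
is exact with $\koker\tau^\dual$ a torsion sheaf supported in codimension $\geq 1$, so that $\Chern_1(\koker\tau^\dual)$ is a well-defined (effective) divisor class.

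Next I would take first Chern classes of this sequence. Using $\Chern_1(\cT_Y(-\log\Delta)) = -\Chern_1(\omega_Y)$ --- recall $\omega_Y = \bigwedge^n\Omega^1_Y(\log\Delta)$ --- together with the line-bundle tensor formula $\Chern_1(\cE^{1,0}\tensor\cT_Y(-\log\Delta)) = n\,\Chern_1(\cE^{1,0}) + \Chern_1(\cT_Y(-\log\Delta))$, the sequence yields
\[
\Chern_1(\koker\tau^\dual) = \Chern_1(\cE^{0,1}) - n\,\Chern_1(\cE^{1,0}) + \Chern_1(\omega_Y).
\]
The crucial observation is that $\Chern_1(\cE) = 0$: since the local monodromy of $\LL^u$ about $\Delta$ is unipotent, the Deligne canonical extension $\cE$ carries a logarithmic connection with nilpotent residues, whose traces vanish, so $\Chern_1(\det\cE) = 0$. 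Feeding this into $0 \to \cE^{1,0} \to \cE \to \cE^{0,1} \to 0$ gives $\Chern_1(\cE^{0,1}) = -\Chern_1(\cE^{1,0})$, and substituting gives
\[
\Chern_1(\koker\tau^\dual) = \Chern_1(\omega_Y) - (n+1)\,\Chern_1(\cE^{1,0}),
\]
that is, $(n+1)\,\Chern_1(\cE^{1,0}) = \Chern_1(\omega_Y) - \Chern_1(\koker\tau^\dual)$. Intersecting both sides with $\Chern_1(\omega_Y)^{n-1}$ and dividing by $\Chern_1(\omega_Y)^n$ --- nonzero because it is a positive multiple of $\vol(B^u)$ by the K\"ahler-Einstein computation in the proof of Theorem~\ref{thm:main-with-kappa} --- gives exactly \eqref{eqn:10,01 und kokern}.

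The only step that is not purely formal is the vanishing $\Chern_1(\cE) = 0$, which is where the unipotency assumption enters; it is a standard property of the Deligne extension. Everything else rests on the rank count $\rk\bigl(\cE^{1,0}\tensor\cT_Y(-\log\Delta)\bigr) = \rk\cE^{0,1} = n$, which is exactly what forces $\koker\tau^\dual$ to be torsion and makes the argument meaningful.
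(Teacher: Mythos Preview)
Your proof is correct and follows essentially the same approach as the paper: both use the short exact sequence defining $\koker\tau^\dual$, take first Chern classes, and invoke the vanishing $\Chern_1(\cE^{1,0}) + \Chern_1(\cE^{0,1}) = 0$. The only difference is in the justification of that vanishing: the paper cites \cite[Theorem~1.1]{EsnaultVieweg02} for $\Chern_1(\cE^{1,0}\oplus\cE^{0,1})=0$, whereas you argue directly that $\Chern_1(\cE)=0$ via the nilpotent residues of the logarithmic connection on the Deligne extension --- these are equivalent and equally standard.
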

\begin{proof}
From the assumption, we obtain a short exact sequence
\begin{align*}
0 \to \cE^{1,0}\tensor \cT_Y(-\log \Delta) \to \cE^{0,1} \to \koker \tau^\dual
\to 0.
\end{align*}
By \cite[Theorem 1.1]{EsnaultVieweg02}, we have 
\begin{align*}
 \Chern_1(\cE^{1,0} \oplus \cE^{0,1}) = 0,
\end{align*}
and from the above exact sequence we obtain
\[\Chern_1(\cE^{0,1}) = \Chern_1(\cT_Y(-\log \Delta)) + n\cdot
\Chern_1(\cE^{1,0}) + \Chern_1(\koker \tau^\dual).\]
Combining these two equations yields
\[(n+1)\Chern_1(\cE^{1,0}).\Chern_1(\omega_Y)^{n-1} =
-(\Chern_1(\cT_Y(-\log\Delta)) + \Chern_1(\koker
\tau^\dual)).\Chern_1(\omega_Y)^{n-1}.\]
\end{proof}

\begin{rem}
For the computation of $\Chern_1(\koker\tau^\dual)$, it will be convenient to
consider the following short exact sequence associated with $\tilde\tau$
\begin{align} \label{eqn:ses for tautilde}
0 \to \cT_Y(-\log\Delta) \to \SheafHom(\cE^{1,0},\cE^{0,1}) \to
\koker \tilde\tau \to 0.
\end{align}
Then $\Chern_1(\koker \tilde\tau) = \Chern_1(\koker\tau^\dual)$, using
additivity of $\Chern_1$ on short exact sequences.
\end{rem}
\par

\subsection{Computation of $\Chern_1(\koker\tau^\dual)$ for the case (INT)}
Let $N\geq 4$, and let $f:\XFam \to B_0$ be a cyclic covering of type
$(d,a_1,\dots,a_N)$, such that the collection $\mu_i = a_i/d$ satisfies
\eqref{eq:INT}. Let  $B \isom \BB^n/\Gamma$ be
the ball quotient parametrizing $\mu$-stable points. Let $1\leq k \leq d-1$ be
coprime to $d$, and such that the direct summand $\LL = \LL_k$ in the VHS of $f$
is polarized by a hermitian form of signature $(1,n)$. 
\par
We consider a finite index normal subgroup $\Gamma'$ of $\Gamma$ 
such that the local monodromies about the boundary divisors are unipotent under $\rho_k$, 
and such that moreover $\Gamma'$ acts freely on $\BB^n$. 
We denote $\pi:B^u\to B$
and $u:\BB^n\to \BB^n/\Gamma'$ the two projections, and let
$\LL^u$ be the pullback of $\LL$ to $\pi^{-1}(B_0)\subset B^u$. By the discussion 
in Section~\ref{sec:extending-periodmap}, we can extend $\LL^u$ to a VHS on $B^u$,
and we denote it by the same letter.
\par
The results of the preceding sections apply to $\LL^u$. Let $\tau^\dual =
\tau^\dual_k$ be the dual of the Higgs field, and let $p_k = p_k^1$ be the
period map
for $\LL$, which we can also think of as a multi-valued map on $B^u$. We have
\par
%
\begin{lemma}\label{lemma:computing Chern_1 of kokertaudual-INT}
 \[\Chern_1(\koker \tau^\dual) = 
\sum_{\substack{\{i,j\}:\\ \mu_i+\mu_j < 1}}
n^k_{ij}\cdot \sum_{\substack{L\subset \pi^{-1}L_{ij}\\ \text{irred.}}} [L]
\qquad + \quad \sum_{\substack{\Delta'\subset \Delta\\ \text{irred.}}}
n_{\Delta'}\Delta'\]
Moreover, $n^k_{ij}$ is the vanishing order of $\det(J(p_k))$ at a point of
$\pi^*L_{ij}^\circ$.
\end{lemma}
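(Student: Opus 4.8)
The plan is to exploit that $\tilde\tau$ is the derivative of the period map $p_k$ and to recover $\koker\tilde\tau$ from the vanishing divisor of $\det\tilde\tau$. Since $\LL_k$ is assumed to have signature $(1,n)$, the bundle $\cE^{1,0}$ is a line bundle, so both $\cT_Y(-\log\Delta)$ and $\SheafHom(\cE^{1,0},\cE^{0,1})$ appearing in \eqref{eqn:ses for tautilde} are locally free of rank $n$. By Lemma~\ref{lem:DM-Lemma on Etaleness of period map in the interior} the map $\tilde\tau$ restricted to $\pi^{-1}(B_0)$ is the differential of $p_k$ and is an isomorphism; in particular $\tilde\tau$ is injective as a sheaf map, so \eqref{eqn:ses for tautilde} is a short exact sequence of coherent sheaves and $\det\tilde\tau$ is a nonzero section of $\det\SheafHom(\cE^{1,0},\cE^{0,1})\otimes(\det\cT_Y(-\log\Delta))^\dual$.

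First I would use that the first Chern class of the cokernel of an injection of vector bundles of equal rank is represented by the zero divisor of the determinant of the map; applied to \eqref{eqn:ses for tautilde} together with additivity of $\Chern_1$ this gives
\[\Chern_1(\koker\tau^\dual)=\Chern_1(\koker\tilde\tau)=[\div(\det\tilde\tau)].\]
Because $\det\tilde\tau$ is invertible on $\pi^{-1}(B_0)$, its zero divisor is supported on $(B^u\setminus\pi^{-1}(B_0))\cup\Delta$. As a divisor, $B\setminus B_0$ is the union of the elliptic divisors $L_{ij}$ with $\mu_i+\mu_j<1$ — a configuration with exactly one collision $x_i=x_j$ is $\mu$-stable precisely when $\mu_i+\mu_j<1$, and any further collisions occur in codimension $\geq 2$ — so $\div(\det\tilde\tau)$ splits as $\sum_{\{i,j\}:\,\mu_i+\mu_j<1}\sum_{L\subset\pi^{-1}(L_{ij})}m_L[L]+\sum_{\Delta'\subset\Delta}n_{\Delta'}\Delta'$ with $m_L=\ord_L(\det\tilde\tau)$ and $n_{\Delta'}=\ord_{\Delta'}(\det\tilde\tau)$.

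It then remains to identify $m_L$ for $L\subset\pi^{-1}(L_{ij})$ with the vanishing order of $\det J(p_k)$ and to check it is independent of the component $L$. For this I would pass to $\BB^n$: choose a point $x$ of $\BB^n=\tilde B^1$ over the generic point of $L$; since $u\colon\BB^n\to B^u$ is a covering map, hence a local biholomorphism, $\ord_L(\det\tilde\tau)$ is unchanged under pullback. By the extension discussion in Section~\ref{sec:extending-periodmap} (the third case, where the monodromy about the stable divisors is of finite order, so the period map extends holomorphically there), both $\LL^u$ and $p_k=p_k^1$ are defined and holomorphic near $x$, and by Section~\ref{sec:Higgscurvature} the pullback of $\tilde\tau$ agrees there with the differential $dp_k$. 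Choosing local holomorphic frames expresses $\tilde\tau$ as the Jacobian matrix of $p_k$ up to invertible factors, so $m_L$ is the order of vanishing of $\det J(p_k)$ along the preimage of $L_{ij}$. Finally, $p_k$ is equivariant for the monodromy actions $\rho_1$ on $\BB^n$ and $\rho_k$ on $\Per(\LL_k)$, both acting by biholomorphisms, so the zero locus of $\det J(p_k)$ together with its multiplicities is $\Gamma$-invariant; since $\Gamma/\Gamma'$ permutes the components of $\pi^{-1}(L_{ij})$ transitively, $m_L$ depends only on $\{i,j\}$ (and $k$), and I would call this common value $n^k_{ij}$. This yields both the displayed formula and the stated description of $n^k_{ij}$.

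The step I expect to be the main obstacle is this last identification, more precisely the need to know that the period map $p_k$ and its Higgs field extend holomorphically across the interiors $\pi^{-1}(L_{ij}^\circ)$; this is exactly what finiteness of the monodromy there provides through the extension theorems recalled in Section~\ref{sec:extending-periodmap}, and without it the purely local comparison of $\koker\tilde\tau$ with the Jacobian locus of $p_k$ would be unjustified. Over the boundary $\Delta$ the monodromy is merely unipotent, so no analogous interpretation of the coefficients $n_{\Delta'}$ is available at this stage; those must instead be extracted from the Deligne extension in the local computations that follow.
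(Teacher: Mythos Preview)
Your proposal is correct and follows essentially the same approach as the paper: both arguments reduce to the divisor of $\det\tilde\tau$ (the paper phrases this via the ideal sheaf obtained after tensoring with $\cL_2^{-1}$), use Lemma~\ref{lem:DM-Lemma on Etaleness of period map in the interior} to confine its support to the complement of $\pi^{-1}(B_0)$, identify the multiplicities along $\pi^{-1}(L_{ij}^\circ)$ with the vanishing order of the Jacobian of $p_k$, and invoke $\Gamma$-equivariance for independence of the component. Your discussion of why the extension of $p_k$ across the elliptic divisors is available (finite monodromy), and why no such interpretation of $n_{\Delta'}$ is claimed at this point, is a useful elaboration but not a departure from the paper's line of argument.
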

\begin{proof}
Define the line bundles
\[\cL_1 = \bigwedge^n \cT_B(-\log\Delta)\quad \text{and}\quad \cL_2 =
\bigwedge^n \SheafHom(\cE^{1,0},\cE^{0,1}).\]
Then
 \[\Chern_1(\koker \tau^\dual) = \Chern_1(\cL_2) -
\Chern_1(\cL_1).\]
 Let $\phi: \cL_1\to \cL_2$ be the map induced by $\tilde\tau_k$. It is locally
at
$p\in Y$ given by multiplication with $a_p\in \cO_{Y,p}$. We tensor both line
bundles with $\cL_2^{-1}$. Then the image of $\cL_1\tensor\cL_2^{-1}$ under
$\phi\tensor \id$ is the ideal sheaf
$\cI \subseteq \cO_Y$ locally generated by $a_p$. Moreover,
\[\Chern_1(\cI) = -\Chern_1(\koker \tilde\tau),\]
and $\Chern_1(\cI) = -D$ for some divisor $D$ on $Y$.
\par
It follows from Lemma~\ref{lem:DM-Lemma on Etaleness of period map in the
interior}, that $a_p = 1$ if $p\in
\pi^{-1}(B_0)$. Hence, 
\[\supp(-D) \subseteq \bigcup_{(i,j):\mu_i+\mu_j<1}\pi^{-1}L_{ij}
\cup \Delta.\]
\par
Let $p \in \pi^{-1}L_{ij}^\circ$, and let $L$ be the irreducible component
of
$\pi^*L_{ij}$ containing $p$. Then $\ord_L(-D) = \ord_L(a_p)$. Since $\phi =
\wedge^n\tilde \tau$ and since in the interior, $\tilde \tau$ is the derivative of the
period map $p_k$, it follows that $\ord_L(a_p)$ is independent of the
irreducible component of $\pi^{-1}L_{ij}$, since $p_k$ is 
equivariant under the full group $\Gamma$, and given by the
vanishing order of $\det(J(p_k))$
along a component of the preimage of $L_{ij}$.
\end{proof}
\par

\subsubsection{Near elliptic divisors}
Next we study $\tilde\tau_k$ at one of the elliptic divisors $L_{ij}$. By Lemma
\ref{lemma:branch orders are integers}, $\mu_i(k) + \mu_j(k) \neq 1$, and
\[\ell_{ij}^k = \kappa_{ij}\cdot |1-\mu_i(k)-\mu_j(k)| \in \NN.\]
\par
\begin{lemma}\label{le:compute coefficients of kokertaudual-case INT}
 We have 
\[n^k_{ij} = \begin{cases}
               \ell_{ij}^k - 1 & \text{if}\ \mu_i(k)+\mu_j(k) < 1\\
               n\cdot\ell_{ij}^k - 1& \text{if}\ \mu_i(k)+\mu_j(k) > 1\\
             \end{cases}\]
\end{lemma}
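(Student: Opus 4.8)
The plan is to prove this by a local study of the multivalued period map $p_k=p_k^1$ near a point of $\pi^{-1}(L_{ij}^\circ)$, reading off the vanishing order of its Jacobian from the classical hypergeometric asymptotics. By Lemma~\ref{lemma:computing Chern_1 of kokertaudual-INT} we have $n^k_{ij}=\ord_L(\det(J(p_k)))$ for any irreducible component $L$ of $\pi^*L_{ij}$, and since $\mu_i+\mu_j<1$ the divisor $L_{ij}$ is elliptic, hence disjoint from the boundary $\Delta$; thus near $L$ no logarithmic poles occur and $\det(J(p_k))$ is the ordinary coordinate Jacobian. As $\mu=\mu(1)$ satisfies \eqref{eq:INT}, the map $p^1_1$ identifies the universal cover of $B^u$ with $\BB^n$; I would choose holomorphic coordinates $(z_1,z')=(z_1,z_2,\dots,z_n)$ on $\BB^n$ with $\{z_1=0\}$ a lift $\widetilde L$ of $L$ and $z'$ restricting to coordinates along $L_{ij}^\circ$. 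Since the $\LL_1$-monodromy about $L_{ij}$ has order $\kappa_{ij}$, a local defining equation of $L_{ij}$ downstairs, which we may take to be the branch point separation $t=x_j-x_i$ up to a unit, pulls back to $z_1^{\kappa_{ij}}$ times a unit; and by Lemma~\ref{lemma:branch orders are integers} together with $\mu_i+\mu_j<1$ one has $\mu_i(k)+\mu_j(k)\neq 1$, so the two cases of the statement are exhaustive.

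Next I would analyze the period integrals along the lines of Section~\ref{sec:explicit basis of H1}. Taking the partition $S_2=\{i,j\}$ one obtains a basis $F_0^k,\dots,F_n^k$ ($N-2=n+1$ integrals) in which one period is the integral along the short segment joining $x_i$ and $x_j$ and the remaining $n$ are integrals over a spanning tree of the other $N-2$ points. Substituting $x=x_i+tw$ gives
\[\int_{x_i}^{x_j}\prod_m(x-x_m)^{-\mu_m(k)}\,\dd x = c_{ij}(k)\, t^{\,1-\mu_i(k)-\mu_j(k)}\bigl(1+O(t)\bigr)\]
with $c_{ij}(k)$ a nonzero beta-function value, while the remaining $F^k_\bullet$ are holomorphic in $t$ and restrict at $t=0$ to the periods of the merged $(N-1)$-point configuration obtained at $x_i=x_j$, carrying weight $\mu_i(k)+\mu_j(k)$ at the merged point. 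Recalling $t=z_1^{\kappa_{ij}}\cdot(\text{unit})$ and $\ell_{ij}^k=\kappa_{ij}\,|1-\mu_i(k)-\mu_j(k)|$, this produces a local normal form for $p_k$ near $\widetilde L$. Establishing this normal form — that the leading coefficients, namely $c_{ij}(k)$ and the non-degeneracy of the merged-configuration period data coming from Lemma~\ref{lem:DM-Lemma on Etaleness of period map in the interior}, do not vanish along $\widetilde L$, and that $p_k$ extends across $L_{ij}$ with image in the interior of $\Per(\LL_k)$ (the monodromy being elliptic of finite order $\ell_{ij}^k$, cf.\ Section~\ref{sec:extending-periodmap}) — is the main obstacle, and is precisely the kind of local period-map analysis carried out in \cite{DeligneMostow86}.

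Finally I would separate the two cases, which differ according to whether the merged configuration is still $\mu(k)$-stable. If $\mu_i(k)+\mu_j(k)<1$, the merged configuration is stable and its holomorphic subbundle has rank one; taking the now vanishing short-segment period for the first affine coordinate and the merged-configuration periods for the rest gives $p_k(z_1,z')=(z_1^{\ell_{ij}^k}u_1(z),u_2(z),\dots,u_n(z))$ with $u_1(0,\cdot)\neq 0$ and $(u_2,\dots,u_n)|_{z_1=0}$ a local isomorphism of $L_{ij}^\circ$ onto the corresponding divisor of $\Per(\LL_k)$, by Lemma~\ref{lem:DM-Lemma on Etaleness of period map in the interior} applied to the merged configuration (whose weights still sum to $2$). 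Expanding the Jacobian along the first row, its lowest-order term is $\ell_{ij}^k\,z_1^{\ell_{ij}^k-1}\,u_1(0,z')\det(\partial u_a/\partial z_b)_{a,b\ge 2}$, which is nonzero on $\widetilde L$, so $n^k_{ij}=\ell_{ij}^k-1$. If $\mu_i(k)+\mu_j(k)>1$, the merged configuration is no longer stable and its holomorphic subbundle has rank zero, so $p_k$ contracts $L_{ij}^\circ$ to an interior point; taking the now unbounded short-segment period as common denominator shows that $p_k$ factors, after biholomorphisms of source and target, as the order-$\ell_{ij}^k$ cyclic cover $(z_1,z')\mapsto(z_1^{\ell_{ij}^k},z')$ followed by the blow-down of $\{z_1=0\}$ to a point. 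The Jacobian of the cyclic cover is $\ell_{ij}^k z_1^{\ell_{ij}^k-1}$ and that of the blow-down is $\zeta^{n-1}$; composing with $\zeta=z_1^{\ell_{ij}^k}$ yields $\ell_{ij}^k\,z_1^{\,n\ell_{ij}^k-1}$, hence $n^k_{ij}=n\ell_{ij}^k-1$. The non-vanishing needed in this last case, namely that the direction in which $p_k$ leaves the contracted divisor varies immersively along $L_{ij}^\circ$, is again part of the local analysis flagged above.
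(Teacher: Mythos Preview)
Your proposal is correct and follows essentially the same route as the paper: a local analysis of the multivalued period map near $L_{ij}$, split into the two cases according to the sign of $1-\mu_i(k)-\mu_j(k)$, with the crucial non-degeneracy in each case reduced to Deligne--Mostow's \'etaleness lemma applied to the merged $(N-1)$-point configuration.

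The only real difference is in the second case. The paper carries out an explicit $n\times n$ Jacobian computation (its Lemma~\ref{le:vanishing of detperiodmap-case two}), row-reducing to the matrix $\bigl(F_j\ \big|\ \partial F_j/\partial x_l\bigr)_{j,l}$ and invoking Lemma~\ref{lem:relation between Jacobians of map to projective space} to identify its determinant with the merged period map's Jacobian. You instead recognize the local model as an order-$\ell$ cyclic cover followed by a blow-down, reading off the exponent $n\ell-1$ from the chain rule. Your factorization claim is genuine and can be made precise: writing $p_k(z_1,z')=z_1^{\ell}G(z_1,z')$ with $G_1(0,0)\neq 0$, the source change $(\tilde z_1,u_2,\dots,u_n)=(z_1\,G_1^{1/\ell},\,G_2/G_1,\dots,G_n/G_1)$ puts $p_k$ exactly into the form $(\tilde z_1^{\ell},\tilde z_1^{\ell}u_2,\dots,\tilde z_1^{\ell}u_n)$, and its being a local biholomorphism is precisely the \'etaleness of the merged projective period map $z'\mapsto[G_1(0,z'):\cdots:G_n(0,z')]$. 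So the two presentations encode the same computation; yours is more geometric, the paper's more hands-on.
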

\par
To prove this lemma, we will need an explicit description of the period
map locally about $L_{ij}$. We recall the discussion in
\cite{DeligneMostow86}. Consider a neighborhood of a point
\[\underline{x}'  = (x_1',\dots,x_{N}') \in \PGL_2(\CC)\backmod (\PP^1)^N,\]
where precisely two entries coincide. Up to renumbering, we can assume them to
be the first two. Lift the point to $\CC^N$ by normalizing
$x_1' = x_2' = 0$, $x'_{N-1} = a$, $x'_{N} = b$ with $a,b\in \CC$ some
suitable points. Let $\varepsilon$ be so small that the disks of radius
$\varepsilon$
about the $x_i'$ do not intersect, and let $U$ be the set of 
\[\underline{x} = (x_1=0,x_2,\dots,x_{N-2},x_{N-1}=a,x_{N}=b)\]
such that $|x_i - x_i'| < \varepsilon$. Then $U$ describes a neighborhood of
$\underline{x}'\in B$.
\par
About $\underline{x}\in U$, we can choose a
basis
of $H^1(\PP^1\setminus\{\underline{x}\}, L)$ as follows (compare Section~\ref{sec:explicit basis of H1}). 
Let $\gamma_1$ be the
straight line segment connecting $x_1 = 0$ and $x_2$. Embed a tree $T$ into
$\CC\setminus B_{\varepsilon}(0)$ such
that its
vertices are $x_i$, $i= 3,\dots, N$, and let $\gamma_j$ ($j=2,\dots,N-2$) be
the edges of $T$ (with a chosen orientation). For each $j=1,\dots,N-2$, choose a
section $e_j$ of $L^\dual$ on interior of $\gamma_j$. 
\par
%
%
With this choice of basis, we obtain in particular for the first of the components 
$F_j^k$ of $p_k$
\begin{align*}
F_1^k(\underline{x}) &= \int_0^{x_2} x^{-\mu_1(k)} (x-x_2)^{-\mu_2(k)}
\prod_{i\neq 1,2} (x-x_i)^{-\mu_i(k)} \dd x\\
		   &= x_2^{1-\mu_1(k)-\mu_2(k)} \int_0^1 x^{-\mu_1(k)}
      (x-1)^{-\mu_2(k)} \prod_{i\neq 1,2} (x_2x-x_i)^{-\mu_i(k)} \dd x 
\end{align*}
\par
The factor
\[I_k(\underline{x}) = \int_0^1 x^{-\mu_1(k)}
      (x-1)^{-\mu_2(k)} \prod_{i\neq 1,2} (x_2x-x_i)^{-\mu_i(k)} \dd x,\]
and $I_k$ is well-defined at $\{x_2 = 0\}$. Moreover  (compare
\cite[9.5]{DeligneMostow86}), $I_k$ is holomorphic and
$I_k\neq 0$ at $\{x_2 = 0\}$, so we may suppose that $I_k$ does not vanish on $U$.
\par
\paragraph{\textit{The case} $\mu_i(k) + \mu_j(k) < 1$.} The above
discussion, together with Lemma \ref{lem:DM-Lemma on Etaleness of period map in
the interior}, yields the following lemma, see {\cite[9.5]{DeligneMostow86}}.
\par
\begin{lemma} \label{lemma:vanishing order of periodmap-case1}
\begin{enumerate}[a)]
 \item If $\mu_i(k) + \mu_j(k) <
1$, then in a
neighborhood $U$ of the divisor $L_{ij}$, the period map is a multi-valued map
to $\PP^n$, locally given as
\[(z_1,\dots, z_n)\mapsto (z_1^{1-\mu_i(k)-\mu_j(k)}, z_2,\dots, z_n),\]
where $z_1=0$ is a local equation for $L_{ij}$.
 \item Assume that $(\mu_i(1))_i$ satisfies \eqref{eq:INT}. The ramification
order of $\pi:B^u \to B$ at $L_{ij}$ is 
\[\kappa_{ij} = (1-\mu_i-\mu_j)^{-1}.\]
\end{enumerate}
\end{lemma}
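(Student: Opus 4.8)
The plan is to obtain part a) by combining the explicit shape of the first period coordinate computed just above the lemma, $F_1^k=x_2^{\,1-\mu_1(k)-\mu_2(k)}I_k$ with $I_k$ holomorphic and nowhere vanishing on a polydisc neighbourhood $U$ of a point of $L_{ij}$ (after renumbering, $\{i,j\}=\{1,2\}$ and $x_2=0$ is a local equation for $L_{12}\cap U$), with the \'etaleness statement of Lemma~\ref{lem:DM-Lemma on Etaleness of period map in the interior}, which I would invoke once in dimension $n$ and once in dimension $n-1$.

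For part a) set $s=1-\mu_1(k)-\mu_2(k)\in(0,1)$. First I would note that $F_2^k,\dots,F_{N-2}^k$ extend holomorphically across $\{x_2=0\}$, since the defining paths $\gamma_2,\dots,\gamma_{N-2}$ stay away from a fixed disc around the merging points, and that $F_1^k\to 0$ as $x_2\to 0$, so the continuous extension of $p_k$ maps $L_{12}\cap U$ into the hyperplane $\{F_1=0\}\cong\PP^{n-1}$; after renumbering the $\gamma_r$ we may assume $F_{N-2}^k$ is nowhere zero on $U$ and pass to the affine chart $\phi_r=F_r^k/F_{N-2}^k$, $r=1,\dots,n$, where $\phi_1=x_2^{s}h$ with $h=I_k/F_{N-2}^k$ holomorphic and nowhere zero. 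Next I would identify $(\phi_2,\dots,\phi_n)|_{\{x_2=0\}}$, up to the choice of normalisations, with the Deligne--Mostow period map of the cyclic covering of type $(d;a_1+a_2,a_3,\dots,a_N)$ obtained by merging $x_1$ and $x_2$ (admissible since $a_1+a_2<d$ and $\sum_i\mu_i(k)=2$); then Lemma~\ref{lem:DM-Lemma on Etaleness of period map in the interior} in dimension $n-1$ gives that $z_2:=\phi_2,\dots,z_n:=\phi_n$ restrict to local holomorphic coordinates on $L_{12}$. Since $U$ is a polydisc, the nowhere-zero function $h$ admits a single-valued holomorphic $s$-th root $h^{1/s}$; putting $z_1:=x_2\,h^{1/s}$, which vanishes to order exactly one along $L_{12}$, one gets $\phi_1=z_1^{\,s}$, so $(z_1,\dots,z_n)$ is a coordinate system on $U$ (after shrinking) in which $p_k=(z_1^{\,1-\mu_i(k)-\mu_j(k)},z_2,\dots,z_n)$, as claimed.

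For part b) take $k=1$ and use part a). The image under $\rho_1$ of a small loop about $L_{ij}$ then acts in the above coordinates by $z_1\mapsto e^{2\pi i(1-\mu_i-\mu_j)}z_1$ and fixes $z_2,\dots,z_n$, so it is a complex reflection whose nontrivial eigenvalue $e^{2\pi i/\kappa_{ij}}$ has exact order $\kappa_{ij}=(1-\mu_i-\mu_j)^{-1}$ (an integer by \eqref{eq:INT}); by Deligne--Mostow the pointwise stabiliser in $\Gamma$ of a component of the preimage $\widetilde L_{ij}\subset\BB^n\cong\tilde B^1$ is cyclic of exactly this order, so $\BB^n\to B=\BB^n/\Gamma$ is ramified of index $\kappa_{ij}$ along $L_{ij}$. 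Since $\Gamma'$ is torsionfree it meets this cyclic inertia group trivially, hence $\BB^n\to B^u=\BB^n/\Gamma'$ is unramified along $\widetilde L_{ij}$; multiplicativity of ramification indices in the tower $\BB^n\to B^u\xrightarrow{\pi}B$ then forces $\pi$ to be ramified of index $\kappa_{ij}$ along $L_{ij}$.

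The main obstacle is the bookkeeping in the middle step of part a): making precise that the restriction of the $(1,0)$-period coordinates to $L_{ij}$ is genuinely the Deligne--Mostow period map of the merged $(N-1)$-point configuration, so that the dimension-$(n-1)$ input of Lemma~\ref{lem:DM-Lemma on Etaleness of period map in the interior} applies, and carefully matching the chart choices and renumberings along the way. Once this is settled, the single-valued $s$-th root trick and the inertia-versus-ramification count of part b) are routine.
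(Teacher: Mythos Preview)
Your proof is correct and follows essentially the same route as the paper, which does not spell out an argument but simply says the lemma follows from the preceding computation of $F_1^k=x_2^{1-\mu_1(k)-\mu_2(k)}I_k$ together with Lemma~\ref{lem:DM-Lemma on Etaleness of period map in the interior} and refers to \cite[9.5]{DeligneMostow86}. Your write-up makes explicit exactly the two ingredients that are implicit in that reference: the identification of the restricted periods $(\phi_2,\dots,\phi_n)|_{\{x_2=0\}}$ with the period map of the merged $(N-1)$-point configuration (which is precisely the content of \cite[9.5]{DeligneMostow86} and is also used verbatim in the paper's treatment of the case $\mu_i(k)+\mu_j(k)>1$, cf.\ the end of the proof of Lemma~\ref{le:vanishing of detperiodmap-case two}), and for part b) the standard inertia-versus-ramification count using that $\Gamma'$ is torsion-free.
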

\par
\begin{proof}[Proof of Lemma \ref{le:compute coefficients of kokertaudual-case
INT}, first case]
The period map $p_k$ thought of as a multi-valued map on $B^u$ is locally about
$L_{ij}$ given as the composition
\[ \left((z_1,\dots,z_n)\mapsto (z_1^{1-\mu_i(k)-\mu_j(k)}, z_2,\dots, z_n) \right) \circ
\left((z_1,\dots,z_n) \mapsto (z_1^{\kappa_{ij}},z_2,\dots,z_n)\right).\]
Hence, $\det(J(p_k)) = \ell_{ij}^k z_1^{\ell_{ij}^k-1}$.
\end{proof}
\par

\paragraph{\textit{The case} $\mu_i(k) + \mu_j(k) > 1$.} Then the multivalued function
\[F_1^k = x_2^{1-\mu_1(k) - \mu_2(k)} I_k\]
has a pole at $x_2 = 0$ of order $|1-\mu_1(k) - \mu_2(k)|\in \QQ$ and the projective
tuple
\[(x_2^{\mu_1(k) +
\mu_2(k) - 1} F_1^k: \dots : x_2^{\mu_1(k) +
\mu_2(k) - 1} F_{N-2}^k)\]
is well-defined on $U$. We set $m=\mu_1(k)+\mu_2(k)-1$ and
dehomogenize $p_k$ with respect to the first coordinate
\begin{align*}
{p}_k(\underline{x}) &= (F_2^k/F_1^k, \dots, F_{N-2}^k/F_1^k)\\
		      &= (x_2^m F_2^k/I_k, \dots , x_2^m F_{N-2}^k/I_k).
\end{align*}
\par


For the computations below the following lemma is useful. Its proof
is a straightforward computation.
\par
\begin{lemma}\label{lem:relation between Jacobians of map to projective space}
 Let $U\subset \CC^n$, 
\[f:U\to \PP^n,\quad w=(w_1,\dots,w_n) \mapsto (f_0(w):f_1(w):\dots:f_n(w))\]
be a holomorphic map, and let $w\in U$ be such that $f_0(w)\neq 0$. Then in an
open neighborhood of $w$ the determinants of
\[J_1 = \begin{pmatrix}
         \pder[(f_i/f_0)]{w_j}
        \end{pmatrix}_{i,j}\qquad \text{and}\qquad
J_2 = \begin{pmatrix}
       f_0 & \pder[f_0]{w_1} & \dots & \pder[f_0]{w_n}\\
       f_1 & \pder[f_1]{w_1} & \dots & \pder[f_1]{w_n}\\
       \vdots & \vdots & \ddots & \vdots\\
       f_n & \pder[f_n]{w_1} & \dots & \pder[f_n]{w_n}\\
      \end{pmatrix}\]
are related by
\[\det(J_2) = f_0^{n+1} \det(J_1).\]
\end{lemma}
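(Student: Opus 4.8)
The plan is to reduce the identity to a short sequence of column operations on the $(n{+}1)\times(n{+}1)$ matrix $J_2$. First I would introduce the abbreviation $g_i = f_i/f_0$ for $i=1,\dots,n$, so that by definition $J_1 = \bigl(\pder[g_i]{w_j}\bigr)_{1\le i,j\le n}$, and record the quotient-rule identity
\[\pder[g_i]{w_j} = \frac{1}{f_0^2}\Bigl(f_0\,\pder[f_i]{w_j} - f_i\,\pder[f_0]{w_j}\Bigr),\]
which is valid on the open neighbourhood of $w$ where $f_0$ is invertible. Pulling the scalar $f_0^{-2}$ out of each of the $n$ rows of $J_1$ then gives
\[\det(J_1) = f_0^{-2n}\cdot\det\Bigl(f_0\,\pder[f_i]{w_j} - f_i\,\pder[f_0]{w_j}\Bigr)_{1\le i,j\le n}.\]

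Next I would massage $\det(J_2)$ into the same shape. For each $j=1,\dots,n$ I multiply the $(j{+}1)$-st column of $J_2$ (i.e.\ the column of $w_j$-derivatives) by $f_0$, which scales $\det(J_2)$ by $f_0^n$ in total, and then subtract $\pder[f_0]{w_j}$ times the first column from it, which leaves the determinant unchanged. After these operations the entry of the $(j{+}1)$-st column in the first row becomes $f_0\,\pder[f_0]{w_j} - \pder[f_0]{w_j}\cdot f_0 = 0$, while its entry in the row indexed by $i$ (for $i=1,\dots,n$) becomes exactly $f_0\,\pder[f_i]{w_j} - f_i\,\pder[f_0]{w_j}$. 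Expanding the resulting determinant along its first row, whose only nonzero entry is $f_0$ in the top-left corner, yields
\[f_0^n\det(J_2) = f_0\cdot\det\Bigl(f_0\,\pder[f_i]{w_j} - f_i\,\pder[f_0]{w_j}\Bigr)_{1\le i,j\le n} = f_0\cdot f_0^{2n}\det(J_1) = f_0^{2n+1}\det(J_1),\]
and dividing by $f_0^n$ (which is nonzero near $w$) gives the claimed relation $\det(J_2) = f_0^{n+1}\det(J_1)$.

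There is essentially no obstacle here: the argument is entirely formal linear algebra. The only points needing the slightest care are the bookkeeping of the powers of $f_0$ produced by the column scalings (these contribute to the determinant) as opposed to the column subtractions (these do not), and the observation that every step takes place on the neighbourhood of $w$ on which $f_0$ is invertible, so that the factorisations and the final division are all legitimate.
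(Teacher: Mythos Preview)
Your argument is correct: the column operations on $J_2$ and the quotient-rule expansion of $J_1$ match up exactly, and the bookkeeping of the $f_0$-powers is right. The paper itself simply remarks that the proof is a straightforward computation and gives no details, so your write-up is precisely the kind of elementary linear-algebra verification the authors had in mind.
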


We would like to determine the vanishing order of the determinant of period map
$\ol{p}_k$ 
at a point $\underline{x} = (x_1 = 0,x_2= 0, x_3,\dots,x_{N})$. Note that
\[\begin{pmatrix}
          \pder[(f_i/f_0)]{x_j}
         \end{pmatrix}_{i,j} = f_0^{-2}
\begin{pmatrix}
 \begin{vmatrix}
  f_0 & f_i\\[2mm]
  \pder[f_0]{x_j} & \pder[f_i]{x_j}
 \end{vmatrix}
\end{pmatrix}_{i,j}\]
and recall that only $x_2,\dots,x_{n+1}$ are varying coordinates. We have to
evaluate the vanishing order of the determinant of the following matrix. We
suppress the super- and subscript $k$ in the following.
\[J = I^{-2} \begin{pmatrix}
             \begin{vmatrix}
              I & x_2^mF_2\\
              \pder[I]{x_2} & mx_2^{m-1}F_2 + x_2^m\pder[F_2]{x_2} 
             \end{vmatrix} & 
             \begin{vmatrix}
              I & x_2^mF_2\\
              \pder[I]{x_3} & x_2^m\pder[F_2]{x_3} 
             \end{vmatrix} & \dots &
             \begin{vmatrix}
              I & x_2^mF_2\\
              \pder[I]{x_{n+1}} & x_2^m\pder[F_2]{x_{n+1}} 
             \end{vmatrix}\\
             \vdots & \vdots & \ddots & \vdots \\
             \begin{vmatrix}
              I & x_2^mF_{n+1}\\
              \pder[I]{x_2} & mx_2^{m-1}F_{n+1} + x_2^m\pder[F_{n+1}]{x_2} 
             \end{vmatrix} &
	     \begin{vmatrix}
              I & x_2^mF_{n+1}\\
              \pder[I]{x_3} & x_2^m\pder[F_{n+1}]{x_3} 
             \end{vmatrix} &\dots &
	     \begin{vmatrix}
              I & x_2^mF_{n+1}\\
              \pder[I]{x_{n+1}} & x_2^m\pder[F_{n+1}]{x_{n+1}} 
             \end{vmatrix}       
      \end{pmatrix}
\]

\begin{lemma} \label{le:vanishing of detperiodmap-case two}
 $J$ vanishes of order $x_2^{nm-1}$ at $\underline{x}$.
\end{lemma}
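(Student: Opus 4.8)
The plan is to strip the evident powers of $x_2$ off the columns of $J$ and then show that the residual $n\times n$ determinant is a non-vanishing holomorphic function along $\{x_2=0\}$ near $\underline{x}$. First I would record the column structure. Since the cycles $\gamma_j$ for $j\geq 2$ stay inside $\CC\setminus B_\varepsilon(0)$, i.e.\ away from $x_1=x_2=0$, the functions $I=I_k$ and $F_i=F_i^k$ ($i=2,\dots,n+1$), together with all their partial derivatives in the varying coordinates $x_2,\dots,x_{n+1}$, are holomorphic in a neighbourhood of $\{x_2=0\}$, and $I$ does not vanish there (as already noted in the text). In the first column (the $\partial/\partial x_2$-column) each entry of $I^2J$ is $m\,x_2^{m-1}IF_i+x_2^{m}\bigl(I\,\partial_{x_2}F_i-F_i\,\partial_{x_2}I\bigr)$, hence divisible by $x_2^{m-1}$, while for $j=3,\dots,n+1$ each entry equals $x_2^{m}\bigl(I\,\partial_{x_j}F_i-F_i\,\partial_{x_j}I\bigr)$, hence divisible by $x_2^{m}$. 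Pulling these factors out of the $n$ columns produces $x_2^{(m-1)+(n-1)m}=x_2^{nm-1}$, so $\det J=I^{-2}\,x_2^{nm-1}\,\det M$, where $M$ has first column with entries $mIF_i+x_2(\cdots)$ and $j$-th column ($j\geq3$) with entries $I\,\partial_{x_j}F_i-F_i\,\partial_{x_j}I$. It then suffices to prove $\det M|_{x_2=0}\neq0$.

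Next I would evaluate at $x_2=0$. Put $G_i:=F_i^k|_{x_2=0}$ and $I_0:=I_k|_{x_2=0}\neq0$. Then the first column of $M|_{x_2=0}$ is $(m\,I_0G_i)_i$ and the $j$-th column ($j\geq3$) is $(I_0\,\partial_{x_j}G_i-G_i\,\partial_{x_j}I_0)_i$. Adding $\tfrac{\partial_{x_j}I_0}{m\,I_0}$ times the first column to the $j$-th column for each $j\geq3$ replaces that column by $(I_0\,\partial_{x_j}G_i)_i$, and then factoring $m\,I_0$ out of the first column and $I_0$ out of each of the remaining $n-1$ columns gives
\[
\det M|_{x_2=0}=m\,I_0^{\,n}\,\det\bigl[\,G_i \mid \partial_{x_3}G_i \mid \cdots \mid \partial_{x_{n+1}}G_i\,\bigr].
\]
By Lemma~\ref{lem:relation between Jacobians of map to projective space}, applied with $f_0=G_{i_0}$ for any index $i_0$ with $G_{i_0}\neq0$, this bordered determinant equals $G_{i_0}^{\,n}$ times the Jacobian determinant, in the variables $x_3,\dots,x_{n+1}$, of the map $(G_i/G_{i_0})_{i\neq i_0}$.

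Finally I would identify this map with a Deligne--Mostow period map. One has $G_i=\int_{\gamma_i\otimes e_i}\omega\otimes e$ with $\omega=x^{-\mu_1(k)-\mu_2(k)}\prod_{j=3}^{N}(x-x_j)^{-\mu_j(k)}\dd x$, and the $\gamma_i$ ($i=2,\dots,n+1$) are the edges of the tree on the vertices $x_3,\dots,x_N$. By the recipe of Section~\ref{sec:explicit basis of H1} applied to the partition $\{0\}\sqcup\{x_3,\dots,x_N\}$ --- admissible since $\sum_{j\geq3}\mu_j(k)=2-\mu_1(k)-\mu_2(k)\notin\ZZ$ --- these $\gamma_i\otimes e_i$ form a basis of the corresponding twisted cohomology, so $(x_3,\dots,x_{n+1})\mapsto(G_2:\cdots:G_{n+1})$ is an incarnation of the period map of the cyclic covering with $N-1$ branch points $0,x_3,\dots,x_N$ and weight datum $(\mu_1(k)+\mu_2(k),\mu_3(k),\dots,\mu_N(k))$. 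As $N-1\geq4$ and this datum has sum $2$ with no integral entry, Lemma~\ref{lem:DM-Lemma on Etaleness of period map in the interior} shows its period map has injective differential; hence the Jacobian above is nonzero, $\det M|_{x_2=0}\neq0$, and $\det J$ vanishes to order exactly $nm-1$ along $\{x_2=0\}$ at $\underline{x}$. When $N=4$ one has $n=1$ and $\det[G_2]=G_2=\int_{\gamma_2}x^{-\mu_1(k)-\mu_2(k)}(x-x_3)^{-\mu_3(k)}(x-x_4)^{-\mu_4(k)}\dd x$ is a nonzero Beta-type integral, so the conclusion holds there too.

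The main obstacle is this last step: one must carefully match the restriction of the period coordinates $F_i^k$ to $\{x_2=0\}$ with an honest Deligne--Mostow period map for the collapsed $(N-1)$-point configuration --- in particular verifying that the cycles $\gamma_i$ still form a cohomology basis after the two branch points come together --- in order to legitimately invoke Lemma~\ref{lem:DM-Lemma on Etaleness of period map in the interior}. The factoring of $x_2$-powers and the elementary column operations are routine bookkeeping.
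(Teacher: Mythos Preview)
Your proof is correct and follows essentially the same approach as the paper: factor the powers $x_2^{m-1}$ and $x_2^{m}$ out of the first column and the remaining $n-1$ columns respectively, then show the residual determinant at $x_2=0$ reduces, via the same column operations you describe, to $m\,I_0^{\,n}$ times the bordered Jacobian $\det[G_i\mid\partial_{x_3}G_i\mid\cdots\mid\partial_{x_{n+1}}G_i]$, which is nonzero by Lemma~\ref{lem:DM-Lemma on Etaleness of period map in the interior} applied to the collapsed tuple $(\mu_1(k)+\mu_2(k),\mu_3(k),\dots,\mu_N(k))$ (and by the non-vanishing of the Beta-type integral when $N=4$, which the paper cites as \cite[Proposition~2.13]{DeligneMostow86}). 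Your explicit attention to why the cycles $\gamma_i$ ($i\geq2$) still give a basis for the twisted cohomology of the $(N-1)$-point configuration is a point the paper passes over in a single clause, so your version is if anything slightly more careful.
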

\begin{proof}
Write $\det(J) = I^{-2}x_2^{(n-1)m + m-1}\det(\tilde J)$ with
\[\tilde J = \begin{pmatrix}
             \begin{vmatrix}
              I & x_2F_2\\
              \pder[I]{x_2} & mF_2 + x_2\pder[F_2]{x_2} 
             \end{vmatrix} & 
             \begin{vmatrix}
              I & F_2\\
              \pder[I]{x_3} & \pder[F_2]{x_3} 
             \end{vmatrix} & \dots &
             \begin{vmatrix}
              I & F_2\\
              \pder[I]{x_{n+1}} & \pder[F_2]{x_{n+1}} 
             \end{vmatrix}\\
             \vdots & \vdots & \ddots & \vdots \\
             \begin{vmatrix}
              I & x_2F_{n+1}\\
              \pder[I]{x_2} & mF_{n+1} + x_2\pder[F_{n+1}]{x_2} 
             \end{vmatrix} &
	     \begin{vmatrix}
              I & F_{n+1}\\
              \pder[I]{x_3} & \pder[F_{n+1}]{x_3} 
             \end{vmatrix} &\dots &
	     \begin{vmatrix}
              I & F_{n+1}\\
              \pder[I]{x_{n+1}} & \pder[F_{n+1}]{x_{n+1}} 
             \end{vmatrix}                     
  \end{pmatrix}
\]
We show that $\tilde J$ does not vanish at $\underline{x} =
(0,x_3,\dots,x_{n+1})$.
 Evaluation at $\underline{x}$ yields
\begin{align*}\det(\tilde J) &= mI 
 \begin{vmatrix}
  F_2 & I\pder[F_2]{x_3} - F_2\pder[I]{x_3} &\dots & I\pder[F_2]{x_{n+1}} -
F_2\pder[I]{x_{n+1}}\\
  \vdots & \vdots & \ddots & \vdots\\
  F_{n+1} & I\pder[F_{n+1}]{x_3} - F_{n+1}\pder[I]{x_3} &\dots &
I\pder[F_{n+1}]{x_{n+1}} - F_{n+1}\pder[I]{x_{n+1}}\\           
 \end{vmatrix}\\
 &= mI
 \begin{vmatrix}
  F_2 & I\pder[F_2]{x_3}  &\dots & I\pder[F_2]{x_{n+1}} \\
  \vdots & \vdots & \ddots & \vdots\\
  F_{n+1} & I\pder[F_{n+1}]{x_3}  &\dots & I\pder[F_{n+1}]{x_{n+1}}\\           
 \end{vmatrix} \\
 &= mI^{1+(n-1)}
 \begin{vmatrix}
  F_2 & \pder[F_2]{x_3}  &\dots & \pder[F_2]{x_{n+1}} \\
  \vdots & \vdots & \ddots & \vdots\\
  F_{n+1} & \pder[F_{n+1}]{x_3}  &\dots & \pder[F_{n+1}]{x_{n+1}}\\           
 \end{vmatrix} 
\end{align*}
By Lemma \ref{lem:DM-Lemma on Etaleness of period map in the interior} 
and Lemma \ref{lem:relation between Jacobians of map to projective space}, we
know that 
the determinant of
\[ \begin{pmatrix}
  F_2 & \pder[F_2]{x_3}  &\dots & \pder[F_2]{x_{n+1}} \\
  \vdots & \vdots & \ddots & \vdots\\
  F_{n+1} & \pder[F_{n+1}]{x_3}  &\dots & \pder[F_{n+1}]{x_{n+1}}\\           
 \end{pmatrix} 
\]
does not vanish at a point $(x_3,\dots,x_{n+1})$ where all coordinates are
 distinct and different from $0$, $x_{N-1}$, $x_{N}$,
since this matrix describes the period map of the tuple
\[\mu' = (\mu_1(k)+\mu_2(k), \mu_2(k), \dots, \mu_N(k)).\]
When $N=4$, the above determinant degenerates to $F_2$, whose non-vanishing is
the content of \cite[Proposition 2.13]{DeligneMostow86}.
%
\end{proof}
\par
\begin{proof}[Proof of Lemma \ref{le:compute coefficients of kokertaudual-case
INT}, second case]
If we precompose the period map starting on $U$ with $\pi:B^u\to B$, then 
using the chain rule, we see that $\det(J(p_k))$ vanishes of order
$\kappa_{ij}nm-1$ at a generic point of the preimage of $L_{ij}$.
\end{proof}
\par

\subsection{Near parabolic divisors}

\begin{lemma}\label{le:no contribution from boundary}
We have $n_{\Delta'} = 0$ for every irreducible component $\Delta'$ of $\Delta$.
\end{lemma}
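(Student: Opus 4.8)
The plan is to show that the determinant of the Jacobian of the period map $p_k$ does not vanish along any boundary divisor $\Delta'$ of $\Delta$, so that $\koker\tau^\dual$ has no component supported on $\Delta'$, whence $n_{\Delta'}=0$ by Lemma~\ref{lemma:computing Chern_1 of kokertaudual-INT}. The key point is that a boundary divisor corresponds to a cusp, i.e.\ to a strictly semi-stable configuration where two branch points collide while the total weight of the colliding cluster equals $1$ (up to the standing assumption $|S_1|=2$ or $|S_2|=2$, so that $\ol B$ is smooth). Thus the local geometry near $\Delta'$ is governed by the case $\mu_i(1)+\mu_j(1)=1$, which by Lemma~\ref{lemma:branch orders are integers}a) is equivalent to $\mu_i(k)+\mu_j(k)=1$ for every $k$ coprime to $d$.

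First I would set up the local coordinates near a generic point of $\Delta'$ exactly as in the analysis of the elliptic divisors: normalize $x_1=x_2'=0$, put the remaining branch points at fixed positions, and take $(x_2,x_3,\dots,x_{n+1})$ as coordinates on a neighborhood $U$ in $B$, with $\{x_2=0\}$ the (semi-stable) locus $L_{ij}$. The first component of the period map again factors as $F_1^k=x_2^{1-\mu_1(k)-\mu_2(k)}I_k = x_2^0 I_k = I_k$, because now $\mu_1(k)+\mu_2(k)=1$, and $I_k$ is holomorphic and nonvanishing at $x_2=0$ by the same argument as in Deligne--Mostow \cite[9.5]{DeligneMostow86}. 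So the homogeneous tuple $(F_1^k:\dots:F_{N-2}^k)$ is already well-defined and holomorphic on $U$, with no fractional powers and no poles: $p_k$ extends holomorphically across $\Delta'$. Then I would pass to the cover $B^{k,\mathrm{nc}}$: since the monodromy of $\LL_k$ about a parabolic divisor is unipotent (this is our standing hypothesis, or we pass to a finite cover realizing it), the map $B^{u}\to B$ — equivalently, the blow-up $B^{k,\mathrm{nc}}\to\ol B^k$ — is unramified along $\Delta'$ in the relevant sense, so precomposing with $\pi$ does not introduce a vanishing factor.

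The remaining step is to check that $\det(J(p_k))$ is actually nonzero (not merely holomorphic) at a generic point of $\Delta'$. For this I would dehomogenize with respect to $F_1^k=I_k$ and use Lemma~\ref{lem:relation between Jacobians of map to projective space}: up to the nonvanishing factor $I_k^{-(n+1)}$, the relevant determinant is the $(n+1)\times(n+1)$ determinant with first column $(F_1^k,\dots,F_{n+1}^k)$ and remaining columns the $x$-derivatives. Restricting to $x_2=0$, the colliding pair has merged into a single point of combined weight $\mu_i(k)+\mu_j(k)=1$, which is an integer; hence that point is not a branch point of the limiting local system $L$, and the remaining configuration is an honest configuration of $N-1$ distinct points with weights summing to $2$. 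The limiting Jacobian is therefore, up to nonzero factors, the Jacobian of the period map of this $(N-1)$-point family, which is injective by Lemma~\ref{lem:DM-Lemma on Etaleness of period map in the interior} (applied with one fewer branch point — valid as long as $N-1\geq 4$; the base case $N=4$ reduces to nonvanishing of a single hypergeometric integral as in \cite[Proposition~2.13]{DeligneMostow86}). Hence $\det(J(p_k))$ is nonzero along $\Delta'$, so $\ord_{\Delta'}(\koker\tau^\dual)=0$.

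The main obstacle I anticipate is bookkeeping rather than conceptual: one must make sure that the "limit" of the period map as $x_2\to0$ really is the period map of the $(N-1)$-point family and not something degenerate — i.e.\ that the chosen basis of currents $\gamma_j\otimes e_j$ specializes correctly, with $\gamma_1$ (the edge joining the colliding pair) contributing a genuine extra period that survives in the limit because $1-\mu_1(k)-\mu_2(k)=0$ makes $F_1^k\to I_k\neq0$ rather than $\to 0$ or $\to\infty$. This is exactly the content already extracted from \cite[9.5]{DeligneMostow86} in the elliptic analysis, specialized to the borderline exponent; once that specialization is recorded, the nonvanishing of the full Jacobian follows from the lower-dimensional étaleness statement, and $n_{\Delta'}=0$ is immediate.
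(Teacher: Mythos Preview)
Your approach has a genuine gap at the outset. The basis $\{\gamma_j\otimes e_j\}$ of Section~\ref{sec:explicit basis of H1}, built from the partition $S_1=\{1,2\}$, $S_2=\{3,\dots,N\}$, requires $\sum_{i\in S_1}\mu_i(k)\notin\ZZ$. At a parabolic divisor this sum is exactly~$1$, so the construction collapses: the currents carried by the two trees become linearly \emph{dependent} in $H^1(\PP^1\setminus\{\underline{x}\},L^\vee)$. (In the model case $N=4$, all $\mu_i=\tfrac12$, the lifts of $[x_1,x_2]$ and $[x_3,x_4]$ to the elliptic curve are homologous cycles.) Thus the factorization $F_1^k=x_2^{0}I_k$ is formally correct but is not a component of the period map in any flat frame, and the claim that ``$p_k$ extends holomorphically across $\Delta'$'' is false: the monodromy there is genuinely unipotent, so the period map has a logarithmic singularity and lands on the boundary of the period domain, not in its interior. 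Your proposed limit also breaks the DM framework: the merged point has integer weight and drops out of the local system, leaving $N-2$ punctures whose weights sum to~$1$, so Lemma~\ref{lem:DM-Lemma on Etaleness of period map in the interior} does not apply to the limiting configuration.

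The paper proceeds differently. Since $p_k$ is \'etale on a punctured neighbourhood of a generic point of $\Delta'$ (no elliptic divisors interfere there), the local structure of the VHS at the boundary is the same for every $k$ of signature $(1,n)$; this reduces the question to the uniformizing VHS. For that case the paper works directly with the Deligne extension in the Siegel-domain model, writing down an explicit frame $\tilde s_1=s_1-z_1s_{n+1},\,\tilde s_2,\dots,\tilde s_{n+1}$ of the canonical extension and checking that the Higgs field with respect to $q\tfrac{\partial}{\partial q},\tfrac{\partial}{\partial z_2},\dots,\tfrac{\partial}{\partial z_n}$ is (minus) the identity matrix. The crucial object is not the period map itself but its logarithmic derivative on the Deligne extension, which the explicit frame makes visible; your attempt to work with the bare period map cannot see this.
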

\par
\begin{proof}
By Lemma~\ref{lem:DM-Lemma on Etaleness of period map in the interior} the
period map is a local isomorphism away
from the elliptic divisors, hence in particular near a generic
point of a parabolic boundary divisor. It thus suffices to
prove the lemma for a uniformizing VHS.
\par
To emphasize similarity with the case of the universal family
of elliptic curves, we use the Siegel domain realization
$$\BB_{\rm Siegel} = \{(z_1:z_2:...:z_n:1): \Im(z_1) - \sum_{i=2}^n |z_i|^2 > 
0  \} 
\subset \PP^n$$ 
and work near the boundary point $(1:0:\cdots:0) \in \PP^n$. A loop
around the boundary point is represented by the parabolic matrix
$T = I_{n+1} + E_{1,n+1}$. We write $q = \exp(2\pi i z_1)$.
\par
We also work with the dual uniformizing VHS, where now $\WW^{1,0}$
has rank $n$.  Over $\BB_{\rm Siegel}$ we choose a basis
$s_1,\ldots,s_{n+1}$ of the constant local system and define the
Hodge filtration $\cW^{1,0}$ as the kernel of the tautological quotient
map 
\[\CC^{n+1}\tensor\cO_{\BB_{\rm Siegel}} \to \cO_{\BB_{\rm Siegel}},\quad
\sum_{i=1}^{n+1} f_is_i \mapsto (\sum_{i=1}^{n} z_i f_i) + f_{n+1}.\]
Explicitly,
$\cW^{1,0}$ is generated by $\omega_i = s_i - z_is_{n+1}, i=1,\ldots,n$. The
sections
$s_i$ for $i=2,\ldots,n+1$ are $T$-invariant and extend to $\tilde{s_i}$ over
the boundary of the quotient by $T$ of $\BB_{\rm Siegel}$. Together with
$\tilde{s_1} = s_1 - z_1 s_{n+1}$ they form
a basis of the Deligne extension of the local system.
\par
A basis of  $\cT_Y(-\log\Delta)$ near a point in $\Delta$ is given
by $2\pi i q\frac\partial{\partial q},\frac\partial{\partial z_2},
\ldots,\frac\partial{\partial z_n}$.  One calculates that
the matrix of derivatives of the $\omega_i$ in these directions
is minus the identity. This is equivalent to $\tau$ being an isomorphism
near the boundary and to the vanishing of the cokernel, as claimed.
\end{proof}
\par

\subsection{Computation of $\Chern_1(\koker\tau^\dual)$ for the case
($\Sigma$INT)}
\label{sec:SigmaINT-computations of chern1-koker}
The calculations proceed analogously to the \eqref{eq:INT} case with some minor
twists. 
\par
Let $N\geq 5$, and let $f:\XFam \to B_0$ be a cyclic covering of type
$(d,a_1,\dots,a_N)$, and assume now that the collection $\mu_i = a_i/d$
satisfies \eqref{eq:SigmaINT} for some $S\subset \{1,\dots,N\}$. Then $B/\Sigma
= \BB^n/\Gamma$ is a ball quotient. The primitive part of the VHS defined by
$f$ then furnishes a VHS on $F/\Sigma$. Let $\LL =
\LL_{k,\Sigma}$ be one of its direct summands, which is polarized by a hermitian
form of signature $(1,n)$.
\par
We choose again an appropriate subgroup $\Gamma'\subset \Gamma$, 
and define $B^u$, $\pi$, $Y$, $\Delta$
and $\LL^u$ analogously to the \eqref{eq:INT}-case. Again the results of
Section \ref{sec:intersection products of Chern classes} apply; Let $\tau^\dual
= \tau^\dual_k$ be the dual of the Higgs field, and let
$p_{k,\Sigma} = p_{k,\Sigma}^1$ be the period map
for $\LL$.
\par
Let $F\subset B_0$ be the open dense submanifold, where $\Sigma$ acts freely,
and let $M\subset B_0\setminus F$ be the codimension one fix locus of $\Sigma$
in $B_0$ (which is only present in a few cases). We denote by $M_l$, $l=1,\dots,r$
the irreducible
components of the closure of $M$ in $B$. Let $\varpi: B \to B/\Sigma$ be the
canonical projection, and let 
\[\{\ol{L}_{ij}\}\quad \text{and} \quad\{\ol{M}_l\}\]
be a system of representatives of $\Sigma$-orbits of the divisors $L_{ij}$
and $M_l$.
\par
\begin{lemma}[{\cite[Lemma 8.3.2]{delcommen}}]
\label{le:ramification of BtoBSigma}
The map $\varpi:B\to B/\Sigma$ is ramified of order two at
each smooth point of
$\bigcup_{i,j \in S} L_{ij} \cup \bigcup_{l} M_l$. 
\end{lemma}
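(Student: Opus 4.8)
The statement is local and can be extracted from \cite[Lemma~8.3.2]{delcommen}; here is the shape of the argument I would give. The plan is to show that at a smooth point $p$ of the union in question the stabilizer $\Sigma_p\subset\Sigma$ is a group of order two generated by a complex reflection, so that $\varpi$ looks near $p$ like a squaring map in one variable. First I would reduce to a single divisor: since $p$ is a \emph{smooth} point of $\bigcup_{i,j\in S}L_{ij}\cup\bigcup_l M_l$, after shrinking we may assume $p$ lies on exactly one of these divisors, say $D$. By the standard linearization of a holomorphic action of a finite group near a fixed point (average a local chart), the $\Sigma_p$-action is linear in a suitable $\Sigma_p$-invariant polydisk centered at $p$; thus it suffices to analyze a linear action of $\Sigma_p$ on $\CC^n$ that fixes (a neighborhood of $0$ in) the hyperplane corresponding to $D$.

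Next I would identify $\Sigma_p$ and show $|\Sigma_p|=2$. A point of $B$ is a $\PGL_2(\CC)$-orbit of an $N$-tuple on $\PP^1$, and $\tau\in\Sigma$ fixes it iff there is $g\in\PGL_2(\CC)$ with $g(x_m)=x_{\tau(m)}$ for all $m$. If $D=L_{ij}$ with $i,j\in S$, then at a generic point of $L_{ij}$ the only coincidence among the $x_m$ is $x_i=x_j$ and the remaining $N-2\ge 3$ points are in general position; any such $g$ is then uniquely determined, and checking the possibilities forces $\tau=(i\,j)$ with $g=\id$, so $\Sigma_p=\langle(i\,j)\rangle\cong\ZZ/2$. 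If $D=M_l$, then by definition $M_l$ is a codimension-one component of the fixed locus of some element of $\Sigma$, and the same genericity argument shows that a generic point of $M_l$ is fixed by exactly one nontrivial involution of $\Sigma$ (realized through a nontrivial Möbius transformation), again giving $\Sigma_p\cong\ZZ/2$.

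Finally I would check that the generator $\sigma_0$ of $\Sigma_p$ acts as a reflection. Its linearization is a linear involution of $\CC^n$ whose fixed subspace is a linear subspace containing the hyperplane $D$; since the $\Sigma$-action on the connected manifold $B$ is faithful, $\sigma_0$ is not the identity near $p$, so this fixed subspace is proper, hence of codimension exactly one and equal to $D$. Thus in suitable coordinates $\sigma_0\colon(z_1,\dots,z_n)\mapsto(-z_1,z_2,\dots,z_n)$ with $D=\{z_1=0\}$, and $\varpi$ is locally the map \[(z_1,\dots,z_n)\mapsto(z_1^2,z_2,\dots,z_n),\] which is ramified of order two along $D$. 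The main obstacle is the middle step: verifying that the stabilizer of a generic point of each of these divisors is exactly of order two, in particular excluding accidental extra symmetries along the exceptional loci $M_l$; this is precisely where one invokes the genericity statements and the description of the fixed loci of $\Sigma$ on $B$ from \cite{DeligneMostow86} and \cite{delcommen}, while the remaining steps are routine linearization of a finite group action.
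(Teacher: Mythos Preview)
The paper does not prove this lemma; it simply quotes it from \cite[Lemma~8.3.2]{delcommen} and moves on. So there is no ``paper's own proof'' to compare against, only the cited reference.

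Your outline is the standard argument and is essentially correct: linearize the finite group action at a smooth point, identify the stabilizer as a group of order two, and observe that an involution whose fixed locus is a hypersurface must act as a complex reflection, giving the local squaring model. The analysis at a generic point of $L_{ij}$ with $i,j\in S$ is fine: the only coincidence is $x_i=x_j$, the remaining $N-2\geq 3$ marked points are in general position, so any Möbius realizing a permutation must be the identity and the stabilizer is $\langle(i\,j)\rangle$.

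The one place where your sketch is genuinely incomplete is exactly the one you flag: for the $M_l$ you assert that the generic stabilizer is a single involution ``by the same genericity argument'', but this actually requires a small case check. In $B_0$ all marked points are distinct, so a permutation $\tau\in\Sigma$ can fix a point only via a nontrivial Möbius transformation realizing $\tau$ on the configuration; a short computation shows that for $N=5$ the only elements of $\Sym(S)$ with a codimension-one fixed locus in $B_0$ are double transpositions (arising when $|S|=4$), while $3$-cycles and $4$-cycles have codimension-two fixed loci and single transpositions have none. This confirms that each $M_l$ is the fixed locus of an involution, and that at a generic point no further element of $\Sigma$ stabilizes it. With that check in hand your argument is complete; without it, the claim that $|\Sigma_p|=2$ along $M_l$ is unjustified. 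This is presumably the content supplied by the reference in \cite{delcommen}.
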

\par
\begin{lemma} \label{le:Chern 1-kokertaudual SigmaINT}
 The ramification order of $\pi:B^u \to B/\Sigma$ at a
point $x\in B^u$ 
\[
  \begin{cases}
   \kappa_{ij} &,\ \text{for generic}\ x \ \text{in}\ \pi^{-1}\ol{L}_{ij}
\ \text{with}\ i,j\not\in S\\
  2\kappa_{ij} &,\ \text{for generic}\ x \ \text{in}\ \pi^{-1}\ol{L}_{ij}
\ \text{with}\ i,j\in S\\
   2	       &,\ \text{for generic}\ x \in \pi^{-1}\ol{M}_l\\
   1	       &,\ \text{if}\ x\in \pi^{-1}(\varpi(F))
  \end{cases}
\]
Moreover, we have
\[\Chern_1(\koker \tau^\dual) = 
\sum_{i,j}
n^k_{ij}\cdot \sum_{\substack{L\subset \pi^{-1}\ol{L}_{ij}\\ \text{irred.}}}
[L]\]
where the $n_{ij}^k$ are given as in Lemma \ref{le:compute coefficients of
kokertaudual-case INT}.
\end{lemma}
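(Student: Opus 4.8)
The plan is to mirror the argument of Lemma~\ref{lemma:computing Chern_1 of kokertaudual-INT} together with Lemmas~\ref{le:compute coefficients of kokertaudual-case INT} and \ref{le:no contribution from boundary}, inserting the extra ramification coming from the quotient by $\Sigma$. First I would establish the ramification statement. Away from the fixed locus, $\varpi\colon B\to B/\Sigma$ is étale, so the ramification of $\pi\colon B^u\to B/\Sigma$ at a generic point of $\pi^{-1}\ol L_{ij}$ with $i,j\notin S$ is the same as in the \eqref{eq:INT} case, namely $\kappa_{ij}$ by Lemma~\ref{lemma:vanishing order of periodmap-case1}~b). For $i,j\in S$, the map $B^u\to B$ is still ramified of order $\kappa_{ij}$ along a component of $\pi^*L_{ij}$ (this depends only on the local monodromy of the uniformizing VHS, which is unchanged), while $\varpi$ is ramified of order $2$ along $L_{ij}$ by Lemma~\ref{le:ramification of BtoBSigma}; composing gives $2\kappa_{ij}$. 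Along $\pi^{-1}\ol M_l$ the map $B^u\to B$ is unramified (these are not elliptic divisors of the uniformizing VHS), while $\varpi$ is ramified of order $2$ again by Lemma~\ref{le:ramification of BtoBSigma}, giving ramification order $2$. Over $\varpi(F)$ everything is étale.

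Next I would compute $\Chern_1(\koker\tau^\vee)$. As in Lemma~\ref{lemma:computing Chern_1 of kokertaudual-INT}, $\koker\tilde\tau_k$ is supported on $\Delta$ together with the preimages of the $\ol L_{ij}$ with $\mu_i+\mu_j<1$ and of the $\ol M_l$; the coefficient along any component equals the vanishing order of $\det J(p_{k,\Sigma})$ there, and by $\Gamma$-equivariance of the period map this order is the same for all components above a fixed divisor downstairs. Along $\Delta$ the coefficient vanishes by (the proof of) Lemma~\ref{le:no contribution from boundary}, which only used that the period map is a local isomorphism near a generic point of a parabolic divisor and reduced to the uniformizing case; nothing in that argument is affected by the presence of $\Sigma$. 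The point requiring a small extra remark is that the $\ol M_l$ contribute nothing: near a generic point of $M_l$ the period map $p_{k,\Sigma}^1$ extends holomorphically with image in the interior of the period domain and is a local isomorphism there, because the monodromy of $\LL_{k,\Sigma}$ about $M_l$ is of finite order by Lemma~\ref{lemma:branch orders are integers} together with the discussion in Section~\ref{sec:extending-periodmap} (third case); hence $\det J(p_{k,\Sigma})$ does not vanish and $n_{M_l}^k=0$.

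Finally, along $\pi^{-1}\ol L_{ij}$ I would redo the local computation of Lemma~\ref{le:compute coefficients of kokertaudual-case INT} verbatim: the multi-valued period map is locally $(z_1,\dots,z_n)\mapsto(z_1^{1-\mu_i(k)-\mu_j(k)},z_2,\dots,z_n)$ in the case $\mu_i(k)+\mu_j(k)<1$ (resp.\ the analogue with a pole in the case $>1$, via Lemma~\ref{le:vanishing of detperiodmap-case two}), and precomposing with $\pi$ contributes the ramification order computed above. For $i,j\notin S$ this reproduces exactly the \eqref{eq:INT} formula $n_{ij}^k=\ell_{ij}^k-1$ (resp.\ $n\ell_{ij}^k-1$) with $\ell_{ij}^k=\kappa_{ij}|1-\mu_i(k)-\mu_j(k)|$. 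For $i,j\in S$ the ramification index is $2\kappa_{ij}$, but $\kappa_{ij}\in\tfrac12\ZZ$ in the \eqref{eq:SigmaINT} case, so $2\kappa_{ij}$ plays the role of the integer $\kappa_{ij}$ from the \eqref{eq:INT} case and $\ell_{ij}^k=2\kappa_{ij}|1-\mu_i(k)-\mu_j(k)|\in\NN$ by Lemma~\ref{lemma:branch orders are integers}~b); the same chain-rule computation then gives the identical formula for $n_{ij}^k$. Summing over components and orbits yields the stated expression. The main obstacle is the bookkeeping at the divisors $L_{ij}$ with $i,j\in S$, where one must check that the half-integer $\kappa_{ij}$ and the order-two ramification of $\varpi$ combine to an integer ramification index for $\pi$ in precisely the way that makes Lemma~\ref{le:compute coefficients of kokertaudual-case INT} applicable unchanged; this is exactly what Lemma~\ref{lemma:branch orders are integers}~b) guarantees.
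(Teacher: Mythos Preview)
Your overall strategy matches the paper's proof, but there is a genuine gap in your ramification argument for $i,j\in S$. You write that ``the map $B^u\to B$ is still ramified of order $\kappa_{ij}$'' and then compose with the order-$2$ ramification of $\varpi$. The problem is that in the \eqref{eq:SigmaINT} case $\kappa_{ij}$ may be a genuine half-integer, so there is no map of complex manifolds $B^u\to B$ with ramification order $\kappa_{ij}$; indeed, $B$ is not itself a ball quotient in this situation (the pulled-back orbifold weight along $L_{ij}$ would be the non-integer $\kappa_{ij}$), and no global factorization $B^u\to B\to B/\Sigma$ exists. The paper avoids this by never invoking such a map: it writes the multi-valued period map on $B/\Sigma$ as $p_{k,\Sigma}=p_k\circ\varpi^{-1}$, observes that for $k=1$ the single-valued lift $p^1_{1,\Sigma}$ is an isomorphism $\tilde B^1_\Sigma\cong\BB^n$, and concludes that $\pi$ must exactly account for the (possibly fractional) exponent of $p_{1,\Sigma}$, giving ramification $2\kappa_{ij}$ directly. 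Your later acknowledgement that ``$2\kappa_{ij}$ plays the role of the integer $\kappa_{ij}$'' is correct in spirit but does not repair the earlier step.

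A smaller point: your justification that $\ol M_l$ contributes nothing cites Lemma~\ref{lemma:branch orders are integers}, but that lemma concerns the $\mu_i+\mu_j$ and says nothing about the $M_l$. The correct reason is simply that $M_l\subset B_0$, so $p_k$ is \'etale there by Lemma~\ref{lem:DM-Lemma on Etaleness of period map in the interior}; combined with the fact (from part a)) that the order-$2$ ramification of $\pi$ along $\pi^{-1}\ol M_l$ exactly cancels the $z_1\mapsto z_1^{1/2}$ from $\varpi^{-1}$, the chain rule gives vanishing order $0$. The paper leaves this implicit in ``by the proof of Part~a) and the chain rule''; your instinct to make it explicit is good, just with the right input.
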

\begin{proof}
a) Let $x$ be a point in $\varpi(F)$ or on one of the divisors $\ol{L}_{ij}$ or
$\ol{M}_l$, but outside the intersection with any of the other divisors. The
map $\varpi$ is locally at $x$ a ramified covering map. By Lemma
\ref{le:ramification of BtoBSigma}, its
multi-valued inverse is of the form
\[(z_1,\dots,z_n) \mapsto
(z_1^{1/2},z_2,\dots,z_n)  \quad \text{or}\quad \id:\CC^n\to \CC^n \]
depending on whether $x\in \bigcup_{i,j \in S} L_{ij} \cup
\bigcup_{l} M_l$ or not.
If $p_k$ denotes the multi-valued period map associated with $\LL_k$ on $B$,
then $p_{k,\Sigma}$ is locally at $x$ given as the composition of the
multi-valued inverse of $\varpi$ and $p_k$. For $k=1$, the
single-valued period map is an isomorphism of
$\widetilde{B}_{\Sigma}^1$ with $\BB^n$, and the map $\BB^n\to B^u$ is
unramified, so $\pi:B^u\to B$ must precisely make up for the ramification of the
multi-valued map $p_{1,\Sigma}$. This shows a).
\par
b) First, notice that we can copy the proof of Lemma \ref{lemma:computing
Chern_1 of kokertaudual-INT} to see that the support of $\Chern_1(\koker
\tau^\dual)$ is contained in the complement of $\pi^{-1}(\varpi(F))$. Next by
essentially the same arguments as in the proof of Lemma \ref{le:no contribution
from boundary}, there are no contributions from the boundary $\Delta$.
Furthermore, $n_{ij}^k$ is again given by the vanishing order of
$\det(J(p_{k,\Sigma}))$. By the proof of Part a) and the chain rule,
this vanishing order is still given as in Lemma \ref{le:compute coefficients of
kokertaudual-case INT}.
\end{proof}

\subsection{Collecting the contributions}
In this section, we gather all the steps needed to compute the Lyapunov
exponents, and thus to prove Theorem \ref{thm:allprimvalues}.

Since \eqref{eq:SigmaINT} comprises \eqref{eq:INT}, we consider the setup of
Section \ref{sec:SigmaINT-computations of chern1-koker}, and work with the
notations introduced there. In particular, $N\geq 5$, $\LL = \LL_{k,\Sigma}$ is
a
direct summand of the primitive part of the VHS on $B/\Sigma$, induced by a
cyclic
covering, and $\tau^\dual$ is the dual Higgs field of the pullback VHS $\LL^u$
on $B^u$.

\begin{prop}\label{prop:Lyapexp of summand of cyclic covering}
The positive Lyapunov spectrum of $\LL^u$ is
\[\lambda_1 = \lambda_1 \geq \underbrace{0 = \dots = 0}_{n-1}\]
where
\[\lambda_1 = 
 1 - \frac{\Chern_1(\koker \tau^\dual).\Chern_1(\omega_Y)^{n-1}}
	   {\Chern_1(\omega_Y)^n}.\]
\end{prop}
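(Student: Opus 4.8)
The plan is to obtain the statement by assembling the general results of Section~\ref{sec:cylicgeneral} with the injectivity of the differential of the period map. First I would observe that Theorem~\ref{thm:main} applies to the VHS $\LL^u$ on $B^u=\BB^n/\Gamma'$: the lattice $\Gamma'$ was chosen to act freely and cofinitely on $\BB^n$, the boundary $\Delta=Y\setminus B^u$ is a normal crossing divisor, and $\LL^u$ extends to a weight-one polarized $\CC$-VHS on $B^u$ with unipotent local monodromy about $\Delta$ by the discussion in Section~\ref{sec:extending-periodmap}. Since $\LL^u$ has complex rank $n+1$ (it has signature $(1,n)$), its Lyapunov spectrum consists of $2(n+1)$ numbers, symmetric about $0$.

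To pin down the \emph{shape} of the spectrum I would combine two items of Theorem~\ref{thm:main}. By part iii), applied with $(p,q)=(1,n)$, at least $2|1-n|=2(n-1)$ of the exponents vanish, so at least $n-1$ of the $n+1$ positive ones are zero. By the first duplication phenomenon recorded in the proof of Proposition~\ref{prop: duplication} --- for a section $v$ of a $\CC$-VHS, $v$ and $iv$ have the same Hodge norm and lie in the same Oseledets subspace, so every distinct Lyapunov exponent occurs with even multiplicity --- the positive part can then contain at most two nonzero entries, and an even number of them. Hence the positive Lyapunov spectrum is $\lambda_1=\lambda_1\geq 0=\cdots=0$ with $n-1$ zeros, where $\lambda_1$ is the top exponent (which is $\geq 0$, possibly $0$).

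To identify $\lambda_1$ I would use that the Hodge line bundle $\cL^{1,0}\subset\LL^u\tensor\cO_{B^u}$ has rank one, so the partial sum in Theorem~\ref{thm:main}~iv) (equivalently Theorem~\ref{thm:main-with-kappa} with $\kappa=\tfrac12$, i.e.\ holomorphic sectional curvature $-4$) reduces to the single top exponent and gives
\[\lambda_1=\frac{(n+1)\Chern_1(\cE^{1,0}).\Chern_1(\omega_Y)^{n-1}}{\Chern_1(\omega_Y)^n},\]
with $\cE^{1,0}$ the Deligne extension of $\cL^{1,0}$ and $\omega_Y=\bigwedge^n\Omega^1_Y(\log\Delta)$. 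The hypothesis of Lemma~\ref{lemma:compute c1 of 10 eq. compute c1 of kokern} holds because $\tilde\tau$ restricted to the interior $B_0$ is the differential of the period map, which is everywhere injective by Lemma~\ref{lem:DM-Lemma on Etaleness of period map in the interior}, and injectivity of a map of locally free sheaves may be tested on a dense open set. That lemma rewrites the right-hand side as $1-\Chern_1(\koker\tau^\dual).\Chern_1(\omega_Y)^{n-1}/\Chern_1(\omega_Y)^n$, which is exactly the asserted value of $\lambda_1$, completing the proof.

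The point that needs care is the reading of Theorem~\ref{thm:main}~iv) in the signature $(1,n)$ case: the partial sum $\lambda_1+\dots+\lambda_k$ there runs over the \emph{top} $k=\rk\cL^{1,0}=1$ exponents rather than over the full positive half, so it computes $\lambda_1$ and not $2\lambda_1$. This is consistent with the normalization Proposition, where for a uniformizing VHS one has $\koker\tau^\dual=0$ and the formula correctly returns $\lambda_1=1$. Everything else is a direct substitution into already-established formulas.
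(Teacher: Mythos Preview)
Your argument for the \emph{shape} of the spectrum and for the applicability of Lemma~\ref{lemma:compute c1 of 10 eq. compute c1 of kokern} is fine, and you land on the correct formula for $\lambda_1$. But the step where you invoke Theorem~\ref{thm:main}~iv) directly for the $\CC$-VHS $\LL^u$ with ``$k=\rk\cL^{1,0}=1$'' is not justified by that theorem as stated and proved. Theorem~\ref{thm:main}~iv) (and its restatement Theorem~\ref{thm:main-with-kappa}) is formulated for a \emph{real} polarized weight-one VHS $\WW$ of rank $2k$; for such a $\WW$ one automatically has $\rk\cW^{1,0}=\rk\cW^{0,1}=k$, and the proof uses the symplectic form $\Omega=\tfrac{1}{k!}\wedge^k Q$ on the real bundle. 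The $\CC$-VHS $\LL^u$ has signature $(1,n)$, so $\rk\cL^{1,0}=1\neq n=\rk\cL^{0,1}$, and it simply does not fit the hypothesis ``rank $2k$ with $k$ positive exponents''. Your last paragraph acknowledges a subtlety here but resolves it incorrectly: the consistency check with the uniformizing VHS only confirms the final number, not the reading of the theorem.

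The paper's proof fixes exactly this point. It passes to the real form $\WW_\RR$ of $\LL^u\oplus\ol{\LL^u}$, which has real rank $2(n+1)$, so $k=n+1$. By Proposition~\ref{prop: duplication} the Lyapunov spectrum of $\WW_\RR$ coincides with that of $\LL^u$, hence $\sum_{i=1}^{n+1}\lambda_i=2\lambda_1$. Theorem~\ref{thm:main}~iv) applied to $\WW_\RR$ gives
\[
2\lambda_1=\frac{(n+1)\,\Chern_1(\cW^{1,0}).\Chern_1(\omega_Y)^{n-1}}{\Chern_1(\omega_Y)^n},
\qquad \cW^{1,0}=\cE^{1,0}\oplus\ol{\cE}^{1,0},
\]
and then the antiholomorphic isomorphism $\cE^{0,1}\cong\ol{\cE}^{1,0}$ together with $\Chern_1(\cE^{1,0})+\Chern_1(\cE^{0,1})=0$ yields $\Chern_1(\cW^{1,0})=2\,\Chern_1(\cE^{1,0})$, cancelling the factor $2$. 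From there Lemma~\ref{lemma:compute c1 of 10 eq. compute c1 of kokern} gives the asserted expression, exactly as you conclude.
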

\begin{proof}
 By Theorem \ref{thm:main} iii), $2n-2$ of the $2n+2$ Lyapunov exponents 
 of $\LL^u$ are zero. Moreover, since we are working with a $\CC$-variation,
 the first Lyapunov exponent occurs with multiplicity $2$, i.e.
 \[\lambda_1 = \lambda_2.\]
 By Proposition \ref{prop: duplication} and its proof, the Lyapunov spectrum of
$\WW_{\RR}$, the $\RR$-variation associated with $\LL_u \oplus \ol{\LL_u}$, is
the
same as the one of $\LL^u$. Therefore, by Theorem \ref{thm:main} iv),
\[2\lambda_1 = \frac{(n+1)\Chern_1(\cW^{1,0}).\Chern_1(\omega_Y)^{n-1}}{
\Chern_1(\omega_Y)^n }\]
with $\cW^{1,0} = \cE^{1,0} \oplus \ol{\cE}^{1,0}$, where $\cE^{1,0}$ and
$\ol{\cE}^{1,0}$ are
the 
$1,0$-parts of the Deligne extensions of the holomorphic subbundles associated
with $\LL^u$ and $\ol{\LL^u}$. From the antiholomorphic isomorphism 
$\cE^{0,1}\isom \ol{\cE}^{1,0}$ and the fact that $\Chern_1(\cE^{1,0}) =
-\Chern_1(\cE^{0,1})$, 
we infer that
\[\Chern_1(\cW^{1,0}) = 2\Chern_1(\cE^{1,0}).\]
Now, Lemma \ref{lemma:compute c1 of 10 eq. compute c1 of kokern} implies
the claim.
\end{proof}
\par
Next, we argue that we can compute the above intersection products on
$B/\Sigma$ instead of $B^u$. We introduce the divisor $D_k\in
\CH_1(B^{\nc}/\Sigma)_\QQ$ given by
\[D_{k,\Sigma} = \sum_{i,j\in S} \frac{n_{ij}^k}{2\kappa_{ij}}
\bigl[\ol{L}_{ij}\bigr]
+ \sum_{i,j\not\in S} \frac{n_{ij}^k}{\kappa_{ij}} \bigl[\ol{L}_{ij}\bigr],\]
and the orbifold canonical divisor
\begin{align}\label{eq:SigmaKorb}
K^{\orb}_\Sigma = K_{B^{\nc}/\Sigma} + \sum_{i, j \in S}
\Bigl(1-\frac{1}{2\kappa_{ij}}\Bigr)\bigl[\ol{L}_{ij}\bigr]
+ \sum_{i, j \not\in S}
\Bigl(1-\frac{1}{\kappa_{ij}}\Bigr)\bigl[\ol{L}_{ij}\bigr]
+ \sum_{l}
\frac{1}{2} \bigl[\ol{M}_{l}\bigr] 
\end{align}
with $n_{ij}^k$ from Lemma \ref{le:compute
coefficients of kokertaudual-case INT}, $\kappa_{ij} = (1-\mu_i-\mu_j)^{-1}$ and
the convention $1/\kappa_{ij} = 0$, if $\mu_i+\mu_j=1$.
We also define the divisors
\begin{align}\label{eq:Korb}
D_k = \sum_{\substack{i,j\\ \mu_i+\mu_j < 1}}
\frac{n_{ij}^k}{\kappa_{ij}}
[L_{ij}]\qquad \text{and}\qquad 
K^{\orb} = K_B + \sum_{\substack{i,j\\ \mu_i+\mu_j \leq 1}}
(1-\frac{1}{\kappa_{ij}}) [L_{ij}]
\end{align}
upstairs in $\CH_1(B^\nc)_\QQ$.
\par
\begin{lemma} We have
\[\lambda_1 = 1 - \frac{\Chern_1(\koker \tau^\dual).\Chern_1(\omega_Y)^{n-1}}
	   {\Chern_1(\omega_Y)^n} 
= 1 -  \frac{D_{k,\Sigma}.(K^{\orb}_\Sigma)^{n-1}}{(K^{\orb}_\Sigma)^n}
= 1 -  \frac{D_k.(K^{\orb})^{n-1}}{(K^{\orb})^n}.\]
\end{lemma}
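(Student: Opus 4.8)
The plan is to prove the two asserted equalities by pushing everything down from $B^u$ to $B/\Sigma$ and its normal-crossing compactification $B^{\nc}/\Sigma$, using the projection $\pi\colon B^u \to B/\Sigma$ together with the ramification data of Lemma~\ref{le:Chern 1-kokertaudual SigmaINT}. The first equality $\lambda_1 = 1 - \Chern_1(\koker\tau^\dual).\Chern_1(\omega_Y)^{n-1}/\Chern_1(\omega_Y)^n$ has already been established in Proposition~\ref{prop:Lyapexp of summand of cyclic covering}, so only the remaining two identities need proof; the last of them (the reduction from $B^{\nc}/\Sigma$ to $B^{\nc}$) is vacuous in the pure \eqref{eq:INT} case where $\Sigma$ is trivial, so it suffices to treat the \eqref{eq:SigmaINT} setup and then specialize.

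First I would record the key comparison of canonical bundles: since the metric on $B^u$ is the pullback of the orbifold metric on $B/\Sigma$ and $\pi$ is ramified as described in Lemma~\ref{le:Chern 1-kokertaudual SigmaINT}, the Hurwitz formula gives $\Chern_1(\omega_Y) = \pi^*K^{\orb}_\Sigma$ as $\QQ$-divisor classes, where $K^{\orb}_\Sigma$ is the orbifold canonical divisor of \eqref{eq:SigmaKorb}: the ramification over $\ol{L}_{ij}$ contributes $1 - 1/(2\kappa_{ij})$ resp.\ $1 - 1/\kappa_{ij}$ according as $i,j\in S$ or not, and the ramification of order $2$ over $\ol{M}_l$ contributes $1/2$. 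Next, Lemma~\ref{le:Chern 1-kokertaudual SigmaINT} expresses $\Chern_1(\koker\tau^\dual)$ on $Y$ as a sum over irreducible components of the $\pi^{-1}\ol{L}_{ij}$ with coefficient $n^k_{ij}$; collecting the components lying over a fixed $\ol{L}_{ij}$ and dividing by the ramification order ($\kappa_{ij}$, resp.\ $2\kappa_{ij}$) gives exactly $\Chern_1(\koker\tau^\dual) = \pi^*D_{k,\Sigma}$ with $D_{k,\Sigma}$ as defined above. Then the projection formula $\pi_*(\pi^*\alpha . \pi^*\beta) = \deg(\pi)\, \alpha.\beta$ applied to numerator and denominator makes the degree $\deg(\pi)$ cancel, yielding
\[
\frac{\Chern_1(\koker\tau^\dual).\Chern_1(\omega_Y)^{n-1}}{\Chern_1(\omega_Y)^n}
= \frac{D_{k,\Sigma}.(K^{\orb}_\Sigma)^{n-1}}{(K^{\orb}_\Sigma)^n},
\]
which is the second equality.

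For the third equality, the point is that $D_{k,\Sigma}$ and $K^{\orb}_\Sigma$ on $B^{\nc}/\Sigma$ are the descents under $\varpi\colon B^{\nc}\to B^{\nc}/\Sigma$ of the divisors $D_k$ and $K^{\orb}$ on $B^{\nc}$. Using that $\varpi$ is ramified of order two exactly along $\bigcup_{i,j\in S}L_{ij}\cup\bigcup_l M_l$ (Lemma~\ref{le:ramification of BtoBSigma}), the Hurwitz formula gives $\varpi^*K^{\orb}_\Sigma = K^{\orb}$: the factor $1 - 1/(2\kappa_{ij})$ downstairs pulls back to $2(1-1/(2\kappa_{ij})) = 2 - 1/\kappa_{ij}$, and adding the Hurwitz ramification contribution $-1$ per $L_{ij}$ with $i,j\in S$ and $-1$ per $M_l$ recovers precisely the coefficients in \eqref{eq:Korb} (the $M_l$ drop out entirely since $2\cdot\frac12 - 1 = 0$, consistent with $K^{\orb}$ having no $M_l$-term, as the $M_l$ are not branch divisors of the period map). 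Similarly $\varpi^*D_{k,\Sigma} = D_k$, the factor $n^k_{ij}/(2\kappa_{ij})$ pulling back to $n^k_{ij}/\kappa_{ij}$ and the $i,j\notin S$ terms being unaffected. One more application of the projection formula for $\varpi$ (degree $|\Sigma|$, which cancels) gives $D_{k,\Sigma}.(K^{\orb}_\Sigma)^{n-1}/(K^{\orb}_\Sigma)^n = D_k.(K^{\orb})^{n-1}/(K^{\orb})^n$, completing the chain.

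I expect the main obstacle to be bookkeeping rather than conceptual: one must be careful that the intersection product on the quotient varieties is the $\QQ$-linear one defined via $\eta^*$ as in Section~\ref{sec:realization}, that the ``ramification order'' weights in the various Hurwitz formulas are assembled consistently with the conventions $1/\kappa_{ij}=0$ when $\mu_i+\mu_j=1$ (parabolic divisors, where Lemma~\ref{le:no contribution from boundary} guarantees no cokernel contribution so the $n^k_{ij}$-terms genuinely omit them), and that the components of $\pi^{-1}\ol L_{ij}$ are all counted with the same multiplicity $n^k_{ij}$ — which is exactly the $\Gamma$-equivariance statement proved inside Lemma~\ref{lemma:computing Chern_1 of kokertaudual-INT}. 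A secondary subtlety is that one should check that $D_k$ and $K^{\orb}$, as defined upstairs on $B^{\nc}$, really are $\Sigma$-invariant so that their descents exist; this is immediate from the $\Sigma$-equivariance of the whole construction and of the weights $\mu_i$ on $S$.
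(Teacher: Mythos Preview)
Your proposal is correct and follows essentially the same route as the paper: establish the pullback identities $\pi^*D_{k,\Sigma} = \Chern_1(\koker\tau^\dual)$, $\pi^*K^{\orb}_\Sigma = \Chern_1(\omega_Y)$ via the ramification data of Lemma~\ref{le:Chern 1-kokertaudual SigmaINT}, and $\varpi^*D_{k,\Sigma} = D_k$, $\varpi^*K^{\orb}_\Sigma = K^{\orb}$ via Lemma~\ref{le:ramification of BtoBSigma}, then cancel degrees using the projection formula (equivalently, invariance of the $\QQ$-intersection ring under choice of presentation as a finite quotient). Your write-up is in fact more explicit about the Hurwitz bookkeeping than the paper, which simply asserts the four pullback identities.
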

\begin{proof}
 By Lemma \ref{le:Chern 1-kokertaudual SigmaINT}, the divisors
$D_k$ and $K^{\orb}$ are adapted to satisfy
\[\pi^*D_{k,\Sigma} = \Chern_1(\koker \tau^\dual)\quad \text{and}\quad
\pi^*K^{\orb}_\Sigma =
\Chern_1(\omega_Y),\]
and by Lemma \ref{le:ramification of BtoBSigma},
\[\varpi^*D_{k,\Sigma} = D_k\qquad \text{and}\qquad \varpi^*K^{\orb}_\Sigma =
K^{\orb}.\]
The ring structure on $\CH_*(B^\nc/\Sigma)_\QQ$ does not depend on the
presentation of $B^\nc/\Sigma$ as a finite quotient, whence
\[(1/\deg(\pi)) \pi^*D_1.\pi^*D_2 =  D_1.D_2 = (1/|\Sigma|)
\varpi^*D_1.\varpi^*D_2\]
for any $D_1$, $D_2$ in $\CH_*(B^\nc/\Sigma)_\QQ$.
\end{proof}

We can now plug in the concrete realizations of
$\CH_*(B^{\nc})_\QQ$ from Section~\ref{sec:realization} to finish
the computation in each individual case.

\section{Orbifold Euler numbers and log-ball quotients}
\label{sec:orbeuler_logbq}

Thurston considers in his paper \cite{thurstonshapes} the moduli space 
$C(\mu_1,\dots,\mu_N)$ of Euclidean metrics on the sphere of total area one 
with conical  singularities at $N$ points of fixed angles
$(2\pi\mu_1,\dots,2\pi\mu_N)$ 
(where $\mu_i\in \QQ\cap (0,1)$). If the sum of angles $\sum \mu_i =2$, Thurston  provides this space with 
a hyperbolic metric so that, up to taking the 
quotient by the finite group permuting the points of equal angle, the metric 
completion $\ol{C}(\mu_1,\dots,\mu_N)$ of this moduli space is 
the space of $(\mu_i)_i$-stable points $B^\mu$. This metric completion is a
hyperbolic cone manifold, and an orbifold ball quotient precisely when
\eqref{eq:SigmaINT} is satisfied.
\par
For each of these hyperbolic cone manifolds, one can study its volume. In
particular, for $\mu = (\mu_i(1))_i$ parametrizing an orbifold ball quotient and
$\mu(k)$ a Galois conjugate tuple thereof with $\sum_i\mu_i(k) =2$, the ratio
of the volumes of $C(\mu_1(k),\dots,\mu_N(k))$ and $C(\mu_1(1),\dots,\mu_N(1))$
is an interesting invariant, which remains unchanged under passage to a
covering defined by a finite index subgroup of the associated lattice.
\par
\par
Here we study the corresponding algebro-geometric version of this
invariant, the orbifold Euler number. By the
Gau\ss-Bonnet theorem 
it is proportional to the volume of
the hyperbolic cone manifold.
\par
Since all but one of the known non-arithmetic lattices are in dimension at most two
and for simplicity we restrict to the surface case. The orbifold Euler number,
as introduced by Langer \cite{langerorbifold}, is defined for any pair $(X,D)$
of a normal projective surface and a $\QQ$-Divisor $D =\sum a_i D_i$,
where the $D_i$ are prime divisors and $0\leq a_i\leq 1$, and is given by
 \begin{align} \label{eqn:orbEuler number}
    \orbEuler(X,D) = \topEuler(X) - \sum a_i \topEuler(D_i\setminus \Sing(X,D))
- \!\!\!\!\!\sum_{x\in \Sing(X,D)} 1- \orbEuler(x,X,D).
 \end{align}
It extends the well-studied case when
$a_i = (1-1/m)$ with $m \in \NN$, \ie when $(X,D)$ is an actual orbifold. Here,
$\Sing(X,D)$ is the locus where either $X$ or $\supp D$ is singular, and 
$\orbEuler(x,X,D)$ is the local orbifold Euler number that we will give
below in the cases of interest.
\par
Fix a quintuple $(\mu_i)_{i=1}^5$. First, we only suppose that $\sum_i
\mu_i = 2$ and consider the space $B^{\mu,\nc}$ of $\mu$-semistable points,
blown up at the cusps, together with the divisor
\begin{align} \label{eq:R-divisor}
R^\mu = \sum_{i<j:\mu_i+\mu_j\leq 1}
(\mu_i+\mu_j)[L_{ij}].
\end{align}
Further let $K_{B^{\mu,\nc}}$ denote the canonical divisor class of
$B^{\mu,\nc}$.
\par
\begin{theorem}\label{thm:log-ballquotients}
For any $(\mu_i)_{i=1}^5\in (\QQ\cap(0,1))^5$ with $\sum_i\mu_i=2$, the pair
$(B^{\mu,\nc}, R^\mu)$ is a is a log-ball quotient in the sense that
with the definition $\Chern_1^\orb(B^{\mu,\nc}, R^\mu) = K_{B^{\mu,\nc}} +
R^\mu$ equality is attained in the generalized Bogomolov-Miyaoka-Yau
inequality, i.e.\ 
\begin{align} \label{eq:BMY}
3 \orbEuler(B^{\mu,\nc}, R^\mu) = \bigr(\Chern_1^\orb(B^{\mu,\nc},
R^\mu)\bigl)^2.
\end{align}
\end{theorem}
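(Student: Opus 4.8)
The plan is to establish the equality \eqref{eq:BMY} in two stages: first prove it under the extra hypothesis that $(\mu_i)_i$ satisfies \eqref{eq:SigmaINT}, so that $(B^{\mu,\nc},R^\mu)$ genuinely comes from an orbifold ball quotient; then deduce the general case by a deformation/continuity argument in the parameters $\mu$. For the first stage, one works on $B^{\mu,\nc}/\Sigma$, where $\Sigma$ is the relevant symmetric group, and uses that by \cite{DeligneMostow86} and \cite{mostowhalfint} the interior is a genuine ball quotient $\BB^2/\Gamma$ with the $L_{ij}$ ($i,j\in S$) and the $M_l$ as additional orbifold loci of the prescribed orders. The orbifold canonical divisor $K^\orb_\Sigma$ introduced in \eqref{eq:SigmaKorb} is proportional to the K\"ahler-Einstein class $[\omega_\hyp]$ by the K\"ahler-Einstein property of ball quotients (exactly the computation $\Chern_1(\omega_{\ol B}) = \tfrac{n+1}{2\pi\kappa}[\omega_\hyp]$ used in the proof of Theorem~\ref{thm:main-with-kappa}, now with logarithmic poles and orbifold corrections along the $L_{ij}$ and $M_l$). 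One then identifies $\Chern_1^\orb(B^{\mu,\nc},R^\mu)$ with (a multiple of) the pullback of $K^\orb_\Sigma$ under the quotient map $\varpi$, using that the coefficient $\mu_i+\mu_j$ of $[L_{ij}]$ in $R^\mu$ matches $1-1/\kappa_{ij}$ whenever $(1-\mu_i-\mu_j)^{-1}=\kappa_{ij}$, together with the factor-of-two corrections from Lemma~\ref{le:ramification of BtoBSigma}. Simultaneously, the right-hand invariant $\orbEuler(B^{\mu,\nc},R^\mu)$ is, via \eqref{eqn:orbEuler number} and the known values of Langer's local orbifold Euler numbers at the normal-crossing intersection points (where $(X,D)$ is smooth), the standard orbifold Euler characteristic, hence proportional to $\vol_\hyp(\BB^2/\Gamma)$ by Gau\ss-Bonnet. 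Since for a complex two-ball $\Chern_1^2$ and $\Chern_2$ of the K\"ahler-Einstein metric satisfy the proportionality $\Chern_1^2=3\Chern_2$ forced by constant holomorphic sectional curvature, \eqref{eq:BMY} follows; the numerical factor $3$ is exactly the one in the Hirzebruch proportionality for $\PU(1,2)$.

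For the second stage — removing \eqref{eq:SigmaINT} — I would argue that both sides of \eqref{eq:BMY} are given by explicit polynomial (in fact piecewise-polynomial) expressions in the $\mu_i$. The right-hand side $\bigl(K_{B^{\mu,\nc}}+R^\mu\bigr)^2$ is a quadratic polynomial in the coefficients $\mu_i+\mu_j$ once the combinatorial type of $B^{\mu,\nc}$ (i.e.\ which $L_{ij}$ and cusps are present) is fixed, and this combinatorial type is locally constant on the chambers cut out by the walls $\mu_i+\mu_j=1$ and $\sum_{i\in T}\mu_i=1$ inside the simplex $\{\sum\mu_i=2\}$. The left-hand side $\orbEuler(B^{\mu,\nc},R^\mu)$ is likewise, by \eqref{eqn:orbEuler number}, a polynomial in the $\mu_i$ on each such chamber — here one needs Langer's formula for the local orbifold Euler number at the points where three of the branch points come together, which contributes terms depending polynomially (indeed rationally, but with controlled denominators that cancel) on the weights. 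Since the two polynomials agree on the dense (in fact Zariski-dense in each chamber) set of parameters satisfying \eqref{eq:SigmaINT} — by the first stage — they agree identically on each chamber, and then by continuity across the walls on all of the simplex.

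The main obstacle I expect is the bookkeeping at stage one: correctly matching the orbifold structure encoded by $R^\mu$ (coefficients $\mu_i+\mu_j$) with the genuine orbifold structure of $\BB^2/\Gamma$ (coefficients $1-1/\kappa_{ij}$ and the $\tfrac12$'s from the $M_l$), including the blow-ups of cusps where the period map is parabolic and the boundary contributes nothing to the orbifold locus — this is the content of the normalization lemmas in Section~\ref{sec:SigmaINT-computations of chern1-koker} and of Lemma~\ref{le:no contribution from boundary}. A secondary subtlety is justifying that Langer's local orbifold Euler number at the strata where the surface $B^{\mu,\nc}$ is smooth but $R^\mu$ has a normal crossing reduces to the elementary product formula, and that no genuinely singular local contributions appear — this is guaranteed by \cite[Lemma 4.5.1]{DeligneMostow86} and the blow-up construction of $B^{\mu,\nc}$, which ensures all relevant singularities of $(X,D)$ are ordinary normal crossings. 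Once these identifications are in place, the proportionality $\Chern_1^2 = 3\,\Chern_2$ on a complex $2$-ball quotient is classical and the factor $3$ in \eqref{eq:BMY} is immediate.
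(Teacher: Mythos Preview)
Your strategy has a genuine gap in stage two. You claim that the tuples $(\mu_i)$ satisfying \eqref{eq:SigmaINT} are Zariski-dense in each chamber of the simplex $\{\sum_i\mu_i=2\}$, but this is false. In the generic chamber where $\mu_i+\mu_j<1$ for all ten pairs (the $B_{10}$ case), condition \eqref{eq:SigmaINT} imposes ten arithmetic constraints $(1-\mu_i-\mu_j)^{-1}\in\tfrac12\ZZ$ on a four-dimensional parameter space; the solution set is discrete and in fact finite --- the complete list of \eqref{eq:INT} and \eqref{eq:SigmaINT} tuples for $N=5$ is tabulated by Deligne--Mostow and Mostow. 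A polynomial identity cannot be deduced from agreement on a finite set, so the deformation step fails and with it the whole argument.

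The paper avoids this issue by a direct calculation that makes no use of any ball-quotient structure or K\"ahler--Einstein proportionality. One first works on the full blowup $B_{10}$ and computes both sides of \eqref{eq:BMY} as explicit polynomials in the $\mu_i$: using the intersection data of Section~\ref{sec:2dimINT} one finds $\bigl(K_{B_{10}}+\sum_{i<j}(\mu_i+\mu_j)[L_{ij}]\bigr)^2 = 18\sum_{i<j}\mu_i\mu_j - 27$, while the explicit formula of Proposition~\ref{prop:formula for orbEuler for mu} gives $\orbEuler = 6\sum_{i<j}\mu_i\mu_j - 9$, so the factor $3$ is visible. For general $\mu$ one pulls back along the blowdown $b\colon B_{10}\to B^{\mu,\nc}$ via Lemma~\ref{le:B10Bkcherncomparison} and checks by hand that the correction terms arising from the contracted $(-1)$-curves on the $\Chern_1^2$ side match the correction terms coming from the ordinary triple points on the $\orbEuler$ side. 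The proof is thus elementary intersection theory combined with Langer's local formulas; neither density nor the ball-quotient interpretation enters.
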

\par
It is classically known that 2-dimensional ball quotients are the surfaces
of general type realizing equality in the Bogomolov-Miyaoka-Yau
inequality $3\Chern_2 \geq \Chern_1^2$. This inequality can be generalized to 
orbifolds and further to log-canonical pairs $(X,D)$ (see \cite{langerorbifold} and 
references therein). The BMY-equality of orbifold Chern classes has been used
to find new examples of ball quotients (for an account see \cite[\S 10]{YoshidaFuchs87}). Our
theorem yields new examples of log-varieties satisfying a BMY-equality without being 
a ball quotient.
\par
In the case that $\mu$ satisfies \eqref{eq:INT}, the orbifold
canonical divisor $\Chern_1^\orb(\cB^\ell, \cR^\ell)$ coincides with $K^\orb$ in
\eqref{eq:Korb} and we recover the orbifold version of Chern class
proportionality on a ball quotient.
\par
From this theorem we will deduce that the relative
orbifold Euler characteristics are ratios of intersection numbers on the ball
quotient and thus commensurability invariants by
Corollary~\ref{cor:comminvariants}. For the precise statement, let us fix
a quintuple $(\mu_i)_{i=1}^5$ satisfying \eqref{eq:SigmaINT}, and let
$\mu(k) = (\mu_i(k))_i$ be a Galois conjugate quintuple parametrizing a local
system of signature $(1,2)$. For $\ell \in \{1,k\}$, we consider the space
$\cB^{\ell} = B^{\nc,\mu(\ell)}/\Sigma$ and define the divisor $\cR^\ell$ to be
the image of $R^{\mu(\ell)}$ in the Chow ring of $\cB^\ell$.
\par
\begin{cor} \label{cor:compute-otherinvariant}
Suppose $(\mu_i)_{i=1}^5$ satisfies \eqref{eq:SigmaINT}, and let
$\mu(k) = (\mu_i(k))_i$ be a Galois conjugate quintuple parametrizing a local
system of signature $(1,2)$. Then
 \[\frac{\orbEuler(\cB^{k},\cR^k)}{\orbEuler(\cB^{1},\cR^1)} =
\frac{(K^\orb - D_k)^2}{(K^{\orb})^2} =
\frac{\Chern_1(\SheafHom(\cE^{1,0}_k,\cE^{0,1}_k))^2}{\Chern_1(\omega_{Y})^2}
 =
\frac{9\Chern_1(\cE^{1,0}_k)^2}{\Chern_1(\omega_{Y})^2}
 \]
where $Y$ is as in Section \ref{sec:SigmaINT-computations of chern1-koker} 
and $\cE^{1,0}_k$
(resp. $\cE^{0,1}_k$) is the holomorphic subbundle (quotient bundle) of the
pullback VHS of $\LL_k$.
\end{cor}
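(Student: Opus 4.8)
\textbf{Proof proposal for Corollary~\ref{cor:compute-otherinvariant}.}
The plan is to chain together three identifications. First, I would establish the leftmost equality $\orbEuler(\cB^k,\cR^k)/\orbEuler(\cB^1,\cR^1) = (K^\orb - D_k)^2/(K^\orb)^2$ by invoking Theorem~\ref{thm:log-ballquotients}. Applied to the quintuple $\mu(\ell)$ for $\ell\in\{1,k\}$, that theorem gives $3\,\orbEuler(B^{\mu(\ell),\nc},R^{\mu(\ell)}) = (K_{B^{\mu(\ell),\nc}}+R^{\mu(\ell)})^2$. Pushing these identities down to $\cB^\ell = B^{\mu(\ell),\nc}/\Sigma$ (using that the Chow-ring structure is independent of the presentation as a finite quotient, as recalled in Section~\ref{sec:realization}, and that both $\topEuler$-style and intersection-theoretic quantities divide by $|\Sigma|$ compatibly), one gets $3\,\orbEuler(\cB^\ell,\cR^\ell) = (\Chern_1^\orb(\cB^\ell,\cR^\ell))^2$. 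The factor $3$ cancels in the ratio. It then remains to identify $\Chern_1^\orb(\cB^1,\cR^1)$ with $K^\orb$ and $\Chern_1^\orb(\cB^k,\cR^k)$ with $K^\orb - D_k$. For $\ell=1$ the paper already remarks (just after Theorem~\ref{thm:log-ballquotients}) that in the \eqref{eq:INT} case $\Chern_1^\orb$ coincides with $K^\orb$ of \eqref{eq:Korb}; in the \eqref{eq:SigmaINT} case the same comparison works with $K^\orb_\Sigma$ in place of $K^\orb$, and pulling back via $\varpi$ identifies these on $B$. For $\ell=k$ I would compare the divisor $R^{\mu(k)} = \sum_{i<j:\mu_i(k)+\mu_j(k)\le 1}(\mu_i(k)+\mu_j(k))[L_{ij}]$ against $K^\orb$, using Lemma~\ref{lemma:branch orders are integers} to match the index sets and a direct computation of coefficients: one checks $\Chern_1^\orb(\cB^k,\cR^k)$ differs from $K^\orb$ exactly by the correction terms $\tfrac{n_{ij}^k}{\kappa_{ij}}[L_{ij}]$ assembled in $D_k$ (resp.\ $D_{k,\Sigma}$), which is the content of Lemma~\ref{le:compute coefficients of kokertaudual-case INT} and Lemma~\ref{le:Chern 1-kokertaudual SigmaINT} translated into divisor-class language.

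Next I would establish the middle equality $(K^\orb - D_k)^2/(K^\orb)^2 = \Chern_1(\SheafHom(\cE^{1,0}_k,\cE^{0,1}_k))^2/\Chern_1(\omega_Y)^2$. Here the Lemma in Section~\ref{sec:intersection products of Chern classes} (more precisely the exact sequence \eqref{eqn:ses for tautilde} and the displayed relations in the proof of the Lemma collecting contributions) gives $\Chern_1(\koker\tau^\dual) = \Chern_1(\SheafHom(\cE^{1,0}_k,\cE^{0,1}_k)) - \Chern_1(\cT_Y(-\log\Delta))$, i.e.\ $\Chern_1(\SheafHom(\cE^{1,0}_k,\cE^{0,1}_k)) = \Chern_1(\omega_Y) - \Chern_1(\koker\tau^\dual)$ as classes on $Y$. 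By Lemma~\ref{le:Chern 1-kokertaudual SigmaINT} (and Lemma~\ref{le:no contribution from boundary}, which kills the $\Delta$-contributions) we have $\pi^*K^\orb_\Sigma = \Chern_1(\omega_Y)$ and $\pi^*D_{k,\Sigma} = \Chern_1(\koker\tau^\dual)$, so $\pi^*(K^\orb_\Sigma - D_{k,\Sigma}) = \Chern_1(\SheafHom(\cE^{1,0}_k,\cE^{0,1}_k))$. Taking $n$-th self-intersections and dividing — again using that intersection numbers on $\cB^\ell$ and on $B^u$ differ only by the factor $\deg(\pi)$, which cancels in the ratio — yields the claimed equality; pulling back further via $\varpi$ replaces $K^\orb_\Sigma, D_{k,\Sigma}$ by $K^\orb, D_k$.

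Finally, the rightmost equality $\Chern_1(\SheafHom(\cE^{1,0}_k,\cE^{0,1}_k))^2/\Chern_1(\omega_Y)^2 = 9\,\Chern_1(\cE^{1,0}_k)^2/\Chern_1(\omega_Y)^2$ is purely formal in the signature $(1,2)$ case. Since $\LL_k$ has signature $(1,2)$, $\cE^{1,0}_k$ is a line bundle and $\cE^{0,1}_k$ has rank $2$, so $\SheafHom(\cE^{1,0}_k,\cE^{0,1}_k) = (\cE^{1,0}_k)^\vee\otimes\cE^{0,1}_k$ is a rank-two bundle with $\Chern_1 = 2\,\Chern_1(\cE^{0,1}_k) - 2\,\Chern_1(\cE^{1,0}_k)$ (there is a rank-two factor from $\cE^{0,1}_k$ and then subtracting the line bundle twice). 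Using $\Chern_1(\cE^{1,0}_k\oplus\cE^{0,1}_k) = 0$ — which holds by \cite[Theorem~1.1]{EsnaultVieweg02}, as already invoked in the proof of Lemma~\ref{lemma:compute c1 of 10 eq. compute c1 of kokern} — we get $\Chern_1(\cE^{0,1}_k) = -\Chern_1(\cE^{1,0}_k)$, hence $\Chern_1(\SheafHom(\cE^{1,0}_k,\cE^{0,1}_k)) = -3\,\Chern_1(\cE^{1,0}_k)$ and squaring gives the factor $9$. Concatenating the three equalities proves the corollary.

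The step I expect to be the main obstacle is the first one, namely the bookkeeping that identifies $\Chern_1^\orb(\cB^k,\cR^k)$ with $K^\orb - D_k$. The subtlety is that the divisor $\cR^k$ is built from the \emph{Galois-conjugate} weights $\mu_i(k)$, which live on the \emph{same} underlying space $\cB^1 \cong \cB^k$ but cut out elliptic or parabolic behaviour of the ``wrong'' local system; one must carefully use Lemma~\ref{lemma:branch orders are integers}(a) to see that the summation ranges $\{i<j:\mu_i(k)+\mu_j(k)\le1\}$ and $\{i<j:\mu_i(1)+\mu_j(1)\le1\}$ coincide, and then match coefficients case-by-case (the cases $\mu_i(k)+\mu_j(k)<1$, $=1$, $>1$ of Lemma~\ref{le:compute coefficients of kokertaudual-case INT}) between the explicit coefficient $\mu_i(k)+\mu_j(k)$ appearing in $\cR^k$ versus the combination $(1-1/\kappa_{ij}) + n_{ij}^k/\kappa_{ij}$ that $K^\orb - D_k$ produces at $[L_{ij}]$. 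Once the arithmetic of these coefficients is checked, everything else is formal manipulation of intersection numbers and quotient maps.
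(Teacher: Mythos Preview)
Your second and third equalities are essentially the paper's argument, but the first equality has a genuine gap: you assume $\cB^1\cong\cB^k$ are the same underlying surface, and you invoke Lemma~\ref{lemma:branch orders are integers}(a) to conclude that the index sets $\{i<j:\mu_i(k)+\mu_j(k)\le1\}$ and $\{i<j:\mu_i(1)+\mu_j(1)\le1\}$ coincide. Neither is true in general. Lemma~\ref{lemma:branch orders are integers}(a) only matches the parabolic locus ($=1$); for the strict inequalities one has only the one-sided implication of Lemma~\ref{le:B1Bk-blowdown}(a), namely $\mu_i+\mu_j>1\Rightarrow\mu_i(k)+\mu_j(k)>1$. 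Concretely, in case~5 of the table ($d=15$, $a=(4,6,6,6,8)$) with $k=4$ one has $\mu_2(1)+\mu_3(1)=12/15<1$ but $\mu_2(4)+\mu_3(4)=18/15>1$, so $L_{23}$ is present in $B^{\mu(1),\nc}$ but contracted in $B^{\mu(4),\nc}$. Thus there is a nontrivial blowdown $b:B^{\mu,\nc}\to B^{\mu(k),\nc}$, and your coefficient comparison ``$(1-1/\kappa_{ij})-n_{ij}^k/\kappa_{ij}$ versus $\mu_i(k)+\mu_j(k)$'' breaks down precisely at those $L_{ij}$ with $\mu_i+\mu_j<1$ but $\mu_i(k)+\mu_j(k)>1$: there the coefficient of $K^\orb-D_k$ is $3-2(\mu_i(k)+\mu_j(k))$, which is \emph{not} the $R^{\mu(k)}$-coefficient (indeed $[L_{ij}]$ does not even appear in $R^{\mu(k)}$). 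The paper fixes this with Lemma~\ref{le:B10Bkcherncomparison} and Lemma~\ref{le:B1Bk-blowdown}(b): one computes $b^*(K_{B^{\mu(k),\nc}}+R^{\mu(k)})$ and finds that the exceptional-divisor correction from the blowdown produces exactly the $3-2(\mu_i(k)+\mu_j(k))$ term, yielding $b^*(K_{B^{\mu(k),\nc}}+R^{\mu(k)})=K^\orb-D_k$ on the nose.

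A minor slip: your formula $\Chern_1((\cE^{1,0}_k)^\vee\otimes\cE^{0,1}_k)=2\Chern_1(\cE^{0,1}_k)-2\Chern_1(\cE^{1,0}_k)$ is off; for a line bundle $L$ and a rank-$2$ bundle $E$ one has $\Chern_1(L^\vee\otimes E)=\Chern_1(E)-2\Chern_1(L)$, which together with $\Chern_1(\cE^{0,1}_k)=-\Chern_1(\cE^{1,0}_k)$ gives the correct $-3\Chern_1(\cE^{1,0}_k)$.
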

\par
Using the structure of the Chow ring explicated in Section \ref{sec:2dimINT},
we can evaluate the right hand side of the \eqref{eq:BMY}. We can also evaluate
the left hand side explicitly.
\par
\begin{prop}\label{prop:formula for orbEuler for mu}
The orbifold Euler number of $(B^{\mu,\nc},R^\mu)$ is
\begin{gather}\label{eq:orbEuler-explicit-Formula}
  \begin{split}
\orbEuler(B^{\mu,\nc}, R^\mu) &= 7 
+ \sum_{i<j:\mu_i+\mu_j\leq 1} (\mu_i+\mu_j)\ 
+ \sum_{i<j:\mu_i+\mu_j>1} (\mu_i +\mu_j -1)^2 - 2\\
&\quad + \sum_{\substack{i<j,l<m:\\ i<l, j\neq m\\
\mu_i+\mu_j\leq
1\\ \mu_l+\mu_m\leq 1}} (1- \mu_i- \mu_j)(1- \mu_l-\mu_m) - 1.
  \end{split}
\end{gather}
\end{prop}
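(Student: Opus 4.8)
The plan is to compute $\orbEuler(B^{\mu,\nc}, R^\mu)$ directly from Langer's definition \eqref{eqn:orbEuler number}, term by term, using the known description of $B^{\mu,\nc}$ and of the divisor configuration $\bigcup L_{ij}$. First I would recall (from Section~\ref{sec:realization}) that $B^{\mu,\nc}$ is a rational surface — a blow-up of $\PP^2$ at a few points — so $\topEuler(B^{\mu,\nc})$ is an explicit small integer; in fact, the normalization $7$ at the front of \eqref{eq:orbEuler-explicit-Formula} should come out of $\topEuler$ of the relevant blow-up (e.g.\ $\PP^2$ blown up at $4$ points has $\topEuler = 7$), which is why the formula is stated this way. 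Since $X = B^{\mu,\nc}$ is smooth, $\Sing(X,D)$ is just the singular locus of $\supp R^\mu$, i.e.\ the pairwise intersection points $L_{ij}\cap L_{kl}$, and there are no contributions from singularities of $X$ itself.

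Next I would handle the sum $-\sum a_{ij}\topEuler(L_{ij}\setminus \Sing)$, where $a_{ij} = \mu_i+\mu_j$. Each $L_{ij}$ is a smooth rational curve ($\topEuler = 2$), and from it one removes the finitely many points where it meets the other $L_{kl}$. Counting these intersection points combinatorially (using $|L_{ij}\cap L_{kl}| = 1$ iff $\{i,j\}\cap\{k,l\} = \emptyset$, from Section~\ref{sec:2dimINT}) gives $\topEuler(L_{ij}\setminus\Sing) = 2 - \#\{kl : \{k,l\}\cap\{i,j\}=\emptyset,\ \mu_k+\mu_l\le 1\}$. Reorganizing $-\sum_{ij}(\mu_i+\mu_j)(2 - \#\ldots)$ is what produces the linear term $\sum_{\mu_i+\mu_j\le 1}(\mu_i+\mu_j)$ together with the double sum $\sum (1-\mu_i-\mu_j)(1-\mu_l-\mu_m)$ (after expanding $(\mu_i+\mu_j)(\mu_l+\mu_m) = -(1-\mu_i-\mu_j)(1-\mu_l-\mu_m) + (\mu_i+\mu_j) + (\mu_l+\mu_m) - 1$ and collecting); the bookkeeping with the index constraints $i<l$, $j\ne m$ needs care but is routine.

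The genuinely delicate part is the local orbifold Euler numbers $\orbEuler(x,X,D)$ at the intersection points $x\in L_{ij}\cap L_{kl}$. Here one uses that near such a point $(X,D)$ is a normal crossing pair with two branches of weights $a_{ij}$ and $a_{kl}$, so $\orbEuler(x,X,D) = (1-a_{ij})(1-a_{kl})$ when both branches are "genuinely there" (i.e.\ $\mu_i+\mu_j<1$, $\mu_k+\mu_l<1$) — this is the standard local contribution for a product orbifold chart — and one must separately treat the degenerate cases where a weight equals $1$ (a parabolic divisor $L_{ij}^\parab$, the quantity $1-a_{ij}$ vanishes) or exceeds $1$. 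I expect the main obstacle to be exactly this case analysis of $\orbEuler(x,X,D)$ near the points where $\mu_i+\mu_j>1$ for one of the colliding pairs: there the divisor $L_{ij}$ is not a boundary divisor of the ball quotient in the usual sense, and one needs to argue — presumably by the local description of the period map near $L_{ij}$ established in Lemma~\ref{lemma:vanishing order of periodmap-case1} and Lemma~\ref{le:vanishing of detperiodmap-case two}, together with Langer's formula for local orbifold Euler numbers of log-canonical surface singularities — that the correct local term is $(\mu_i+\mu_j-1)^2$ when two such "heavy" branches meet (this is the source of the $\sum_{\mu_i+\mu_j>1}(\mu_i+\mu_j-1)^2 - 2$ term, the $-2$ presumably absorbing a count of how many such points there are, namely the points of $\PP^1$-collapse that become the three special vertices after normalization). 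Once every local term is identified, assembling $\topEuler - \sum a_i\topEuler(D_i\setminus\Sing) - \sum_x(1 - \orbEuler(x,X,D))$ and matching against \eqref{eq:orbEuler-explicit-Formula} is a finite, if somewhat lengthy, combinatorial identity that I would verify by grouping the contributions according to whether each pair $\{i,j\}$ is light ($\mu_i+\mu_j<1$), critical ($=1$), or heavy ($>1$).
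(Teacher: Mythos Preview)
Your overall strategy --- compute $\orbEuler$ term by term from Langer's formula --- is exactly what the paper does, but you have misread the geometry at the ``heavy'' pairs, and this is a genuine gap.

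When $\mu_i+\mu_j>1$, the divisor $L_{ij}$ does \emph{not} appear in $R^\mu$ at all: by the definition \eqref{eq:R-divisor} only pairs with $\mu_i+\mu_j\le 1$ contribute, and geometrically $L_{ij}$ has been \emph{contracted} in passing from $B_{10}$ to $B^{\mu,\nc}$. So there is no ``heavy branch'' with weight exceeding $1$, and no need for the period-map lemmas you cite; those are irrelevant here. What actually happens is that the contraction of $L_{ij}$ produces a single point of $B^{\mu,\nc}$ through which the three remaining divisors $L_{lm}$, $L_{lq}$, $L_{mq}$ (where $\{l,m,q\}=\{1,\dots,5\}\setminus\{i,j\}$) now pass, forming an \emph{ordinary triple point}. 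The local contribution is then Langer's formula for three lines, $\tfrac14(a_{lm}+a_{lq}+a_{mq}-2)^2$, and since $a_{lm}+a_{lq}+a_{mq}=2(\mu_l+\mu_m+\mu_q)=2(2-\mu_i-\mu_j)$ this equals $(\mu_i+\mu_j-1)^2$. That is the source of the quadratic term --- not a meeting of heavy branches.

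This also explains the $-2$: one $-1$ comes from $\topEuler(B^{\mu,\nc})=7-|\{(i,j):\mu_i+\mu_j>1\}|$ (each contraction lowers $\topEuler$ by one), and the other $-1$ is the constant in $-(1-\orbEuler(x,X,D))$ for the triple point. Similarly, the codimension-one computation is simpler than you suggest: for every $L_{ij}$ in $\supp R^\mu$, the complement $L_{ij}\setminus\Sing(R^\mu)$ is always a \emph{thrice}-punctured $\PP^1$ (each of its three former transverse intersections in $B_{10}$ survives either as a double point or as one of the new triple points), so $\topEuler(L_{ij}\setminus\Sing)=-1$ and $-a_{ij}\cdot(-1)=\mu_i+\mu_j$ directly gives the linear sum without any reorganization. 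The remaining double-point contributions $(1-\mu_i-\mu_j)(1-\mu_l-\mu_m)-1$ are then immediate from the $n=2$ case of Langer's local formula.
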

\par
We will combine these two results for the proof of
Theorem~\ref{thm:log-ballquotients}. It would however be interesting to find a
more conceptual explanation for the proportionality relation, using the fact
that on the open dense set $\tilde B_0$ in the Fox completion $\tilde B^\mu$,
the period map is \'etale and induces a sheaf isomorphism of
$\Omega^1_{B_0}$ with $\SheafHom(\cL^{1,0},\cL^{0,1})$, the sheaf induced by
pulling back the cotangent sheaf of $\BB^n$. The major difficulty seems to be
the lack of a complex structure on the Fox completion for arbitrary $\mu$ with
$\sum_i \mu_i=2$.

\subsection{Orbifold Euler numbers}
 For our computations it will be sufficient to know
$\orbEuler(x,X,D)$ when $X$ is smooth and $D$ has normal crossings or
ordinary triple points, i.e.\ locally at $x\in \Sing(D)$ the divisor $D$ is
analytically isomorphic to
$\sum_{i=1}^n a_iL_i$ where $L_i$ are $n \leq 3$ distinct lines in $\CC^2$
passing through the origin. Then $\orbEuler(x,X,D) = \orbEuler(0,\CC^2,\sum a_i
L_i)$. In this situation, we have in particular \cite[Theorem
8.3]{langerorbifold}
 \[\orbEuler(0,\CC^2, \sum a_i L) = \begin{cases}
		       (1-a_1)\cdot(1- a_2),   &\text{if}\ n=2\\
		       \tfrac{1}{4}\cdot(a_1+a_2+a_3 -2)^2, & \text{if}\ n=3,
				      a_1\leq a_2 \leq a_3, \\
				      &\quad a_3 < a_1+a_2\ \text{and}\\
				      &\quad a_1+a_2+a_3\leq 2.\\ 
				    \end{cases}\]
Another property of $\orbEuler$ that we will use is the fact that it behaves
multiplicatively under pullback, i.e if $f:Y\to X$ is a finite, proper morphism
of normal, proper surfaces and $f^*(K_X + D)
= K_Y + D'$ for a $\QQ$-divisor $D'$, then
\[\orbEuler(Y,D') = \deg(f) \cdot \orbEuler(X,D).\]
\par
We can now prove the explicit formula for the orbifold Euler number of the pair
$(B^{\mu,\nc},R^\mu)$. 
\par
\begin{proof}[Proof of Proposition~\ref{prop:formula for orbEuler for mu}]
The topological Euler characteristic of $B^{\mu,\nc}$ is $7$ minus the number of
contracted $L_{ij}$'s. $L_{ij}$ is contracted whenever $\mu_i+\mu_j>1$, whence
the third summand has a $(-2)$ instead of the $(-1)$ appearing in 
\eqref{eqn:orbEuler number}. The second summand is the
codimension $1$ contribution. The coefficient of $L_{ij}$, whenever present in
$R^\mu$, is $a_{ij} = \mu_i+\mu_j$, and $L_{ij}\setminus \Sing(R^\mu)$ is a
thrice-punctured $\PP^1$. The remaining summands are the codimension $2$
contributions. For each contracted $L_{ij}$ we have an ordinary triple point
with contribution
\[\tfrac{1}{4}(a_{lm}+a_{lq}+a_{mq} - 2)^2 = (\mu_l+\mu_m+\mu_q - 1)^2 =
(\mu_i+\mu_j-1)^2,\]
where $\{l,m,q\} = \{1,\dots,5\}\setminus\{i,j\}$. Note that
$a_{\sigma(l)\sigma(m)} \leq a_{\sigma(l)\sigma(q)}+a_{\sigma(m)\sigma(q)}$ for
every permutation $\sigma$ of $\{l,m,q\}$ and $a_{lm}+a_{lq}+a_{mq}< 2$. The
last summand is the contribution from the ordinary double points.
\end{proof}

\subsection{Proof of Theorem \ref{thm:log-ballquotients} and Corollary
\ref{cor:compute-otherinvariant}}
The space $B^{\mu,\nc}$ results from $B_{10}$ by blowing down all the divisors
$L_{ij}$ with $\mu_i+\mu_j>1$. Before engaging in the proof of
Theorem~\ref{thm:log-ballquotients}, we compute the pullback of the
``boundary`` divisor $R^\mu$ under this blowup map.

\begin{lemma} \label{le:B10Bkcherncomparison}
Let $\mu\in (\QQ\cap (0,1))^5$, $\sum_i\mu_i= 2$ and let $b:B_{10} \to
B^{\mu,\nc}$ denote the map blowing down all divisors $L_{ij}$ with $\mu_i+\mu_j
> 1$.
Then
\begin{equation} \label{eq:bpullback}
b^*(K_{B^{\mu,\nc}} + R^\mu) = K_{B_{10}} + \sum_{\substack{i<j:\\
\mu_i+\mu_j\leq 1}} (\mu_i+\mu_j)[L_{ij}] + \sum_{\substack{i<j:\\ \mu_i+\mu_j
> 1}} (3-2(\mu_i+\mu_j))[L_{ij}].
\end{equation}
\end{lemma}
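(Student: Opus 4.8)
The plan is to exploit the description, recalled just above, of $B^{\mu,\nc}$ as the surface obtained from $B_{10}$ by contracting precisely the $L_{ij}$ with $\mu_i+\mu_j>1$. First I would verify that these contracted curves are \emph{pairwise disjoint $(-1)$-curves}, so that $b$ is an honest iterated blow-down of distinct smooth points $p_{kl}:=b(L_{kl})$. From the intersection data of Section~\ref{sec:2dimINT}, each $L_{ij}$ has self-intersection $-1$ in $B_{10}$, and two distinct $L_{ij}$, $L_{kl}$ meet (transversally, in a single point) exactly when $\{i,j\}\cap\{k,l\}=\emptyset$. If two contracted curves met, disjointness of their index sets would force $\mu_i+\mu_j+\mu_k+\mu_l>2$, while that sum equals $2-\mu_m<2$ for the remaining index $m$; contradiction. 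Hence $B^{\mu,\nc}$ is smooth and $b$ contracts disjoint $(-1)$-curves.

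Given this, the canonical bundle formula for the blow-down of disjoint $(-1)$-curves gives
\[
b^{*}K_{B^{\mu,\nc}} \;=\; K_{B_{10}} - \sum_{i<j:\ \mu_i+\mu_j>1}[L_{ij}].
\]
Next I would compute $b^{*}[L_{ij}]$ for $\mu_i+\mu_j\le 1$ (for such a pair the strict transform of the image divisor is $L_{ij}$ itself, since $b$ is an isomorphism off the contracted locus). For a blow-down of a $(-1)$-curve $E$ over a point $p$ one has $b^{*}D=\widetilde D+(\operatorname{mult}_p D)\,E$, and here the multiplicity of $b(L_{ij})$ at $p_{kl}$ equals $L_{ij}\cdot L_{kl}$, which is $1$ if $\{i,j\}\cap\{k,l\}=\emptyset$ and $0$ otherwise. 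Therefore
\[
b^{*}R^{\mu} \;=\; \sum_{\substack{i<j\\ \mu_i+\mu_j\le 1}}(\mu_i+\mu_j)\Bigl([L_{ij}]+\!\!\sum_{\substack{k<l:\ \mu_k+\mu_l>1\\ \{k,l\}\cap\{i,j\}=\emptyset}}\!\![L_{kl}]\Bigr).
\]

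Finally I would add $b^{*}K_{B^{\mu,\nc}}$ and $b^{*}R^{\mu}$ and read off the coefficient of each $L_{ij}$. For $\mu_i+\mu_j\le 1$ the coefficient is plainly $\mu_i+\mu_j$. For a contracted $L_{kl}$ the coefficient is $-1+\sum_{\{a,b\}\subset\{1,\dots,5\}\setminus\{k,l\}}(\mu_a+\mu_b)$, where the condition $\mu_a+\mu_b\le 1$ is automatic because $\mu_a+\mu_b\le\mu_a+\mu_b+\mu_c=2-(\mu_k+\mu_l)<1$; summing over the three pairs in the complement yields $2(\mu_a+\mu_b+\mu_c)=2\bigl(2-(\mu_k+\mu_l)\bigr)$, hence the coefficient is $-1+4-2(\mu_k+\mu_l)=3-2(\mu_k+\mu_l)$, which is exactly \eqref{eq:bpullback}. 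The argument is essentially bookkeeping; the only two points requiring a moment's thought are the disjointness of the contracted $(-1)$-curves and the elementary identity $\sum_{\{a,b\}}(\mu_a+\mu_b)=2\bigl(2-(\mu_k+\mu_l)\bigr)$, so I do not expect a genuine obstacle.
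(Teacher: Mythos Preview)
Your proof is correct and follows essentially the same approach as the paper's. The paper treats the blowdowns one at a time and computes the total transform $b^*[L_{ij}^\mu]$ directly, whereas you separate $b^*K$ from $b^*R^\mu$ and phrase the multiplicity via the intersection number $L_{ij}\cdot L_{kl}$; both routes amount to the same elementary bookkeeping, and your explicit verification that the contracted $(-1)$-curves are pairwise disjoint (and that the condition $\mu_a+\mu_b\le 1$ is automatic for pairs in the complement) makes the argument slightly more self-contained.
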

\begin{proof} 
Since $b$ is a composition of several blowdowns that commute with each other,
we can treat each blowdown individually. So assume first that there is only one
pair  $\{a,b\} \subset \{1,\dots,5\}$ such that $\mu_a + \mu_b >1$. The image
of the exceptional divisor $L_{ab}$ under $b$ is the
intersection of the lines $L_{cd}^\mu$, $L_{ce}^\mu$ and $L_{de}^\mu$ in
$B^\mu$, where
$\{a,b,c,d,e\} = \{1,\dots,5\}$. The total transform of a boundary divisor
$[L_{ij}^\mu]$ is thus
\[b^*[L_{ij}^\mu] = \begin{cases} 
               [L_{ij}] + [L_{ab}],&\quad \text{if}\
\{i,j\}\subset\{c,d,e\}\\
	       [L_{ij}],&\quad \text{otherwise.}
             \end{cases}
\]
Therefore, we obtain altogether
\begin{align*}
b^*(K_{B^{\mu,\nc}} + R^\mu) &= K_{B_{10}}  
+ \sum_{\substack{i<j:\\ \mu_i+\mu_j \leq 1}} (\mu_i+\mu_j) [L_{ij}]\\
      &\qquad + \sum_{\substack{a<b:\\ \mu_a+\mu_b> 1\\ \{a,b,c,d,e\} =
\{1,\dots,5\}}} 
(-1 +
2(\mu_c+\mu_d+\mu_e))[L_{ab}].
\end{align*}
Using $\sum_i\mu_i=2$, we obtain the claim.
\end{proof}
\par
\begin{proof}[Proof of Theorem~\ref{thm:log-ballquotients}]
We first show the identity $(K_{B^\mu}+R^\mu)^2 = \orbEuler(B^\mu,R^\mu)$
for a configuration of type $B_{10}$. We have $K_{B_{10}}^2 = 5$, and
$[L_{ij}].K_{B_{10}} = -1$. Moreover \[ [L_{ij}].[L_{lm}] = \begin{cases}
                    0, &\quad \text{if}\ |\{i,j\} \cap \{l,m\}| = 1\\
		    1, &\quad \text{if}\ \{i,j\} \cap \{l,m\} = \emptyset\\
		   -1, &\quad \text{if}\ \{i,j\} = \{l,m\}.\\
                   \end{cases}
\]
Hence,
\begin{align*}
 \bigl(\sum_{i<j} (\mu_i+\mu_j)[L_{ij}]\bigr)^2 &= - \sum_{i<j} (\mu_i+\mu_j)^2
+
    2\cdot \sum_{i<j,l<m, i<l} (\mu_i+\mu_j)(\mu_l+\mu_m)\\
    &= -4 \sum_i \mu_i^2 - 2\sum_{i<j}\mu_i\mu_j + 12\sum_{i<j} \mu_i\mu_j
    = 18\sum_{i<j} \mu_i\mu_j -16,
\end{align*}
where we use $4 = (\sum_i \mu_i)^2 = \sum_i \mu_i^2 + 2\sum_{i<j} \mu_i\mu_j$.
Altogether we obtain
\begin{align*}
 (K_{B_{10}} + \sum_{i<j} (\mu_i+\mu_j)[L_{ij}])^2 &= K_{B_{10}}^2 +
2K_{B_{10}}.(\sum_{i<j} (\mu_i+\mu_j)[L_{ij}]) + (\sum_{i<j}
(\mu_i+\mu_j)[L_{ij}])^2 \\
    &= 5 + (-2)\cdot 4\sum_{i} \mu_i + 18\sum_{i<j}\mu_i\mu_j - 16\\
    &= 18\sum_{i<j}\mu_i\mu_j - 27.
\end{align*}
On the other hand,
\begin{align*}
 \orbEuler(B_{10},R^\mu) &= 7 + 4\cdot \sum_i \mu_i - 15 + \sum_{i<j,l<m,i<l}
(1-\mu_i-\mu_j)(1-\mu_l-\mu_m) \\
      &= 15 - \tfrac{4!}{2} \sum_i \mu_i + \sum_{i<j,l<m,i<l}
(\mu_i+\mu_j)(\mu_l+\mu_m)\\
      &= -9 + 6 \sum_{i<j} \mu_i\mu_j
\end{align*}
Now let $\mu$ be arbitrary. By Lemma \ref{le:B10Bkcherncomparison}, the blowdown
map $b: B_{10} \to B^\mu$ satisfies
\[b^*(K_{B^\mu} + R^\mu) = \overbrace{K_{B_{10}} + \sum_{\substack{i<j}}
(\mu_i+\mu_j)[L_{ij}]}^A + \overbrace{\sum_{\substack{i<j:\\ \mu_i+\mu_j>
1}}3(1-\mu_i-\mu_j)[L_{ij}]}^B.\]
We proceed by computing the individual summands of 
\[(K_{B^\mu} + R^\mu)^2 =
(b^*(K_{B^\mu} + R^\mu))^2 = A^2 + 2AB + B^2.\]
For $i<j$ with $\mu_i+\mu_j>1$, let
$\{l,m,q\}$ be the complement in $\{1,\ldots,5\}$. Then
\begin{align*}
(K_{B_{10}} + \sum_{\substack{i'<j'}}
(\mu_{i'}+\mu_{j'})[L_{i'j'}]).[L_{ij}]) 
&= -1 +
2(\mu_l+\mu_m+\mu_q) - (\mu_i+\mu_j)\\
&= 3(1-\mu_i-\mu_j)
\end{align*}
As different $L_{ij}$'s with $\mu_i+\mu_j>1$ do not intersect,
\[\biggl(\sum_{\substack{i<j:\\ \mu_i+\mu_j>
1}}3(1-\mu_i-\mu_j)[L_{ij}] \biggr)^2 = -9\sum_{\substack{i<j:\\ \mu_i+\mu_j>
1}}(1-\mu_i-\mu_j)^2\]
Putting everything together, using that the
identity $A^2 = 18\sum_{i<j}\mu_i\mu_j - 27$ also holds for arbitrary $\mu$, 
we obtain
\begin{align*}
(K_{B^\mu} + R^\mu)^2 &= (K_{B_{10}} + \sum_{\substack{i<j}}
(\mu_i+\mu_j)L_{ij})^2 + 9\sum_{\substack{i<j:\\\mu_i+\mu_j> 1}}
(1-\mu_i-\mu_j)^2\\
    &= 18\sum_{i<j}\mu_i\mu_j - 27 + 9\sum_{\substack{i<j:\\\mu_i+\mu_j>
1}}(1-\mu_i-\mu_j)^2.
\end{align*}
Now we investigate the amount by which the Euler characteristic changes for
each $i<j$ with $\mu_i+\mu_j>1$. First observe that we still can evaluate the
right hand side of \eqref{eq:orbEuler-explicit-Formula} for the boundary divisor
$\sum_{i<j}(\mu_i+\mu_j)[L_{ij}]$ on $B_{10}$
and that $\orbEuler(B^\mu,R^\mu)$ is off this
quantity by some ``error term'' $M$
\[\orbEuler(B^\mu,R^\mu) = 6\sum_{i<j} \mu_i\mu_j  - 9 + M\]
with $M$ being the sum over all $i<j$ with $\mu_i+\mu_j>1$ of the expression
\begin{align*}
 - (\mu_i+\mu_j) + (\mu_i+\mu_j -1)^2 - 2 -
(1-\mu_i-\mu_j)(3-2(\mu_l+\mu_m+\mu_q)) + 3,
\end{align*}
where $\{l,m,q\}$ is again a complement of $\{i,j\}$. Evaluating this yields
\[M = 3\sum_{\substack{i<j:\\ \mu_i+\mu_j>1 }} (1-\mu_i-\mu_j)^2,\]
which altogether implies $3\orbEuler(B^\mu,R^\mu) = (K_{B^\mu} + R^\mu)^2$.
\end{proof}
\par
The starting point for the proof of
Corollary~\ref{cor:compute-otherinvariant} is that 
for $\mu = \mu(1)$ satisfying \eqref{eq:SigmaINT}, the space $B^{\mu,\nc}$ is
larger than any of its Galois conjugates $B^{\mu(k),\nc}$ (with $\sum_i\mu_i(k)
= 2$) in the sense that $B^{\mu,\nc}$ is an intermediate space in the chain of
blowdowns $B_{10} \to \dots\to B^{\mu(k),\nc}$. More precisely,
\begin{lemma}\label{le:B1Bk-blowdown}
 Suppose $\mu \in (\QQ\cap(0,1))^5$ satisfies \eqref{eq:SigmaINT}, and let
$\mu(k)$ be a Galois conjugate of signature $(1,2)$.
\begin{enumerate}[a)]
 \item For any
pair $\{i,j\}\subset\{1,\dots,5\}$, $\mu_i+\mu_j>1$ implies
$\mu_i(k)+\mu_j(k)>1$.
\par
In particular, there is a blowdown map 
$b: B^{\mu,\nc} \to B^{\mu(k),\nc}$
contracting some
of the boundary divisors $L_{ij}$ with self-intersection $(-1)$.
\item $b^*(K_{B^{\mu(k),\nc}} + R^{\mu(k)}) = K^\orb - D_k$.
\end{enumerate}
\end{lemma}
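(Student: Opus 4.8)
\textbf{Proof plan for Lemma~\ref{le:B1Bk-blowdown}.}
The plan is to reduce both parts to the explicit combinatorics of the weights $\mu$ and $\mu(k)$, using only elementary properties of fractional parts together with the $(\Sigma\mathrm{INT})$ hypothesis and the blow-down descriptions of $B_{10}$ from Section~\ref{sec:realization}. For part a), I would argue as follows. Fix a pair $\{i,j\}$ with $\mu_i+\mu_j>1$. Since $\mu=\mu(1)$ satisfies \eqref{eq:SigmaINT} and $\mu_i+\mu_j\neq 1$ (as the sum is $>1$ and, being a sum of two rationals in $(0,1)$, cannot be made equal to $1$ under our running assumptions), the quantity $\kappa_{ij}=(1-\mu_i-\mu_j)^{-1}$ is a negative rational whose reciprocal is a (half-)integer; more to the point, Lemma~\ref{lemma:branch orders are integers}(a) gives $\mu_i(k)+\mu_j(k)=1 \iff \mu_i(1)+\mu_j(1)=1$, so in our case $\mu_i(k)+\mu_j(k)\neq 1$. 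It remains to exclude $\mu_i(k)+\mu_j(k)<1$. Here I would invoke the description of the complement index set: recall $\sum_i\mu_i(k)=2$ for the Galois conjugate under consideration, so $\mu_i(k)+\mu_j(k)>1$ is equivalent to $\sum_{l\notin\{i,j\}}\mu_l(k)<1$, and $\mu(k)$ has signature $(1,2)$, i.e. $\sigma(k)=\sum_i\mu_i(k)$ corresponds via Lemma~\ref{le:basis-coho of cyclic covering} to the fixed Hodge numbers. The combinatorial point is that the partition of $\{1,\dots,5\}$ into the ``large-angle pair'' and its complement is forced by the signature, and since signature is preserved under the Galois action defining $\mu(k)$, the same pair $\{i,j\}$ remains the large-angle pair for $\mu(k)$. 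I would make this precise by checking, case by case over the finitely many relevant quintuples in the table of Section~\ref{sec:nonarithexam} (or by the general parity argument on $a_i(k)=\langle ka_i/d\rangle d$), that $\mu_i+\mu_j>1$ entails $\mu_i(k)+\mu_j(k)>1$.

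Once part a) is established, the ``in particular'' clause is immediate: by Section~\ref{sec:2dimINT} the space $B_{10}$ is $\PP^2$ blown up at four points, the divisors $L_{ij}$ with $\mu_i+\mu_j>1$ have self-intersection $(-1)$ and are pairwise disjoint, and $B^{\mu,\nc}$ (resp. $B^{\mu(k),\nc}$) is obtained from $B_{10}$ by contracting exactly those $L_{ij}$ with $\mu_i+\mu_j>1$ (resp. $\mu_i(k)+\mu_j(k)>1$), as recorded in Lemma~\ref{le:B10Bkcherncomparison} and the surrounding discussion. Since part a) shows the set of contracted divisors for $\mu(k)$ contains the set for $\mu$, the two contraction maps $b_\mu\colon B_{10}\to B^{\mu,\nc}$ and $b_{\mu(k)}\colon B_{10}\to B^{\mu(k),\nc}$ factor as $b_{\mu(k)}=b\circ b_\mu$ for a unique morphism $b\colon B^{\mu,\nc}\to B^{\mu(k),\nc}$ (the intermediate $(-1)$-curves remaining $(-1)$-curves after the earlier blow-downs because the contracted curves are disjoint). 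This $b$ is the desired blow-down.

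For part b), I would compute $b^*(K_{B^{\mu(k),\nc}}+R^{\mu(k)})$ by pulling back all the way to $B_{10}$ and comparing with the two descriptions already available. Applying Lemma~\ref{le:B10Bkcherncomparison} to the weight tuple $\mu(k)$ gives
\[
b_{\mu(k)}^*(K_{B^{\mu(k),\nc}}+R^{\mu(k)}) = K_{B_{10}} + \sum_{\substack{i<j:\\ \mu_i(k)+\mu_j(k)\leq 1}} (\mu_i(k)+\mu_j(k))[L_{ij}] + \sum_{\substack{i<j:\\ \mu_i(k)+\mu_j(k)>1}} (3-2(\mu_i(k)+\mu_j(k)))[L_{ij}].
\]
On the other hand, $K^\orb$ and $D_k$ are defined in \eqref{eq:Korb} upstairs on $B^{\mathrm{nc}}$ with coefficients $1-\kappa_{ij}^{-1}=\mu_i+\mu_j$ on the $L_{ij}$ in $K^\orb$, and $n_{ij}^k/\kappa_{ij}$ on the $L_{ij}$ in $D_k$, where $n_{ij}^k$ is given by Lemma~\ref{le:compute coefficients of kokertaudual-case INT}; pulling $K^\orb-D_k$ back to $B_{10}$ via the same blow-down $b_\mu$ (using Lemma~\ref{le:B10Bkcherncomparison} once more for the $B^{\mu,\nc}$-side, and the elliptic-divisor coefficient computations) produces precisely the same combination of $[L_{ij}]$'s once one substitutes $n_{ij}^k=\ell_{ij}^k-1$ in the case $\mu_i(k)+\mu_j(k)<1$ and $n_{ij}^k=n\ell_{ij}^k-1$ in the case $\mu_i(k)+\mu_j(k)>1$ with $n=2$, and uses $\ell_{ij}^k=\kappa_{ij}|1-\mu_i(k)-\mu_j(k)|$. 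Since $b_\mu^*$ is injective on $\CH_1$ (as $b_\mu$ is a birational morphism between smooth surfaces), matching the $B_{10}$-pullbacks of the two classes yields $b^*(K_{B^{\mu(k),\nc}}+R^{\mu(k)})=K^\orb-D_k$ on $B^{\mu,\nc}$, which is the assertion.

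\textbf{Main obstacle.} The routine parts are the intersection-theoretic bookkeeping in part b); the genuinely delicate point is part a)---namely verifying that the ``large-angle pair'' really is Galois-invariant, equivalently that $\mu_i+\mu_j>1 \Rightarrow \mu_i(k)+\mu_j(k)>1$. Lemma~\ref{lemma:branch orders are integers} only handles the boundary case $\mu_i+\mu_j=1$ and the integrality of $\kappa_{ij}(1-\mu_i(k)-\mu_j(k))$; it does not by itself pin down the \emph{sign} of $1-\mu_i(k)-\mu_j(k)$. I expect to need either a direct case check against the finitely many quintuples of Section~\ref{sec:nonarithexam} that satisfy \eqref{eq:SigmaINT} with a signature-$(1,2)$ conjugate, or a short general argument: $\sum_l \mu_l(k)=2$ forces exactly one pair to have angle-sum $>1$ whenever the signature is $(1,2)$ (since $\sigma(k)=2$ and $s(k)=5$), and the integrality constraint from Lemma~\ref{lemma:branch orders are integers}(b) together with $a_i=a_j$ for $i,j\in S$ then rules out the contracted pair migrating out of its $\mu(1)$-position. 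Making this last step airtight, rather than merely checking the table, is the crux of the proof.
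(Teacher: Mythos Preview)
Your plan matches the paper's almost exactly. The paper's own proof of part a) is literally ``one checks all the finitely many cases'', and for part b) it invokes the proof of Lemma~\ref{le:B10Bkcherncomparison} to read off that the coefficient of each $[L_{ij}]$ in $b^*(K_{B^{\mu(k),\nc}}+R^{\mu(k)})$ is $3-2(\mu_i(k)+\mu_j(k))$ when $L_{ij}$ is $b$-exceptional (and $\mu_i(k)+\mu_j(k)$ otherwise), and then verifies the elementary identity $3-2(\mu_i(k)+\mu_j(k))=1-n_{ij}^k/\kappa_{ij}$ using Lemma~\ref{le:compute coefficients of kokertaudual-case INT} with $n=2$. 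Your route via pullback to $B_{10}$ and injectivity of $b_\mu^*$ is a harmless variant of the same computation; the paper simply works directly on $B^{\mu,\nc}$.

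One genuine caution about the ``general argument'' you sketch for a): the claim that signature $(1,2)$ forces \emph{exactly one} pair with $\mu_i(k)+\mu_j(k)>1$ is false. Already for $\mu=\mu(1)$ in entry~5 of the table, $(4,6,6,6,8)/15$, every pair has sum $\leq 14/15<1$, so there are zero such pairs; nothing about $\sigma(k)=2$ pins down the number of large-angle pairs. Likewise, Lemma~\ref{lemma:branch orders are integers}(b) only gives integrality of $\kappa_{ij}(1-\mu_i(k)-\mu_j(k))$, not its sign, so the ``migration'' heuristic does not go through. Stick with the finite case check; that is all the paper does, and all that is needed.
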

\begin{proof}
a) One checks all the finitely many cases. b) By the proof of Lemma
\ref{le:B10Bkcherncomparison}, we find that the coefficient of
$b^*(K_{B^{\mu(k),\nc}} + R^{\mu(k)})$ in front of $[L_{ij}]$ is 
\[3 - 2(\mu_i(k) + \mu_j(k)) = 1 - \tfrac{n_{ij}^k}{\kappa_{ij}}.\]
\end{proof}
\par
This property is not true for arbitrary $\mu$ with $\sum_i\mu_i=2$ and we
suspect it to reflect a metric contraction property of the non-uniformizing
period maps relative to the uniformizing one, analogously to the case of
Teichm\"uller curves.
\par
\begin{proof}[Proof of Corollary \ref{cor:compute-otherinvariant}]
First, note that
\[\frac{\orbEuler(\cB^k,\cR^k)}{\orbEuler(\cB^1,\cR^1)} =
\frac{\orbEuler(B^{\mu(k),\nc},\cR^{\mu(k)})}{\orbEuler(B^{\mu,\nc},R^\mu)}\]
using the multiplicative behavior of $\orbEuler$ under pullback. The first
equality reduces thus to combining Theorem~\ref{thm:log-ballquotients} with
Lemma \ref{le:B1Bk-blowdown} b).
\par
For the second and third equality, let again
$\pi:Y \to \cB^1$ denote the
extension of the canonical projection $B^u = \BB^2/\Gamma' \to B^1/\Sigma$ as
in Section \ref{sec:SigmaINT-computations of chern1-koker}. Then
by \eqref{eqn:ses for tautilde} and using $\Chern_1(\cE^{1,0}) =
-\Chern_1(\cE^{0,1})$, we obtain
\begin{align*}
\pi^*(K^\orb - D_k) &= \Chern_1(\omega_Y) - \Chern_1(\koker \tilde \tau)\\
		    &= -\Chern_1(\SheafHom(\cE^{1,0},\cE^{0,1}))\\
		    &= - \Chern_1((\cE^{1,0})^*\tensor \cE^{0,1}) =
3\Chern_1(\cE^{1,0})
\end{align*}
\end{proof}

\bibliographystyle{halpha}
\bibliography{bq_biblio}
\end{document}